\newtheorem*{theo}{Theorem}
\newtheorem{thm}{Theorem}
\newtheorem{theorem}{Theorem}[section]
\newtheorem{corollary}[theorem]{Corollary}
\newtheorem{definition}{Definition}[section]
\newtheorem{lemma}[theorem]{Lemma}
\newtheorem{proposition}[theorem]{Proposition}
\begin{document}

\sloppy

\title{Distortion elements for surface homeomorphisms}
\author{E. Militon}
\date{\today}
\maketitle

\setlength{\parskip}{10pt}

\selectlanguage{english}
\begin{abstract}
Let $S$ be a compact orientable surface and $f$ be an element of the group $\mathrm{Homeo}_{0}(S)$ of homeomorphisms of $S$ isotopic to the identity. Denote by $\tilde{f}$ a lift of $f$ to the universal cover $\tilde{S}$ of $S$. In this article, the following result is proved: if there exists a fundamental domain $D$ of the covering $\tilde{S} \rightarrow S$ such that
$$ \lim_{n \rightarrow + \infty} \frac{1}{n} d_{n} \mathrm{log}(d_{n})=0,$$
where $d_{n}$ is the diameter of $\tilde{f}^{n}(D)$, then the homeomorphism $f$ is a distortion element of the group $\mathrm{Homeo}_{0}(S)$.
\end{abstract}

\section{Introduction}

Given a compact manifold $M$, we denote by $\mathrm{Diff}_{0}^{r}(M)$ the identity component of the group of $C^{r}$-diffeomorphisms of $M$. A way to understand better this group is to try to describe the subgroups of this group. In other words, given a group $G$, does there exist an injective group morphism from the group $G$ to the group $\mathrm{Diff}_{0}^{r}(M)$? If, for this group $G$, we can answer affirmatively to this first question, one can try to describe the group morphisms from the group $G$ to the group $\mathrm{Diff}_{0}^{r}(M)$ as best as possible (ideally up to conjugacy but this is often an unattainable goal). The concept of distortion element, which we will define, allows to obtain rigidity results on group morphisms from $G$ to $\mathrm{Diff}_{0}^{r}(M)$ and will give us very partial answers to these questions.

Let us give now the definition of distortion elements. Remember that a group $G$ is \emph{finitely generated} if there exists a finite generating set $\mathcal{G}$: any element $g$ in this group is a product of elements of $\mathcal{G}$ and their inverses, $g=s_{1}^{\epsilon_{1}}s_{2}^{\epsilon_{2}} \ldots s_{n}^{\epsilon}$ where the $s_{i}$'s are elements of $\mathcal{G}$ and the $\epsilon_{i}$ are equal to $+1$ or $-1$. The minimal integer $n$ in such a decomposition is denoted by $l_{\mathcal{G}}(g)$. The map $l_{\mathcal{G}}$ is invariant under inverses and satisfies the triangle inequality $l_{\mathcal{G}}(gh) \leq l_{\mathcal{G}}(g)+l_{\mathcal{G}}(h)$. Therefore, for any element $g$ in the group $G$, the sequence $(l_{\mathcal{G}}(g^{n}))_{n \geq 0}$ is sub-additive, so the sequence $(\frac{l_{\mathcal{G}}(g^{n})}{n})_{n}$ converges. When the limit of this sequence is zero, the element $g$ is said to be \emph{distorted} or a \emph{distortion element} in the group $G$. Notice that this notion does not depend on the generating set $\mathcal{G}$. In other words, this concept is intrinsic to the group $G$. The notion extends to the case where the group $G$ is not finitely generated by saying that an element $g$ of the group $G$ is distorted if it belongs to a finitely generated subgroup of $G$ in which it is distorted. The main interest of the notion of distortion is the following rigidity property for groups morphisms: for a group morphism $\varphi : G \rightarrow H$, if an element $g$ is distorted in the group $G$, then its image under $\varphi$ is also distorted. In the case where the group $H$ does not contain distortion element other than the identity element in $H$ and where the group $G$ contains a distortion element different from the identity, such a group morphism cannot be an embedding: the group $G$ is not a subgroup of $H$.

Let us give now some simple examples of distortion elements. In any group $G$, the torsion elements, \emph{i.e.} those of finite order, are distorted. In free groups and free abelian groups, the only distorted element is the identity element. The simplest examples of groups which contain a distortion element which is not a torsion element are the Baumslag-Solitar groups which have the following presentation~: $BS(1,p)=<a,b \ | \ bab^{-1}=a^{p}>$. Then, for any integer $n$ : $b^{n}ab^{-n}=a^{p^{n}}$. Taking $\mathcal{G}=\left\{a,b\right\}$ as a generating set of this group, we have $l_{\mathcal{G}}(a^{p^{n}}) \leq 2n+1$ and the element $a$ is distorted in the group $BS(1,p)$ if $\left| p \right| \geq 2$. A group $G$ is said to be \emph{nilpotent} if the sequence of subgroups $(G_{n})_{n \geq 0}$ of $G$ defined by $G_{0}=G$ and $G_{n+1}=[G_{n},G]$ (this is the subgroup generated by elements of the form $[g_{n},g]=g_{n}gg_{n}^{-1}g^{-1}$, where $g_{n} \in G_{n}$ and $g \in G$) stabilizes and is equal to $\left\{1_{G} \right\}$ for a sufficiently large $n$. A typical example of a nilpotent group is the Heisenberg group which is the group of upper triangular matrices whose diagonal entries are $1$ and other entries are integers. In a nilpotent non-abelian group $N$, one can always find three distinct elements $a$, $b$ and $c$ different from the identity such that $[a,b]=c$ and the element $c$ commutes with $a$ and $b$. In this case, we have $c^{n^{2}}=[a^{n},b^{n}]$ so that, in the subgroup generated by $a$ and $b$ (and also in $N$), the element $c$ is distorted: $l_{\left\{a,b\right\}}(c^{n^{2}}) \leq 4n$. A general theorem by Lubotzky, Mozes and Raghunathan implies that there exist distortion elements (and even elements with a logarithmic growth) in some lattices of higher rank Lie groups, for instance in the group $\mathrm{SL}_{n}(\mathbb{Z})$ for $n \geq 3$. In the case of the group $\mathrm{SL}_{n}(\mathbb{Z})$, one can even find a generating set consisting of distortion elements. Notice that, in mapping class groups (see \cite{FLM}) and in the group of interval exchange transformations (see \cite{Nov}), any distorted element is a torsion element.

Let us consider now the case of diffeomorphisms groups. The following theorem has led to progress on Zimmer's conjecture. Let us denote by $S$ a compact surface without boundary endowed with a probability measure $area$ with full support. Finally, let us denote by $\mathrm{Diff}^{1}(S,area)$ the group of $C^{1}$-diffeomorphisms of the surface $S$ which preserve the measure $area$. Then we have the following statement:

\begin{theo}{(Polterovich \cite{Pol}, Franks-Handel \cite{FH})} \label{PFH}
If the genus of the surface $S$ is greater than one, any distortion element in the group $\mathrm{Diff}^{1}(S,area)$ is a torsion element.
\end{theo}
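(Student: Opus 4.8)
The strategy is to play the algebraic rigidity of distortion against the hyperbolic geometry carried by a surface of genus $\geq 2$, after reducing to the case where $f$ is isotopic to the identity. Throughout I use the formal remark that if $\varphi$ is a homomorphism from $\mathrm{Diff}^1(S,area)$ to a group whose only distortion elements are torsion, then $\varphi(f)$ is torsion whenever $f$ is, and that replacing $f$ by a power preserves the distortion hypothesis (if $l_{\mathcal{G}}(f^{n})/n\to 0$ then $l_{\mathcal{G}}(f^{kn})/n\to 0$, since the iterates of $f^{k}$ form a subsequence of those of $f$). First I would kill the mapping class: the map $\mathrm{Diff}^1(S,area)\to\mathrm{MCG}(S)$ sends $f$ to a distortion element, hence (Farb--Lubotzky--Minsky, \cite{FLM}) to a torsion element, so after a power $f\in\mathrm{Diff}^1_0(S,area)$. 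Then I would kill the flux: on $\mathrm{Diff}^1_0(S,area)$ it is a homomorphism to a torus, whose only distortion elements are torsion, so after a further power $f$ is Hamiltonian; in particular its rotation vector for $area$ vanishes and $f$ has a fixed point. (Restricting the witnessing subgroup $G$ to $G\cap\mathrm{Diff}_0$, or passing to the induced automorphisms of $\pi_1(S)$, is a bookkeeping point I set aside.) It remains to show that a Hamiltonian distortion element of $\mathrm{Diff}^1(S,area)$, with $S$ of genus $\geq 2$, is the identity.

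The elementary half of this is a displacement estimate on the universal cover $\tilde{S}=\mathbb{H}^2$ (cocompact deck group $\Gamma$). The canonical lift of any element of $\mathrm{Diff}_0(S)$ commutes with $\Gamma$ and hence displaces points a uniformly bounded hyperbolic distance; among all lifts it is the unique one with bounded displacement, so the product of the canonical lifts of the letters of a word is the canonical lift of that word's value. Writing $f^n$ as a word of length $l_{\mathcal{G}}(f^n)$ then gives $d_{\mathbb{H}^2}(\tilde{x},\tilde{f}^{n}(\tilde{x}))\leq C\, l_{\mathcal{G}}(f^n)=o(n)$, uniformly in $\tilde{x}$: the rotation set of $\tilde{f}$ is trivial. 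This rules out the ``macroscopic'' forms of nontriviality --- if some lifted orbit drifted along the axis of a hyperbolic element of $\Gamma$, as for the time-one map of an area-preserving flow with an essential periodic orbit, the displacement would grow linearly, a contradiction. Combining triviality of the rotation set with Poincar\'e recurrence and the structure theory of area-preserving surface maps confines whatever nontrivial dynamics $f$ has, away from $\mathrm{Fix}(f)$, to a disjoint union of embedded topological disks.

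The hard part, which I expect to be the main obstacle, is to exclude such a confined nontrivial Hamiltonian; here area-preservation (not merely isotopy to the identity) is indispensable. In Polterovich's approach \cite{Pol} one uses that such a disk is displaceable in $(S,area)$ and that displacement energy yields a genuinely linear lower bound for $l_{\mathcal{G}}(f^n)$ --- via fragmentation, positivity of displacement energy, and a Calabi-type homogeneous quasimorphism (Gambaudo--Ghys, Entov--Polterovich, Py) --- contradicting $l_{\mathcal{G}}(f^n)=o(n)$ unless $f=\mathrm{id}$. In Franks--Handel's approach \cite{FH} one instead feeds the confined dynamics into Brouwer theory and Poincar\'e--Birkhoff/Le Calvez forcing to produce, from any nontrivial area-preserving disk map with trivial boundary rotation, families of periodic orbits whose linking numbers are recorded by a homogeneous quasimorphism on $\mathrm{Diff}^1(S,area)$; such quasimorphisms vanish on distortion elements, so $f=\mathrm{id}$. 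Either way this closing step carries the real content --- the earlier reductions are essentially formal, whereas here one must genuinely exploit the two-dimensional rigidity of measure-preserving dynamics. Once $f=\mathrm{id}$, unwinding the powers shows the original $f$ is a torsion element.
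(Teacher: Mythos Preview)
The paper does not prove this theorem. It is quoted in the introduction as a known result, with references to Polterovich \cite{Pol} and Franks--Handel \cite{FH}, and is used only as motivation; no proof or sketch is given anywhere in the paper. So there is no ``paper's own proof'' to compare your proposal against.

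That said, your outline is a fair high-level summary of how the cited papers proceed, and the one ingredient that does overlap with the present paper is your displacement estimate on $\tilde S=\mathbb H^2$: this is exactly Proposition~\ref{distorsionimpliquecroissance} here (Franks--Handel's Lemma~6.1), and your derivation of $d_{\mathbb H^2}(\tilde x,\tilde f^n(\tilde x))=o(n)$ matches that proof. One genuine soft spot in your write-up is the passage from ``$f$ is distorted in $\mathrm{Diff}^1(S,\mathrm{area})$'' to ``$f^k$ is distorted in a finitely generated subgroup of $\mathrm{Diff}^1_0(S,\mathrm{area})$'': the witnessing generating set $\mathcal G$ need not lie in the identity component, and $G\cap\mathrm{Diff}_0$ need not be finitely generated, so the parenthetical you set aside is not pure bookkeeping. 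The usual fix is to argue directly with the canonical lifts of the elements of $\mathcal G$ (as you in fact do for the displacement bound), or to note that $\mathrm{Diff}_0$ has finite index image in the relevant quotient, rather than to literally restrict the subgroup.
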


As nilpotent groups and $\mathrm{SL}_{n}(\mathbb{Z})$ have some non-torsion distortion elements, they are not subgroups of the group $\mathrm{Diff}^{1}(S,area)$. In the latter case, using a property of  almost simplicity of the group $\mathrm{SL}_{n}(\mathbb{Z})$, one can see even that a group morphism from the group $\mathrm{SL}_{n}(\mathbb{Z})$ to the group $\mathrm{Diff}^{1}(S,area)$ is "almost" trivial (its image is a finite group). Franks and Handel proved actually a more general result on distorsion elements in the case where the measure $area$ is any borelian probability measure which allows them to prove that this last statement is true for any measure $area$ with infinite support. They also obtain similar results in the cases of the torus and of the sphere. A natural question now is to wonder whether these theorems can be generalized in the case of more general diffeomorphisms or homeomorphisms groups (with no area-preservation hypothesis).

Unfortunately, one may find lots of distorted elements in those cases. The most striking example of this phenomenon is the following theorem by Calegari and Freedman:

\begin{theo}{(Calegari-Freedman \cite{CF})}
For an integer $d \geq 1$, every homeomorphism in the group $\mathrm{Homeo}_{0}(\mathbb{S}^{d})$ is distorted.
\end{theo}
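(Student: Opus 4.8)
The plan is: for a given $g \in \mathrm{Homeo}_{0}(\mathbb{S}^{d})$, exhibit a finitely generated subgroup $G \leq \mathrm{Homeo}_{0}(\mathbb{S}^{d})$ containing $g$ with $l_{G}(g^{n}) = o(n)$. The first step is a fragmentation into ball-supported pieces. If $g$ fixes a point $p$, then $g$ is supported in the ball $\mathbb{S}^{d} \setminus \{p\}$. A general $g$ can be split as $g = g_{1}g_{2}$ with each factor having a fixed point: pick $p$, put $p' = g^{-1}(p)$, choose $g_{2} \in \mathrm{Homeo}_{0}(\mathbb{S}^{d})$ fixing some $q \notin \{p,p'\}$ and with $g_{2}(p') = p$ (take $g_{2}$ supported in a small ball around $\{p,p'\}$), and set $g_{1} = gg_{2}^{-1}$, which then fixes $p$. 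One must be careful here, because the distortion elements of a group need not form a subgroup: it is not enough to distort $g_{1}$ and $g_{2}$ separately. So the construction below has to be performed inside a single finitely generated group in which $l_{G}((g_{1}g_{2})^{n})$ itself is controlled, with the auxiliary homeomorphisms attached to $g_{1}$ and to $g_{2}$ chosen compatibly. This step cannot be avoided: for instance an irrational rotation of $\mathbb{S}^{1}$ has no fixed point at all, so it is reached only through such a product.

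The core assertion is then: a homeomorphism $g$ supported in a ball $B$ is a distortion element. Model $\mathbb{S}^{d}$ as $\mathbb{R}^{d} \cup \{\infty\}$ with $B$ a small round ball, and introduce a \emph{shift} $a$, a translation in the chart (extended so as to fix $\infty$) chosen large enough that the beads $B_{j} := a^{j}(B)$, $j \geq 0$, are pairwise disjoint; they accumulate only at $\infty = \mathrm{Fix}(a)$. Since the successive factors have shrinking displacements and pairwise disjoint supports, the \emph{infinite swindle} element $\Phi := \prod_{j \geq 0} a^{j} g a^{-j}$ is a well-defined homeomorphism of $\mathbb{S}^{d}$, supported in the ball $\overline{\bigcup_{j} B_{j}}$, and all its factors commute; deleting the $j = 0$ term gives $a\Phi a^{-1} = g^{-1}\Phi$, whence $g = [\Phi, a]$ and, taking powers of commuting factors, $g^{n} = [\Phi^{n}, a]$. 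I would then add a \emph{renormalization} $\delta$: taking $\delta$ to be a "dilation by $2$ in the translation direction, equal to the identity on $B$" produces $\delta \in \mathrm{Homeo}_{0}(\mathbb{S}^{d})$ with $\delta a \delta^{-1} = a^{2}$, $\delta(B_{j}) = B_{2j}$ and $\delta g \delta^{-1} = g$. Thus $\langle a, \delta \rangle$ contains a copy of $BS(1,2)$, so $a^{2^{k}} = \delta^{-k} a \delta^{k}$ and $l(a^{m}) = O(\log m)$ — the bead index can be moved at logarithmic cost — and $\delta$ carries the copy of $g$ in $B_{j}$ to the copy in $B_{2j}$, so that the necklace splits into its even and odd halves: $\Phi = (\delta\Phi\delta^{-1})\,(a\,\delta\Phi\delta^{-1}\,a^{-1})$, the two factors commuting.

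The remaining, and hardest, step is to leverage this $BS(1,2)$ symmetry into a genuinely sublinear bound for $l(g^{n})$. What I want is that $\Phi^{n}$ be conjugate, by a word of length $O(1)$ in $a$ and $\delta$, to $\Phi^{\lfloor n/2 \rfloor}$ up to a correction of bounded length; together with $l(a^{m}) = O(\log m)$ and $g^{n} = [\Phi^{n},a]$ this would give $l(g^{n}) \leq l(\Phi^{n}) + O(1) \leq l(\Phi^{\lceil n/2\rceil}) + O(1) \leq \dots = O(\log n)$, hence distortion. The danger — and the reason this is the main obstacle — is that the even/odd splitting used naively only yields the recursion $l(g^{n}) \leq 2\, l(g^{\lceil n/2 \rceil}) + O(1)$, which is still linear: $\Phi$ naively appears as a product of two rescaled copies of itself, not of one. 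Each renormalization step must instead be arranged to cost a bounded \emph{additive} amount, i.e. the disjoint-bead configuration must be set up so that $\Phi$ is recovered from a single rescaled copy of itself plus bounded data; and this must be done simultaneously for the two factors $g_{1},g_{2}$ of a general $g$, inside one finitely generated group. Granting this, $l_{G}(g^{n}) = O(\log n)$ when $g$ is supported in a ball and $l_{G}(g^{n}) = o(n)$ for arbitrary $g \in \mathrm{Homeo}_{0}(\mathbb{S}^{d})$, so every such $g$ is a distortion element; the whole argument is uniform in $d$.
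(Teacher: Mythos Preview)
Your setup through the identity $g^{n}=[\Phi^{n},a]$ is correct, and you are right to flag the final step as the obstacle --- but it is a genuine gap, not a technicality. The self-similarity $\Phi=(\delta\Phi\delta^{-1})(a\,\delta\Phi\delta^{-1}\,a^{-1})$ relates $\Phi^{n}$ only to itself: on every bead $B_{j}$ the map $\Phi^{n}$ acts by $g^{n}$ while $\Phi^{\lfloor n/2\rfloor}$ acts by $g^{\lfloor n/2\rfloor}$, and conjugation by any word in $a,\delta$ merely permutes beads, so there is no way to pass from $\Phi^{n}$ to $\Phi^{\lfloor n/2\rfloor}$ up to a bounded correction. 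The $BS(1,2)$ structure gives logarithmic control of the \emph{bead index}, but the exponent of $g$ sitting on each bead stays equal to $n$ throughout, and nothing in your construction halves it. The only recursion one actually obtains from the splitting is the vacuous $l(\Phi^{n})\le 2\,l(\Phi^{n})+O(1)$; your hoped-for relation ``$\Phi^{n}$ conjugate to $\Phi^{\lfloor n/2\rfloor}$ up to bounded error'' simply does not hold.

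The paper's argument is structurally different. It first invokes the annulus theorem (Kirby, Quinn) to get $\mathrm{Frag}_{\mathcal{U}}(h)\le 3$ for every $h\in\mathrm{Homeo}_{0}(\mathbb{S}^{d})$, with $\mathcal{U}$ the cover by two hemisphere neighbourhoods; in particular $\mathrm{Frag}_{\mathcal{U}}(f^{n})$ is bounded independently of $n$, and Theorem~\ref{fragdistth} then gives distortion. The engine behind that theorem is Lemma~\ref{Avila}: given \emph{any} sequence $(f_{n})$ of ball-supported homeomorphisms, there is a finite $\mathcal{G}$ with $l_{\mathcal{G}}(f_{n})\le 14\log n+14$. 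The key device there is to replace your single shift $a$ by a \emph{pair} $s_{1},s_{2}$ generating a free semigroup, so that the beads are addressed by binary words rather than by $\mathbb{Z}_{\ge 0}$; one then stores the commutator data for each $f_{n}$ on the bead whose address is the $n$-th binary word (of length $\lfloor\log_{2}n\rfloor$) and reads it back with a word of that length in $s_{1},s_{2}$. This binary-tree addressing is exactly what your one-dimensional $BS(1,2)$ scheme cannot supply.
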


In the case of a higher regularity, Avila proved in \cite{Avi} that any diffeomorphism in $\mathrm{Diff}_{0}^{\infty}(\mathrm{S}^{1})$ for which arbitrarily large iterates are arbitrarily close to the identity in the $C^{\infty}$-topology (such an element will be said to be \emph{recurrent}) is distorted in the group $\mathrm{Diff}_{0}^{\infty}(\mathrm{S}^{1})$: for instance, the irrational rotations are distorted. Using Avila's techniques and a local perfection result (due to Haller, Rybicki and Teichmann \cite{HT}), I obtained the following result (see \cite{Mil}):

\begin{thm}
For any compact manifold $M$ without boundary, any recurrent element in $\mathrm{Diff}_{0}^{\infty}(M)$ is distorted in this group.
\end{thm}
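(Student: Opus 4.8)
\emph{Reduction.} The plan is to reduce the statement, by means of the recurrence hypothesis and the subadditivity of word length, to a \emph{uniform} bound on the word length of those iterates of $f$ that are $C^{\infty}$-close to the identity, and then to obtain such a bound by transplanting Avila's acceleration construction from \cite{Avi} to the manifold $M$, the passage from the circle to $M$ being supplied by the local perfection theorem of Haller--Rybicki--Teichmann \cite{HT}. Concretely, I would fix a finite set $\mathcal{G} \subset \mathrm{Diff}_{0}^{\infty}(M)$ containing $f$ (to be pinned down during the construction) and aim to show $l_{\mathcal{G}}(f^{n})/n \to 0$, where $l_{\mathcal{G}}$ is word length in the subgroup generated by $\mathcal{G}$. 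For integers $n$ and $m > 0$, writing $n = qm + s$ with $0 \le s < m$ gives
\begin{equation*}
l_{\mathcal{G}}(f^{n}) \;\le\; \frac{n}{m}\, l_{\mathcal{G}}(f^{m}) + \max_{0 \le s < m} l_{\mathcal{G}}(f^{s}),
\end{equation*}
so that $\limsup_{n} l_{\mathcal{G}}(f^{n})/n \le l_{\mathcal{G}}(f^{m})/m$ for every $m$. Hence it suffices to produce a sequence $n_{k} \to +\infty$ with $l_{\mathcal{G}}(f^{n_{k}}) = o(n_{k})$; in fact keeping $l_{\mathcal{G}}(f^{n_{k}})$ bounded would do. Since $f$ is recurrent, I am free to choose the times $n_{k}$ so that $f^{n_{k}} \to \mathrm{id}$ in the $C^{\infty}$-topology as fast as needed, the required speed being fixed by the estimates in the last step.

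\emph{From $M$ to balls.} Next I would fix a finite cover of $M$ by embedded balls $U_{1}, \dots, U_{C_{0}}$ and combine the classical fragmentation lemma with \cite{HT}: any diffeomorphism sufficiently $C^{\infty}$-close to the identity is a product of at most $C_{1}$ commutators $[a_{i}, b_{i}]$ with each $a_{i}, b_{i}$ supported in one of the $U_{j}$ and $C^{\infty}$-close to the identity, where --- and this is precisely what bare perfectness does not give and what the distortion estimate needs --- $C_{1}$ is \emph{independent} of the diffeomorphism on a whole neighbourhood of the identity. Applying this to each $f^{n_{k}}$, it then suffices to bound the word length of a single $C^{\infty}$-small diffeomorphism supported in a fixed ball, with its smallness as strong as we like.

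\emph{Acceleration.} For such a ball-supported diffeomorphism I would run an ``infinite telescoping'' in the spirit of the Baumslag--Solitar relation $b^{n}ab^{-n} = a^{p^{n}}$, following Avila \cite{Avi}: pick pairwise disjoint balls $B_{1}, B_{2}, \dots$ inside the relevant $U_{j}$ with $\mathrm{diam}(B_{k}) \to 0$, accumulating at a single point, together with charts $\iota_{k}$ identifying a fixed model ball with $B_{k}$; build one auxiliary diffeomorphism $H$ equal on each $B_{k}$ to the $\iota_{k}$-transported copy of the $k$-th datum and to the identity elsewhere; and exhibit a fixed finite family of ``shift'' diffeomorphisms carrying $B_{k}$ to $B_{k+1}$ compatibly with the $\iota_{k}$. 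Conjugating $H$ by these shifts and peeling off one block at a time --- using that the difference of two recurrence times is again a near-recurrence time, so that the ``increments'' feed back into a construction of the same type --- one would express the $\iota_{1}$-copy of each required datum, hence (after reassembling via finitely many fixed diffeomorphisms implementing the fragmentation) each $f^{n_{k}}$, as a word of length $O(1)$ in $H$, the shifts, the reassembling maps and $f$ itself. Taking $\mathcal{G}$ to be the finite collection of all these diffeomorphisms together with $f$, and combining with the Reduction step, would complete the proof.

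\emph{Main obstacle.} The hard part --- and the reason full $C^{\infty}$ regularity rather than some finite $C^{r}$ is essential --- is to ensure that $H$ really is a $C^{\infty}$ diffeomorphism of $M$. Transporting a near-identity diffeomorphism into a ball of diameter $\delta$ through a chart contracting by a factor of order $\delta$ amplifies its $C^{r}$-norm by a factor of order $\delta^{-(r-1)}$ (and worse once commutators and compositions enter), so convergence of the infinite product in \emph{every} $C^{r}$ forces, for each fixed $r$, the relevant $C^{r}$-norms of the $k$-th datum to decay faster than any fixed power of $\mathrm{diam}(B_{k})$. Reconciling this with the choice of the radii $\mathrm{diam}(B_{k})$, the speed of the recurrence $f^{n_{k}} \to \mathrm{id}$, and the demand that the shift and reassembling diffeomorphisms be smooth on all of $M$ --- in particular at the accumulation point of the $B_{k}$ --- is the quantitative core of the argument.
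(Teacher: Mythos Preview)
The paper does not actually prove this theorem here: it is quoted in the introduction as a result from the author's earlier work \cite{Mil}, with only the indication that the argument combines ``Avila's techniques'' with the local perfection theorem of Haller--Rybicki--Teichmann \cite{HT}. Your outline is exactly that strategy --- subadditive reduction to a recurrence subsequence, uniform fragmentation of near-identity diffeomorphisms into a bounded number of ball-supported commutators via \cite{HT}, and then an Avila-type ``storage'' construction --- and is the correct architecture. The present paper does contain a $C^{0}$ analogue of the storage step (the lemma in Section~4 asserting that any sequence $(f_{n})$ of homeomorphisms supported in the unit ball lies in a finitely generated group with $l_{\mathcal{G}}(f_{n})\le 14\log_{2}(n)+14$), and remarks explicitly just afterwards that this version fails in $C^{r}$ for $r\ge 1$; your ``Main obstacle'' paragraph correctly isolates the point at which the $C^{\infty}$ hypothesis does the real work.

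One overclaim to flag: the acceleration step does \emph{not} yield words of length $O(1)$. Whether one navigates with a single shift $B_{k}\mapsto B_{k+1}$ or with the binary-tree scheme of Section~4, extracting the $k$-th stored datum costs a word whose length grows with $k$ (linearly in the first case, logarithmically in the second). This is harmless for the conclusion --- taking the $n_{k}$ strictly increasing forces $n_{k}\ge k$, so even $l_{\mathcal{G}}(f^{n_{k}})=O(k)$ already gives $l_{\mathcal{G}}(f^{n_{k}})/n_{k}\to 0$ and your subadditive reduction finishes --- but the bounded-length claim as stated is not what the construction delivers.
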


For instance, irrational rotations of the $2$ dimensional sphere or translations of the $d$-dimensional torus are distorted. More generally, there exist non-trivial distortion elements in the group of $C^{\infty}$-diffeomorphism of any manifold which admits a non-trivial $C^{\infty}$ circle action. Notice that, thanks to the Anosov-Katok method (see \cite{Her} and \cite{FaH}), we can build recurrent elements in the case of the sphere or of the $2$-dimensional torus which are not conjugate to a rotation. Anyway, we could not hope for a result analogous to the theorem by Polterovich and Franks and Handel, at least in the $C^{1}$ category, as we will see that the Baumslag-Solitar group $BS(1,2)$ embeds in the group $\mathrm{Diff}_{0}^{1}(M)$ for any manifold $M$ (this was indicated to me by Isabelle Liousse).

Identify the circle $\mathbb{S}^{1}$ with $\mathbb{R} \cup \left\{ \infty \right\}$. Then consider the (analytical) circle diffeomorphisms $a: x \mapsto x+1$ and $b:x \mapsto 2x$. The relation $bab^{-1}=a^{2}$ is satisfied and, therefore, the two elements $a$ and $b$ define an action of the group $BS(1,2)$ on the circle. By thickening the point at infinity (\emph{i.e.} by replacing the point at infinity with an interval), we have a compactly-supported action of our group on $\mathbb{R}$. This last action can be made $C^{1}$. Finally, by a radial action, we have a compactly-supported $C^{1}$ action of this Baumslag-Solitar group on $\mathbb{R}^{d}$ and, by identifying an open disc of a manifold with $\mathbb{R}^{d}$, we get an action of the Baumslag-Solitar group on any manifold. This gives some non-recurrent distortion elements in the group $\mathrm{Diff}_{0}^{1}(M)$ for any manifold $M$. In the case of  diffeomorphisms, it is difficult to approach a characterization of distortion element as there are many obstructions to being a distortion element (for instance, the differential cannot grow too fast along an orbit, the topological entropy of the diffeomorphism must vanish). On the contrary, in the groups of surface homeomorphisms, there is only one known obstruction to being a distortion element. We will describe it in the next section.

In this article, we will try to characterize geometrically the set of distortion elements in the group of homeomorphisms isotopic to the identity of a compact orientable surface. The theorem we obtain is a consequence of a result valid on any manifold and proved in the fourth section. This last result has a major drawback: it uses the fragmentation length which is not well understood except in the case of spheres. Thus, we will try to connect this fragmentation length to a more geometric quantity: the diameter of the image of a fundamental domain under a lift of the given homeomorphism. It is not difficult to prove that the fragmentation length dominates this last quantity: this will be treated in the third section of this article. However, conversely, it is more difficult to show that this last quantity dominates the fragmentation length. In order to prove this, we will make a distinction between the case of surfaces with boundary (Section 5), which is the easiest, the case of the torus (Section 6) and the case of higher genus closed manifolds (Section 7). The last section of this article gives examples of distortion elements in the group of homeomorphisms of the annulus for which the growth of the diameter of a fundamental domain is "fast".

\section{Notations and results}

Let $M$ be a manifold, possibly with boundary. We denote by $\mathrm{Homeo}_{0}(M)$ (respectively $\mathrm{Homeo}_{0}(M, \partial M)$) the identity component of the group of compactly-supported homeomorphisms of $M$ (respectively of the group of homeomorphisms of $M$ which pointwise fix a neighbourhood of the boundary $\partial M$ of $M$). Given two homeomorphisms $f$ and $g$ of $M$ and a subset $A$ of $M$, an \emph{isotopy} between $f$ and $g$ relative to $A$ is a continuous path of homeomorphisms $(f_{t})_{t \in [0,1]}$ which pointwise fix $A$ such that $f_{0}=f$ and $f_{1}=g$. If $A$ is the empty set, it is called an isotopy between $f$ and $g$.

In what follows, $S$ is a compact orientable surface, possibly with boundary, different from the disc and from the sphere. We denote by $\Pi : \tilde{S} \rightarrow S$ the universal cover of $S$. The surface $\tilde{S}$ is seen as a subset of the euclidean plane $\mathbb{R}^{2}$ or of the hyperbolic plane $\mathbb{H}^{2}$ so that deck transformations are isometries for the euclidean metric or the hyperbolic metric. We endow the surface $\tilde{S}$ with this metric. In what follows, we identify the fundamental group $\Pi_{1}(S)$ of the surface $S$ with the group of deck transformations of the covering $\Pi : \tilde{S} \rightarrow S$. If $A$ is a subset of the hyperbolic plane $\mathbb{H}^{2}$ (respectively of the euclidean plane $\mathbb{R}^{2}$), we denote by $\delta(A)$ the diameter of $A$ for the hyperbolic distance (respectively the euclidean distance).

For a homeomorphism $f$ of $S$, a \emph{lift} of $f$ is a homeomorphism $F$ of $\tilde{S}$ which satisfies $\Pi \circ F=f \circ \Pi$. For an isotopy $(f_{t})_{t \in [0,1]}$, a lift of $(f_{t})_{t \in [0,1]}$ is a continuous path $(F_{t})_{t \in [0,1]}$ of homeomorphisms of $\tilde{S}$ such that, for any $t$, the homeomorphism $F_{t}$ is a lift of the homeomorphism $f_{t}$. For a homeomorphism $f$ in $\mathrm{Homeo}_{0}(S)$, we denote by $\tilde{f}$ a lift of $f$ obtained as the time $1$ of a lift of an isotopy between the identity and $f$ which is equal to the identity for $t=0$. If moreover the boundary of $S$ is non-empty and the homeomorphism $f$ is in $\mathrm{Homeo}_{0}(S, \partial S)$, the homeomorphism $\tilde{f}$ is obtained by lifting an isotopy relative to the boundary $\partial S$. If there exists a disc $\mathbb{D}^{2}$ embedded in the surface $S$ which contains the support of the homeomorphism $f$, we require moreover that the support of $\tilde{f}$ is contained in $\Pi^{-1}(\mathbb{D}^{2})$.
Notice that the homeomorphism $\tilde{f}$ is unique except in the cases of the groups $\mathrm{Homeo}_{0}(\mathbb{T}^{2})$ and $\mathrm{Homeo}_{0}([0,1]\times \mathbb{S}^{1})$. This last claim is a consequence of a theorem by Hamstrom (see \cite{Ham}): if $S$ is a surface without boundary of genus greater than $1$, then the topological space $\mathrm{Homeo}_{0}(S)$ is simply connected. Moreover, if $S$ is a surface with a nonempty boundary, the topological space $\mathrm{Homeo}_{0}(S, \partial S)$ is simply connected. Finally, let us prove the claim in the case of an element $f$ in $\mathrm{Homeo}_{0}(S)$. Take two lifts $F_{1}$ and $F_{2}$ of $f$ to $\tilde{S}$ as above. Then the double of $F_{1}$, which is the homeomorphism on the double of $\tilde{S}$ canonically defined by $F_{1}$, is equal to the double of $F_{2}$ by Hamstrom's theorem. This proves the claim.

Notice that, for any deck transformation $\gamma \in \Pi_{1}(S)$, and any homeomorphism $f$ in $\mathrm{Homeo}_{0}(S)$, $\gamma \circ \tilde{f}= \tilde{f} \circ \gamma$. This is true when the surface $S$ is the torus or the closed annulus and, in the other cases, the map $\gamma \circ \tilde{f}$ as well as the map $\tilde{f} \circ \gamma$ is the time $1$ of a lift of an isotopy between the identity and $f$ which is equal to $\gamma$ for $t=0$.

\begin{definition}
We call \emph{fundamental domain} of $S$ any compact connected subset $D$ of $\tilde{S}$ which satisfies the following properties:
\begin{enumerate}
\item $\Pi(D)=S$.
\item For any deck transformation $\gamma$ in $\Pi_{1}(S)$ different from the identity, the interior of $D$ is disjoint from the interior of $\gamma(D)$. 
\end{enumerate}
\end{definition}

The main theorem of this article is a partial converse to the following property (observed by Franks and Handel in \cite{FH}, Lemma 6.1):

\begin{proposition} \label{distorsionimpliquecroissance}
Denote by $D$ a fundamental domain of $\tilde{S}$ for the action of $\Pi_{1}(S)$.\\
If a homeomorphism $f$ in $\mathrm{Homeo}_{0}(S)$ (respectively in $\mathrm{Homeo}_{0}(S, \partial S)$) is a distortion element of $\mathrm{Homeo}_{0}(S)$ (respectively of $\mathrm{Homeo}_{0}(S, \partial S)$), then:
$$ \lim_{n \rightarrow + \infty} \frac{\delta(\tilde{f}^{n}(D))}{n}=0.$$
\end{proposition}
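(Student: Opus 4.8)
The plan is to unwind the distortion hypothesis into a word in a finite set of homeomorphisms, lift that word to $\tilde{S}$, and control how far the resulting composition can push the fundamental domain $D$, using crucially that deck transformations are isometries of $\tilde{S}$ and commute with the chosen lifts.

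\emph{Setup.} Since $f$ is a distortion element, it lies in a finitely generated subgroup $G$ of $\mathrm{Homeo}_{0}(S)$ (resp. of $\mathrm{Homeo}_{0}(S,\partial S)$) admitting a finite generating set $\mathcal{G}$ with $l_{\mathcal{G}}(f^{n})/n \rightarrow 0$. For each $n$, write $f^{n}=s_{1}^{\epsilon_{1}}\cdots s_{k}^{\epsilon_{k}}$ with $k=l_{\mathcal{G}}(f^{n})$, each $s_{i}\in\mathcal{G}$ and $\epsilon_{i}=\pm 1$. For $s\in\mathcal{G}$ fix its canonical lift $\tilde{s}$; then $(\tilde{s})^{-1}$ is a canonical lift of $s^{-1}$, and each of these finitely many lifts commutes with every deck transformation. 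A composition of lifts is a lift of the composition, so $\tilde{s}_{1}^{\epsilon_{1}}\cdots\tilde{s}_{k}^{\epsilon_{k}}$ is a lift of $f^{n}$; since any two lifts of a given homeomorphism of $S$ differ by a deck transformation, there is $\gamma_{0}\in\Pi_{1}(S)$ with $\tilde{f}^{n}=\gamma_{0}\,\tilde{s}_{1}^{\epsilon_{1}}\cdots\tilde{s}_{k}^{\epsilon_{k}}$, and, $\gamma_{0}$ being an isometry, $\delta(\tilde{f}^{n}(D))=\delta(\tilde{s}_{1}^{\epsilon_{1}}\cdots\tilde{s}_{k}^{\epsilon_{k}}(D))$. (This takes care of the non-uniqueness in the cases of $\mathbb{T}^{2}$ and $[0,1]\times\mathbb{S}^{1}$.)

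\emph{A combinatorial estimate.} Let $\mathcal{S}_{0}=\{\gamma\in\Pi_{1}(S)\setminus\{e\} : \gamma(D)\cap D\neq\emptyset\}$; this is a finite symmetric generating set of $\Pi_{1}(S)$ (a classical consequence of the connectedness of $D$ and of $\tilde{S}$), and we write $|\cdot|$ for the associated word length. For each $s\in\mathcal{G}\cup\mathcal{G}^{-1}$, the set $\tilde{s}(D)$ is compact, hence meets only finitely many translates $\gamma(D)$, so there is $C_{s}$ with $\tilde{s}(D)\subseteq\bigcup_{|\gamma|\leq C_{s}}\gamma(D)$; set $C=\max_{s}C_{s}$. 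Using $\tilde{s}\gamma=\gamma\tilde{s}$, for every $\gamma\in\Pi_{1}(S)$ we get $\tilde{s}(\gamma(D))=\gamma\,\tilde{s}(D)\subseteq\bigcup_{|\gamma'|\leq C}\gamma\gamma'(D)$. A straightforward induction on $k$ (peeling off one generator at a time, and using $|\gamma\gamma'|\leq|\gamma|+|\gamma'|$) then gives $\tilde{s}_{1}^{\epsilon_{1}}\cdots\tilde{s}_{k}^{\epsilon_{k}}(D)\subseteq\bigcup_{|\gamma|\leq Ck}\gamma(D)$.

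\emph{Passage to the metric, and conclusion.} Fix $x_{0}\in D$ and let $M=\max_{\gamma\in\mathcal{S}_{0}}d(x_{0},\gamma x_{0})$. If $|\gamma|\leq R$, writing $\gamma$ as a word of length $\leq R$ in $\mathcal{S}_{0}$ and applying the triangle inequality together with the fact that deck transformations are isometries yields $d(x_{0},\gamma x_{0})\leq MR$; hence $\gamma(D)$, whose diameter equals $\delta(D)$, is contained in the ball $B(x_{0},MR+\delta(D))$. Therefore $\bigcup_{|\gamma|\leq Ck}\gamma(D)$ has diameter at most $2MCk+2\delta(D)$, so $\delta(\tilde{f}^{n}(D))\leq 2MC\,l_{\mathcal{G}}(f^{n})+2\delta(D)$; dividing by $n$ and letting $n\rightarrow+\infty$ gives the claim. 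The argument has no essential difficulty: the only points requiring care are the bookkeeping ones isolated above — that $\tilde{f}^{n}$ differs from the word in the chosen lifts by a deck transformation, and the classical facts that $\mathcal{S}_{0}$ is finite and generates $\Pi_{1}(S)$ and that the family $\{\gamma(D)\}_{\gamma}$ is locally finite — the heart being simply that each generator displaces $D$ by a bounded combinatorial amount while the combinatorial and Riemannian metrics on $\tilde{S}$ are comparable.
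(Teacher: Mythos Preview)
Your proof is correct and follows essentially the same strategy as the paper: write $f^{n}$ as a word of length $l_{n}$ in a finite generating set, lift the word up to a deck transformation, and use commutation of the chosen lifts with deck transformations to get a linear-in-$l_{n}$ bound on $\delta(\tilde{f}^{n}(D))$. The paper's version is a bit more direct---it simply sets $\mu=\max_{i}\sup_{x\in\tilde{S}}d(x,\tilde{g}_{i}(x))$ (finite by the same commutation argument) and bounds $d(\tilde{f}^{n}(x),\tilde{f}^{n}(y))\leq 2l_{n}\mu+\delta(D)$ via the triangle inequality, bypassing your combinatorial detour through the word length on $\Pi_{1}(S)$ and the \v{S}varc--Milnor comparison.
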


\noindent \textbf{Remark} In the case where the surface considered is the torus $\mathbb{T}^{2}$ or the annulus $[0,1]\times \mathbb{S}^{1}$, the conclusion of this proposition is equivalent to saying that the rotation set of $f$ is reduced to a single point (see \cite{MZ} for a definition of the rotation set of a homeomorphism of the torus isotopic to the identity; the definition is analogous in the case of the annulus).

\begin{proof}
Let $f$ be a distortion element in $\mathrm{Homeo}_{0}(S)$ (respectively in $\mathrm{Homeo}_{0}(S, \partial S)$).
Denote by $\mathcal{G}=\left\{g_{1}, g_{2}, \ldots , g_{p} \right\}$ a finite subset of $\mathrm{Homeo}_{0}(S)$ (respectively of $\mathrm{Homeo}_{0}(S, \partial S)$) such that:
\begin{enumerate}
\item The homeomorphism $f$ belongs to the group generated by $\mathcal{G}$.
\item The sequence $(\frac{l_{\mathcal{G}}(f^{n})}{n})_{n \geq 1}$ converges to $0$.
\end{enumerate}
Then we have a decomposition of the form:
$$f^{n}=g_{i_{1}} \circ g_{i_{2}} \circ \ldots \circ g_{i_{l_{n}}}$$
where $l_{n}=l_{\mathcal{G}}(f^{n})$. This implies the following equality:
$$I \circ \tilde{f}^{n}=\tilde{g}_{i_{1}} \circ \tilde{g}_{i_{2}} \circ \ldots \circ \tilde{g}_{i_{l_{n}}}$$
where $I$ is an isometry of $\tilde{S}$. Let us take $\mu= \max_{1 \leq i \leq p, \ x \in \tilde{S}} d(x,\tilde{g}_{i}(x))$. As, for any index $i$ and any deck transformation $\gamma$ in $\Pi_{1}(S)$, $\gamma \circ \tilde{g}_{i}= \tilde{g}_{i} \circ \gamma$ and as the distance $d$ is invariant under deck transformations, $\mu$ is finite. Then, for any two points $x$ and $y$ of the fundamental domain $D$, we have:
$$\begin{array}{rcl}
d(\tilde{f}^{n}(x),\tilde{f}^{n}(y)) & = & d(I \circ \tilde{f}^{n}(x),I \circ \tilde{f}^{n}(y)) \\
 & \leq & d(I \circ \tilde{f}^{n}(x),x)+d(x,y)+d( I \circ \tilde{f}^{n}(y),y) \\
 & \leq & l_{n}\mu+ \delta(D)+l_{n}\mu
\end{array}
$$
which implies the proposition, by sublinearity of the sequence $(l_{n})_{n \geq 0}$.
\end{proof}

The main theorem of this article is the following:

\begin{theorem} \label{croissanceimpliquedistorsion}
Let $f$ be a homeomorphism in $\mathrm{Homeo}_{0}(S)$ (respectively in $\mathrm{Homeo}_{0}(S, \partial S)$). If:
$$\lim_{n \rightarrow + \infty} \frac{\delta(\tilde{f}^{n}(D))log(\delta(\tilde{f}^{n}(D)))}{n}=0,$$
then $f$ is a distortion element in $\mathrm{Homeo}_{0}(S)$ (respectively in $\mathrm{Homeo}_{0}(S, \partial S)$).
\end{theorem}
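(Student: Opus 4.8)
The plan is to route everything through the \emph{fragmentation length}. Fix once and for all a finite cover $\mathcal{D}=\{D_{1},\dots,D_{m}\}$ of $S$ by open discs embedded in $S$ (in the boundary case, discs in the interior of $S$), and for $g\in\mathrm{Homeo}_{0}(S)$ let $N(g)$ be the least $k$ such that $g=h_{1}\circ\cdots\circ h_{k}$ with each $h_{j}$ supported in one of the $D_{i}$. Put $q_{n}=N(f^{n})$. The argument then splits into two logically independent halves, and the theorem is their juxtaposition. The geometric half is the inequality
$$q_{n}\ \le\ C\,\delta(\tilde{f}^{\,n}(D))+C'$$
for constants depending only on $S$, $\mathcal{D}$ and $D$; since $t\mapsto t\log t$ is increasing, this together with the hypothesis yields $\frac1n q_{n}\log q_{n}\to 0$. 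The algebraic half is a criterion valid on an arbitrary manifold $M$: if $\frac1n N(f^{n})\log N(f^{n})\to 0$, then $f$ is a distortion element of $\mathrm{Homeo}_{0}(M)$. Granting both, Theorem~\ref{croissanceimpliquedistorsion} is immediate (and the same scheme handles $\mathrm{Homeo}_{0}(S,\partial S)$).

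For the algebraic half I would adapt the Calegari--Freedman ``portmanteau'' construction, upgraded by a Baumslag--Solitar speed-up to absorb the logarithm. One fixes inside a chart of $M$ a homeomorphism $\sigma$ whose positive iterates carry a small ball $B_{0}$ to pairwise disjoint balls accumulating at a point, together with a homeomorphism $\tau$ realising $\tau\sigma\tau^{-1}=\sigma^{2}$ (so that ``reaching the $j$-th ball of the $\sigma$-orbit'' costs $O(\log j)$ letters rather than $j$), plus finitely many homeomorphisms conjugating each $D_{i}$ into $B_{0}$. Given a fragmentation $f^{n}=h_{1}\circ\cdots\circ h_{q_{n}}$, the decisive point is that the entire ordered list of pieces, and then their composition, is assembled from a single self-similar homeomorphism built out of $f$: this homeomorphism is distorted because $\sigma$ (or $\tau$) conjugates it to essentially its own square — the source of an $O(\log n)$ term — while spreading the $q_{n}$ pieces along the $\sigma$-orbit and recombining them costs $\sum_{j\le q_{n}}O(\log j)=O(q_{n}\log q_{n})$. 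One thus produces a \emph{fixed} finite generating set $\mathcal{G}$ (comprising $f$, $\sigma$, $\tau$ and the conjugators) with $l_{\mathcal{G}}(f^{n})=O(q_{n}\log q_{n})+O(\log n)+O(1)=o(n)$, i.e.\ $f$ is distorted in $\langle\mathcal{G}\rangle$.

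I expect the geometric half to be the main obstacle, and I would treat it case by case. The easy reverse inequality $\delta_{n}\le C\,q_{n}+C'$ (each disc-supported generator displaces the fundamental domain a bounded amount, exactly as in the proof of Proposition~\ref{distorsionimpliquecroissance}) is only needed to see that the theorem is a genuine partial converse; the implication itself needs $q_{n}\le C\,\delta_{n}+C'$, that an iterate whose lifted fundamental domain has small diameter can be cut into few disc-supported pieces, and note that this must be essentially \emph{linear} in $\delta_{n}$ for the bookkeeping to close. When $\partial S\neq\emptyset$ the universal cover is a convex region of $\mathbb{R}^{2}$ or $\mathbb{H}^{2}$ with enough room to ``peel'' $f^{n}$ into $O(\delta_{n})$ homeomorphisms of bounded displacement, each fragmenting into boundedly many disc-supported maps. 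The torus is similar once the drift of $\tilde{f}^{\,n}$ in $\mathbb{R}^{2}$ is controlled (equivalently, once the rotation set is understood). The closed genus $\ge 2$ case is the hardest: here $\tilde{S}=\mathbb{H}^{2}$, the deck group consists of hyperbolic isometries with exponential orbit growth, and one must marry a controlled cellular/Dehn-type decomposition of a neighbourhood of $\tilde{f}^{\,n}(D)$ with the combinatorics of which fundamental domains that set meets — the subtlety being that this count, rather than a raw hyperbolic area, is what must be kept linear in $\delta_{n}$.
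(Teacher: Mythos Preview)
Your two-step architecture---bound the fragmentation length linearly by $\delta(\tilde f^{\,n}(D))$, then show that $\frac1n\mathrm{Frag}(f^n)\log\mathrm{Frag}(f^n)\to 0$ forces distortion---is exactly the paper's (Theorems~\ref{quasiisom}/\ref{quasiisom2} for the geometric half, Theorem~\ref{fragdistth} for the algebraic half), and your case split for the geometric inequality (boundary, torus, higher genus) matches Sections~5--7.

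Your sketch of the algebraic half, though, is garbled in a way that matters. The portmanteau homeomorphism is not ``built out of $f$'': it has to store the \emph{fragmentation pieces} $h_{j,n}$ of every $f^n$, and these are choices, not determined by $f$. So the fixed finite generating set must contain this portmanteau (or rather, as in the paper, a pair of them via the commutator trick of Lemma~\ref{commutateur}), not merely $f,\sigma,\tau$ and conjugators. Second, the logarithmic saving is not that the portmanteau is ``conjugate to its own square''; it is that the word reaching the $j$-th storage ball has length $O(\log j)$. The paper achieves this with a free semigroup on two generators (Lemma~\ref{groupelibre}) rather than a Baumslag--Solitar relation---your variant could be made to work, but the self-similarity lives in the indexing maps, not in the portmanteau. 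Third, your accounting $\sum_{j\le q_n}O(\log j)=O(q_n\log q_n)$ is wrong as stated: since one fixed generating set must serve all $n$, the pieces carry \emph{global} indices up to $\sum_{m\le n}q_m$, so the honest bound is $q_n\cdot O\bigl(\log\sum_{m\le n}q_m\bigr)=O(q_n\log n)$. This is still $o(n)$ under the hypothesis (split into $q_n\ge\sqrt n$ and $q_n<\sqrt n$), but the paper instead handles the bookkeeping by passing to a carefully chosen subsequence (see the construction of $\sigma$ in the proof of Proposition~\ref{fragdist}).
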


\noindent \textbf{Remark} The property $\lim_{n \rightarrow + \infty} \frac{\delta(\tilde{f}^{n}(D))log(\delta(\tilde{f}^{n}(D)))}{n}=0$ is independent of the chosen fundamental domain $D$, as we will see in the next section. Thus, it is invariant under conjugation.

The proof of this theorem occupies the next five sections. Let us now introduce a new notion in order to complete this proof.

Let $M$ be a compact $d$-dimensional manifold. We will call closed ball of $M$ the image of the closed unit ball under an embedding from $\mathbb{R}^{d}$ to the manifold $M$. Let: 
$$H^{d}=\left\{ (x_{1},x_{2}, \ldots, x_{d}) \in \mathbb{R}^{N}, x_{1} \geq 0 \right\}.$$
We will call closed half-ball of $M$ the image of $B(0,1)\cap H^{d}$ under an embedding $p:H^{d} \rightarrow M$ such that:
$$p(\partial H^{d})=p(H^{d}) \cap \partial M.$$
 Let us fix a finite family $\mathcal{U}$ of closed balls or closed half-balls whose interiors cover $M$. Then, by the fragmentation lemma (see \cite{Fis} or \cite{Bou}), there exists a finite family $(f_{i})_{1 \leq i \leq n}$ of homeomorphisms in $\mathrm{Homeo}_{0}(M)$, each supported in one of the sets of $\mathcal{U}$, such that:
$$f=f_{1} \circ f_{2} \circ \ldots \circ f_{n}.$$
We denote by $\mathrm{Frag}_{\mathcal{U}}(f)$ the minimal integer $n$ in such a decomposition: it is the minimal number of factors necessary to write $f$ as a product (\emph{i.e.} composition) of homeomorphisms supported each in one of the balls of $\mathcal{U}$.

Let us come back to the case of a compact surface $S$ and denote by $\mathcal{U}$ a finite family of closed discs or of closed half-discs whose interiors cover $S$. Denote by $D$ a fundamental domain of $\tilde{S}$ for the action of $\Pi_{1}(S)$. We now describe the two steps of the proof of Theorem \ref{croissanceimpliquedistorsion}. The first step of the proof consists of checking that the quantity $\mathrm{Frag}_{\mathcal{U}}(f)$ is almost equal to $\delta(\tilde{f}(D))$:

\begin{theorem} \label{quasiisom}
There exist two real constants $C>0$ and $C'$ such that, for any homeomorphism $g$ in $\mathrm{Homeo}_{0}(S)$:
$$ \frac{1}{C}\delta(\tilde{g}(D))-C' \leq \mathrm{Frag}_{\mathcal{U}}(g) \leq C\delta(\tilde{g}(D))+C'.$$
\end{theorem}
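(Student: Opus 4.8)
The plan is to prove the two inequalities separately, the lower bound on $\mathrm{Frag}_{\mathcal{U}}(g)$ being the easier half (and essentially the content of the ``only if'' direction of Proposition \ref{distorsionimpliquecroissance}, specialised to a single homeomorphism rather than an iterate). First I would fix once and for all a fundamental domain $D$ and the covering family $\mathcal{U}$, and I would choose for each $U \in \mathcal{U}$ a lift $\tilde{U}$ of $U$ to $\tilde{S}$, together with the finite set of deck transformations $\gamma$ such that $\gamma(\tilde{U})$ meets $D$. For the \textbf{lower bound}, suppose $g = f_1 \circ \cdots \circ f_n$ with each $f_i$ supported in some ball $U_i \in \mathcal{U}$ and $n = \mathrm{Frag}_{\mathcal{U}}(g)$. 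Each $f_i$ admits a canonical lift $\tilde{f}_i$ supported in $\Pi^{-1}(U_i)$, and one has $\tilde{g} = \tilde{f}_1 \circ \cdots \circ \tilde{f}_n$ up to composition with a deck transformation (or exactly, by our normalisation conventions). Since $\tilde{f}_i$ is supported in the disjoint union $\bigsqcup_\gamma \gamma(\tilde{U}_i)$ and permutes these lifted balls trivially (it fixes each one, being the lift of a homeomorphism isotopic to the identity through maps supported in $U_i$), its restriction to any point moves it by at most $\mathrm{diam}(\tilde{U}_i) \le \mu_0 := \max_{U \in \mathcal{U}} \mathrm{diam}(\tilde{U})$. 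Hence for any $x \in D$, $d(x, \tilde{g}(x)) \le n \mu_0$, and therefore $\delta(\tilde{g}(D)) \le \delta(D) + 2 n \mu_0$, which gives the left-hand inequality with $C = 2\mu_0$ and $C' = \delta(D)$.

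For the \textbf{upper bound} one must produce, from a homeomorphism $g$ whose lift displaces $D$ by a controlled amount, a fragmentation of $g$ into roughly $\delta(\tilde{g}(D))$ pieces. Here is where the real work lies, and I expect the argument to differ according to the topology of $S$, which is why the excerpt announces separate treatments for surfaces with boundary, the torus, and higher genus; but the common skeleton is as follows. One triangulates (or cellulates) $S$ so that each closed cell lies in the interior of some $U \in \mathcal{U}$, lifts this to a $\Pi_1(S)$-invariant cellulation of $\tilde{S}$, and uses the bound $\delta(\tilde{g}(D)) =: d$ to control how far, in the combinatorial (word) metric on $\Pi_1(S)$ — which is quasi-isometric to the metric on $\tilde{S}$ since $S$ is compact — the lift $\tilde{g}$ can carry a given lifted cell: at most $\kappa d + \kappa$ for a constant $\kappa$ depending only on the cellulation. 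One then wants to write $g$ as a product of $O(d)$ homeomorphisms each supported in a single $U$, by a ``push along a geodesic'' scheme: decompose the motion of each cell under $\tilde{g}$ into unit combinatorial steps, realise each step by a homeomorphism supported in the union of two adjacent balls, and invoke the fragmentation lemma plus a counting argument to bound the total number of factors. The subtlety — the \textbf{main obstacle} — is that these elementary pushes must be organised coherently over \emph{all} lifted cells simultaneously and equivariantly, so that the resulting product of compactly-supported (on $S$) homeomorphisms actually equals $g$ and uses only $O(d)$ factors rather than a number growing with the area; this is exactly the point where one needs a genuinely global construction and where the three cases genuinely diverge (the presence of boundary or of an essential annulus allows simpler ``sweeping'' arguments, whereas the higher-genus closed case requires more care with the hyperbolic geometry of $\tilde{S}$).

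In detail, for the upper bound I would proceed: (i) reduce, via the quasi-isometry between $(\tilde{S}, d)$ and the Cayley graph of $\Pi_1(S)$, to a purely combinatorial displacement bound $\ell(g) \le \kappa d + \kappa$ where $\ell(g)$ measures the maximal word-length displacement of a lifted fundamental cell; (ii) fix a finite generating set of $\Pi_1(S)$ each of whose generators is ``realised'' by a homeomorphism $h_s$ of $S$ supported in $U$ (for suitable $U \in \mathcal U$) whose lift agrees near $D$ with the deck transformation — possible because $\Pi_1(S)$ acts by deck transformations and each generator can be taken short; (iii) compose $g$ with an appropriate word of length $O(d)$ in these $h_s$ so that the resulting homeomorphism $g'$ has a lift fixing $D$ pointwise, hence is supported away from $D$ in $\tilde S$, hence (after pushing with $O(1)$ further factors) $g'$ is itself supported in a single ball of $\mathcal U$; (iv) conclude $\mathrm{Frag}_{\mathcal U}(g) \le \mathrm{Frag}_{\mathcal U}(g') + O(d) = O(d) + O(1)$. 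Step (iii) is the delicate one and is where the case distinction enters; I would expect to defer its proof to the later sections as the excerpt indicates, and to state here only the reduction to that combinatorial core.
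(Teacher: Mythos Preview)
Your lower bound is correct and is essentially the paper's argument (the paper phrases it via the discrete diameter $\mathrm{diam}_{\mathcal D}$ and Lemma~\ref{min}, but the content is the same telescoping estimate you give).

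The upper bound, however, has a genuine gap in steps (ii)--(iii). A homeomorphism $h_s\in\mathrm{Homeo}_0(S)$ supported in a single ball $U\in\mathcal U$ has canonical lift $\tilde h_s$ supported in $\Pi^{-1}(U)$, a disjoint union of small balls; in particular $\tilde h_s$ is the identity on the complement and commutes with every deck transformation. Such a lift cannot ``agree near $D$ with the deck transformation $s$'': outside $\Pi^{-1}(U)$ it is the identity, not $s$. Worse, your step (iii) asks for a word $w$ in the $h_s$ with $\widetilde{w\circ g}$ fixing $D$ pointwise; but any canonical lift commutes with $\Pi_1(S)$, so if it is the identity on $D$ it is the identity on every $\gamma(D)$, hence $w\circ g=\mathrm{id}$. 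You would then be asserting that $g$ equals a specific word of length $O(d)$ in pre-chosen ball-supported maps, which is exactly what has to be proved and is in general false (the $h_s$ generate only a countable group). The conclusion ``hence is supported away from $D$ in $\tilde S$'' is a non sequitur for the same reason.

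Your earlier heuristic paragraph (``decompose the motion into unit combinatorial steps'') is closer to what actually works, and you correctly flag the real obstacle there: the pushes must be organised equivariantly. The paper implements this not by realising deck transformations but by an \emph{iterative retraction}: one measures $\mathrm{el}_{D_0}(\tilde g(D_0))$, the maximal $d_{\mathcal D}$-distance from $D_0$ of a tile meeting $\tilde g(D_0)$, and shows (Lemmas~\ref{herbord}, \ref{hertore}, \ref{herhyp}) that a bounded number of ball-supported homeomorphisms suffice to push every extremal piece of $\tilde g(\partial D_0)$ back across one edge, reducing $\mathrm{el}_{D_0}$ by~$1$. The construction is local on $S$ (hence automatically equivariant on $\tilde S$) and the difficulty is purely in controlling which new tiles the pushed curve may enter; this is where the case split occurs, and in the hyperbolic case requires the Dehn-algorithm combinatorics of Section~7. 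A separate base-case lemma then handles $\mathrm{el}_{D_0}\le$ const. Your scheme (ii)--(iii) should be replaced by this retraction mechanism.
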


In the case when the boundary of the surface $S$ is nonempty, let us denote by $S'$ a submanifold of $S$ homeomorphic to $S$, contained in the interior of $S$ and which is a deformation retract of $S$. We denote by $\mathcal{U}$ a family of closed balls of $S$ whose union of interiors cover $S'$.

\begin{theorem} \label{quasiisom2}
There exist two real constants $C>0$ and $C'$ such that, for any homeomorphism $g$ in $\mathrm{Homeo}_{0}(S, \partial S)$ supported in $S'$:
$$ \frac{1}{C}\delta(\tilde{g}(D))-C' \leq \mathrm{Frag}_{\mathcal{U}}(g) \leq C\delta(\tilde{g}(D))+C'.$$
\end{theorem}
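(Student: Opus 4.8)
\noindent\textit{Strategy of proof.}
The two inequalities are of a very different nature, and I would prove them separately, taking in the end the maximum of the two constants. The left-hand inequality $\mathrm{Frag}_{\mathcal{U}}(g) \geq \frac{1}{C}\delta(\tilde{g}(D)) - C'$ is the analogue at the level of the fragmentation norm of Proposition \ref{distorsionimpliquecroissance}, and it does not even use the hypothesis that $g$ is supported in $S'$. For each ball $U_{j} \in \mathcal{U}$ the set $U_{j}$ is simply connected, so $\Pi^{-1}(U_{j})$ is a disjoint union of topological discs, each of diameter $\delta(\widetilde{U_{j}})$; put $\delta_{0} = \max_{j}\delta(\widetilde{U_{j}})$. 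If $h$ is supported in $U_{j}$, then by the Alexander trick $h$ is isotopic to the identity through homeomorphisms supported in $U_{j}$; by our normalisation of lifts, $\tilde{h}$ is then supported in $\Pi^{-1}(U_{j})$ and preserves each of its components, so $d(x,\tilde{h}(x)) \leq \delta_{0}$ for every $x$. Writing $g = h_{1}\circ \cdots \circ h_{n}$ with $n = \mathrm{Frag}_{\mathcal{U}}(g)$ and each $h_{i}$ supported in a ball of $\mathcal{U}$, the composition $\tilde{h}_{1}\circ \cdots \circ \tilde{h}_{n}$ is a lift of $g$ obtained from an isotopy relative to $\partial S$, hence equals $\tilde{g}$ by uniqueness of the lift (Hamstrom's theorem); therefore $d(x,\tilde{g}(x)) \leq n\delta_{0}$ for all $x$, and so $\delta(\tilde{g}(D)) \leq 2n\delta_{0} + \delta(D)$, which is the desired inequality with $C = 2\delta_{0}$ and $C' = \delta(D)/(2\delta_{0})$.

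The right-hand inequality $\mathrm{Frag}_{\mathcal{U}}(g) \leq C\delta(\tilde{g}(D)) + C'$ is the difficult part, and here the hypothesis on $S'$ is essential. The plan is to pass to the universal cover of the core. Write $\Gamma = \Pi_{1}(S)$ and $\widetilde{S'} = \Pi^{-1}(S')$. Since $S' \hookrightarrow S$ is a homotopy equivalence, $\widetilde{S'}$ is connected and is precisely the universal cover of $S'$; it is invariant under $\Gamma$ and, because $g$ is supported in $S'$ and $\tilde{g}$ is close to the identity, also under $\tilde{g}$, while $\tilde{g}$ is the identity outside $\widetilde{S'}$. As $\Gamma$ acts properly cocompactly on $\widetilde{S'}$, the inclusion $\widetilde{S'} \hookrightarrow \tilde{S}$ is a $\Gamma$-equivariant quasi-isometry, and $D \cap \widetilde{S'}$ is a compact (hence coarse) fundamental domain for the action of $\Gamma$ on $\widetilde{S'}$; consequently $R := \delta(\tilde{g}(D))$ is, up to an affine change, the diameter measured in $\widetilde{S'}$ of the image under $\tilde{g}$ of a fundamental domain. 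Since $S'$ is a compact surface with nonempty boundary, $\Gamma$ is free and $\widetilde{S'}$ is a regular neighbourhood of the tree $T$ on which $\Gamma$ acts (the Cayley graph relative to a spine of $S'$); thus $R$ is comparable, up to an affine change, to the depth in $T$ reached by $\tilde{g}$ applied to a fundamental domain.

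The heart of the argument is then an induction on an integer comparable to $R$. I would fix a finite family of \emph{elementary moves}: homeomorphisms of $S'$, each supported in a single ball of $\mathcal{U}$ (or in a bounded number of them), consisting of arbitrary homeomorphisms supported in one ball together with ``slide'' homeomorphisms which, read in $\widetilde{S'}$, shift one fundamental domain onto an adjacent one while being the identity far away. The inductive step is: from $g$ supported in $S'$ with associated integer $R \geq R_{0}$, produce a product $m$ of at most $C_{1}$ elementary moves such that $m^{-1}\circ g$ is again supported in $S'$ and has associated integer at most $R-1$; composing the steps writes $g$ as a product of at most $C_{1}R$ elementary moves, and the base case $R \leq R_{0}$ is treated separately by showing, via the fragmentation lemma and a compactness argument, that $\{g : \delta(\tilde{g}(D)) \leq R_{0}\}$ has bounded fragmentation length. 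The point that makes the count linear rather than exponential is that a single elementary move downstairs lifts to a $\Gamma$-equivariant homeomorphism of $\widetilde{S'}$, so it acts simultaneously on all the (in the higher-rank case, exponentially many) fundamental domains sitting at a given depth of $T$; only the depth, which is comparable to $R$, has to be paid for.

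The main obstacle is precisely this inductive step: one must extract, from the sole geometric datum that $\tilde{g}$ sends a fundamental domain to a set of diameter $R$, a bounded-cost product of ball-supported homeomorphisms that provably lowers that diameter by a definite amount, while the set $\tilde{g}(D)$ may branch wildly inside the thickened tree $\widetilde{S'}$, while the running homeomorphism must be kept supported in $S'$ and equal to the identity near $\partial S$ throughout, and while the moves must be chosen so as to descend to genuine homeomorphisms of $S'$ supported in balls of $\mathcal{U}$ (i.e.\ $\Gamma$-equivariantly upstairs). Establishing the base case, namely that homeomorphisms moving the fundamental domain only boundedly have bounded fragmentation length, is a second, smaller difficulty of the same flavour.
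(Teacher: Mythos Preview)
Your overall architecture matches the paper's: the lower bound is the displacement estimate (the paper routes it through the discrete diameter $\mathrm{diam}_{\mathcal{D}}$ and Lemma~\ref{min}, but your direct continuous argument is equivalent and correct), and the upper bound is an induction reducing an integer comparable to $\delta(\tilde g(D))$ by one at each step using a bounded number of ball-supported moves --- this is exactly Proposition~\ref{diamfrag2}, carried out in Section~\ref{bord} via Lemma~\ref{herbord} (inductive step, three moves) and Lemma~\ref{inbord} (base case). You also correctly single out the freeness of $\pi_1(S)$ as what makes this case tractable: at an extremal fundamental domain there is a unique neighbour closer to $D_0$, so the pushing-back is unambiguous.

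Two warnings about the details. First, your ``slide homeomorphisms which, read in $\widetilde{S'}$, shift one fundamental domain onto an adjacent one while being the identity far away'' cannot exist as described: any lift is $\Gamma$-equivariant, hence acts identically on every translate and is never the identity far away upstairs. The paper's moves in Lemma~\ref{herbord} are not slides but local pushes supported in thin neighbourhoods of the edges and in an interior disc $U_2$; they push the extremal arcs of $\tilde g(\partial D_0)$ back across a single edge. Second, and more seriously, the base case cannot be done by ``the fragmentation lemma and a compactness argument'': the set $\{g : \delta(\tilde g(D))\le R_0\}$ is not compact in any useful topology, and the fragmentation lemma (Lemma~\ref{fragmentation}) only gives uniform bounds for homeomorphisms $C^0$-close to the identity, which bounded $\delta(\tilde g(D))$ does not imply. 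The paper handles the base case by another explicit construction (Lemma~\ref{inbord}, six moves) of the same nature as the inductive step.
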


The lower bound of the fragmentation length is not difficult: it is treated in the next section in which we will also see that the quantity $\mathrm{Frag}_{\mathcal{U}}$ is essentially independent from the cover $\mathcal{U}$ chosen. On the other hand, it is much more technical to establish the upper bound. In the proof of this bound, we distinguish three cases: the case of surfaces with boundary (Section 5), the case of the torus (Section 6) and the case of higher genus compact surfaces without boundary (Section 7). The proof seems to depend strongly on the fundamental group of the surface considered. In particular, it is easier in the case of surfaces with boundary whose fundamental groups are free. In the case of the torus, the proof is a little tricky and, in the case of higher genus closed surfaces, the proof is more complex and uses Dehn algorithm for small-cancellation groups (surface groups in this case).

Let us explain now the second step of the proof. Denote by $M$ a compact manifold and $\mathcal{U}$ a finite family of closed balls or half-balls whose interiors cover $M$. In Section 4, we will prove the following theorem which asserts that,  for a homeomorphism $f$ in $\mathrm{Homeo}_{0}(M)$, if the sequence $\mathrm{Frag}_{\mathcal{U}}(f^{n})$ does not grow too fast with $n$, then the homeomorphism $f$ is a distortion element:

\begin{theorem} \label{fragdistth}
If $$\lim_{n \rightarrow + \infty} \frac{\mathrm{Frag}_{\mathcal{U}}(f^{n}).log(\mathrm{Frag}_{\mathcal{U}}(f^{n}))}{n}=0,$$
then the homeomorphism $f$ is a distortion element in $\mathrm{Homeo}_{0}(M)$.
\end{theorem}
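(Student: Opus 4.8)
The plan is to show that if $\mathrm{Frag}_{\mathcal{U}}(f^n)$ grows slowly (in the precise sense of the hypothesis), then $f$ lies in a finitely generated subgroup of $\mathrm{Homeo}_0(M)$ in which it is distorted. The natural model to imitate is the Baumslag-Solitar/Calegari-Freedman trick: one wants an auxiliary homeomorphism $b$ together with a finite family of homeomorphisms supported in balls such that conjugating by powers of $b$ "compresses" long compositions. Concretely, I would first fix the finite cover $\mathcal{U}=\{U_1,\dots,U_k\}$ and, for each ball $U_j$, identify its interior with $\mathbb{R}^d$ (or a half-space) so that $\mathrm{Homeo}_c(U_j)\cong\mathrm{Homeo}_c(\mathbb{R}^d)$. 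The key local fact is that $\mathrm{Homeo}_c(\mathbb{R}^d)$ admits an "infinite swindle / displacement" element: there is a homeomorphism $\theta$ of $\mathbb{R}^d$ and a ball $B$ such that $\theta^n(B)$, $n\in\mathbb{Z}$, are pairwise disjoint, which lets one realize a free product of copies of $\mathrm{Homeo}_c(B)$ and, more importantly, lets one write a product of $N$ conjugates $\theta^{a_i} h_i \theta^{-a_i}$ (with the $h_i$ supported in a fixed small ball and the $a_i$ spread out) as a single homeomorphism whose word length in $\{\theta\}\cup(\text{a bounded generating set of }\mathrm{Homeo}_c(B))$ is only $O(\log N)$ — this is exactly the binary-subdivision estimate used by Calegari-Freedman and by Avila.

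Next I would globalize this. Each factor in the fragmentation decomposition $f^n = \phi_1\circ\cdots\circ\phi_{L_n}$ (with $L_n=\mathrm{Frag}_{\mathcal{U}}(f^n)$) is supported in some $U_{j_i}$; I would like to encode the entire word $\phi_1\cdots\phi_{L_n}$ using a bounded number of generators plus one "shift" generator per chart, paying only $\mathrm{log}$ of the length. The cleanest route: choose one global homeomorphism $b\in\mathrm{Homeo}_0(M)$ whose dynamics restrict, inside each chart $U_j$, to a copy of the displacement map $\theta$ above (possible since $\mathrm{Homeo}_0(M)$ acts transitively enough on balls, and by the fragmentation lemma any compactly supported homeomorphism of a ball extends by the identity to $M$). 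Then take as generating set $\mathcal{S}$ the map $b$ together with finitely many homeomorphisms, each supported in a ball small enough to be "pushed around" by $b$ to cover every $U_j$; standard perfectness/conjugation arguments for $\mathrm{Homeo}_c(\mathbb{R}^d)$ show such a finite $\mathcal{S}$ generates a subgroup containing $f$. By the group-theoretic compression lemma (write $L_n$ factors as a balanced binary tree of conjugates), $l_{\mathcal{S}}(f^n) \le C\, \mathrm{Frag}_{\mathcal{U}}(f^n)\,\mathrm{log}(\mathrm{Frag}_{\mathcal{U}}(f^n)) + C$.

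Finally, dividing by $n$ and invoking the hypothesis $\frac{1}{n}\mathrm{Frag}_{\mathcal{U}}(f^n)\,\mathrm{log}\,\mathrm{Frag}_{\mathcal{U}}(f^n)\to 0$ gives $\frac{1}{n}l_{\mathcal{S}}(f^n)\to 0$, so $f$ is distorted in $\langle\mathcal{S}\rangle$, hence in $\mathrm{Homeo}_0(M)$. **The main obstacle** I anticipate is the bookkeeping in making a \emph{single} finite generating set do the job simultaneously for all charts $U_j$ and for \emph{all} $n$ at once: one must arrange that the same bounded family of ball-supported homeomorphisms, conjugated by powers of the one shift $b$, produces every factor $\phi_i$ that can arise (the $\phi_i$ range over all of $\mathrm{Homeo}_c(U_{j_i})$, an infinite-dimensional group), and that the conjugating exponents needed stay $O(L_n)$ so the binary-tree compression really yields the $\mathrm{log}$ bound. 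This is where the commutator/conjugation tricks specific to $\mathrm{Homeo}_c(\mathbb{R}^d)$ (every element is a product of boundedly many commutators of conjugates of a displacement map, uniformly) must be deployed carefully; the rest is routine once that uniform local statement is in place.
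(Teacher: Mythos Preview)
Your outline has the right ingredients (displacement maps, commutators, logarithmic addressing), and you correctly isolate the real difficulty: a fixed finite $\mathcal{S}$ cannot produce \emph{every} $\phi\in\mathrm{Homeo}_c(U_j)$ with bounded word length, since the factors in a fragmentation of $f^n$ range over an uncountable group while $\langle\mathcal{S}\rangle$ is countable. Your proposed fix (``every element is a product of boundedly many commutators of conjugates of a displacement map, uniformly'') does not close this gap: the commutator identity $\phi=[g,\theta]$ has $\theta$ fixed but $g$ still ranging over all of $\mathrm{Homeo}_c$, so you have merely relabelled the problem. Consequently the inequality $l_{\mathcal{S}}(f^n)\le C\,L_n\log L_n$ with one $\mathcal{S}$ independent of the sequence $(f^n)$ is not attainable by this route, and the ``binary-tree compression of $L_n$ factors'' is not what actually produces the logarithm.

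The paper's resolution is different in spirit: rather than seek a universal $\mathcal{S}$, one first \emph{lists} the specific countable collection of ball-supported pieces $\{f_{i,n}\}$ needed to fragment the powers $f^{\sigma(n)}$ along a carefully chosen subsequence $\sigma$, and then invokes a lemma (of Avila type) stating that \emph{any} sequence $(h_m)_{m\ge1}$ in $\mathrm{Homeo}_c(B)$ lies in a finite generating set $\mathcal{G}$ with $l_{\mathcal{G}}(h_m)\le 14\log m+14$. The mechanism is: write $h_m=[g_m,k_m]$, use a free \emph{semigroup} on two contractions $s_1,s_2$ to produce a tree of pairwise disjoint balls $B_m$ addressed by words of length $\lfloor\log_2 m\rfloor$, and store all the $g_m$ (resp.\ $k_m$) simultaneously in a single homeomorphism $s_4$ (resp.\ $s_5$) acting as the conjugate of $g_m$ on $B_m$. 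Navigating to $B_m$ costs $O(\log m)$; extracting $h_m$ costs $O(1)$ commutators. The logarithm thus comes from \emph{addressing the $m$-th stored piece}, not from compressing a product of $L_n$ arbitrary factors. Finally, because the bound reads $l_{\mathcal{G}'}(f^{\sigma(n)})\le L_{\sigma(n)}\cdot O\bigl(\log\sum_{i\le n}k_{\sigma(i)}\bigr)$ rather than $L_n\log L_n$, one must construct $\sigma$ recursively so that this quantity divided by $\sigma(n)$ is at most $1/n$; this bookkeeping step is essential and is absent from your sketch.
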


Moreover, in the case of a manifold $M$ with boundary, if $\mathcal{U}$ is a finite family of closed balls contained in the interior of $M$ whose interiors cover the support of a homeomorphism $f$ in $\mathrm{Homeo}_{0}(M, \partial M)$, this last theorem remains true in the group $\mathrm{Homeo}_{0}(M, \partial M)$. This section uses a technique due to Avila (see \cite{Avi}).

Theorem \ref{croissanceimpliquedistorsion} is clearly a consequence of these two theorems.

The last section is dedicated to the proof of the following theorem which proves that Proposition \ref{distorsionimpliquecroissance} is optimal:

\begin{theorem} \label{exemple}
Let $(v_{n})_{n \geq 1}$ be a sequence of positive real numbers such that:
$$\lim_{n \rightarrow + \infty} \frac{v_{n}}{n}=0.$$

Then there exists a homeomorphism $f$ in $\mathrm{Homeo}_{0}(\mathbb{R}/\mathbb{Z} \times [0,1] , \mathbb{R}/\mathbb{Z} \times \left\{ 0,1 \right\} )$ such that:
\begin{enumerate}
\item $ \forall n \geq 1, \ \delta(\tilde{f}^{n}([0,1] \times [0,1])) \geq v_{n}$.
\item The homeomorphism $f$ is a distortion element in $\mathrm{Homeo}_{0}(\mathbb{R}/\mathbb{Z} \times [0,1] , \mathbb{R}/\mathbb{Z} \times \left\{ 0,1 \right\} )$.
\end{enumerate}
\end{theorem}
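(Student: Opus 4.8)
The plan is to build $f$ on the closed annulus $\mathbb{A}=\mathbb{R}/\mathbb{Z}\times[0,1]$ as an explicit homeomorphism that pushes points in the universal cover $\widetilde{\mathbb{A}}=\mathbb{R}\times[0,1]$ by prescribed amounts in the horizontal direction, with the horizontal displacement concentrated near the core circle and vanishing near the two boundary circles. Concretely, I would choose an increasing sequence of integers $0=m_{0}<m_{1}<m_{2}<\cdots$ adapted to $(v_{n})$, and a sequence of homeomorphisms $h_{k}$ of $\mathbb{A}$, each supported in a thin annular neighbourhood $A_{k}=\mathbb{R}/\mathbb{Z}\times[\epsilon_{k+1},\epsilon_{k}]$ of the core (with $\epsilon_{k}\downarrow 0$), such that the lift $\widetilde{h}_{k}$ translates the strip $\mathbb{R}\times[\epsilon_{k+1},\epsilon_{k}]$ (say, its middle sub-strip) horizontally by a large integer $N_{k}$, interpolating down to the identity on $\partial A_{k}$. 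Setting $f=\lim$ (an infinite composition, arranged to converge because the supports $A_{k}$ shrink) gives a homeomorphism of $\mathbb{A}$ fixing a neighbourhood of the boundary; by construction the point $(1/2,t)$ for $t$ in the $k$-th sub-strip has $\widetilde{f}^{n}$-orbit displaced by $nN_{k}$, so $\delta(\widetilde{f}^{n}([0,1]^{2})\geq nN_{k}$ once we are working in the right strip. The sequences $(\epsilon_{k})$, $(N_{k})$ and the switch-times $(m_{k})$ must be interlocked so that, for every $n$, there is a strip whose points realise displacement at least $v_{n}$; since $v_{n}/n\to 0$ we have enormous freedom, and a diagonal choice (make $N_{k}$ grow, make the $k$-th strip ``active'' precisely for $n\in[m_{k},m_{k+1})$, and pad with $N_{k}\geq v_{m_{k+1}}$) suffices for conclusion (1).

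For conclusion (2), the strategy is to show $\mathrm{Frag}_{\mathcal{U}}(f^{n})$ grows slowly and invoke Theorem \ref{fragdistth} (in its boundary version, applied to $M=\mathbb{A}$ with $\mathcal{U}$ a finite cover of a core sub-annulus $S'$), or equivalently to invoke Theorem \ref{quasiisom2} together with Theorem \ref{fragdistth}. The key point is that, although single orbits travel far, the homeomorphism $f^{n}$ is itself ``cheap to fragment'': on each strip $f^{n}$ is essentially a horizontal translation by $nN_{k}$ composed with a bounded interpolation, and a translation by $L$ in the annulus can be written as a product of $O(\log L)$ maps each supported in a ball — this is the familiar Baumslag–Solitar-style trick (conjugating a bounded push by a ``doubling'' map, exactly as in the $BS(1,2)$ discussion in the introduction). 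I would realise this uniformly across the strips: because the strips are nested and disjointly supported, I can handle them with a single fixed finite generating family, so that $l_{\mathcal{G}}(f^{n})=O(\log(\max_{k\leq k(n)}nN_{k}))=O(\log n + \log N_{k(n)})$. The interlocking from step (1) must therefore also guarantee $N_{k}$ does not grow too fast with the switch times: it is enough to take, e.g., $N_{k}\leq 2^{m_{k}}$, which is compatible with $N_{k}\geq v_{m_{k+1}}$ precisely because $v_{n}=o(n)$ (choose $m_{k+1}$ large compared to $m_{k}$). This yields $l_{\mathcal{G}}(f^{n})=O(\sqrt{n})$ or better, in particular sublinear, so $f$ is distorted.

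The main obstacle I anticipate is making the ``slow fragmentation'' estimate genuinely uniform over the infinitely many nested strips with a single finite generating set: one must produce, once and for all, a finite family $\mathcal{G}\subset\mathrm{Homeo}_{0}(\mathbb{A},\partial\mathbb{A})$ and a doubling-type element $b$ such that every $f^{n}$ decomposes as a word of controlled length in $\mathcal{G}$, simultaneously accounting for all active strips. The natural fix is to choose $b$ to be a single homeomorphism that acts as a ``horizontal dilation'' in an appropriate coordinate on the whole core region (so $b a_{k} b^{-1}=a_{k}^{2}$ holds strip-by-strip for suitable generators $a_{k}$), and to note that only finitely many strips — those with $m_{k}\leq n$ — are active at stage $n$, their number growing like $o(n)$; then the total word length is at most (number of active strips)$\times O(\log N_{k(n)})$, still sublinear after tuning the $m_{k}$. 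A secondary technical nuisance is verifying that the infinite composition defining $f$ is a well-defined homeomorphism fixing a neighbourhood of $\partial\mathbb{A}$ and that its lift $\widetilde{f}$ is the canonical one (time-one of a lifted isotopy rel boundary); this is routine once the supports $A_{k}$ are taken to shrink to the core, since then $f$ is continuous and equals the identity on $\mathbb{R}/\mathbb{Z}\times(\{0\}\cup\{1\})$ together with collars, and the isotopy is assembled from the isotopies defining the $h_{k}$.
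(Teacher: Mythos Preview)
There is a fundamental obstruction that no amount of bookkeeping repairs. In your construction $f$ restricts to a rigid horizontal translation by $N_{k}$ on (the core of) the $k$-th strip, so for a point $x$ there one has $p_{1}(\tilde f^{\,n}(x))-p_{1}(x)=nN_{k}$ for every $n$; hence $\delta(\tilde f^{\,n}([0,1]^{2}))\geq n\cdot\sup_{k}N_{k}$, which is linear in $n$ unless every $N_{k}$ vanishes. By Proposition~\ref{distorsionimpliquecroissance} (equivalently: the rotation set of a distorted annulus homeomorphism is a single point) such an $f$ is never a distortion element. No Baumslag--Solitar trick can rescue this: you have built an $f$ for which short expressions provably do not exist, not merely failed to exhibit one. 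Relatedly, your claim that an annulus translation by $L$ is a product of $O(\log L)$ maps each supported in a ball contradicts the lower bound of Theorem~\ref{quasiisom2}, which gives $\mathrm{Frag}_{\mathcal U}\geq L/C-C'$; the relation $bab^{-1}=a^{2}$ shortens word length in the generating set $\{a,b\}$, but the dilation $b$ is not supported in a ball. There is also a continuity problem: if $N_{k}\to\infty$ then the displacement $\tilde f-\mathrm{id}$ is unbounded on the compact set $[0,1]^{2}$, so $f$ is not even a homeomorphism.

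The paper's construction avoids all of this by making the one-step displacement \emph{decay} along orbits rather than stay constant. One embeds a line $L$ spirally in the open annulus (accumulating on both boundary circles), chooses a homeomorphism $h$ of $L\cong\mathbb R$ with $x\mapsto h(x)-x$ decreasing to $0$ and $h^{n}(0)\approx v_{n}$ (possible precisely because $v_{n}/n\to 0$, after a preliminary convexification of $(v_{n})$), and thickens $h$ to an annulus homeomorphism $f$ supported in a tube around $L$. Since $h-\mathrm{id}\to 0$ at infinity, $f$ extends continuously across the accumulation circles and has rotation set $\{0\}$. Distortion is then obtained not through Theorem~\ref{fragdistth} --- which the paper explicitly notes is too weak here, since $v_{n}\log v_{n}/n$ need not tend to $0$ --- but through the finer invariant $a_{\mathcal U}$ of Proposition~\ref{fragdist}: for each $l$ one constructs just two homeomorphisms $g_{1},g_{2}$ (one supported off a transversal arc $\alpha$, one near $\alpha$) such that $(g_{2}g_{1})^{\lambda_{l}-1}f^{l}$ has bounded $\mathrm{Frag}_{\mathcal U}$, where $\lambda_{l}\approx v_{l}$. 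This yields $a_{\mathcal U}(f^{l})\lesssim v_{l}$, hence $a_{\mathcal U}(f^{l})/l\to 0$.
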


This theorem means that being a distortion element gives no clues on the growth of the diameter of a fundamental domain other than the sublinearity of this growth. This theorem remains true for any surface $S$: it suffices to embed the annulus $ \mathbb{R}/\mathbb{Z} \times [0,1]$ in any surface $S$ and to use this last theorem to see it.

\section{Quasi-isometries}

In this section, we prove the lower bound in Theorems \ref{quasiisom} and \ref{quasiisom2}. More precisely, we prove these theorems using the following propositions whose proof will be discussed in Sections 5, 6 and 7.

\begin{proposition} \label{diamfrag}
There exists a finite cover $\mathcal{U}$ of $S$ by closed discs and half-discs as well as real constants $C \geq 1$ and $\ C' \geq 0$ such that, for any homeomorphism $g$ in $\mathrm{Homeo}_{0}(S)$:
$$\mathrm{Frag}_{\mathcal{U}}(g) \leq  C \mathrm{diam}_{\mathcal{D}}(\tilde{g}(D_{0}))+C'.$$
\end{proposition}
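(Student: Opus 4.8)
The goal is to bound the fragmentation length $\mathrm{Frag}_{\mathcal{U}}(g)$ from above by (a constant times) the diameter of $\tilde g(D_0)$. The strategy is to push forward the problem to the universal cover: instead of fragmenting $g$ on $S$, I would produce an explicit decomposition whose length is controlled by how far the lift $\tilde g$ moves the fundamental domain, measured combinatorially. The key is to choose $\mathcal{U}$ together with a ``nice'' fundamental domain $D_0$ and a word-metric-type gadget on $\tilde S$ adapted to $\mathcal{U}$: roughly, a chain of translates $\gamma D_0$ joining $D_0$ to a given translate $\gamma D_0$ has length comparable to $\delta$ of the union, and each ``step'' of the chain can be realized by a homeomorphism supported in one disc of $\mathcal{U}$ (lifted appropriately). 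This is the standard picture ``$\mathrm{Homeo}_0$ acts with a Milnor--\v{S}varc-type quasi-isometry on the orbit of $D_0$'', but here only one inequality is needed and it is the easier direction.

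\textbf{Step 1.} Reduce to the universal cover. Write $g$ as a product of homeomorphisms supported in discs; lifting each factor, the product $\tilde g$ (up to the isometry/deck ambiguity discussed before the statement of Theorem \ref{quasiisom}) is a product of lifts, each supported in a connected component of $\Pi^{-1}$ of a disc of $\mathcal{U}$, hence each moving points a bounded amount. Conversely, any homeomorphism of $\tilde S$ obtained in this way that is equivariant and descends to $S$ gives back a fragmented homeomorphism of $S$ with the same number of factors. So it suffices to build, for a given target homeomorphism with $\delta(\tilde g(D_0)) = \Delta$, a fragmented expression of length $O(\Delta) + O(1)$.

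\textbf{Step 2.} The combinatorial model. Put on the set of deck translates $\{\gamma D_0 : \gamma \in \Pi_1(S)\}$ the adjacency graph ($\gamma D_0 \sim \gamma' D_0$ iff they intersect); this graph is quasi-isometric to $\Pi_1(S)$ with the word metric, and the graph distance between $D_0$ and $\gamma D_0$ is comparable (up to multiplicative and additive constants depending only on $D_0$) to $d(D_0,\gamma D_0)$, hence to $\delta$ of a connecting region. The point of the cover $\mathcal{U}$ is chosen so that: for every pair of adjacent translates there is a single disc $U \in \mathcal{U}$ whose preimage contains a component meeting both; then a homeomorphism of $\tilde S$ that ``transposes/shifts'' finitely many translates along a path in this graph can be written as a product of that many disc-supported pieces. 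This is where I would invoke the detailed geometry of $S$ — and this is exactly the part the author defers to Sections 5, 6, 7, because the combinatorics of the graph (free group vs. $\mathbb{Z}^2$ vs. surface group) is genuinely different in the three cases.

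\textbf{Step 3.} Assemble. Given $g$ with $\delta(\tilde g(D_0)) = \Delta$, note $\tilde g(D_0)$ meets $O(\Delta^{2})$ translates a priori, but what actually matters is the combinatorial displacement: $\tilde g$ sends $D_0$ to a region of diameter $\Delta$, so there is a deck transformation $\gamma$ with $d(\mathrm{id},\gamma) = O(\Delta)$ ``absorbing'' the displacement, and after composing with $\gamma^{-1}$ the remaining homeomorphism moves $D_0$ within a region of bounded diameter; such a bounded-displacement homeomorphism has bounded fragmentation length (compactness of the relevant piece of $\mathrm{Homeo}_0$, or a direct fragmentation lemma argument), contributing the additive constant $C'$. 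The deck transformation $\gamma$ itself is realized, via Step 2, by $O(\Delta)$ disc-supported factors. Summing gives $\mathrm{Frag}_{\mathcal{U}}(g) \le C\,\delta(\tilde g(D_0)) + C'$.

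\textbf{Main obstacle.} The serious work is Step 2 — showing that one \emph{specific} well-chosen cover $\mathcal{U}$ and fundamental domain $D_0$ make ``one combinatorial step along the translate-graph'' cost exactly one disc-supported homeomorphism, uniformly. Building the translates and the discs so that consecutive translates share a disc, and handling the boundary (half-discs) and the torus/annulus exceptions where the lift is not unique, is precisely the case-by-case content postponed to the later sections. Everything else here is soft: the Milnor--\v{S}varc comparison of $d(D_0,\gamma D_0)$ with word length, and the bounded-fragmentation-length statement for homeomorphisms with bounded support displacement, are routine.
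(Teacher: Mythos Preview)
Your Step 3 contains a genuine error that breaks the argument. You propose to absorb the displacement of $\tilde g(D_0)$ by a deck transformation $\gamma$ and then argue that $\gamma^{-1}\tilde g$ moves $D_0$ within a region of bounded diameter. But deck transformations are isometries of $\tilde S$, so $\mathrm{diam}_{\mathcal{D}}(\gamma^{-1}\tilde g(D_0)) = \mathrm{diam}_{\mathcal{D}}(\tilde g(D_0)) = \Delta$: composing with $\gamma^{-1}$ translates the image but does not shrink it. The quantity you must control is the \emph{spread} of $\tilde g(D_0)$, not where its centre sits, and no deck transformation touches that. Moreover, $\gamma$ does not descend to a nontrivial element of $\mathrm{Homeo}_0(S)$ --- it projects to the identity --- so ``realising $\gamma$ by $O(\Delta)$ disc-supported factors'' produces factors of the identity on $S$ and contributes nothing to a fragmentation of $g$. (You may be thinking of a situation like the torus, where translations lie in $\mathrm{Homeo}_0$; but even there the lift of a translation commutes with, and does not undo, the stretching caused by $\tilde g$.)

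What the paper actually does is the opposite of a one-shot reduction: it runs an \emph{iterative} scheme. One defines $\mathrm{el}_{D_0}(\tilde g(D_0))$, the maximal $d_{\mathcal D}$-distance from $D_0$ to a translate meeting $\tilde g(D_0)$, and proves a ``one-step'' lemma: there exist a bounded number of disc-supported homeomorphisms $h_1,\dots,h_k$ (with $k$ depending only on $S$) such that $\mathrm{el}_{D_0}(\tilde h_k\cdots\tilde h_1\tilde g(D_0)) \le \mathrm{el}_{D_0}(\tilde g(D_0)) - 1$. Iterating $\sim\Delta$ times brings the éloignement down to a fixed threshold, and a separate ``base case'' lemma bounds $\mathrm{Frag}_{\mathcal U}$ uniformly once the éloignement is below that threshold. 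The one-step lemma is where the real geometry lives: one must push the outermost pieces of $\tilde g(\partial D_0)$ back across one layer of translates \emph{without} letting them spill into new translates, and the combinatorics of which neighbouring translates can be hit is governed by the structure of $\Pi_1(S)$ --- free (boundary case), abelian (torus), or hyperbolic (higher genus, where Dehn's algorithm enters). Your Step 2 correctly locates this as the hard part, but Step 3 does not use it; the paper's argument is essentially all Step 2.

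Finally, your claim that ``bounded-displacement homeomorphisms have bounded fragmentation length by compactness'' is not soft: the set $\{g : \mathrm{el}_{D_0}(\tilde g(D_0)) \le K\}$ is not compact in $\mathrm{Homeo}_0(S)$, and establishing a uniform bound on $\mathrm{Frag}_{\mathcal U}$ over it is precisely the content of the base-case Lemmas \ref{inbord}, \ref{intore}, \ref{inhyp}, each of which requires its own construction. You also have the difficulty backwards: the paper states explicitly that the lower bound $\mathrm{Frag}_{\mathcal U}(g) \gtrsim \mathrm{diam}_{\mathcal D}(\tilde g(D_0))$ is the easy direction (Section 3), while the upper bound you are attempting is the technical heart of Sections 5--7.
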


Here is a version of the previous proposition in the case of the group $\mathrm{Homeo}_{0}(S, \partial S)$.

\begin{proposition} \label{diamfrag2}
Fix a subsurface with boundary $S'$ of $S$ which is contained in the interior of $S$, is a deformation retract of $S$ and is homeomorphic to $S$. There exists a finite cover $\mathcal{U}$ of $S'$ by closed discs contained in the interior of $S$ as well as real constants $C \geq 1$ and $\ C' \geq 0$ such that, for any homeomorphism $g$ in $\mathrm{Homeo}_{0}(S)$ supported in $S'$:
$$\mathrm{Frag}_{\mathcal{U}}(g) \leq  C \mathrm{diam}_{\mathcal{D}}(\tilde{g}(D_{0}))+C'.$$
\end{proposition}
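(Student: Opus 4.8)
The plan is to produce, for a given $g$ as in the statement, a fragmentation of length $O(\mathrm{diam}_{\mathcal{D}}(\tilde{g}(D_{0})))$ by an inductive ``peeling'' of the compact set $\tilde{g}(D_{0})$, removing one layer of the tessellation at a time. Recall that $D_{0}$ is a fixed fundamental domain, which we take to be a topological polygon, and write $\mathcal{D}=\{\gamma(D_{0}):\gamma\in\Pi_{1}(S)\}$ for the tessellation of $\tilde{S}$ by its translates, metrized by its dual graph, so that $\mathrm{diam}_{\mathcal{D}}(A)$ is the diameter of the set of tiles meeting $A$. First I fix the cover $\mathcal{U}$: I cover the $1$-skeleton $\Pi(\partial D_{0})$ by finitely many embedded closed discs, contained in the interior of $S$ and so small that each connected component of the preimage in $\tilde{S}$ of such a disc meets at most two tiles --- that is, it straddles a single ``doorway'' $\gamma'(D_{0})\cap\gamma(D_{0})$, or lies inside one tile near a vertex --- and I add finitely many more discs, also in the interior of $S$, covering the rest. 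Since $\mathrm{Frag}_{\mathcal{U}}$ is subadditive and, exactly as in the proof of Proposition~\ref{distorsionimpliquecroissance}, $\mathrm{diam}_{\mathcal{D}}(\widetilde{gh}(D_{0}))\le\mathrm{diam}_{\mathcal{D}}(\tilde{g}(D_{0}))+\mathrm{diam}_{\mathcal{D}}(\tilde{h}(D_{0}))+C_{0}$, the proposition reduces to two claims: a \emph{base case}, that $\mathrm{Frag}_{\mathcal{U}}(h)\le K_{0}$ whenever $h$ is supported in $S'$ and $\mathrm{diam}_{\mathcal{D}}(\tilde{h}(D_{0}))\le 1$; and an \emph{inductive step}, that any such $h$ with $\mathrm{diam}_{\mathcal{D}}(\tilde{h}(D_{0}))=N\ge 2$ can be written $h=\psi\circ h'$ with $\mathrm{Frag}_{\mathcal{U}}(\psi)\le K$ and $\mathrm{diam}_{\mathcal{D}}(\tilde{h'}(D_{0}))\le N-1$. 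Iterating the step and then applying the base case gives $\mathrm{Frag}_{\mathcal{U}}(h)\le K(N-1)+K_{0}$, which is the desired bound.

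The base case is a uniform fragmentation statement. Equivariance of $\tilde{h}$ under $\Pi_{1}(S)$ and compactness of $D_{0}$ bound $\max_{x\in\tilde{S}}d(x,\tilde{h}(x))$ uniformly over the $h$ in question, and the fragmentation length along $\mathcal{U}$ of a homeomorphism that fixes a neighbourhood of $\partial S$ and has controlled displacement is bounded by a standard Alexander-type argument applied chart by chart.

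The core is the inductive step. Call a tile $\gamma(D_{0})$ \emph{outermost} for $h$ if it meets $\tilde{h}(D_{0})$ and lies at distance exactly $N$ from $D_{0}$ in $\mathcal{D}$; then $\tilde{h}(D_{0})\cap\gamma(D_{0})$ is a compact connected set reaching the doorway of $\gamma(D_{0})$ towards $D_{0}$ but none of the doorways of $\gamma(D_{0})$ leading further out. I build $\psi$ in two stages, each an honest homeomorphism of $S$ supported in members of $\mathcal{U}$: first one whose lift \emph{compresses} $\tilde{h}(D_{0})\cap\gamma(D_{0})$ into a thin collar of the inward doorway, simultaneously in every outermost tile, and then one whose lift \emph{pushes} that collar through the inward doorway into the adjacent tile at distance $N-1$, again everywhere at once. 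Then $\tilde{h'}(D_{0})=\tilde{\psi}^{-1}(\tilde{h}(D_{0}))$ meets no tile at distance $N$, so $\mathrm{diam}_{\mathcal{D}}(\tilde{h'}(D_{0}))\le N-1$ (the resulting $h'$ is supported in a fixed slightly enlarged subsurface in the interior of $S$, which one makes $\mathcal{U}$ cover from the outset; keeping the whole induction supported in a fixed compact region is a routine bookkeeping matter affecting only the constants). The reason the total count stays \emph{linear} in $N$, even though $\tilde{h}(D_{0})$ typically meets exponentially many tiles, is that each stage of $\psi$ is one homeomorphism of $S$, whose lift therefore acts equivariantly in all tiles at once: peeling an entire sphere of the tessellation costs only $O(1)$ factors of $\mathcal{U}$, because $S$ is compact and so there are only finitely many combinatorial types of tile and of doorway, each handled by boundedly many $\mathcal{U}$-supported moves.

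I expect the compression stage to be the main obstacle, and it is where the argument has to be split according to the combinatorial geometry of $\mathcal{D}$, i.e. of $\Pi_{1}(S)$. For a surface with boundary $\Pi_{1}(S)$ is free, the tessellation is tree-like, each tile has a unique inward doorway, ``outermost'' is unambiguous, and the compression is a single push towards that doorway: this is the present proposition, the easiest case (Section~5). For the torus, $\Pi_{1}(S)=\mathbb{Z}^{2}$, ``outermost'' is ambiguous, and one must peel one coordinate direction at a time with additional bookkeeping (Section~6). For a closed surface of higher genus one uses that the surface group satisfies a small-cancellation condition and runs Dehn's algorithm to decide which part of the word naming a far tile can be shortened, converting the geometric peeling into a controlled algebraic reduction (Section~7); the delicate point there is to respect Dehn reduction while still spending only boundedly many $\mathcal{U}$-supported maps per layer. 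Finally Proposition~\ref{diamfrag}, in which $g$ need not be supported away from $\partial S$, follows from the same scheme once one allows closed half-discs along $\partial S$ among the members of $\mathcal{U}$.
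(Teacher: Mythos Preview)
Your strategy is the paper's (Section~5): peel one layer of the tree-like tessellation per step using boundedly many $\mathcal{U}$-supported homeomorphisms (Lemma~\ref{herbord}), then handle the residual case by hand (Lemma~\ref{inbord}). Two points need correcting. First, the right inductive quantity is the \emph{\'eloignement} $\mathrm{el}_{D_{0}}(\tilde h(D_{0}))=\max\{d_{\mathcal D}(D_{0},D):D\cap\tilde h(D_{0})\neq\emptyset\}$, not $\mathrm{diam}_{\mathcal D}$: with $N=\mathrm{diam}_{\mathcal D}(\tilde h(D_{0}))$ there need not exist any tile at distance $N$ from $D_{0}$, so your ``outermost'' tiles may be empty; since $\tilde h$ fixes points of $D_{0}$ (the support is in $S'$) one has $\mathrm{el}_{D_{0}}\le\mathrm{diam}_{\mathcal D}\le 2\,\mathrm{el}_{D_{0}}$, so the final linear bound is unaffected. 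Second, your base case does not follow from a displacement bound plus an ``Alexander-type argument chart by chart'': bounded displacement is far weaker than $C^{0}$-smallness, and the set of $h$ with $\mathrm{el}_{D_{0}}(\tilde h(D_{0}))\le 1$ is not precompact, so the fragmentation lemma for near-identity maps does not apply directly. The paper instead gives an explicit construction (Lemma~\ref{inbord}) that first straightens the image of each edge $\tilde\alpha$ into its own tubular neighbourhood $\tilde V_{\tilde\alpha}$ and then decomposes the resulting map as a product of six maps supported in $U_{1}$ or $U_{2}$; some such concrete endgame is unavoidable. (A minor aside: $\tilde h(D_{0})\cap\gamma(D_{0})$ need not be connected, though the argument does not use this.)
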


In order to prove these theorems, we need some notation. As in the last section, let us denote by $S$ a compact surface.
Two maps $a,b:\mathrm{Homeo}_{0}(S) \rightarrow \mathbb{R}$ are \emph{quasi-isometric} if and only if there exist real constants $C \geq 1$ and $C' \geq 0$ such that:
$$ \forall f \in \mathrm{Homeo}_{0}(S), \ \frac{1}{C}a(f)-C' \leq b(f) \leq Ca(f)+C'.$$
More generally, an arbitrary number of maps $\mathrm{Homeo}_{0}(S) \rightarrow \mathbb{R}$ are said to be quasi-isometric if they are pairwise quasi-isometric.

Let us consider a fundamental domain $D_{0}$ of $\tilde{S}$ for the action of the group $\Pi_{1}(S)$ which satisfies the following properties (see figure \ref{ledomaineD0}) :
\begin{enumerate}
\item If the surface $S$ is closed of genus $g$, the fundamental domain $D_{0}$ is a closed disc bounded by a $4g$-gone with geodesic edges.
\item If the surface $S$ has a nonempty boundary, the fundamental domain $D_{0}$ is a closed disc bounded by a polygon with geodesic edges such that any edge of this polygon which is not contained in $\partial \tilde{S}$ connects two edges contained in $\partial \tilde{S}$.
\end{enumerate}

\begin{figure}[ht]
\begin{center}
\includegraphics{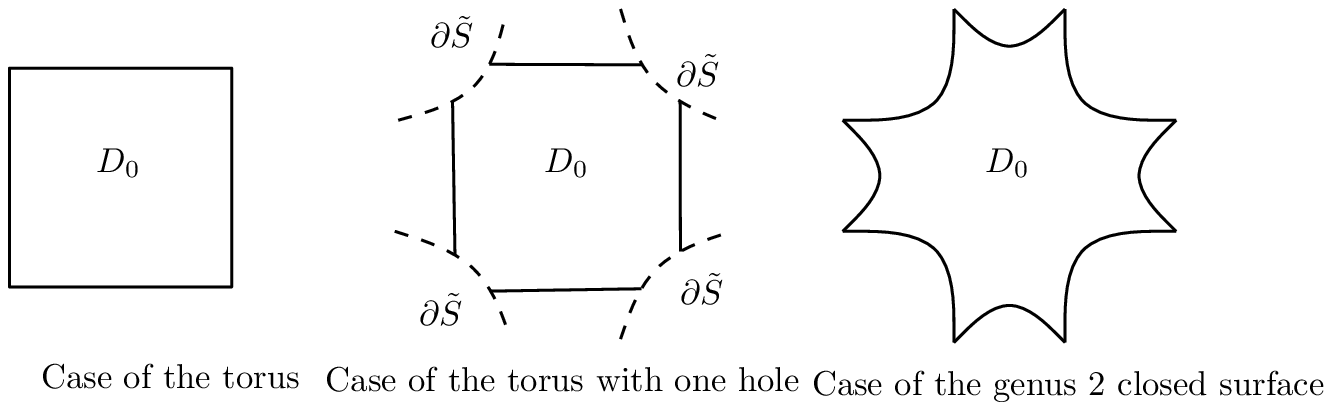}
\end{center}
\caption{The fundamental domain $D_{0}$}
\label{ledomaineD0}
\end{figure}

Let us take:
$$ \mathcal{D}= \left\{ \gamma(D_{0}), \ \gamma \in \Pi_{1}(S) \right\}.$$
For fundamental domains $D$ and $D'$ in $\mathcal{D}$, we denote by $d_{\mathcal{D}}(D,D')+1$ the minimal number of fundamental domains met by a path which connects the interior of $D$ to the interior of $D'$. The map $d_{\mathcal{D}}$ is a distance on $\mathcal{D}$. We now give an algebraic definition of this quantity. Denote by $\mathcal{G}$ the finite set of deck transformations in $\Pi_{1}(S)$ which send $D_{0}$ to a polygon in $\mathcal{D}$ adjacent to $D_{0}$, \emph{i.e.} which shares an edge in common with $D_{0}$. Then the subset $\mathcal{G}$ is symmetric and is a generating set of $\Pi_{1}(S)$. Notice that the map
$$\begin{array}{rrcl}
d_{\mathcal{G}}: & \Pi_{1}(S) \times \Pi_{1}(S) & \rightarrow & \mathbb{R} \\
& (\varphi, \psi) & \mapsto & l_{\mathcal{G}}(\varphi^{-1} \psi)
\end{array}
$$
is a distance on the group $\Pi_{1}(S)$. Then, for any pair $(\varphi, \psi)$ of deck transformations in the group $\Pi_{1}(S)$, we have:
$$ l_{\mathcal{G}}(\varphi^{-1} \psi)=d_{\mathcal{D}}(\varphi(D_{0}), \psi(D_{0})).$$
One can see it by noticing that $d_{\mathcal{D}}$ is invariant under the action of the group $\Pi_{1}(S)$ and by proving by induction on $l_{\mathcal{G}}(\psi)$ that:
$$l_{\mathcal{G}}(\psi)=d_{\mathcal{D}}(D_{0},\psi(D_{0})).$$

Given a compact subset $A$ of $\tilde{S}$, we call \emph{discrete diameter} of $A$ the following quantity:
$$ \mathrm{diam}_{\mathcal{D}}(A)=\max \left\{ d_{\mathcal{D}}(D,D'), \left\{
\begin{array}{l}
D \in \mathcal{D}, \ D' \in \mathcal{D} \\
D \cap A \neq \emptyset, \ D' \cap A \neq \emptyset
\end{array}
\right.
\right\}
.$$
For a fundamental domain $D_{1}$ in $\mathcal{D}$, we call \emph{éloignement} of $A$ with respect to $D_{1}$ the following quantity:
$$ \mathrm{el}_{D_{1}}(A)= \max \left\{ d_{\mathcal{D}}(D_{1},D), \left\{
\begin{array}{l}
D \in \mathcal{D} \\
D \cap A \neq \emptyset
\end{array}
\right.
\right\}
.$$

Notice that, in the case where $D_{1} \cap A \neq \emptyset$, we have:
$$\mathrm{el}_{D_{1}}(A) \leq \mathrm{diam}_{\mathcal{D}}(A) \leq 2 \mathrm{el}_{D_{1}}(A).$$

In this section, we prove the following statement, using Proposition \ref{diamfrag}:

\begin{proposition} \label{equiv}
Given two finite sets $\mathcal{U}$ and $\mathcal{U}'$ of closed balls or half-balls whose interiors cover the surface $S$, the maps $\mathrm{Frag}_{\mathcal{U}}$ and $\mathrm{Frag}_{\mathcal{U}'}$ are quasi-isometric.\\
Given two fundamental domains $D$ and $D'$ of $\tilde{S}$ for the action of the fundamental group of $S$, the maps $f \mapsto \delta(\tilde{f}(D))$ and $g \mapsto \delta(\tilde{g}(D'))$ are quasi-isometric.\\
Fix now a finite cover $\mathcal{U}$ of $S$ as above and a fundamental domain $D$. Then the following maps $\mathrm{Homeo}_{0}(S) \rightarrow \mathbb{R}$ are quasi-isometric:
\begin{enumerate}
\item The map $\mathrm{Frag}_{\mathcal{U}}$.
\item The map $g \mapsto \delta(\tilde{g}(D))$.
\item The map $g \mapsto \mathrm{diam}_{\mathcal{D}}(\tilde{g}(D_{0}))$.
\end{enumerate}
\end{proposition}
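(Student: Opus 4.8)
Quasi-isometry of maps $\mathrm{Homeo}_{0}(S)\to\mathbb{R}$ is a transitive relation (composing two such comparisons only enlarges the constants), so it is enough to show that every map in the statement is quasi-isometric to the single reference map $g\mapsto\mathrm{diam}_{\mathcal{D}}(\tilde{g}(D_{0}))$. Two facts will be used repeatedly. First, $\tilde{g}(D)$ is again a fundamental domain: it is compact and connected, $\Pi(\tilde{g}(D))=g(\Pi(D))=S$, and for $\gamma\neq\mathrm{id}$ the interiors of $\tilde{g}(D)$ and $\gamma\,\tilde{g}(D)=\tilde{g}(\gamma D)$ are disjoint. Second, each of $\delta(\tilde{g}(D))$, $\mathrm{diam}_{\mathcal{D}}(\tilde{g}(D_{0}))$ and $\mathrm{Frag}_{\mathcal{U}}(g)$ is unchanged when $\tilde{g}$ is replaced by $\gamma\circ\tilde{g}$ for a deck transformation $\gamma$, since the deck group acts on $\tilde{S}$ by isometries and permutes $\mathcal{D}$ preserving $d_{\mathcal{D}}$.

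\emph{Step 1 (independence of the fundamental domain).} Given fundamental domains $D$ and $D'$, compactness of $D'$ and the fact that the deck-translates of $D$ tile $\tilde{S}$ yield a finite set $F\subset\Pi_{1}(S)$ with $D'\subseteq\bigcup_{\gamma\in F}\gamma(D)$, and, enlarging $F$, we may take $\bigcup_{\gamma\in F}\gamma(D)$ connected. Then $\tilde{g}(D')\subseteq\bigcup_{\gamma\in F}\gamma\,\tilde{g}(D)$ is a connected union of $\#F$ sets of diameter $\delta(\tilde{g}(D))$, hence $\delta(\tilde{g}(D'))\leq\#F\cdot\delta(\tilde{g}(D))$; by symmetry this proves the second assertion of the proposition and reduces the study of $g\mapsto\delta(\tilde{g}(D))$ to $D=D_{0}$. \emph{Step 2 ($\delta(\tilde{\cdot}(D_{0}))$ versus $\mathrm{diam}_{\mathcal{D}}(\tilde{\cdot}(D_{0}))$).} This is a geometry-of-the-tiling comparison, in the spirit of the Milnor--\v{S}varc lemma. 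Each tile of $\mathcal{D}$ has diameter $\delta(D_{0})$ and two adjacent tiles share an edge, so chaining along a $d_{\mathcal{D}}$-geodesic between two tiles that meet a compact set $A$ gives $\delta(A)\leq 2\delta(D_{0})\,(\mathrm{diam}_{\mathcal{D}}(A)+1)$. Conversely, only finitely many tiles lie within metric distance $1$ of a fixed tile, so chaining along a metric geodesic subdivided into unit steps produces constants $\kappa\geq 1$, $\kappa'\geq 0$ with $d_{\mathcal{D}}(T,T')\leq\kappa\,d(T,T')+\kappa'$ for all tiles $T,T'$; since $d(T,T')\leq\delta(A)$ whenever $T$ and $T'$ both meet $A$, this gives $\mathrm{diam}_{\mathcal{D}}(A)\leq\kappa\,\delta(A)+\kappa'$. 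Taking $A=\tilde{g}(D_{0})$ yields the quasi-isometry, with constants independent of $g$.

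\emph{Step 3 (the fragmentation length).} Fix any cover $\mathcal{U}'$ of $S$ by closed discs and half-discs. The displacement estimate from the proof of Proposition~\ref{distorsionimpliquecroissance} applies: if $g=f_{1}\circ\cdots\circ f_{n}$ with each $f_{i}$ supported in an element of $\mathcal{U}'$ and $n=\mathrm{Frag}_{\mathcal{U}'}(g)$, then a disc or half-disc lifts homeomorphically to a compact subset of $\tilde{S}$, so the preferred lift $\tilde{f}_{i}$ preserves each component of $\Pi^{-1}(\mathrm{supp}\,f_{i})$ and displaces every point by at most a finite constant $\mu$; as $\tilde{g}$ and $\tilde{f}_{1}\circ\cdots\circ\tilde{f}_{n}$ differ by an isometry, $\delta(\tilde{g}(D_{0}))\leq 2\mu\,\mathrm{Frag}_{\mathcal{U}'}(g)+\delta(D_{0})$, and with Steps 1--2 this already gives $\mathrm{diam}_{\mathcal{D}}(\tilde{g}(D_{0}))\leq C_{1}\,\mathrm{Frag}_{\mathcal{U}'}(g)+C_{1}'$ for every $\mathcal{U}'$. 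For the converse I use the claim, stated separately below, that $\mathrm{Frag}_{\mathcal{U}'}$ and $\mathrm{Frag}_{\mathcal{U}''}$ are quasi-isometric for any two covers $\mathcal{U}'$, $\mathcal{U}''$ (this is already the first assertion of the proposition). Choosing $\mathcal{U}''$ to be the cover furnished by Proposition~\ref{diamfrag} then gives $\mathrm{Frag}_{\mathcal{U}'}(g)\leq C\,\mathrm{Frag}_{\mathcal{U}''}(g)+C'\leq C_{2}\,\mathrm{diam}_{\mathcal{D}}(\tilde{g}(D_{0}))+C_{2}'$. Combining this with Steps 1--2 shows that $\mathrm{Frag}_{\mathcal{U}}$ (for any cover $\mathcal{U}$), $g\mapsto\delta(\tilde{g}(D))$ (for any fundamental domain $D$), and $g\mapsto\mathrm{diam}_{\mathcal{D}}(\tilde{g}(D_{0}))$ are pairwise quasi-isometric, which is precisely the three assertions.

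\emph{Main obstacle.} Everything above is routine once Proposition~\ref{diamfrag} is granted, except the comparison $\mathrm{Frag}_{\mathcal{U}'}\asymp\mathrm{Frag}_{\mathcal{U}''}$ for arbitrary covers used in Step 3; this is where the real work lies. It reduces to a uniform statement: there is a constant, depending only on $\mathcal{U}'$ and $\mathcal{U}''$, bounding the number of factors needed to write any homeomorphism supported in a single element $B$ of $\mathcal{U}''$ as a product of homeomorphisms each supported in an element of $\mathcal{U}'$. This follows from the fragmentation lemma applied inside the ball or half-ball $B$ together with the Alexander trick, which show that $\mathrm{Homeo}_{0}(B,\partial B)$ has bounded fragmentation length with respect to the finite cover of $B$ induced by $\mathcal{U}'$; fragmenting each factor of an optimal $\mathcal{U}''$-decomposition of $g$ then yields $\mathrm{Frag}_{\mathcal{U}'}(g)\leq C\,\mathrm{Frag}_{\mathcal{U}''}(g)+C'$, and symmetrically. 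Apart from this point, the proof is bookkeeping with the quasi-isometry constants.
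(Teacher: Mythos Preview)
Your proof is correct and follows essentially the same route as the paper: independence of the fundamental domain via a finite covering by translates, comparison of $\delta$ and $\mathrm{diam}_{\mathcal{D}}$ via \v{S}varc--Milnor, the lower bound on $\mathrm{Frag}_{\mathcal{U}}$ by a displacement estimate, and the upper bound by reducing to the special cover of Proposition~\ref{diamfrag} once cover-independence is known. The one place where your sketch is looser than the paper is the ``main obstacle'': the paper makes the bounded fragmentation inside a single ball precise via two separate lemmas (any element of $\mathrm{Homeo}_{0}(B,\partial B)$ is a product of at most $N$ homeomorphisms $\epsilon$-close to the identity, and any such $\epsilon$-small homeomorphism fragments with at most $N'$ factors), whereas you invoke ``the Alexander trick'' --- that is indeed the idea behind the first of these lemmas, but it does not by itself give a uniform bound, so you should be explicit that the Alexander isotopy is chopped into boundedly many small steps before the quantitative fragmentation lemma is applied.
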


When the boundary of the surface $S$ is nonempty, we have an analogous proposition in the case of the group $\mathrm{Homeo}_{0}(S, \partial S)$. As in the last section, let us denote by $S'$ a submanifold with boundary of $S$ homeomorphic to $S$, contained in the interior of $S$, and which is a deformation retract of $S$, and by $\mathcal{U}$ a finite family of closed balls contained in the interior of $S$ whose union of the interiors contains $S'$. Finally, let us denote by $G_{S'}$ the group of homeomorphisms in $\mathrm{Homeo}_{0}(S, \partial S)$ which are supported in $S'$.

\begin{proposition} \label{equiv2}
Let us fix a fundamental domain $D$ of $\tilde{S}$ for the action of the fundamental group of $S$. The following maps $G_{S'} \rightarrow \mathbb{R}$ are quasi-isometric:
\begin{enumerate}
\item The map $\mathrm{Frag}_{\mathcal{U}}$.
\item The map $g \mapsto \delta(\tilde{g}(D))$.
\item The map $g \mapsto \mathrm{diam}_{\mathcal{D}}(\tilde{g}(D_{0}))$.
\end{enumerate}
\end{proposition}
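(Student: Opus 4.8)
The plan is to prove Proposition \ref{equiv2} exactly as one expects Proposition \ref{equiv} to be proved, but restricting everything to the subgroup $G_{S'}$ and using Proposition \ref{diamfrag2} in place of Proposition \ref{diamfrag}. First I would observe that the three maps $\mathrm{Frag}_{\mathcal{U}}$, $g \mapsto \delta(\tilde{g}(D))$ and $g \mapsto \mathrm{diam}_{\mathcal{D}}(\tilde{g}(D_0))$, restricted to $G_{S'}$, are pairwise quasi-isometric once we establish a cycle of three inequalities of the form ``$a(g) \leq C b(g) + C'$''. Since the relation of being quasi-isometric is transitive (up to adjusting constants), it suffices to close the loop: $\mathrm{diam}_{\mathcal{D}}(\tilde{g}(D_0)) \preceq \delta(\tilde{g}(D))$, then $\delta(\tilde{g}(D)) \preceq \mathrm{Frag}_{\mathcal{U}}(g)$, then $\mathrm{Frag}_{\mathcal{U}}(g) \preceq \mathrm{diam}_{\mathcal{D}}(\tilde{g}(D_0))$, where $\preceq$ denotes domination up to a multiplicative and an additive constant.

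For the comparison between $\delta$ and $\mathrm{diam}_{\mathcal{D}}$, I would first note that the two quantities, applied to any compact connected subset $A$ of $\tilde{S}$ that meets $D_0$, are comparable with constants depending only on $D_0$ and the chosen metric: indeed a fundamental domain $\gamma(D_0)$ has bounded diameter, and any two fundamental domains in $\mathcal{D}$ at $d_{\mathcal{D}}$-distance $k$ have their union connected and of metric diameter $\leq k \cdot \delta(D_0) + \delta(D_0)$, giving $\delta(A) \leq (\mathrm{diam}_{\mathcal{D}}(A)+1)\,\delta(D_0)$; conversely a metric ball of radius $r$ can meet at most $\sim e^{Cr}$ fundamental domains, but more usefully, if two points of $A$ lie at metric distance $r$, the number of fundamental domains separating them along a geodesic is at most $C r + C'$ by the Milnor--\v{S}varc-type comparison already implicit in the identity $l_{\mathcal{G}}(\psi) = d_{\mathcal{D}}(D_0, \psi(D_0))$ (the word metric on $\Pi_1(S)$ and the orbit metric on $\tilde{S}$ are quasi-isometric since $S'$, hence $S$, is compact). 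This gives $\mathrm{diam}_{\mathcal{D}}(A) \leq C \delta(A) + C'$ and conversely. Applying this to $A = \tilde{g}(D)$ (which one may assume meets $D_0$ after translating by a deck transformation, using that $\gamma \circ \tilde{g} = \tilde{g} \circ \gamma$ and that $d_{\mathcal{D}}$, $\delta$ are $\Pi_1(S)$-invariant) settles the equivalence of items (2) and (3), and simultaneously the independence statement for $D$ versus $D'$: passing from $D$ to $D'$ changes $\tilde{g}(D)$ to something within bounded Hausdorff distance of $\tilde{g}(D')$ since $D \cup D'$ has bounded diameter and $\tilde{g}$ commutes with deck transformations only up to the isometry $I$ — here I would spell out that $\tilde{f}(D)$ and $\tilde{f}(D')$ both lie within $\delta(D \cup D') + \mu$-neighbourhoods of each other where $\mu$ bounds the displacement, wait, more carefully: since $D,D'$ are both compact and $\Pi(D)=\Pi(D')=S$, there is a finite set $F \subset \Pi_1(S)$ with $D' \subset \bigcup_{\gamma \in F}\gamma(D)$, whence $\tilde{g}(D') \subset \bigcup_{\gamma\in F}\tilde{g}(\gamma(D)) = \bigcup_{\gamma\in F}\gamma(\tilde{g}(D))$ up to the ambiguity isometry, and each $\gamma(\tilde{g}(D))$ has the same diameter as $\tilde{g}(D)$, so $\delta(\tilde{g}(D')) \leq \delta(\tilde{g}(D)) + 2\max_{\gamma\in F}\delta(\gamma(D_0) \cup D_0)$, and symmetrically.

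For $\delta(\tilde{g}(D)) \preceq \mathrm{Frag}_{\mathcal{U}}(g)$: writing $g = f_1 \circ \cdots \circ f_N$ with $N = \mathrm{Frag}_{\mathcal{U}}(g)$ and each $f_i$ supported in a ball of $\mathcal{U}$ contained in the interior of $S$, each $f_i$ lifts to a homeomorphism $\tilde{f}_i$ of $\tilde{S}$ supported in $\Pi^{-1}$ of that ball, i.e. in a disjoint union of lifted balls, each of which has diameter bounded by some constant $\kappa$ depending only on $\mathcal{U}$; hence $\tilde{f}_i$ moves every point by at most $\kappa$, and composing, $\tilde{g}$ moves every point by at most $N\kappa$ (after correcting by the isometry $I$, exactly as in the proof of Proposition \ref{distorsionimpliquecroissance}). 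Therefore $\delta(\tilde{g}(D)) \leq \delta(D) + 2N\kappa$, as needed. The remaining inequality $\mathrm{Frag}_{\mathcal{U}}(g) \preceq \mathrm{diam}_{\mathcal{D}}(\tilde{g}(D_0))$ for $g \in G_{S'}$ is precisely Proposition \ref{diamfrag2}, which is assumed (its proof being deferred to Sections 5--7). Finally, for the equivalence of $\mathrm{Frag}_{\mathcal{U}}$ and $\mathrm{Frag}_{\mathcal{U}'}$ for two covers of $S'$ by interior balls, I would note that each ball of $\mathcal{U}'$, being a closed ball in the interior of $S$, can itself be written using the fragmentation lemma as a product of a bounded number $m$ of homeomorphisms each supported in a ball of $\mathcal{U}$ — more precisely one covers the (compact) ball of $\mathcal{U}'$ by finitely many interiors of balls of $\mathcal{U}$ and applies fragmentation there — so $\mathrm{Frag}_{\mathcal{U}} \leq m\, \mathrm{Frag}_{\mathcal{U}'}$ and symmetrically. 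The main obstacle here is bookkeeping rather than ideas: one must be consistent about the ambiguity isometry $I$ (which is a deck transformation in the torus/annulus case and trivial otherwise, but in any case an isometry, which is all that is used), and one must make sure every ``comparable'' claim between the metric on $\tilde{S}$ and the combinatorial distance $d_{\mathcal{D}}$ is legitimate — this rests entirely on compactness of $S$ (equivalently of $S'$) and the standard fact that a group acting properly cocompactly by isometries is quasi-isometric to its Cayley graph, which is exactly what the identity $l_{\mathcal{G}}(\psi)=d_{\mathcal{D}}(D_0,\psi(D_0))$ encodes.
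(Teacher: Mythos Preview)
Your plan is correct and follows essentially the same route as the paper, which simply says that the proof of Proposition~\ref{equiv2} is ``quite the same'' as that of Proposition~\ref{equiv}: compare $\delta$ and $\mathrm{diam}_{\mathcal{D}}$ via \v{S}varc--Milnor, bound $\mathrm{diam}_{\mathcal{D}}$ (or equivalently $\delta$) above by $\mathrm{Frag}_{\mathcal{U}}$ via a bounded-displacement argument, and close the loop with Proposition~\ref{diamfrag2}. Your metric version of the lower bound on $\mathrm{Frag}_{\mathcal{U}}$ (each $\tilde f_i$ moves points by at most $\kappa$) is a mild simplification of the paper's combinatorial Lemma~\ref{min}, and works just as well.

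Two small corrections. First, your claimed bound $\delta(\tilde g(D')) \leq \delta(\tilde g(D)) + 2\max_{\gamma\in F}\delta(\gamma(D_0)\cup D_0)$ is not true: translates $\gamma(\tilde g(D))$ can be strung out in a chain, so the union may have diameter of order $|F|\cdot\delta(\tilde g(D))$, not $\delta(\tilde g(D))+\mathrm{const}$. The correct (and sufficient) bound is the multiplicative one $\delta(\tilde g(D')) \leq |F|\cdot\delta(\tilde g(D))$, obtained exactly as in the paper's connectedness lemma; this still yields quasi-isometry. Second, your sketch for the equivalence of $\mathrm{Frag}_{\mathcal{U}}$ and $\mathrm{Frag}_{\mathcal{U}'}$ (which, incidentally, is not part of the statement of Proposition~\ref{equiv2}) needs more than the fragmentation lemma: fragmentation alone does not give a \emph{uniform} bound $m$ independent of the homeomorphism. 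The paper combines Lemma~\ref{fragmentation} with Lemma~\ref{d�comp} (any homeomorphism of a ball is a bounded product of homeomorphisms $\epsilon$-close to the identity) to obtain the uniform constant.
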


The proof of this proposition is quite the same as the proof of the previous one: that is why we will not provide it.

These two propositions directly imply Theorems \ref{quasiisom} and \ref{quasiisom2}.

\begin{proof}
Let us prove first that, for any two fundamental domains $D$ and $D'$, the maps $g \mapsto \delta(\tilde{g}(D))$ and $g \mapsto \delta(\tilde{g}(D'))$ are quasi-isometric.
Let us take:
$$\left\{\gamma_{1}, \gamma_{2}, \ldots, \gamma_{p} \right\}= \left\{ \gamma \in \Pi_{1}(S), \ D' \cap \gamma(D) \neq \emptyset \right\}.$$
Notice that:
$$D' \subset \bigcup_{i=1}^{p} \gamma_{i}(D)$$
and the right-hand side is path-connected. Then:
$$\tilde{g}(D') \subset \bigcup_{i=1}^{p} \tilde{g}(\gamma_{i}(D)).$$
Then the lemma below implies that:
$$\delta(\tilde{g}(D')) \leq p\delta(\tilde{g}(D)).$$
As the fundamental domains $D$ and $D'$ play symmetric roles, this implies that the maps $g \mapsto \delta(\tilde{g}(D))$ and $g \mapsto \delta(\tilde{g}(D'))$ are quasi-isometric.

\begin{lemma}
Let $X$ be a path-connected metric space. Let $(A_{i})_{1 \leq i \leq p}$ be a family of closed subsets of $X$ such that:
$$X= \bigcup_{i=1}^{p} A_{i}.$$
Then:
$$\delta(X)= \sup_{x \in X, y \in X}d(x,y) \leq p \max_{1 \leq i \leq p} \delta(A_{i}).$$
\end{lemma}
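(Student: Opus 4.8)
The plan is to prove the following slightly more flexible statement by induction on $p$: \emph{if a nonempty path-connected metric space $X$ is the union of $p$ closed subsets $A_{1},\dots,A_{p}$, then $\delta(X)\le p\,M$, where $M=\max_{1\le i\le p}\delta(A_{i})$}. It suffices to bound $d(x,y)$ by $pM$ for an arbitrary pair $x,y\in X$. The case $p=1$ is immediate, since then $X=A_{1}$ and $\delta(X)=\delta(A_{1})\le M$.

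For the inductive step, fix $x,y\in X$ and a path $\gamma\colon[0,1]\to X$ with $\gamma(0)=x$ and $\gamma(1)=y$; after relabelling the $A_{i}$ we may assume $x\in A_{1}$. Set $C_{1}=\gamma^{-1}(A_{1})$, a closed subset of $[0,1]$ containing $0$, and let $t_{1}=\sup C_{1}$. Since $C_{1}$ is closed, $\gamma(t_{1})\in A_{1}$, hence $d(x,\gamma(t_{1}))\le\delta(A_{1})\le M$. If $t_{1}=1$, then $y=\gamma(1)\in A_{1}$ as well and $d(x,y)\le M\le pM$, so we are done; otherwise consider $Y=\gamma([t_{1},1])$, which is path-connected (being the image of a subinterval under $\gamma$) and contains both $\gamma(t_{1})$ and $y$.

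The key point is that $Y$ is covered by only $p-1$ of the sets. Indeed, the closed sets $B_{j}=\gamma^{-1}(A_{j})\cap[t_{1},1]$, for $j=2,\dots,p$, cover $(t_{1},1]$; their union is closed in $[t_{1},1]$ and therefore contains the point $t_{1}$ too, so $\gamma(t_{1})\in A_{j}$ for some $j\ge2$. Consequently $Y=\bigcup_{j=2}^{p}(A_{j}\cap Y)$ is a union of $p-1$ closed subsets of $Y$, each of diameter at most $M$. By the induction hypothesis, $\delta(Y)\le(p-1)M$, so in particular $d(\gamma(t_{1}),y)\le(p-1)M$. The triangle inequality then gives
$$d(x,y)\le d(x,\gamma(t_{1}))+d(\gamma(t_{1}),y)\le M+(p-1)M=pM,$$
which proves the claim, and hence the lemma, since $x$ and $y$ were arbitrary.

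The one step worth stating carefully — and the only place where anything beyond bookkeeping happens — is this "absorption": a priori $Y$ still meets $A_{1}$ (at the point $\gamma(t_{1})$), but because the traces of $A_{2},\dots,A_{p}$ on $[t_{1},1]$ form a \emph{closed} cover of the dense subset $(t_{1},1]$, they automatically cover $t_{1}$ as well, which is exactly what lets us discard $A_{1}$ and invoke the inductive hypothesis for $p-1$ sets. Everything else (reparametrising $\gamma|_{[t_{1},1]}$ on $[0,1]$ if one insists on that domain, discarding empty $A_{i}$, the degenerate cases $t_{1}\in\{0,1\}$) is routine and does not affect the constant $p$.
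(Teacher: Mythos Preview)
Your proof is correct and follows essentially the same path-and-chain idea as the paper. The paper simply asserts, without justification, the existence of a chain $x=x_{1},\dots,x_{k+1}=y$ with $k\le p$ and each consecutive pair lying in a common $A_{\sigma(i)}$ for an injection $\sigma$; your induction on $p$, peeling off $A_{1}$ at the last time the path visits it, is exactly a rigorous construction of that chain, and the ``absorption'' step you highlight is the one point the paper takes for granted.
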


\begin{proof}
Let $x$ and $y$ be two points in $X$. By path-connectedness of $X$, there exists an integer $k$ between $1$ and $p$, an injection $\sigma : [1,k] \cap \mathbb{Z} \rightarrow [1,p] \cap \mathbb{Z}$ and a sequence $(x_{i})_{1 \leq i \leq k+1}$ of points in $X$ which satisfy the following properties:
\begin{enumerate}
\item $x_{1}=x$ and $x_{k+1}=y$.
\item For any index $i$ between $1$ and $k$, the points $x_{i}$ and $x_{i+1}$ both belong to $A_{\sigma(i)}$.
\end{enumerate}
Then:
$$ \begin{array}{rcl}
d(x,y) & \leq & \sum \limits _{i=1}^{k}d(x_{i},x_{i+1}) \\
 & \leq & \sum \limits _{i=1}^{k} \delta(A_{\sigma(i)}) \\
 & \leq & p \max \limits _{1 \leq i \leq p} \delta(A_{i}).
\end{array}
$$
This last inequality implies the lemma.
\end{proof}

Let us show now that, for two finite families $\mathcal{U}$ and $\mathcal{U}'$  as in the statement of Proposition \ref{equiv}, the maps $\mathrm{Frag}_{\mathcal{U}}$ and $\mathrm{Frag}_{\mathcal{U}'}$ are quasi-isometric. The proof of this fact requires the following lemmas.

\begin{lemma}\label{décomp}
Let $\epsilon>0$. Let us denote by $B$ the unit closed ball of $\mathbb{R}^{d}$. There exists an integer $N \geq 0$ such that any homeomorphism in $\mathrm{Homeo}_{0}(B, \partial B)$ can be written as a composition of at most $N$ homeomorphisms in $\mathrm{Homeo}_{0}(B, \partial B)$ $\epsilon$-close to the identity (for a distance which defines the $C^{0}$-topology on this group).
\end{lemma}

\begin{lemma}\label{fragmentation}
Let $M$ be a compact manifold and $\left\{U_{1}, U_{2}, \ldots , U_{p} \right\}$ be an open cover of $M$. There exist $\epsilon>0$ and an integer $N'>0$ such that, for any homeomorphism $g$ in $\mathrm{Homeo}_{0}(M)$ (respectively in $\mathrm{Homeo}_{0}(M, \partial M)$) $\epsilon$-close to the identity, there exist homeomorphisms $g_{1}, \ldots, g_{N'}$ in $\mathrm{Homeo}_{0}(M)$ (respectively in $\mathrm{Homeo}_{0}(M, \partial M)$) such that:
\begin{enumerate}
\item Each homeomorphism $g_{i}$ is supported in one of the $U_{j}$'s.
\item $g=g_{1} \circ g_{2} \circ \ldots \circ g_{N'}$.
\end{enumerate}
\end{lemma}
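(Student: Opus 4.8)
The plan is to prove the lemma with $N'=p$, the number of sets in the cover, by peeling off one homeomorphism per element of $\mathcal{U}$ and using the uniform local contractibility of homeomorphism groups to keep everything close to the identity. First I would shrink the cover: since $M$ is compact there is an open cover $\{V_{j}\}_{1\leq j\leq p}$ of $M$ with $\overline{V_{j}}\subset U_{j}$ for each $j$. Set $A_{0}=\emptyset$ and $A_{j}=\overline{V_{1}}\cup\cdots\cup\overline{V_{j}}$, so that $A_{p}=M$; the scheme is to produce $g_{1},\ldots,g_{p}$ with $g_{j}$ supported in $U_{j}$ and such that $(g_{1}\circ\cdots\circ g_{j})^{-1}\circ g$ is the identity on a neighbourhood of $A_{j}$, the case $j=p$ giving $g=g_{1}\circ\cdots\circ g_{p}$.

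The essential input is the Edwards--Kirby (respectively Chernavskii) local deformation theorem on uniform local contractibility of $\mathrm{Homeo}(M)$ with support control: given a compact set $C$ and an open set $U$ with $C\subset U\subset M$, there is a path-connected $C^{0}$-neighbourhood $\mathcal{N}$ of the identity in $\mathrm{Homeo}(M)$ and a continuous map $\theta:\mathcal{N}\rightarrow\mathrm{Homeo}(M)$ with $\theta(\mathrm{id})=\mathrm{id}$, such that $\theta(h)$ is supported in $U$, $\theta(h)$ coincides with $h$ on a neighbourhood of $C$, and $\theta(h)$ is the identity on the open set where $h$ is the identity. Because $\mathcal{N}$ is path-connected and $\theta$ is continuous with $\theta(\mathrm{id})=\mathrm{id}$, every $\theta(h)$ is isotopic to the identity through homeomorphisms supported in $U$; in particular $\theta(h)\in\mathrm{Homeo}_{0}(M)$. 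Applying this with $C=\overline{V_{j}}$ and $U=U_{j}$ yields, for each $j$, a neighbourhood $\mathcal{N}_{j}$ of the identity and a continuous map $\theta_{j}$ such that, if $h\in\mathcal{N}_{j}$, then $g_{j}:=\theta_{j}(h)$ is supported in $U_{j}$ and $g_{j}^{-1}\circ h$ is the identity on a neighbourhood of $\overline{V_{j}}$, and moreover $g_{j}^{-1}\circ h$ is the identity wherever $h$ already was.

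To assemble these, consider the continuous maps $\phi_{j}:\mathcal{N}_{j}\rightarrow\mathrm{Homeo}(M)$, $\phi_{j}(h)=\theta_{j}(h)^{-1}\circ h$, which satisfy $\phi_{j}(\mathrm{id})=\mathrm{id}$ and carry a near-identity homeomorphism that is the identity on a neighbourhood of $A_{j-1}$ to one that is the identity on a neighbourhood of $A_{j}=A_{j-1}\cup\overline{V_{j}}$. Since each $\phi_{j}$ is continuous and fixes the identity, a finite backward induction lets one choose a $C^{0}$-neighbourhood of the identity --- i.e. an $\epsilon>0$ --- small enough that the composition $\phi_{p}\circ\phi_{p-1}\circ\cdots\circ\phi_{1}$ is defined on it: one first shrinks $\mathcal{N}_{p}$ to $\mathcal{N}_{p-1}\cap\phi_{p-1}^{-1}(\mathcal{N}_{p})$, then continues down to $j=1$. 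For $g$ in this neighbourhood, set $h_{0}=g$ and $h_{j}=\phi_{j}(h_{j-1})$, $g_{j}=\theta_{j}(h_{j-1})$; then each $g_{j}$ is supported in $U_{j}$, each $h_{j}$ is the identity on a neighbourhood of $A_{j}$, and $h_{p}$ is the identity on a neighbourhood of $A_{p}=M$, hence $h_{p}=\mathrm{id}$. Unwinding $h_{j}=g_{j}^{-1}\circ h_{j-1}$ gives $g=g_{1}\circ g_{2}\circ\cdots\circ g_{p}$, which is the conclusion with $N'=p$.

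For the relative statement, if $g\in\mathrm{Homeo}_{0}(M,\partial M)$ is $\epsilon$-close to the identity it is the identity on a neighbourhood of $\partial M$; since each $\theta_{j}$ keeps the identity where its argument is the identity, every $g_{j}$ is the identity near $\partial M$, and by the argument above lies in $\mathrm{Homeo}_{0}(M,\partial M)$, so the same decomposition works verbatim (one may also simply run the construction on the boundary-compatible version of the deformation theorem). The only real content lies in the Edwards--Kirby--Chernavskii deformation theorem; the delicate bookkeeping point is precisely the backward choice of $\epsilon$ guaranteeing that each partially fragmented homeomorphism $h_{j-1}$ still lies in the domain $\mathcal{N}_{j}$ of the next deformation, which works because there are only finitely many steps and each $\theta_{j}$ depends continuously on its argument and fixes the identity.
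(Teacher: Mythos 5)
Your argument is the classical one that the paper does not write out but delegates to the proof of Theorem 1.2.3 in \cite{Bou}: shrink the cover, use the Edwards--Kirby deformation theorem to peel off, one element of the cover at a time, a factor supported in $U_{j}$ which agrees with the remaining homeomorphism near $\overline{V_{j}}$, and fix $\epsilon$ by a finite backward induction using continuity of the deformations at the identity; this yields $N'=p$ and is exactly the scheme behind the cited proof. The one point you should correct is the property you attribute to the deformation $\theta$, namely that $\theta(h)$ is the identity on the (varying) open set where $h$ is the identity: the Edwards--Kirby theorem does not give this pointwise locality, and the torus-trick construction does not satisfy it. What it does give, and what your argument actually needs, is the relative version with respect to a closed set chosen in advance: if $D$ is closed and $h$ equals the identity on a neighbourhood of $D$, then $\theta(h)$ can be taken to be the identity on a neighbourhood of $D$ as well, with $\theta$ depending on $D$. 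In your induction the relevant closed set $A_{j-1}=\overline{V_{1}}\cup\dots\cup\overline{V_{j-1}}$ is known beforehand, so each $\theta_{j}$ should be constructed relative to $A_{j-1}$ (its domain and construction then depend on $A_{j-1}$, not only on the pair $(\overline{V_{j}},U_{j})$); likewise the relative case is handled by taking all deformations relative to the fixed closed set $\partial M$, which keeps every $g_{j}$ in $\mathrm{Homeo}_{0}(M,\partial M)$. With this substitution the bookkeeping you describe goes through unchanged, and the proof coincides with the classical argument the paper invokes for Lemma \ref{fragmentation}.
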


Lemma \ref{décomp} is a consequence of Lemma 5.2 in \cite{BCLP} (notice that the proof works in dimensions higher than $2$). Lemma \ref{fragmentation} is classical. It is a consequence of the proof of Theorem 1.2.3 in \cite{Bou}. These two lemmas imply that, for an open cover of the disc $\mathbb{D}^{2}$, there exists an integer $N$ such that any homeomorphism in $\mathrm{Homeo}_{0}(\mathbb{D}^{2}, \partial \mathbb{D}^{2})$ can be written as a composition of at most $N$ homeomorphisms supported each in one of the open sets of the covering. Now, for an element $U$ in $\mathcal{U}$, we denote by $U \cap \mathcal{U}'$ the cover of $U$ given by the intersections of the elements of $\mathcal{U}'$ with $U$. The application of this last result to the ball $U$ with the cover $U \cap \mathcal{U}'$ gives us a constant $N_{U}$. Let us denote by $\mathbf{N}$ the maximum of the $N_{U}$, where $U$ varies over $\mathcal{U}$. We directly obtain  that, for any homeomorphism $g$:
$$\mathrm{Frag}_{\mathcal{U}'}(g) \leq \mathbf{N} \mathrm{Frag}_{\mathcal{U}}(g).$$
As the two covers $\mathcal{U}$ and $\mathcal{U}'$ play symmetric roles, the fact is proved. Notice that this fact is true in any dimension.

Using a quasi-isometry between the metric spaces $(\Pi_{1}(S),d_{S})$ and $\tilde{S}$, we will prove the following lemma which implies that the last two maps in the proposition are quasi-isometric:

\begin{lemma}
There exist constants $C \geq 1$ and $C' \geq 0$ such that, for any compact subset $A$ of $\tilde{S}$:
$$\frac{1}{C} \delta(A)-C' \leq \mathrm{diam}_{\mathcal{D}}(A) \leq C \delta (A) +C'.$$
\end{lemma}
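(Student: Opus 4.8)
The plan is to compare the continuous metric $d$ on $\tilde S$ with the combinatorial metric $d_{\mathcal D}$ on the orbit of the fundamental domain $D_0$, and then promote this comparison to compact subsets. The guiding principle is the Milnor--\v{S}varc lemma: $\Pi_1(S)$ acts on $\tilde S$ properly discontinuously and cocompactly by isometries, so the orbit map $\gamma \mapsto \gamma(x_0)$ is a quasi-isometry between $(\Pi_1(S),d_{\mathcal G})$ and $(\tilde S, d)$, where $x_0$ is a fixed interior point of $D_0$. Since $d_{\mathcal G}(\varphi,\psi)=d_{\mathcal D}(\varphi(D_0),\psi(D_0))$ by the identity established earlier in this section, this already gives constants $a\ge 1$, $b\ge 0$ with
$$ \frac{1}{a} d_{\mathcal D}(\varphi(D_0),\psi(D_0)) - b \le d(\varphi(x_0),\psi(x_0)) \le a\, d_{\mathcal D}(\varphi(D_0),\psi(D_0)) + b $$
for all $\varphi,\psi \in \Pi_1(S)$.

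First I would make this explicit rather than quote Milnor--\v{S}varc, since the constants matter. Set $R_0=\delta(D_0)$ (the diameter of $D_0$). For the upper bound on $d$ in terms of $d_{\mathcal D}$: if $D$ and $D'$ are fundamental domains in $\mathcal D$ with $d_{\mathcal D}(D,D')=k$, then there is a chain $D=E_0,E_1,\dots,E_{k}=D'$ of consecutively adjacent tiles, so any two points of $D\cup D'$ are joined within $E_0\cup\cdots\cup E_k$ and the triangle inequality gives distance at most $(k+1)R_0$. For the lower bound: since the action is properly discontinuous and $D_0$ is compact, only finitely many translates $\gamma(D_0)$ meet any fixed ball; more quantitatively, there is a constant $\rho>0$ such that any two non-adjacent-in-a-short-chain tiles are far apart — concretely, one shows that if $d(x,y)\le t$ with $x\in D$, $y\in D'$, then $d_{\mathcal D}(D,D')\le \lambda t + \mu$ for fixed $\lambda,\mu$ (this is where one uses that a geodesic of length $t$ crosses at most $O(t)$ tiles, which follows from a positive lower bound on the ``width'' of $D_0$ transverse to its edges, itself a consequence of compactness and property (2) of fundamental domains).

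Next I would pass from points to compact sets $A$. Choose $D_A\in\mathcal D$ with $D_A\cap A\neq\emptyset$. Every tile meeting $A$ lies within $d_{\mathcal D}$-distance $\mathrm{diam}_{\mathcal D}(A)$ of $D_A$ (well, within $\mathrm{el}_{D_A}(A)\le\mathrm{diam}_{\mathcal D}(A)$), hence within $d$-distance $a\,\mathrm{diam}_{\mathcal D}(A)+b+2R_0$ of $x_0^{A}$, the center of $D_A$; since $A$ is covered by the tiles it meets, $\delta(A)\le 2a\,\mathrm{diam}_{\mathcal D}(A)+2b+4R_0$, giving the right-hand inequality. Conversely, pick $x,y\in A$ realizing (nearly) $\delta(A)$, lying in tiles $D_x,D_y$ meeting $A$; then $d_{\mathcal D}(D_x,D_y)\le \lambda\, d(x,y)+\mu \le \lambda\,\delta(A)+\mu$, and since $D_x\cap A\neq\emptyset\neq D_y\cap A$ this shows $\mathrm{diam}_{\mathcal D}(A)\ge$ ... wait, that is the wrong direction — $\mathrm{diam}_{\mathcal D}(A)$ is a \emph{max}, so from one pair we get a \emph{lower} bound on $\mathrm{diam}_{\mathcal D}(A)$ only if those tiles realize the max. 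Instead: take $D,D'$ realizing $\mathrm{diam}_{\mathcal D}(A)=d_{\mathcal D}(D,D')$ with $D,D'$ meeting $A$; pick $x\in D\cap A$, $y\in D'\cap A$; then $\delta(A)\ge d(x,y)\ge \frac1a d_{\mathcal D}(D,D') - b - 2R_0 = \frac1a\mathrm{diam}_{\mathcal D}(A)-b-2R_0$, which rearranges to the left-hand inequality with suitable $C,C'$.

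\textbf{Main obstacle.} The subtle point — and the only place real geometry enters — is the linear bound ``a $d$-geodesic (or just a rectifiable path) of length $t$ meets at most $O(t)$ tiles of $\mathcal D$'', equivalently the lower bound $d(x,y)\ge \frac1a d_{\mathcal D}(D_x,D_y)-b$. In the hyperbolic case one must be careful that $D_0$ is a geodesic polygon with possibly some vertices at infinity / edges on $\partial\tilde S$ when $S$ has boundary, so ``$D_0$ compact'' needs the convention that $\tilde S\subset\mathbb H^2$ is the chosen lift and $D_0$ genuinely compact there; granting that, compactness of $D_0$ plus proper discontinuity of the action yields a uniform $\eta>0$ with $d(D_0,\gamma(D_0))\ge\eta$ whenever $\gamma(D_0)$ is not adjacent to $D_0$, and iterating along a chain covering a geodesic gives the bound. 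I would isolate this as a short sublemma. Everything else is the triangle inequality and bookkeeping of additive constants.
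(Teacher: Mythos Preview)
Your approach is essentially the same as the paper's: both invoke the \v{S}varc--Milnor lemma for the orbit map $\gamma\mapsto\gamma(x_0)$ and then transfer the resulting quasi-isometry to arbitrary compact sets. The paper is more economical: it simply cites \v{S}varc--Milnor (so your ``Main obstacle'' and the worry about compactness of $D_0$ are unnecessary --- $D_0$ is compact by definition here), and it packages the passage to compact sets via the auxiliary set $B=\bigcup\{D\in\mathcal D:D\cap A\neq\emptyset\}$, observing that $\mathrm{diam}_{\mathcal D}(A)$ is the $d_{\mathcal G}$-diameter of $q^{-1}(B)$ together with the elementary inequalities $\delta(B)-2\delta(D_0)\le\delta(A)\le\delta(B)$.
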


\begin{proof} Let us fix a point $x_{0}$ in the interior of $D_{0}$. The map:
$$\begin{array}{rrcl}
q: & \Pi_{1}(S) & \rightarrow & \tilde{S} \\
 & \gamma & \mapsto & \gamma(x_{0})
\end{array}
$$
is a quasi-isometry for the distance $d_{\mathcal{G}}$ and the distance on $\tilde{S}$ (this is the \v{S}varc-Milnor lemma, see \cite{Har} p.87). We notice that, for a compact subset $A$ of $\tilde{S}$, the number $\mathrm{diam}_{\mathcal{D}}(A)$ is equal to the diameter of $q^{-1}(B)$ for the distance $d_{\mathcal{G}}$, where 
$$B=\bigcup \left\{D, \ D \in \mathcal{D} \ D \cap A \neq \emptyset \right\}.$$
We deduce that there exist constants $C_{1} \geq 1$ and $C_{1}'\geq 0$ independent from $A$ such that:
$$ \frac{1}{C_{1}} \delta(B)-C_{1}' \leq \mathrm{diam}_{\mathcal{D}}(A) \leq C_{1} \delta(B)+C_{1}'.$$
The inequalities
$$ \delta(B) - 2 \delta(D_{0}) \leq \delta(A) \leq \delta(B),$$
complete the proof of the lemma.
\end{proof}

We now prove that, for any cover $\mathcal{U}$ as in the statement of Proposition \ref{equiv}, there exist constants $C \geq 1$ and $C' \geq 0$ such that, for any homeomorphism $g$ in $\mathrm{Homeo}_{0}(S)$:
$$ \frac{1}{C}\mathrm{diam}_{\mathcal{D}}(\tilde{g}(D_{0}))-C' \leq \mathrm{Frag}_{\mathcal{U}}(g).$$
Let us fix such a family $\mathcal{U}$. We will need the following lemma that we will prove later:

\begin{lemma} \label{min}
There exists a constant $C >0$ such that, for any compact subset $A$ of $\tilde{S}$ and any homeomorphism $g$ supported in one of the sets in $\mathcal{U}$:
$$ \mathrm{diam}_{\mathcal{D}}(\tilde{g}(A)) \geq \mathrm{diam}_{\mathcal{D}}(A)-C.$$
\end{lemma}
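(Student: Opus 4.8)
The plan is to exploit the fact that $g$ is supported in a single element $U$ of $\mathcal{U}$, which is a closed disc or half-disc and hence simply connected. Consequently $\Pi^{-1}(U)$ is the disjoint union $\bigsqcup_{\gamma \in \Pi_{1}(S)} \gamma(\tilde U)$, where $\tilde U$ is one fixed connected component; each $\gamma(\tilde U)$ is mapped homeomorphically onto $U$ by $\Pi$, so it is compact, meets only finitely many elements of $\mathcal{D}$, and satisfies $\mathrm{diam}_{\mathcal{D}}(\gamma(\tilde U)) = \mathrm{diam}_{\mathcal{D}}(\tilde U) < +\infty$ by $\Pi_{1}(S)$-invariance of $d_{\mathcal{D}}$. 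First I would check, from the normalization of the lift, that $\tilde g$ is the identity on $\tilde S \setminus \Pi^{-1}(U)$ and maps each component $\gamma(\tilde U)$ onto itself — the second point because $\Pi \circ \tilde g = g \circ \Pi$ and $g(U) = U$ force $\tilde g$ to permute the components of $\Pi^{-1}(U)$, while $\tilde g$ fixes the frontier of each $\gamma(\tilde U)$, so it preserves each of them. Then I would set $k = \max_{U \in \mathcal{U}} \mathrm{diam}_{\mathcal{D}}(\tilde U)$, which is finite since $\mathcal{U}$ is finite, and claim that $C = 2k$ works.

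For the estimate, fix a nonempty compact set $A \subseteq \tilde S$ and choose $D, D' \in \mathcal{D}$ meeting $A$ with $d_{\mathcal{D}}(D, D') = \mathrm{diam}_{\mathcal{D}}(A)$; pick $x \in D \cap A$ and $x' \in D' \cap A$. I would produce $E \in \mathcal{D}$ with $E \cap \tilde g(A) \neq \emptyset$ and $d_{\mathcal{D}}(D, E) \leq k$: if $x \notin \Pi^{-1}(U)$ then $\tilde g(x) = x$ and $E = D$ works; otherwise $x \in \gamma(\tilde U)$ for some $\gamma$, so $D \cap \gamma(\tilde U) \neq \emptyset$ and also $\tilde g(x) \in \gamma(\tilde U) \cap \tilde g(A)$, hence $\tilde g(x)$ lies in some $E \in \mathcal{D}$ which meets both $\gamma(\tilde U)$ and $\tilde g(A)$, giving $d_{\mathcal{D}}(D, E) \leq \mathrm{diam}_{\mathcal{D}}(\gamma(\tilde U)) = \mathrm{diam}_{\mathcal{D}}(\tilde U) \leq k$. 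Applying the same argument to $x'$ gives $E' \in \mathcal{D}$ with $E' \cap \tilde g(A) \neq \emptyset$ and $d_{\mathcal{D}}(D', E') \leq k$.

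Finally the triangle inequality for $d_{\mathcal{D}}$ yields
$$ \mathrm{diam}_{\mathcal{D}}(\tilde g(A)) \geq d_{\mathcal{D}}(E, E') \geq d_{\mathcal{D}}(D, D') - d_{\mathcal{D}}(D, E) - d_{\mathcal{D}}(D', E') \geq \mathrm{diam}_{\mathcal{D}}(A) - 2k, $$
which is the desired bound with $C = 2k$ (the case $A = \emptyset$ being trivial). The only delicate step is the first one: pinning down how the canonical lift $\tilde g$ behaves over the ball $U$ — that it is the identity off $\Pi^{-1}(U)$ and preserves each component $\gamma(\tilde U)$ — which rests on the normalization of lifts and on the simple-connectedness of the balls of $\mathcal{U}$; granting that, the remainder is a direct application of the triangle inequality together with the compactness of the lifts of the balls.
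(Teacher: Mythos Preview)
Your argument is correct and is essentially the paper's own proof: the paper defines $\mu = \max_{U \in \mathcal{U}} \mathrm{diam}_{\mathcal{D}}(\tilde U)$ (your $k$), picks extremal $D,D'$ and points $x,x'$, uses that $\tilde g$ maps each lift of $U$ into itself to find $\hat D,\hat D'$ at $d_{\mathcal{D}}$-distance at most $\mu$ from $D,D'$, and concludes via the triangle inequality with $C=2\mu$. Your extra paragraph justifying that $\tilde g$ is the identity off $\Pi^{-1}(U)$ and preserves each component of $\Pi^{-1}(U)$ makes explicit what the paper takes from its earlier normalization of the lift $\tilde f$ for homeomorphisms supported in a disc.
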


Take $k=\mathrm{Frag}_{\mathcal{U}}(g)$ and:
$$ g= g_{1} \circ g_{2} \circ \ldots \circ g_{k},$$
where each homeomorphism $g_{i}$ is supported in one of the elements of $\mathcal{U}$. Then:
$$I \circ \tilde{g}= \tilde{g}_{1} \circ \tilde{g}_{2} \circ \ldots \circ \tilde{g}_{k},$$
where $I$ is a deck transformation (and an isometry).
Lemma \ref{min} combined with an induction implies that:
$$ \forall j \in [1, k] \cap \mathbb{Z}, \  \mathrm{diam}_{\mathcal{D}}(\tilde{g}_{j}^{-1} \circ \ldots \circ \tilde{g}_{1}^{-1} \circ \tilde{g}(D_{0})) \geq \mathrm{diam}_{\mathcal{D}}(\tilde{g}(D_{0})) - jC,$$
as the homeomorphisms $\tilde{g}_{i}$ commute with $I$.
Hence:
$$ 2=\mathrm{diam}_{\mathcal{D}}(\tilde{g}_{k}^{-1} \circ \ldots \circ \tilde{g}_{1}^{-1} \circ \tilde{g}(D_{0})) \geq \mathrm{diam}_{\mathcal{D}}(\tilde{g}(D_{0})) - kC.$$
Therefore:
$$\mathrm{Frag}_{\mathcal{U}}(g) \geq \frac{1}{C} \mathrm{diam}_{\mathcal{D}}(\tilde{g}(D_{0}))- \frac{2}{C}.$$
We obtain the lower bound wanted.

\begin{proof}[Proof of Lemma \ref{min}]
For an element $U$ in $\mathcal{U}$, we denote by $\tilde{U}$ a lift of $U$, \emph{i.e.} a connected component of $\Pi^{-1}(U)$. Let us take:
$$ \mu(U)= \mathrm{diam}_{\mathcal{D}}(\tilde{U}).$$
This quantity does not depend on the lift $\tilde{U}$ chosen. We denote by $\mu$ the maximum of the $\mu(U)$, for $U$ in $\mathcal{U}$.

We denote by $U_{g}$ an element in $\mathcal{U}$ which contains the support of $g$. Let us consider two fundamental domains $D$ and $D'$ which meet $A$ and which satisfy the following relation:
$$ d_{\mathcal{D}}(D,D')=\mathrm{diam}_{\mathcal{D}}(A).$$
Let us take a point $x$ in $D \cap A$ and a point $x'$ in $D' \cap A$. If the point $x$ belongs to $\Pi^{-1}(U_{g})$, we denote by $\tilde{U}_{g}$ the lift of $U_{g}$ which contains $x$. Then the point $\tilde{g}(x)$ belongs to $\tilde{U}_{g}$ and a fundamental domain $\hat{D}$ which contains the point $\tilde{g}(x)$ is at distance at most $\mu$ from $D$ (for $d_{\mathcal{D}}$). Hence, in any case, there exists a fundamental domain $\hat{D}$ which contains the point $\tilde{g}(x)$ and is at distance at most $\mu$ from $D$. Similarly, there exists a fundamental domain $\hat{D}'$ which contains the point $\tilde{g}(x')$ and is at distance at most $\mu$ from $D'$. Therefore:
$$ d_{\mathcal{D}}(\hat{D},\hat{D'}) \geq d_{\mathcal{D}}(D,D') -2\mu.$$
We deduce that:
$$\mathrm{diam}_{\mathcal{D}}(\tilde{g}(A)) \geq \mathrm{diam}_{\mathcal{D}}(A)-2\mu,$$
what we wanted to prove.
\end{proof}

Thus, to complete the proof of Proposition \ref{equiv}, it suffices to prove Proposition \ref{diamfrag}.
\end{proof}

It suffices now to find a finite family $\mathcal{U}$ for which Proposition \ref{diamfrag} or \ref{diamfrag2} holds. We will distinguish the following cases. A section is devoted to each of them:
\begin{enumerate}
\item The surface $S$ has a nonempty boundary (Section 5).
\item The surface $S$ is the torus (Section 6).
\item The surface $S$ is closed of genus greater than one (Section 7).
\end{enumerate}
The proof of Propositions \ref{diamfrag} and \ref{diamfrag2}, in each of these cases, consists in putting back the boundary of $\tilde{g}(D_{0})$ close to the boundary of $\partial D_{0}$ by composing by homeomorphisms supported each in the interior of one of the balls of a well-chosen cover $\mathcal{U}$. Most of the time, after composing by a homeomorphism supported in the interior of one of the balls of $\mathcal{U}$, the image of the fundamental domain $D_{0}$ will not meet faces which were not met before the composition. However, it will not be always possible, which explains the technicality of parts of the proof. Then, we will have to assure that, after composing by a uniformly bounded number of homeomorphisms supported in interiors of balls of $\mathcal{U}$, the image of the boundary of $D_{0}$ will be strictly closer to $D_{0}$ than before. 

\section{Distortion and fragmentation on manifolds}

In this section, $M$ is a compact $d$-dimensional manifold, possibly with boundary.
Let us fix a finite family $\mathcal{U}$ of closed balls or half-balls of $M$ whose interiors cover $M$. For a homeomorphism $g$ in $\mathrm{Homeo}_{0}(M)$, we denote by $a_{\mathcal{U}}(g)$ the minimum of the quantities $l.log(k)$, where there exists a finite set $\left\{ f_{i}, \ 1 \leq i \leq k \right\}$ of $k$ homeomorphisms in $\mathrm{Homeo}_{0}(M)$, each supported in one of the elements of $\mathcal{U}$, and a map $ \nu : [1,l] \cap \mathbb{Z}_{>0} \rightarrow [1,k] \cap \mathbb{Z}_{>0}$ with:
$$g=f_{\nu(1)} \circ f_{\nu(2)} \circ \ldots \circ f_{\nu(l)}.$$

The aim of this section is to prove the following proposition:
\begin{proposition} \label{fragdist}
Let $f$ be a homeomorphism in $\mathrm{Homeo}_{0}(M)$.
Then:
$$ \lim_{n \rightarrow + \infty} \frac{a_{\mathcal{U}}(f^{n})}{n}=0$$
if and only if the homeomorphism $f$ is a distortion element in $\mathrm{Homeo}_{0}(M)$.
\end{proposition}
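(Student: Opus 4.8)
The plan is to prove the two implications of Proposition~\ref{fragdist} separately; only the converse requires real work, and it follows Avila's method (\cite{Avi}) together with the ``portmanteau'' constructions of Calegari--Freedman (\cite{CF}).

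\emph{Distortion $\Rightarrow$ sublinear growth of $a_{\mathcal{U}}$.} This direction is elementary. Suppose $f$ lies in a finitely generated subgroup $\langle \mathcal{G}\rangle$ of $\mathrm{Homeo}_{0}(M)$ with $l_{\mathcal{G}}(f^{n})/n\to 0$. By the fragmentation lemma each of the finitely many generators $g\in\mathcal{G}$ is a product of at most $N$ homeomorphisms, each supported in an element of $\mathcal{U}$; let $\mathcal{F}$ be the finite set consisting of all these factors and their inverses. Replacing each letter in a word of minimal length for $f^{n}$ over $\mathcal{G}$ by the corresponding product of elements of $\mathcal{F}$ exhibits $f^{n}$ as a product of at most $N\,l_{\mathcal{G}}(f^{n})$ elements drawn from the \emph{fixed} finite family $\mathcal{F}$, so in the definition of $a_{\mathcal{U}}$ one may take $l=N\,l_{\mathcal{G}}(f^{n})$ and $k=\#\mathcal{F}$. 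Hence $a_{\mathcal{U}}(f^{n})\leq N\log(\#\mathcal{F})\,l_{\mathcal{G}}(f^{n})=o(n)$.

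\emph{Sublinear growth of $a_{\mathcal{U}}$ $\Rightarrow$ distortion.} Assume $a_{\mathcal{U}}(f^{n})/n\to 0$; I want a finitely generated group $\Gamma\ni f$ with $l_{\Gamma}(f^{n})=o(n)$. For each $n$ I fix a decomposition $f^{n}=f^{(n)}_{\nu(1)}\circ\cdots\circ f^{(n)}_{\nu(l_{n})}$ realizing the minimum in the definition of $a_{\mathcal{U}}$, where $f^{(n)}_{1},\dots,f^{(n)}_{k_{n}}$ are supported in elements of $\mathcal{U}$ and $l_{n}\log k_{n}=a_{\mathcal{U}}(f^{n})$ (if $f^{n}$ is supported in a single ball, take $l_{n}=1$; otherwise $k_{n}\geq 2$ and $l_{n}=O(a_{\mathcal{U}}(f^{n}))$). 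Fixing finitely many homeomorphisms $\phi_{r}\in\mathrm{Homeo}_{0}(M)$ (one per element of $\mathcal{U}$) that push the corresponding ball onto one fixed ball $B^{\ast}$, with supports disjoint from a fixed ball $R$ disjoint from $B^{\ast}$, the conjugates $\hat f^{(n)}_{j}:=\phi_{r(n,j)}\,f^{(n)}_{j}\,\phi_{r(n,j)}^{-1}$ become all supported in $B^{\ast}$. Inside $R$ I would build a self-similar configuration of pairwise disjoint closed balls of diameters tending to $0$: a first-level family indexed by $n$, inside the $n$-th member a second-level family indexed by $j$, and inside each such slot a third ``Baumslag--Solitar'' family used to read a stored homeomorphism out of a portmanteau without leaving a residue in the other slots. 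From these data one assembles a bounded number of homeomorphisms of $\mathrm{Homeo}_{0}(M)$ --- portmanteaux $\mathbf{H}$ (convergent infinite products of commuting conjugates of the $\hat f^{(n)}_{j}$), shifts along the third direction, and finitely many self-similar navigation homeomorphisms supported in $R$ --- so that reaching the $n$-th first-level slot costs a word $v_{n}$ of length $O(\log n)$, the \emph{same} for all $l_{n}$ factors; navigating inside a first-level slot to its $j$-th second-level slot costs a word $u_{j}$ of length $O(\log j)$ depending only on $j$ and commuting with $v_{n}$; and, for each $n$, one has $f^{n}=v_{n}^{-1}\big(\prod_{i=1}^{l_{n}}\phi_{r(n,\nu(i))}^{-1}\,u_{\nu(i)}^{-1}\,\mathbf{W}\,u_{\nu(i)}\,\phi_{r(n,\nu(i))}\big)v_{n}$ with $\mathbf{W}$ a word of bounded length in $\mathbf{H}$ and the shifts --- the $v_{n}$ factoring out because it is supported in $R$ and commutes with the $\phi_{r}$ and the $u_{j}$. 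With $\Gamma$ generated by $f$, the $\phi_{r}$, the $\mathbf{H}$'s, the shifts and the navigation homeomorphisms, this yields $l_{\Gamma}(f^{n})\leq 2|v_{n}|+\sum_{i=1}^{l_{n}}\big(2|u_{\nu(i)}|+O(1)\big)=O(\log n)+O(l_{n}\log k_{n})=O(\log n)+O(a_{\mathcal{U}}(f^{n}))=o(n)$, so $f$ is distorted; the version relative to $\partial M$ is proved identically, keeping all auxiliary homeomorphisms in the interior of $M$.

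The hard part will be making that construction work, i.e.\ the combinatorial design of the storage. Three points are delicate: first, the logarithm in the definition of $a_{\mathcal{U}}$ must be matched exactly by a binary-tree addressing of the $k_{n}$ homeomorphisms $\hat f^{(n)}_{j}$, so that each costs only $O(\log k_{n})$ to retrieve; second, the $O(\log n)$ cost of addressing the level of $f^{n}$ must be \emph{amortized}, appearing as a single outer conjugator $v_{n}$ rather than once per factor --- this is exactly what forces the separation between the navigation region $R$, the conjugators $\phi_{r}$, and the within-level navigation, and it is where the inequality $l_{n}\log k_{n}=a_{\mathcal{U}}(f^{n})$ is really used; third, one must ensure both that extracting a single $\hat f^{(n)}_{j}$ from a portmanteau leaves nothing behind in the other slots --- this is where the third, Baumslag--Solitar, direction and a cancellation between two portmanteaux come in, as in \cite{CF} and \cite{Avi} --- and that every homeomorphism produced (infinite products, self-similar shifts and navigations) is a genuine element of $\mathrm{Homeo}_{0}(M)$, i.e.\ a uniform limit of homeomorphisms isotopic to the identity. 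The remaining ingredients (the fragmentation lemma, and the fact that a homeomorphism supported in a tame ball and isotopic to the identity there belongs to $\mathrm{Homeo}_{0}(M)$) are standard, and once Proposition~\ref{fragdist} is proved, Theorem~\ref{fragdistth} follows from its ``$\Leftarrow$'' half together with the trivial bound $a_{\mathcal{U}}(g)\leq\mathrm{Frag}_{\mathcal{U}}(g)\log\mathrm{Frag}_{\mathcal{U}}(g)$.
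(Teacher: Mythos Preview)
The easy direction matches the paper and is correct. The hard direction has a genuine gap at the amortization step, and the paper in fact proceeds differently.

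Your plan needs $v_{n}$ supported in the reservoir $R$ (hence commuting with the $\phi_{r}$) and commuting with the $u_{j}$, together with the identity $f^{n}=v_{n}^{-1}\bigl(\prod_{i}\phi_{r_i}^{-1}u_{\nu(i)}^{-1}\mathbf W\,u_{\nu(i)}\phi_{r_i}\bigr)v_{n}$. Using the commutations this rewrites as $\prod_{i}\phi_{r_i}^{-1}u_{\nu(i)}^{-1}(v_{n}^{-1}\mathbf W v_{n})\,u_{\nu(i)}\phi_{r_i}$, so after your residue cancellation one needs $u_{j}^{-1}(v_{n}^{-1}\mathbf W v_{n})u_{j}=\hat f^{(n)}_{j}$ on $B^{\ast}$. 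But for $x\in B^{\ast}$, the relation $[v_{n},u_{j}]=1$ together with $v_{n}|_{M\setminus R}=\mathrm{id}$ forces $v_{n}$ to fix $u_{j}(B^{\ast})$ pointwise; since the value must return to $B^{\ast}$, one checks that $\mathbf W(u_{j}(x))\in u_{j}(B^{\ast})$ as well, whence $v_{n}^{-1}$ fixes it and $u_{j}^{-1}(v_{n}^{-1}\mathbf W v_{n})u_{j}(x)=u_{j}^{-1}\mathbf W\,u_{j}(x)$ is \emph{independent of $n$}. The very commutations that let you telescope $v_{n}$ outside the product prevent $v_{n}$ from selecting the $n$-th storage level; no two-level scheme of this shape can produce $\hat f^{(n)}_{j}$ on $B^{\ast}$ with a single outer $v_{n}$.

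The paper does not amortize. It lists all the fragments $f_{i,\sigma(m)}$, for a sparse subsequence $\sigma(1)<\sigma(2)<\cdots$, into a \emph{single} sequence and applies the one-parameter Lemma~\ref{Avila} once, so each fragment costs $O\bigl(\log\sum_{m'\le m}k_{\sigma(m')}\bigr)$. The subsequence is built greedily so that $l_{\sigma(m)}\log\bigl(\sum_{m'\le m}k_{\sigma(m')}\bigr)\le\sigma(m)/m$, which is possible precisely because $l_{N}\log k_{N}/N\to 0$; subadditivity of $N\mapsto l_{\mathcal G'}(f^{N})$ then upgrades this subsequential bound to full distortion. Replacing your two-level amortization by this subsequence trick repairs the argument with no separate $O(\log n)$ bookkeeping needed.
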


Let us give now an analogous statement in the case of the group $\mathrm{Homeo}_{0}(M, \partial M)$. Denote by $M'$ a submanifold with boundary of $M$ homeomorphic to $M$, contained in the interior of $M$ and which is a deformation retract of $M$. We denote by $\mathcal{U}$ a family of closed balls of $M$ whose interiors cover $M'$. For a homeomorphism $g$ in $\mathrm{Homeo}_{0}(M, \partial M)$ supported in $M'$, we define $a_{\mathcal{U}}(g)$ the same way as before.

\begin{proposition} \label{fragdist2}
Let $f$ be a homeomorphism in $\mathrm{Homeo}_{0}(M, \partial M)$ supported in $M'$.
Then:
$$ \lim_{n \rightarrow + \infty} \frac{a_{\mathcal{U}}(f^{n})}{n}=0$$
if and only if the homeomorphism $f$ is a distortion element in $\mathrm{Homeo}_{0}(M, \partial M)$.
\end{proposition}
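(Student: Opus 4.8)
We prove Proposition~\ref{fragdist2}; Proposition~\ref{fragdist} is the same statement with no boundary to keep track of. The two implications are of very different natures.

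\emph{Distortion implies $a_{\mathcal{U}}(f^{n})=o(n)$.} This is soft. If $f$ is a distortion element, fix a finite set $\mathcal{G}=\{g_{1},\dots,g_{p}\}$ generating a subgroup that contains $f$, with $l_{\mathcal{G}}(f^{n})/n\to 0$. By the fragmentation lemma, in the quantitative form supplied by Lemma~\ref{fragmentation} and the decomposition lemma for balls recalled in Section~3 (which bound a priori the number of factors), each $g_{j}$ is a composition of at most a fixed number $c$ of homeomorphisms, each supported in one of the balls of $\mathcal{U}$; in the case of $\mathrm{Homeo}_{0}(M,\partial M)$ one works throughout with balls contained in $\mathrm{int}(M)$. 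Substituting these fragmentations into a minimal word $f^{n}=g_{i_{1}}\circ\cdots\circ g_{i_{l_{n}}}$, $l_{n}=l_{\mathcal{G}}(f^{n})$, writes $f^{n}$ as a composition of at most $c\,l_{n}$ homeomorphisms, all taken from the single finite family $\mathcal{F}$ of the fragmentation pieces of $g_{1},\dots,g_{p}$, of cardinality at most $cp$; hence $a_{\mathcal{U}}(f^{n})\le (c\,l_{n})\log|\mathcal{F}|=o(n)$.

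\emph{$a_{\mathcal{U}}(f^{n})=o(n)$ implies distortion.} This is the substance of the statement, and the tool is Avila's compression technique~\cite{Avi}, in the purely topological form available for homeomorphisms (compare Calegari-Freedman~\cite{CF} and~\cite{Mil}). For each $n$, fix a near-optimal decomposition
\[
f^{n}=h^{(n)}_{\nu_{n}(1)}\circ\cdots\circ h^{(n)}_{\nu_{n}(l_{n})},\qquad l_{n}\log k_{n}=a_{\mathcal{U}}(f^{n}),
\]
with $h^{(n)}_{1},\dots,h^{(n)}_{k_{n}}$ supported in balls of $\mathcal{U}$ and $\nu_{n}\colon\{1,\dots,l_{n}\}\to\{1,\dots,k_{n}\}$. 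As $\mathcal{U}$ is finite, choose for each $U\in\mathcal{U}$ a homeomorphism $\theta_{U}$ carrying $U$ into a fixed small ball $R$, so that $h\mapsto\theta_{U}h\theta_{U}^{-1}$ sends homeomorphisms supported in $U$ to homeomorphisms supported in $R$. Inside a ball $B\supset R$ contained in $\mathrm{int}(M)$ (so that, by the Alexander trick, everything constructed lies in $\mathrm{Homeo}_{0}(M,\partial M)$), set up a self-similar ``filing system'': two contracting homeomorphisms supported in $B$ whose iterated images of $R$ form pairwise disjoint balls shrinking to fixed points, indexed by finite binary words, so that the slot of address $a$ is reached by conjugation by a word of length $|a|$, and in particular the block reserved for an index $n$ has an address of length $O(\log n)$. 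Let $G$ be the master homeomorphism of $B$ — a convergent infinite product, since the slots shrink to points — that stores, for all $n$ and $i$, the conjugate $\widehat{h}^{(n)}_{i}:=\theta_{U}h^{(n)}_{i}\theta_{U}^{-1}$ (with $U$ a ball of $\mathcal{U}$ containing $\mathrm{supp}\,h^{(n)}_{i}$) in the sub-slot indexed by the pair $(n,i)$, and also stores the word $\nu_{n}$ in an auxiliary region of the $n$-block.

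The delicate point — precisely Avila's technique — is that from $G$, the two filing homeomorphisms, and one further fixed ``readout'' homeomorphism, the content of any slot can be recovered by a word of length proportional to the address length (navigate to the slot by conjugation, then isolate its content by a telescoping identity of the shape $G\,(\sigma G\sigma^{-1})^{\pm 1}$), and, once one has navigated to the $n$-block (cost $O(\log n)$) and brought its data to a fixed region, a fixed gadget reads the recorded $\nu_{n}$ and composes the corresponding pieces $\widehat{h}^{(n)}_{\nu_{n}(j)}$ at cost $O(\log k_{n})$ per letter, after which the $\theta_{U}$ are undone. Taking $\mathcal{G}$ to be the fixed finite set formed by $G$, the filing and readout homeomorphisms, the replay gadget and the $\theta_{U}$, this produces a word for $f^{n}$ of length
\[
l_{\mathcal{G}}(f^{n})\ \le\ C\log n + C\,l_{n}\log k_{n} + C'\ =\ C\log n + C\,a_{\mathcal{U}}(f^{n}) + C',
\]
which is $o(n)$ by hypothesis; thus $f$ is a distortion element, and in the relative case every element of $\mathcal{G}$ is supported in a ball of $\mathrm{int}(M)$, hence lies in $\mathrm{Homeo}_{0}(M,\partial M)$, so the same word applies.

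\emph{Main obstacle.} The whole difficulty lies in the previous paragraph: building the master homeomorphism and, above all, the readout and replay mechanisms so that retrieving one letter costs $O(\log k_{n})$ rather than something linear in $l_{n}$, and locating the $n$-th block costs $O(\log n)$ paid once per power rather than once per letter — which is exactly what the logarithm in the definition of $a_{\mathcal{U}}$ is designed to absorb (the address of a letter in a $k_{n}$-symbol alphabet has length $\log k_{n}$, and $\log n=o(n)$ is harmless). The remaining points — convergence of the infinite product $G$, the reduction to a single ball via the $\theta_{U}$, and the verification that the constructed homeomorphisms lie in the correct group — are routine.
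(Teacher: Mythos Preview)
Your easy direction is right in outline, though in the relative case the generators $g_j$ witnessing distortion need not be supported in $M'$ and hence cannot be fragmented with the given $\mathcal{U}$, whose balls only cover $M'$; the paper fixes this by first conjugating $\mathcal{G}$ by an $h\in\mathrm{Homeo}_0(M,\partial M)$ equal to the identity on $M'$ that pushes all supports into $\bigcup_{U\in\mathcal{U}}\mathring U$.

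The hard direction has a real gap. The ``readout and replay gadget'' is not a group-theoretic operation: a word over a finite alphabet has no conditionals, and ``bringing the $n$-block to a fixed region'' does not produce a new generator but the word $\phi_n^{-1}G\phi_n$, whose every occurrence costs its full length $\asymp\log n$. What Avila's construction (Lemma~\ref{Avila}) actually delivers is that each stored homeomorphism can be written as a word of length proportional to the address of its slot. Since the conjugates $\widehat h^{(n)}_{\nu_n(j)}$ all act on the single ball $R$ and their product bears no relation to $f^n$, the $\theta_U^{\pm1}$ must be interleaved and the $l_n$ pieces retrieved one by one; the honest estimate is $l_{\mathcal{G}}(f^n)\le C\,l_n\cdot(\text{slot depth})$. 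As the slots for all powers sit disjointly in one tree, that depth cannot be $O(\log k_n)$ uniformly in $n$; even granting your two-level scheme with depth $\log n+\log k_n$, you get $C\,l_n(\log n+\log k_n)$, which is \emph{not} $o(n)$ under $l_n\log k_n=o(n)$ alone --- take $l_n=n/\log n$ with $k_n$ bounded, so that $a_{\mathcal{U}}(f^n)=O(n/\log n)$ but $l_n\log n=n$. The paper's way out is a diagonal extraction rather than a sharper retrieval: store only the pieces for a subsequence $f^{\sigma(n)}$, enumerate them globally so that every piece for $f^{\sigma(n)}$ has index at most $\sum_{i\le n}k_{\sigma(i)}$, and build $\sigma$ inductively so that $l_{\sigma(n)}\log\bigl(\sum_{i\le n}k_{\sigma(i)}\bigr)\le\sigma(n)/n$ (possible because $l_N\log(C+k_N)/N\to 0$ for each fixed $C$). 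That subsequence step is the idea missing from your argument.
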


As $a_{\mathcal{U}}(f) \leq \mathrm{Frag}_{\mathcal{U}}(f).log(\mathrm{Frag}_{\mathcal{U}}(f))$, these last propositions clearly imply Theorem \ref{fragdistth}.

\begin{proof}[Proof of the "if" statement in Propositions \ref{fragdist} and \ref{fragdist2}] 
If the homeomorphism $f$ is a distortion element, we denote by $\mathcal{G}$ the finite set which appears in the definition of a distortion element. Then we write each of the homeomorphisms in $\mathcal{G}$ as a product of homeomorphisms supported in one of the sets of $\mathcal{U}$. We denote by $\mathcal{G}'$ the (finite) set of homeomorphisms which appear in such a decomposition. Then the homeomorphism $f^{n}$ is a composition of $l_{n}$ elements of $\mathcal{G}'$, where $l_{n}$ is less than a constant independent from $n$ times $l_{\mathcal{G}}(f^{n})$. As the element $f$ is distorted, $\lim_{n \rightarrow + \infty} \frac{l_{n}}{n}=0$ and: 
$$a_{\mathcal{U}}(f^{n}) \leq log(card(\mathcal{G}')) l_{n}. $$
Therefore:
$$ \lim_{n \rightarrow + \infty} \frac{a_{\mathcal{U}}(f^{n})}{n}=0.$$
In the case of Proposition \ref{fragdist2}, there is only one new difficulty: the elements of $\mathcal{G}$ are not necessarily supported in the union of the balls of $\mathcal{U}$. Let us take a homeomorphism $h$ in $\mathrm{Homeo}_{0}(M, \partial M)$ with the following properties: the homeomorphism $h$ is equal to the identity on $M'$ and sends the union of the supports of elements of $\mathcal{G}$ in the union of the interiors of the balls of $\mathcal{U}$. Then it suffices to consider the finite set $h\mathcal{G}h^{-1}$ instead of $\mathcal{G}$ in order to complete the proof.
\end{proof}

The full power of Propositions \ref{fragdist} and \ref{fragdist2} will be used only for the proof of Theorem \ref{exemple} (construction of the example). In order to prove Theorem \ref{croissanceimpliquedistorsion}, we just used Theorem \ref{fragdistth} which is weaker.

\noindent \textbf{Remark} Notice that, if $\mathcal{U}$ is the cover of the sphere by two neighbourhoods of the hemispheres, the map $\mathrm{Frag}_{\mathcal{U}}$ is bounded by $3$ on the group $\mathrm{Homeo}_{0}(\mathbb{S}^{n})$  of homeomorphisms of the n-dimensional sphere isotopic to the identity (see \cite{CF}). This is a consequence of the annulus theorem by Kirby (see \cite{Kir}) and Quinn (see \cite{Q}). Thus, the following theorem by Calegari and Freedman (see \cite{CF}) is a consequence of Theorem \ref{fragdistth}:

\begin{theorem}[Calegari-Freedman \cite{CF}]
Any homeomorphism in $\mathrm{Homeo}_{0}(\mathbb{S}^{n})$ is a distortion element.
\end{theorem}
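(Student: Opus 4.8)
The plan is to deduce this theorem directly from Theorem~\ref{fragdistth} together with the quoted bound $\mathrm{Frag}_{\mathcal{U}} \leq 3$ on $\mathrm{Homeo}_0(\mathbb{S}^n)$. First I would fix $\mathcal{U} = \{U_1, U_2\}$ to be a cover of $\mathbb{S}^n$ by two closed balls, each a slight enlargement of a closed hemisphere, so that the interiors of $U_1$ and $U_2$ cover $\mathbb{S}^n$; this is a legitimate family of the type required in the definition of $\mathrm{Frag}_{\mathcal{U}}$, since each $U_i$ is the image of the closed unit ball of $\mathbb{R}^n$ under an embedding of $\mathbb{R}^n$ into $\mathbb{S}^n$. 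The content of the annulus theorem of Kirby and Quinn, as used by Calegari and Freedman in \cite{CF}, is that for this cover every $f \in \mathrm{Homeo}_0(\mathbb{S}^n)$ can be written as a composition of at most three homeomorphisms each supported in one of $U_1$, $U_2$; in other words $\mathrm{Frag}_{\mathcal{U}}(g) \leq 3$ for all $g$.

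Next I would apply this uniform bound to the iterates of a fixed homeomorphism $f \in \mathrm{Homeo}_0(\mathbb{S}^n)$. Since $\mathrm{Frag}_{\mathcal{U}}(f^n) \leq 3$ for every $n \geq 1$, we get
$$\frac{\mathrm{Frag}_{\mathcal{U}}(f^n) \cdot \log(\mathrm{Frag}_{\mathcal{U}}(f^n))}{n} \leq \frac{3 \log 3}{n} \xrightarrow[n \to +\infty]{} 0.$$
Thus the hypothesis of Theorem~\ref{fragdistth} is satisfied, and that theorem immediately yields that $f$ is a distortion element of $\mathrm{Homeo}_0(\mathbb{S}^n)$. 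Since $f$ was arbitrary, every homeomorphism in $\mathrm{Homeo}_0(\mathbb{S}^n)$ is distorted.

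There is really no serious obstacle here: the whole argument is a one-line consequence of Theorem~\ref{fragdistth}, and the only external input is the Kirby--Quinn annulus theorem, which is precisely what guarantees the boundedness of $\mathrm{Frag}_{\mathcal{U}}$. The one point that deserves a sentence of care is that Theorem~\ref{fragdistth} is stated for $\mathrm{Homeo}_0(M)$ with $M$ a compact manifold without the need of any recurrence or regularity hypothesis, so it applies verbatim to $M = \mathbb{S}^n$; and that the two-ball cover above is admissible in the sense of Section~4. With these remarks in place the proof is complete, and it should be presented essentially as the displayed computation above preceded by the observation that $\mathrm{Frag}_{\mathcal{U}} \leq 3$.
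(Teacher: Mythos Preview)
Your proposal is correct and follows exactly the argument sketched in the paper: the remark preceding the theorem states that $\mathrm{Frag}_{\mathcal{U}}$ is bounded by $3$ on $\mathrm{Homeo}_0(\mathbb{S}^n)$ for the two-hemisphere cover (via the Kirby--Quinn annulus theorem, as in \cite{CF}), and then the theorem is declared a consequence of Theorem~\ref{fragdistth}. Your write-up simply makes this one-line deduction explicit.
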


The proof of Proposition \ref{fragdist} is based on the following lemma, whose proof uses a technique due to Avila (see \cite{Avi}):

\begin{lemma} \label{Avila}
Let $(f_{n})_{n \geq 1}$ be a sequence of homeomorphisms of $\mathbb{R}^{d}$ (respectively of $H^{d}$) supported in $B(0,1)$ (respectively in $B(0,1) \cap H^{d}$). There exists a finite set $\mathcal{G}$ of compactly-supported homeomorphisms of $\mathbb{R}^{d}$ (respectively of $H^{d}$) such that:
\begin{enumerate}
\item For any natural number $n$, the homeomorphism $f_{n}$ belongs to the group generated by $\mathcal{G}$.
\item $l_{\mathcal{G}}(f_{n}) \leq 14.log(n)+14.$
\end{enumerate}
\end{lemma}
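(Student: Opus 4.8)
The plan is to use the Baumslag-Solitar trick that underlies Avila's construction: a single ``doubling'' homeomorphism can shrink supports geometrically, so that the $n$-th element of an arbitrary sequence supported in a fixed ball can be reached in $O(\log n)$ steps. First I would fix, inside $B(0,1)$ (resp. $B(0,1)\cap H^{d}$), a countable family of pairwise disjoint closed balls $(B_{k})_{k\geq 0}$ together with a compactly supported homeomorphism $b$ such that $b(B_{k})=B_{k+1}$ for all $k$, all $B_k$ accumulating only at a single point; concretely one can take $B_k$ to be balls of radius $\sim 2^{-k}$ lined up along a segment, and $b$ a homeomorphism realizing the shift, equal to the identity outside a slightly larger ball. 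The key point is that conjugation by $b^{k}$ carries a homeomorphism supported in $B_0$ to one supported in $B_k$; iterating, $b^{-k}\circ(\text{something in }B_0)\circ b^{k}$ lives in $B_k$, and one extracts powers of $b$ with a binary-expansion bookkeeping so that only $O(\log k)$ letters are needed.

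The core step is then the following: given the sequence $(f_n)$, I would place in $B_0$ a single homeomorphism $\varphi$ that ``encodes'' all the $f_n$ at once, using a second shift. Precisely, choose disjoint balls $C_n\subset B_0$ and a homeomorphism $c$ supported in $B_0$ with $c(C_n)=C_{n+1}$; let $\varphi$ be supported in $\bigsqcup_n C_n$ and equal, on $C_n$, to a copy of $f_n$ transported into $C_n$ by the composition of conjugations (so $\varphi$ restricted to $C_0$ is a copy of $f_0$, etc.). This is a standard ``infinite connected sum of homeomorphisms'' construction and is well defined as a homeomorphism since the $C_n$ shrink to a point and each $f_n$ is supported in a fixed ball. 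Then $f_n$ is recovered, up to conjugation by a word in $b$ and $c$ of length $O(\log n)$, from $\varphi$: one conjugates $\varphi$ by $c^{n}$ to extract the $C_n$-component, then transports it back to $B(0,1)$ by undoing the earlier conjugations — again using the fast (binary) encoding of $c^{n}$ by the generators, as in the Baumslag-Solitar example in the introduction where $l_{\mathcal{G}}(a^{p^{n}})\leq 2n+1$. Taking $\mathcal{G}=\{b,c,\varphi,\text{finitely many auxiliary homeomorphisms realizing the binary speed-up of }b,c\}$ gives a finite set generating a group containing every $f_n$, with $l_{\mathcal{G}}(f_n)$ bounded by a universal constant times $\log n$; a careful count of the conjugations (two levels of shift, each contributing $\sim \log n$ letters, plus bounded overhead) yields the explicit bound $14\log n+14$.

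The main obstacle is bookkeeping rather than conceptual: one must arrange the two shift homeomorphisms $b$ and $c$, the ``diagonal'' homeomorphism $\varphi$, and the binary-acceleration generators so that they genuinely form a \emph{finite} set while still allowing the length $l_{\mathcal{G}}(f_n)$ to grow only logarithmically, and one must verify that all the infinite products involved are continuous (this is where disjointness of the balls and their shrinking to a point is used) and that the extracted conjugate of $\varphi$ is exactly $f_n$ and not a product of several $f_k$'s — which forces the encoding balls $C_n$ to be genuinely disjoint and the support of $\varphi$ on $C_n$ to be exactly the transported copy of $f_n$. The numerical constant $14$ then comes from optimizing the word length in the two nested binary expansions; I would not belabor the exact arithmetic beyond checking that the chosen generating set makes $14\log n + 14$ achievable.
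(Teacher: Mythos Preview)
Your outline captures the storage idea---encode the whole sequence inside one compactly supported homeomorphism---but it has a genuine gap at the \emph{extraction} step, and a secondary gap at the \emph{acceleration} step.

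\textbf{Extraction.} You write that one ``conjugates $\varphi$ by $c^{n}$ to extract the $C_{n}$-component''. This does not work: $c^{-n}\varphi c^{n}$ is still supported on the full disjoint union $\bigsqcup_{k}C_{k}$ (just with the labels shifted), so you have not isolated $f_{n}$ at all. Some additional mechanism is needed to kill the action on every ball except one. The paper solves this with a commutator trick. First write each $f_{n}=[g_{n},h_{n}]$ (possible since every compactly supported homeomorphism of $\mathbb{R}^{d}$ is a commutator, Lemma~\ref{commutateur}). Store all the $g_{n}$'s in a single homeomorphism $s_{4}$ and all the $h_{n}$'s in $s_{5}$, each acting on the $n$-th ball $B_{n}$. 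Then take a further homeomorphism $s_{3}$ supported in $B(0,2)$ that pushes $B(0,1)$ off itself; conjugating $s_{3}$ by the word addressing $B_{n}$ gives a local shift $\lambda_{n}$ supported near $B_{n}$ alone. The commutator $[s_{4},\lambda_{n}]$ is then the identity outside $B_{n}\cup\lambda_{n}(B_{n})$, equal to (a conjugate of) $g_{n}$ on $B_{n}$ and to $g_{n}^{-1}$ on $\lambda_{n}(B_{n})$. The product $[s_{4},\lambda_{n}][s_{5},\lambda_{n}][s_{4}^{-1}s_{5}^{-1},\lambda_{n}]$ is arranged so that the contributions on $\lambda_{n}(B_{n})$ cancel while on $B_{n}$ they compose to $[g_{n},h_{n}]=f_{n}$. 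This isolation-by-commutator is the missing idea in your sketch.

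\textbf{Acceleration.} Your plan to reach $c^{n}$ in $O(\log n)$ letters via a Baumslag--Solitar relation $dcd^{-1}=c^{2}$ would require $c$ to be conjugate to its square inside the group of compactly supported homeomorphisms; this is not automatic for a shift along a sequence of shrinking balls, and you do not say how to arrange it. The paper sidesteps this entirely: instead of a single shift, it uses \emph{two} homeomorphisms $s_{1},s_{2}$ generating a free semigroup action on $B(0,2)$ with pairwise disjoint images (Lemma~\ref{groupelibre}). Indexing the balls $B_{n}$ by words $m_{n}$ of length $\lfloor\log_{2}n\rfloor$ in $s_{1},s_{2}$ gives $O(\log n)$ addressing for free, with no relation needed. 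The final generating set is $\{s_{1},s_{2},s_{3},s_{4},s_{5}\}$ and the count $2l(m_{n})+6(2l(m_{n})+1)+8\le 14\log_{2}(n)+14$ drops out directly.
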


This lemma is not true anymore in case of the $C^{r}$ regularity, for $r \geq 1$. It crucially uses the following fact: given a sequence of homeomorphisms $(h_{n})$ supported in the unit ball $B(0,1)$, one can store all the information of this sequence in one homeomorphism. Let us explain now how to build such a homeomorphism. For any integer $n$, denote by $g_{n}$ a homeomorphism which sends the unit ball on a ball $B_{n}$ such that the balls $B_{n}$ are pairwise disjoint and have a diameter which converges to $0$. Then it suffices to consider the homeomorphism
$$ \prod _{n=1}^{\infty} g_{n}h_{n} g_{n}^{-1}.$$
Such a construction is not possible in the case of a higher regularity.

\noindent \textbf{Remark} There are two main differences between this lemma and the one stated by Avila:
\begin{enumerate}
\item Avila's lemma deals with a sequence of diffeomorphisms which converges sufficiently fast (in the $C^{\infty}$-topology) to the identity whereas any sequence of homeomorphisms is considered here.
\item The upper bound is logarithmic and not linear.
\end{enumerate}

\noindent \textbf{Remark} This lemma is optimal in the sense that, if the homeomorphisms $f_{n}$ are pairwise distinct, the growth of $l_{\mathcal{G}}(f_{n})$ is at least logarithmic. Indeed, if the generating set $\mathcal{G}$ contains $k$ elements, there are at most $\frac{k^{l+1}-1}{k-1}$ homeomorphisms whose length with respect to $\mathcal{G}$ is less than or equal to $l$.

Before proving Lemma \ref{Avila}, let us see why this lemma implies Propositions \ref{fragdist} and \ref{fragdist2}.

\begin{proof}[End of the proof of Propositions \ref{fragdist} and \ref{fragdist2}]
Suppose that:
$$ \lim_{n \rightarrow + \infty} \frac{a_{\mathcal{U}}(f^{n})}{n}=0.$$
Let
$$ \mathcal{U}= \left\{U_{1}, U_{2}, \ldots, U_{p} \right\}.$$
For any integer $i$ between $1$ and $p$, denote by $\varphi_{i}$ an embedding of $\mathbb{R}^{d}$ into $M$ which sends the closed ball $B(0,1)$ onto $U_{i}$ if $U_{i}$ is a closed ball or an embedding of $H^{d}$ into $M$ which sends the closed half-ball $B(0,1) \cap H^{d}$ onto $U_{i}$ if $U_{i}$ is a closed half-ball.
For any natural number $n$, let $l_{n}$ and $k_{n}$ be two positive integers such that:
\begin{enumerate}
\item $a_{\mathcal{U}}(f^{n})= l_{n} log(k_{n})$.
\item There exists a sequence $(f_{1,n},f_{2,n}, \ldots, f_{k_{n},n})$ of homeomorphisms in $\mathrm{Homeo}_{0}(M)$, each supported in one of the elements of $\mathcal{U}$, such that $f^{n}$ is the composition of $l_{n}$ homeomorphisms of this family.
\end{enumerate}

Let us build an increasing one-to-one function $\sigma: \mathbb{Z}_{>0} \rightarrow \mathbb{Z}_{>0}$ which satisfies:
$$ \forall n \in \mathbb{Z}_{>0}, \frac{l_{\sigma(n)}(14. log( \sum_{i=1}^{n} k_{\sigma(i)})+14)}{\sigma(n)} \leq \frac{1}{n}.$$
Suppose that, for some $m \geq 0$, $\sigma(1), \sigma(2), \ldots, \sigma(m)$ have been built. Then, as: 
$$ \lim_{n \rightarrow + \infty} \frac{l_{n}log(k_{n})}{n}=0,$$
we have
$$ \lim_{n \rightarrow + \infty} \frac{l_{n}(14. log( \sum_{i=1}^{m} k_{\sigma(i)}+k_{n})+14)}{n}=0.$$
Hence, we can find an integer $\sigma(m+1)> \sigma(m)$ such that:
$$\frac{l_{\sigma(m+1)}(14. log( \sum_{i=1}^{m+1} k_{\sigma(i)})+14)}{\sigma(m+1)} \leq \frac{1}{m}.$$
This completes the construction of the map $\sigma$.
Take a bijective map:
$$ \psi : \mathbb{Z}_{>0} \rightarrow \left\{ (i, \sigma(j)) \in \mathbb{Z}_{>0} \times \mathbb{Z}_{>0}, \left\{ \begin{array}{l} i \leq k_{\sigma(j)} \\ j \in \mathbb{Z}_{>0} \end{array} \right. \right\}$$
such that, if $\psi(n_{1})=(i_{1},\sigma(j_{1}))$, $\psi(n_{2})=(i_{2},\sigma(j_{2}))$ and $\sigma(j_{1})<\sigma(j_{2})$, then $n_{1} < n_{2}$. For instance, take the inverse of the bijective map
$$ \begin{array}{rcl}
\left\{ (i, \sigma(j)) \in \mathbb{Z}_{>0} \times \mathbb{Z}_{>0}, \left\{ \begin{array}{l} i \leq k_{\sigma(j)} \\ j \in \mathbb{Z}_{>0} \end{array} \right. \right\} & \rightarrow & \mathbb{Z}_{>0} \\
 (i, \sigma(j)) & \mapsto & i+\sum \limits_{j'<j} k_{\sigma(j')}
\end{array}
.
$$
Then:
$$\psi^{-1}(i, \sigma(j)) \leq \sum_{l=1}^{j}k_{\sigma(l)}.$$
Denote by $\tau_{i,j}$ an integer between $1$ and $p$ such that:
$$\mathrm{supp}(f_{i,j}) \subset U_{\tau_{i,j}}.$$
Then apply Lemma \ref{Avila} to the sequence of homeomorphisms 
$$\varphi_{\tau_{\psi(n)}}^{-1} \circ f_{\psi(n)} \circ \varphi_{\tau_{\psi(n)}},$$
where the $\varphi_{i}$'s were defined at the beginning of the proof. 
Let us denote by $\mathcal{G}$ the finite set given by Lemma \ref{Avila}. Let $\mathcal{G}_{i}$ be the finite set of homeomorphisms supported in $U_{i}$ of the form $\varphi_{i} \circ s \circ \varphi_{i}^{-1}$, where $s$ is a homeomorphism in $\mathcal{G}$. Let
$$\mathcal{G}'= \bigcup_{i=1}^{p}\mathcal{G}_{i}.$$
By Lemma \ref{Avila}:
$$ \forall n \in \mathbb{Z}_{>0}, \ l_{\mathcal{G}'}(f_{\psi(n)}) \leq Clog(n)+C'.$$
Now the homeomorphism $f^{\sigma(n)}$ can be decomposed as follows:
$$f^{\sigma(n)}=g_{1} \circ g_{2} \circ \ldots \circ g_{l_{\sigma(n)}},$$
where each of the homeomorphisms $g_{i}$ belongs to the set:
$$ \left\{ f_{1, \sigma(n)},f_{2, \sigma(n)}, \ldots, f_{k_{\sigma(n)}, \sigma(n)} \right\}.$$
Thus:
$$l_{\mathcal{G}'}(f^{\sigma(n)}) \leq l_{\sigma(n)}(Clog( \max \limits _{1 \leq i \leq k_{\sigma(n)}}\psi^{-1}(i, \sigma(n)))+C').$$
Therefore:
$$\frac{l_{\mathcal{G}'}(f^{\sigma(n)})}{\sigma(n)} \leq \frac{l_{\sigma(n)}(C. log( \sum_{i=1}^{n} k_{\sigma(i)})+C')}{\sigma(n)} \leq \frac{1}{n}$$
and the homeomorphism $f$ is a distortion element of $\mathrm{Homeo}_{0}(M)$ (respectively of $\mathrm{Homeo}_{0}(M, \partial M)$).
\end{proof}

Let us now prove Lemma \ref{Avila}. This will require two lemmas.

Let $a$ and $b$ be the generators of the free semigroup $L_{2}$ on two generators. For two compactly supported homeomorphisms $f$ and $g$ of $\mathbb{R}^{d}$, let $\eta_{f,g}$ be the semigroup morphism from $L_{2}$ to the group of homeomorphism of $\mathbb{R}^{d}$ defined by $\eta_{f,g}(a)=f$ and $\eta_{f,g}(b)=g$.

\begin{lemma} \label{groupelibre}
There exist compactly supported homeomorphisms $s_{1}$ and $s_{2}$ of $\mathbb{R}^{d}$ such that:
$$ \forall m \in L_{2}, \ m' \in L_{2}, \ m \neq m' \Rightarrow \eta_{s_{1},s_{2}}(m)(B(0,2)) \cap \eta_{s_{1},s_{2}}(m')(B(0,2))= \emptyset$$
and the diameter of $\eta_{s_{1},s_{2}}(m)(B(0,2))$ converges to $0$ when the length of $m$ tends to infinity.
\end{lemma}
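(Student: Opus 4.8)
The plan is to realise the vertex set of the rooted infinite binary tree --- which I identify with $L_2 \cup \{e\}$, $e$ being the empty word, with the children of a vertex $m$ being $am$ and $bm$ --- as a family $(Y_m)_{m \in L_2 \cup \{e\}}$ of pairwise disjoint closed round balls of $\mathbb{R}^d$, with $Y_e = \overline{B(0,2)}$ and $\delta(Y_m) \to 0$ as $|m| \to \infty$, in such a way that the two ``prepend'' bijections $Y_m \mapsto Y_{am}$ and $Y_m \mapsto Y_{bm}$ extend to compactly supported homeomorphisms $s_1$ and $s_2$ of $\mathbb{R}^d$. Granting this, an immediate induction on $|m|$ shows $\eta_{s_1,s_2}(m)(\overline{B(0,2)}) = Y_m$ for every $m \in L_2$ (using $\eta_{s_1,s_2}(xm') = s_x \circ \eta_{s_1,s_2}(m')$), and both conclusions of the lemma are then read off from the disjointness and the decay of the diameters of the $Y_m$. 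The idea that makes this work is to keep the root ball $\overline{B(0,2)}$ far away from the region where the rest of the tree lives; this is what prevents the nesting $Y_{am} \subseteq Y_a$ that would ruin disjointness in a naive iterated-function-system construction.

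Concretely, I would first fix the configuration. Take $\omega = (10,0,\dots,0) \in \mathbb{R}^d$ and $\rho = 1$, so that $\overline{B(\omega,\rho)}$ is disjoint from $\overline{B(0,2)}$, and choose contracting similarities $\theta_1, \theta_2$ of $\mathbb{R}^d$ of a sufficiently small common ratio $\lambda \in (0,1)$, with fixed points near $\omega$, so that $\theta_1(\overline{B(\omega,\rho)})$ and $\theta_2(\overline{B(\omega,\rho)})$ are disjoint and contained in the interior of $\overline{B(\omega,\rho)}$. Then choose two disjoint small closed round balls $Y_a, Y_b$ in the interior of $\overline{B(\omega,\rho)}$, disjoint from $\theta_1(\overline{B(\omega,\rho)})$ and from $\theta_2(\overline{B(\omega,\rho)})$, and set $Y_{am} := \theta_1(Y_m)$, $Y_{bm} := \theta_2(Y_m)$ for $m \in L_2$, by induction on length. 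Since the four sets $\theta_1(\overline{B(\omega,\rho)})$, $\theta_2(\overline{B(\omega,\rho)})$, $Y_a$, $Y_b$ are pairwise disjoint and $\theta_1, \theta_2$ are injective, an easy induction on $|m| + |m'|$ (reducing via $\theta_1^{-1}$, $\theta_2^{-1}$) gives that the $Y_m$, $m \in L_2$, are pairwise disjoint; being contained in $\overline{B(\omega,\rho)}$ they are also disjoint from $Y_e$. Writing $m = x_1 \cdots x_k$, one has $Y_m = \theta_{x_1} \circ \cdots \circ \theta_{x_{k-1}}(Y_{x_k})$, so $\delta(Y_m) = \lambda^{k-1} \delta(Y_{x_k}) \to 0$ as $k \to \infty$.

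Finally I would build $s_1$, $s_2$ being symmetric. The point is that it suffices to take for $s_1$ \emph{any} compactly supported homeomorphism of $\mathbb{R}^d$ which coincides with $\theta_1$ on $\overline{B(\omega,\rho)}$ and which maps $\overline{B(0,2)}$ onto $Y_a$: for $m \in L_2$ the ball $Y_m$ lies in $\overline{B(\omega,\rho)}$, so $s_1(Y_m) = \theta_1(Y_m) = Y_{am}$, while $s_1(\overline{B(0,2)}) = Y_a$ is the case $m = e$; this is exactly the prescribed correspondence, and it yields the induction step $\eta_{s_1,s_2}(am')(\overline{B(0,2)}) = s_1(\eta_{s_1,s_2}(m')(\overline{B(0,2)})) = s_1(Y_{m'}) = Y_{am'}$. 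To produce such an $s_1$: first extend the (very tame) embedding $\theta_1|_{\overline{B(\omega,\rho)}}$ to a homeomorphism $\Phi$ of $\mathbb{R}^d$ supported in a large ball $B(0,N)$; then, $\Phi(\overline{B(0,2)})$ and $Y_a$ being disjoint closed balls in the connected open set $B(0,N) \setminus \theta_1(\overline{B(\omega,\rho)})$, push $\Phi(\overline{B(0,2)})$ onto $Y_a$ by an ambient isotopy supported away from $\theta_1(\overline{B(\omega,\rho)})$ and from $\partial B(0,N)$, and compose its time-one map with $\Phi$. The only step requiring any care is this last one --- checking that the prescribed partial data genuinely assemble into a self-homeomorphism of $\mathbb{R}^d$ --- but it is a routine application of the isotopy extension theorem (with minor adjustments when $d = 1$); everything else is bookkeeping with the two similarities $\theta_1, \theta_2$.
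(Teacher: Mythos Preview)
Your argument is correct and follows a genuinely different route from the paper. The paper first settles the case $d=1$ by a two-step dynamical construction: it perturbs two homeomorphisms so that the $L_2$-orbit of $0$ consists of pairwise distinct points, and then performs a Denjoy-type blow-up, replacing each orbit point by an interval, to obtain the disjointness and shrinking properties. Higher dimensions are then handled by taking coordinate-wise products of the one-dimensional maps and cutting off near the boundary of a cube.

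Your construction is instead a direct iterated-function-system picture that works uniformly in all dimensions: the key idea of parking the root ball $\overline{B(0,2)}$ away from the ``working'' ball $\overline{B(\omega,\rho)}$ and letting $s_i$ agree with a contracting similarity $\theta_i$ on the latter neatly avoids the self-intersection problem. This is arguably more elementary (no perturbation lemma, no Denjoy construction) and more transparent geometrically; the paper's approach, on the other hand, makes the one-dimensional case the heart of the matter and gets higher dimensions essentially for free via products. One small point worth tightening: when you extend $\theta_1|_{\overline{B(\omega,\rho)}}$ to $\Phi$, you should say explicitly that $\Phi$ can be taken with support in a neighbourhood of $\overline{B(\omega,\rho)}$ disjoint from $\overline{B(0,2)}$ (an easy radial interpolation does this), so that $\Phi(\overline{B(0,2)})=\overline{B(0,2)}$ really is a round ball disjoint from $Y_a$; this makes the subsequent ambient push onto $Y_a$ completely routine and removes any appeal to deeper facts about topological balls.
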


\begin{figure}[ht]
\begin{center}
\includegraphics[scale=0.75]{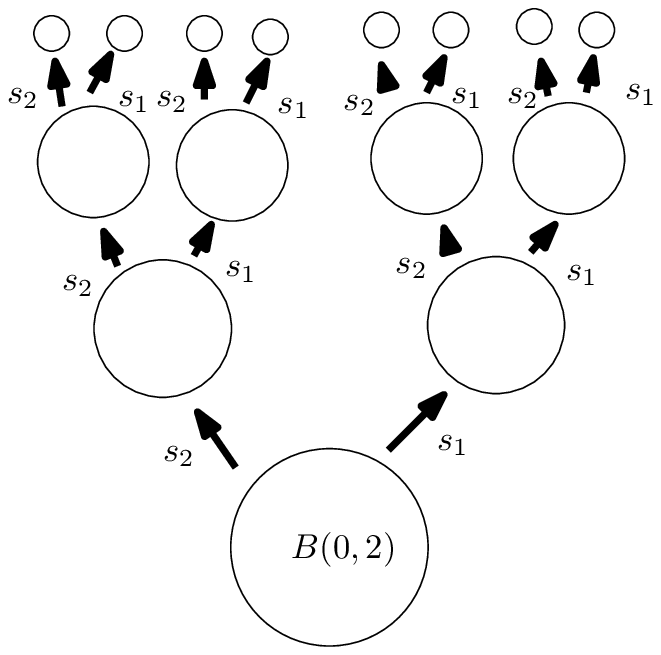}
\end{center}
\caption{Lemma \ref{groupelibre}}
\end{figure}

\begin{lemma}\label{commutateur}
Let $f$ be a homeomorphism in $\mathrm{Homeo}_{0}(\mathbb{R}^{d})$. There exist two homeomorphisms $g$ and $h$ in $\mathrm{Homeo}_{0}(\mathbb{R}^{d})$ such that:
$$f= [g,h],$$
where $[g,h] = g \circ h \circ g^{-1} \circ h^{-1}$.
\end{lemma}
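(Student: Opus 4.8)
The plan is to use the classical ``infinite swindle'': given $f$ compactly supported in $\mathbb{R}^d$, I will package together all the conjugates $h^{i} f h^{-i}$ sitting in a shrinking sequence of pairwise disjoint balls into a single homeomorphism $g$, and then recover $f$ as the commutator $[g,h]$.

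First I would fix a closed ball $B_0$ containing $\mathrm{supp}(f)$, a point $p$, and a sequence of pairwise disjoint closed balls $(B_i)_{i\geq 0}$ with $B_0$ the chosen one, all contained in a fixed larger ball $B$, with $\mathrm{diam}(B_i)\to 0$ and $B_i\to\{p\}$. Then I would exhibit $h\in\mathrm{Homeo}_0(\mathbb{R}^d)$ supported in $B$ and fixing $\partial B$ with $h(B_i)=B_{i+1}$ for all $i\geq 0$; since $h$ is supported in a ball and fixes its boundary, the Alexander trick gives an isotopy to the identity rel $\partial B$, so $h$ lies in $\mathrm{Homeo}_0(\mathbb{R}^d)$. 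For each $i$ set $f_i:=h^i f h^{-i}$, a homeomorphism supported in $B_i$ and conjugate to $f$, hence in $\mathrm{Homeo}_0(\mathbb{R}^d)$; if $(f_t)_{t\in[0,1]}$ is an isotopy from $\mathrm{id}$ to $f$ supported in $B_0$, then $f_{i,t}:=h^i f_t h^{-i}$ is an isotopy from $\mathrm{id}$ to $f_i$ supported in $B_i$.

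Next I would define $g:=\prod_{i=0}^{\infty} f_i$; the factors commute since the $B_i$ are disjoint, so the order is irrelevant, $g$ equals $f_i$ on $B_i$ and the identity off $\bigcup_i B_i$. Because $\mathrm{diam}(B_i)\to 0$, this infinite product is a genuine homeomorphism: continuity of $g$ and of $g^{-1}$ at the accumulation point $p$ follows from the fact that for every $\varepsilon>0$ only finitely many $B_i$ have diameter $\geq\varepsilon$, so off a finite union of balls $g$ moves every point by less than $\varepsilon$. The same estimate applied to $g_t:=\prod_{i=0}^{\infty} f_{i,t}$ shows $t\mapsto g_t$ is continuous for the $C^0$ topology, giving an isotopy from $\mathrm{id}$ to $g$, so $g\in\mathrm{Homeo}_0(\mathbb{R}^d)$. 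Finally, since $h(B_i)=B_{i+1}$, conjugating termwise gives $h g h^{-1}=\prod_{i=0}^{\infty} h f_i h^{-1}=\prod_{i=0}^{\infty} f_{i+1}=\prod_{i=1}^{\infty} f_i$, hence
$$[g,h]=g\,h g^{-1} h^{-1}=g\cdot\big(h g^{-1} h^{-1}\big)=\Big(\prod_{i=0}^{\infty} f_i\Big)\Big(\prod_{i=1}^{\infty} f_i\Big)^{-1}=f_0=f,$$
using once more that all the $f_i$ commute.

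The main obstacle is not the algebra, which collapses at once given the disjointness, but the topological bookkeeping: verifying that the infinite product $g$ and the infinite product of isotopies $g_t$ are genuinely a homeomorphism and a $C^0$-continuous path, and building $h$ with the prescribed action on the nested shrinking balls while remaining compactly supported and isotopic to the identity. Both points are handled by the diameter-to-zero condition together with the Alexander trick, and they are the only places that require care.
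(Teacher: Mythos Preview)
Your proof is correct and follows essentially the same infinite-swindle approach as the paper: the paper implements it concretely by first conjugating $f$ into the annulus $A_0=\{1/2\leq\|x\|\leq 1\}$, taking $h=\varphi$ to be the contraction $x\mapsto x/2$ on $B(0,2)$, and setting $g|_{A_i}=\varphi^i f\varphi^{-i}$ on the nested annuli $A_i$ shrinking to the origin, so that $f=[g,\varphi]$. Your version with disjoint shrinking balls is a harmless variant of the same telescoping trick, and you give more care than the paper does to the continuity of $g$ and of the isotopy.
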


This lemma is classical and seems to appear for the first time in \cite{And}. Let us prove it now.

\noindent \begin{proof}  Denote by $\varphi$ a homeomorphism in $\mathrm{Homeo}_{0}(\mathbb{R}^{d})$ whose restriction to $B(0,2)$ is defined by:
$$\begin{array}{rcl}
B(0,2) & \rightarrow & \mathbb{R}^{d} \\
x & \mapsto & \frac{x}{2}
\end{array}
.$$
For any natural number $n$, let
$$ A_{n}=\left\{x \in \mathbb{R}^{d}, \frac{1}{2^{n+1}}\leq \left\|x\right\| \leq \frac{1}{2^{n}} \right\}.$$
Let $f$ be an element in $\mathrm{Homeo}_{0}(\mathbb{R}^{N})$. As any element in $\mathrm{Homeo}_{0}(\mathbb{R}^{N})$ is conjugate to an element supported in the interior of $A_{0}$, we may suppose that the homeomorphism $f$ is supported in the interior of $A_{0}$. Then we define $g \in \mathrm{Homeo}_{0}(\mathbb{R}^{d})$ by:
\begin{enumerate}
\item $g=Id$ outside $B(0,1)$.
\item For any natural number $i$, $g_{|A_{i}}=\varphi^{i}f \varphi^{-i}$.
\item $g(0)=0$.
\end{enumerate}
Then:
$$ f= [g,\varphi].$$
\begin{figure}[ht]
\begin{center}
\includegraphics[scale=0.75]{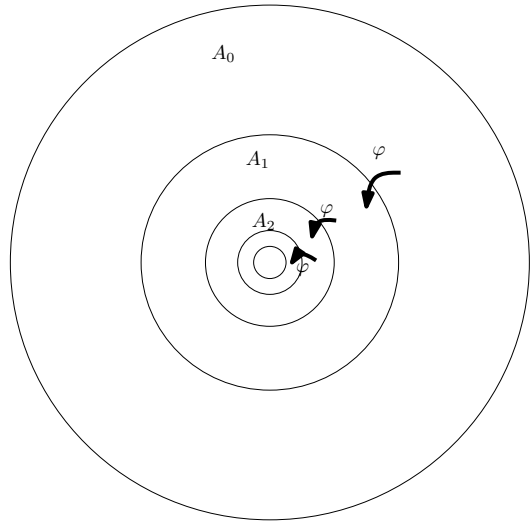}
\end{center}
\caption{Proof of Lemma \ref{commutateur}~: description of the homeomorphism $\varphi$}
\end{figure}
\end{proof}

These two lemmas remain true when we replace $\mathbb{R}^{d}$ with $H^{d}$ and $B(0,2)$ with $B(0,2) \cap H^{d}$.

Before proving Lemma \ref{groupelibre}, let us prove Lemma \ref{Avila} with the help of these two lemmas.

\begin{proof} [Proof of Lemma \ref{Avila}]
We prove the lemma in the case of homeomorphisms of $\mathbb{R}^{d}$. In the case of the half-space, the proof can be performed likewise.
For an element $m$ in $L_{2}$, let $l(m)$ be the length of $m$ as a word in $a$ and $b$. Let
$$ \begin{array}{rcl}
\mathbb{Z}_{>0} & \rightarrow & L_{2} \\
n & \mapsto & m_{n}
\end{array}$$
be a bijective map which satisfies:
$$l(m_{n})<l(m_{n'}) \Rightarrow n<n'.$$
This last condition implies that:
$$ l(m_{n})=l \Leftrightarrow 2^{l} \leq n < 2^{l+1}.$$
In particular, for any natural number $n$:
$$l(m_{n}) \leq log_{2}(n).$$
Let $s_{1}$ and $s_{2}$ be the homeomorphisms in $\mathrm{Homeo}_{0}(\mathbb{R}^{d})$ given by Lemma \ref{groupelibre}. Let $s_{3}$ be a homeomorphism in $\mathrm{Homeo}_{0}(\mathbb{R}^{d})$ supported in the ball $B(0,2)$ which satisfies:
$$s_{3}(B(0,1)) \cap B(0,1)= \emptyset.$$
We denote by $B_{n}$ the closed ball $\eta_{s_{1},s_{2}}(m_{n})(B(0,1))$. By Lemma \ref{commutateur}, there exist homeomorphisms $g_{n}$ and $h_{n}$ supported in $B(0,1)$ such that $f_{n}=[g_{n},h_{n}]$.

\begin{figure}[ht]
\begin{center}
\includegraphics[scale=0.75]{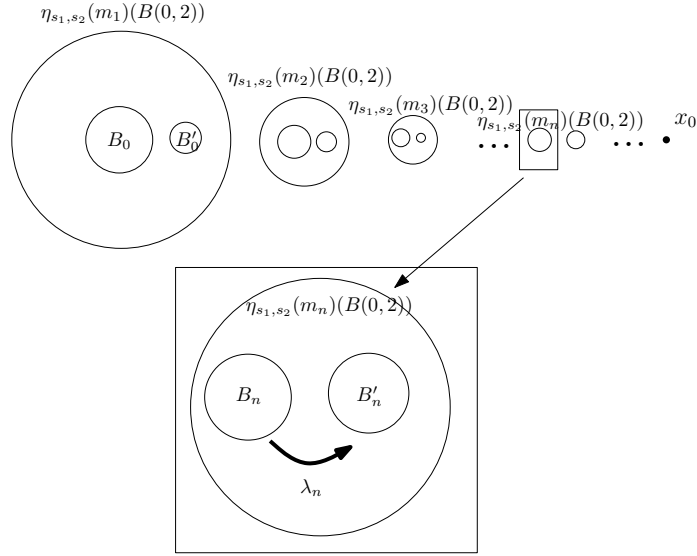}
\end{center}
\caption{Notations in the proof of Lemma \ref{Avila}}
\end{figure}

Define the homeomorphism $s_{4}$ by:
$$\left\{
\begin{array}{l}
\forall n \in \mathbb{Z}_{>0}, \ s_{4 |B_{n}}=\eta_{s_{1},s_{2}}(m_{n}) \circ g_{n} \circ \eta_{s_{1},s_{2}}(m_{n})^{-1} \\
s_{4} =Id \mbox{  on } \mathbb{R}^{d}- \bigcup \limits_{n \in \mathbb{Z}_{>0}} B_{n}
\end{array}
\right.
$$
and the homeomorphism $s_{5}$ by:
$$\left\{
\begin{array}{l}
\forall n \in \mathbb{Z}_{>0}, \ s_{5 |B_{n}}=\eta_{s_{1},s_{2}}(m_{n}) \circ h_{n} \circ \eta_{s_{1},s_{2}}(m_{n})^{-1} \\
s_{5} =Id \mbox{  on } \mathbb{R}^{d}- \bigcup \limits_{n \in \mathbb{Z}_{>0}} B_{n}
\end{array}
\right.
.$$
Let $\mathcal{G}= \left\{ s_{i}^{\epsilon}, \ i \in \left\{1, \ldots, 5 \right\} \mbox{ et } \epsilon \in \left\{-1,1 \right\}  \right\}$.
Let
$$\left\{
\begin{array}{l}
\lambda_{n}=\eta_{s_{1},s_{2}}(m_{n}) \circ s_{3} \circ \eta_{s_{1},s_{2}}(m_{n})^{-1} \\
B'_{n}=\lambda_{n}(B_{n})
\end{array}
\right.
$$
Notice that the balls $B_{n}$ and $B'_{n}$ are disjoint and contained in $\eta_{s_{1},s_{2}}(m_{n})(B(0,2))$.
Notice also that the homeomorphism $s_{4} \circ \lambda_{n} \circ s_{4}^{-1} \circ \lambda_{n}^{-1}$ (respectively $s_{5} \circ \lambda_{n} \circ s_{5}^{-1} \circ \lambda_{n}^{-1}$, $s_{4}^{-1} \circ s_{5}^{-1} \circ \lambda_{n} \circ s_{5} \circ s_{4} \circ \lambda_{n}^{-1}$) fixes the points outside $B_{n} \cup B'_{n}$, is equal to $\eta_{s_{1},s_{2}}(m_{n}) \circ g_{n} \circ \eta_{s_{1},s_{2}}(m_{n})^{-1}$ (respectively to $\eta_{s_{1},s_{2}}(m_{n}) \circ h_{n} \circ \eta_{s_{1},s_{2}}(m_{n})^{-1}$, $\eta_{s_{1},s_{2}}(m_{n}) \circ g_{n}^{-1} \circ h_{n}^{-1} \circ \eta_{s_{1},s_{2}}(m_{n})^{-1}$) on $B_{n}$ and to $\lambda_{n} \circ \eta_{s_{1},s_{2}}(m_{n})\circ g_{n}^{-1} \circ \eta_{s_{1},s_{2}}(m_{n})^{-1} \circ \lambda_{n}^{-1}$ (respectively to $\lambda_{n} \circ \eta_{s_{1},s_{2}}(m_{n})\circ h_{n}^{-1} \circ \eta_{s_{1},s_{2}}(m_{n})^{-1} \circ \lambda_{n}^{-1}$, $\lambda_{n} \circ \eta_{s_{1},s_{2}}(m_{n})\circ h_{n} \circ g_{n} \circ \eta_{s_{1},s_{2}}(m_{n})^{-1} \circ \lambda_{n}^{-1}$) on $B'_{n}$.

Therefore, the homeomorphism
$$[s_{4}, \lambda_{n}] [s_{5}, \lambda_{n}] [s_{4}^{-1}s_{5}^{-1},\lambda_{n}]$$
is equal to $\eta_{s_{1},s_{2}}(m_{n}) \circ f_{n} \circ \eta_{s_{1},s_{2}}(m_{n})^{-1}$ on $B_{n}$ and fixes the points outside $B_{n}$. Thus:
$$ f_{n}=\eta_{s_{1},s_{2}}(m_{n})^{-1} [s_{4}, \lambda_{n}] [s_{5}, \lambda_{n}] [s_{4}^{-1}s_{5}^{-1},\lambda_{n}] \eta_{s_{1},s_{2}}(m_{n}).$$
The homeomorphism $f_{n}$ hence belongs to the group generated by $\mathcal{G}$ and:
$$\begin{array}{rcl}
l_{\mathcal{G}}(f_{n}) & \leq & 2 l_{\mathcal{G}}(\eta_{s_{1},s_{2}}(m_{n}))+6l_{\mathcal{G}}(\lambda_{n})+8 \\
 & \leq & 2l_{\mathcal{G}}(\eta_{s_{1},s_{2}}(m_{n})) + 12l_{\mathcal{G}}(\eta_{s_{1},s_{2}}(m_{n})+14) \\
 & \leq & 14 log_{2}(n)+14.
 \end{array}
$$
\end{proof}

\begin{figure}
\begin{center}
\includegraphics[scale=0.75]{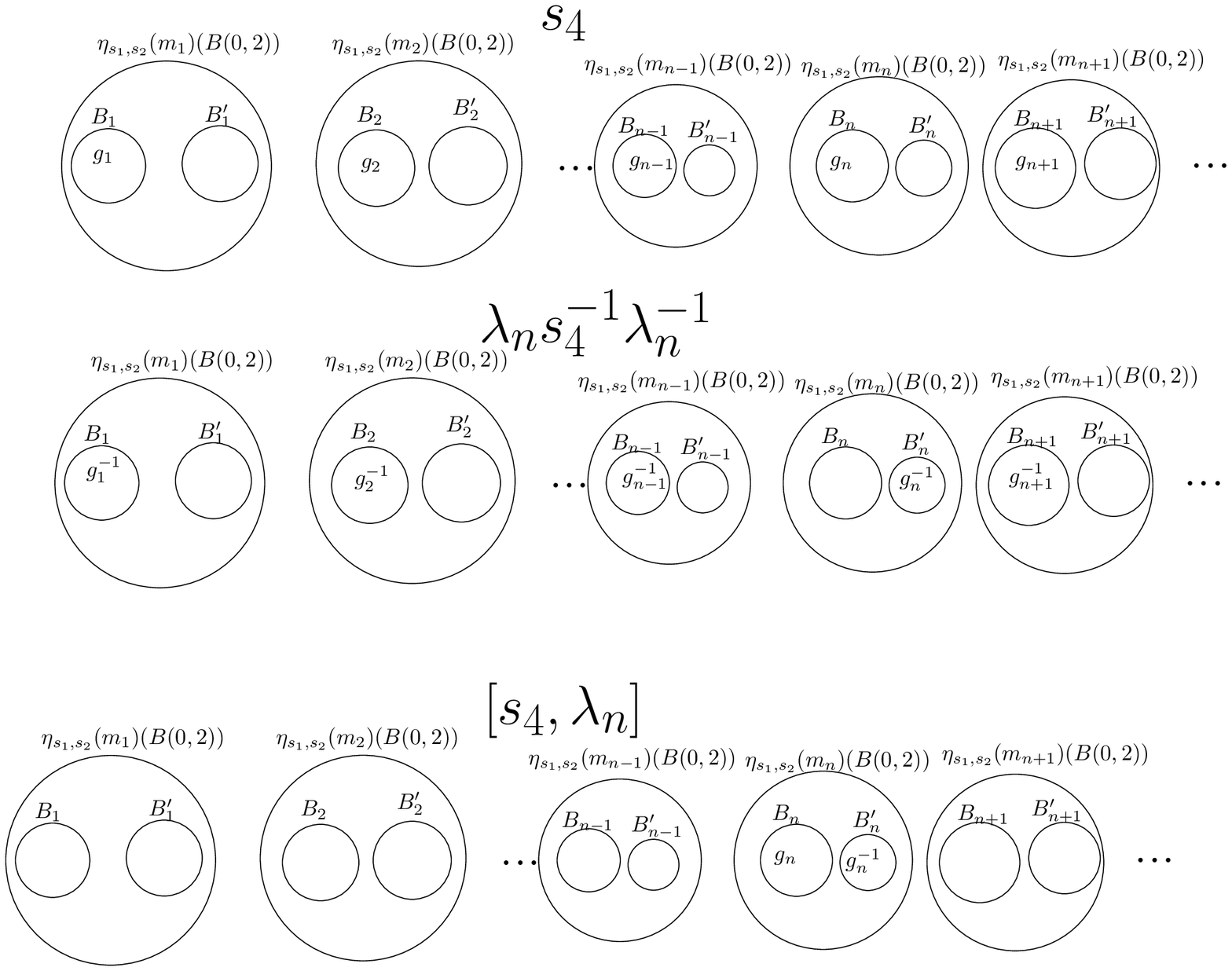}
\includegraphics[scale=0.75]{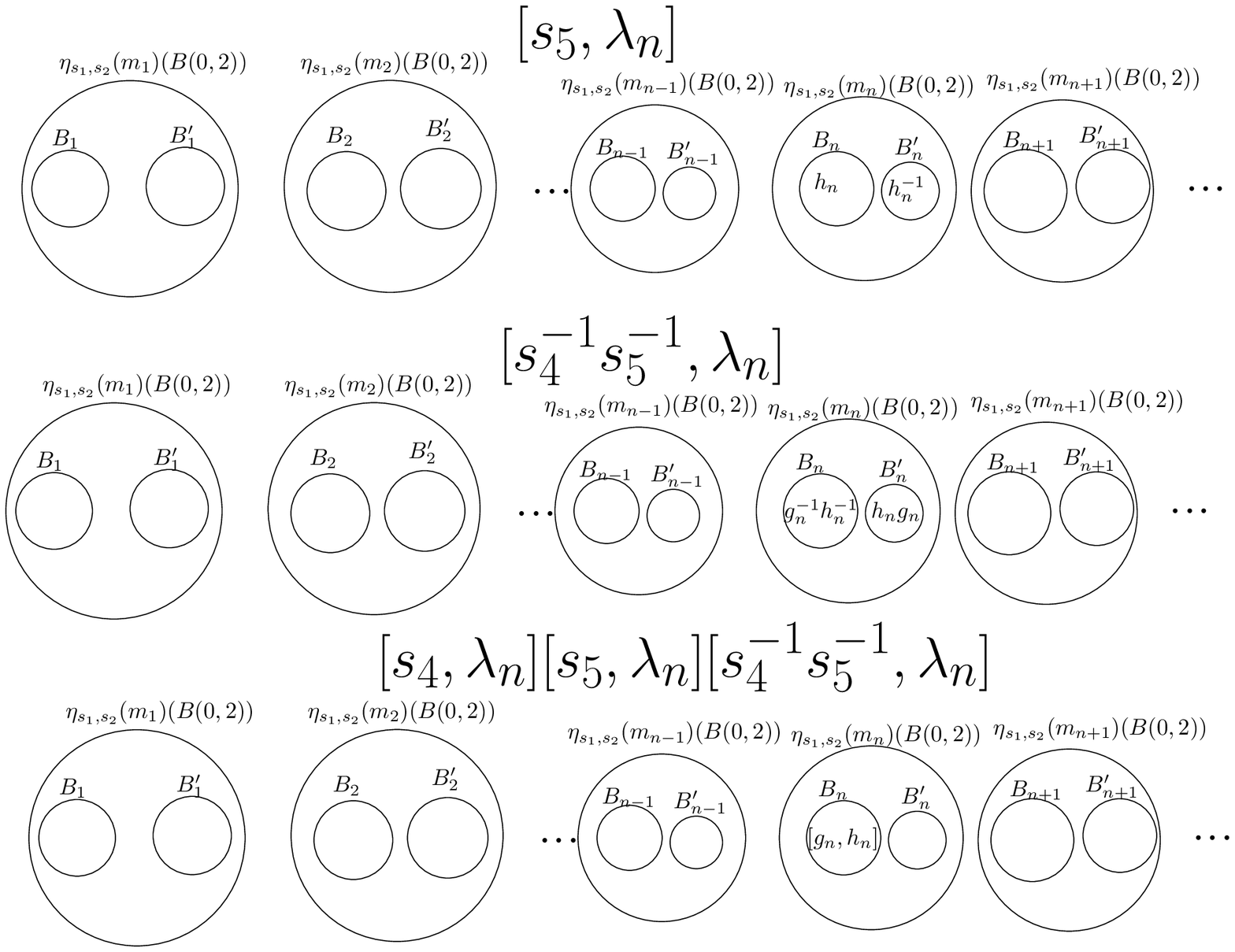}
\end{center}
\caption{The different homeomorphisms appearing in the proof of Lemma \ref{Avila}}
\end{figure}

\begin{proof}[Proof of Lemma \ref{groupelibre}]
First, let us prove the lemma in the case of homeomorphisms of $\mathbb{R}$. By perturbing two given homeomorphisms (as in \cite{Ghy}), one can find two compactly-supported homeomorphisms $\hat{s}_{1}$ and $\hat{s}_{2}$ of $\mathbb{R}$ which satisfy the following property:
$$ \forall m \in L_{2}, \ m' \in L_{2}, \ m \neq m' \Rightarrow \eta_{\hat{s}_{1},\hat{s}_{2}}(m)(0) \neq \eta_{\hat{s}_{1},\hat{s}_{2}}(m')(0).$$
Then, in the same way as in Denjoy's construction (see \cite{HK} p.403), replace each point of the orbit of $0$ under $L_{2}$ with an interval with positive length to obtain the wanted property. Thus, the proof is completed in the one-dimensional case. In the case of a higher dimension, denote by $f$ and $g$ the two homeomorphisms of $\mathbb{R}$ that we obtained in the one-dimensional case. Let $[-M,M]$ be an interval which contains the support of each of these homeomorphisms.\\
Let us look now at the case of $\mathbb{R}^{d}$. The homeomorphism:
$$\begin{array}{rcl}
\mathbb{R}^{d} & \rightarrow & \mathbb{R}^{d} \\
(x_{1},x_{2}, \ldots, x_{d}) & \mapsto & (f(x_{1}), f(x_{2}), \ldots, f(x_{d}))
\end{array}
$$
preserves the cube $[-M,M]^{d}$. Let $s_{1}$ be a homeomorphism of $\mathbb{R}^{d}$ supported in $[-M-1,M+1]^{d}$ which is equal to the above homeomorphism on $[-M,M]^{d}$. Apply the same construction to the homeomorphism $g$ to obtain a homeomorphism $s_{2}$. The ball centered on $0$ of radius $2$ of $\mathbb{R}^{d}$ is contained in the cube $[-2,2]^{d}$ and the diameters of the sets
$$ \eta_{s_{1},s_{2}}(m)([-2,2]^{d})=(\eta_{f,g}(m)([-2,2]))^{d}$$
converge to $0$ when the length of the word $m$ tends to infinity. Therefore, we have the wanted property.
The case of the half-spaces $H^{d}$ is similar as long as compactly-supported homeomorphisms which are equal to homeomorphisms of the form
$$\begin{array}{rcl}
 \mathbb{R}_{+} \times \mathbb{R}^{d-1} & \rightarrow & \mathbb{R}_{+} \times \mathbb{R}^{d-1} \\
(t,x_{1},x_{2}, \ldots, x_{d-1}) & \mapsto & (\frac{t}{2}, f(x_{1}), f(x_{2}), \ldots, f(x_{d-1}))
\end{array}
$$
in a neighbourhood of $0$ are used.
\end{proof}

\section{Case of surfaces with boundary} \label{bord}

Suppose that the boundary of the surface $S$ is nonempty. Let us prove now Proposition \ref{diamfrag2}. By considering a cover by half-discs, one can prove, with the same techniques as below, Proposition \ref{diamfrag} in the case that $S$ has a nonempty boundary: this case is left to the reader.

Recall that, in Section 3, we have chosen a "nice" fundamental domain $D_{0}$. Let $\tilde{A}$ be the set of edges of the boundary $\partial D_{0}$ which are not contained in the boundary of $\tilde{S}$ and let:
$$A =\left\{ \Pi (\beta), \ \beta \in \tilde{A} \right\}.$$
For any edge $\alpha$ in $A$, let us consider a closed disc $V_{\alpha}$ which does not meet the boundary of the surface $S$, whose interior contains $\alpha \cap S'$ and such that there exists a homeomorphism $\varphi_{\alpha} : V_{\alpha} \rightarrow \mathbb{D}^{2}$ which sends the set $\alpha \cap V_{\alpha}$ to the horizontal diameter of the unit disc $\mathbb{D}^{2}$. Choose sufficiently thin discs $V_{\alpha}$ so that they are pairwise disjoint. Let $U_{1}$ be a closed disc which contains the union of the discs $V_{\alpha}$. Let $U_{2}$ be a closed disc of $S$ which does not meet any edge in $A$, \emph{i.e.} contained in the interior of the fundamental domain $D_{0}$, and which satisfies the two following properties:
\begin{enumerate}
\item The surface $S'$ is contained in the interior of  $\bigcup \limits_{\alpha \in A} V_{\alpha} \cup U_{2}$.
\item For any edge $\alpha$ in $A$, the set $U_{2} \cap V_{\alpha}$ is homeomorphic to the disjoint union of two closed discs.
\end{enumerate}
Let $\mathcal{U}=\left\{ U_{1}, U_{2} \right\}$.

\begin{figure}[ht]
\begin{center}
\includegraphics{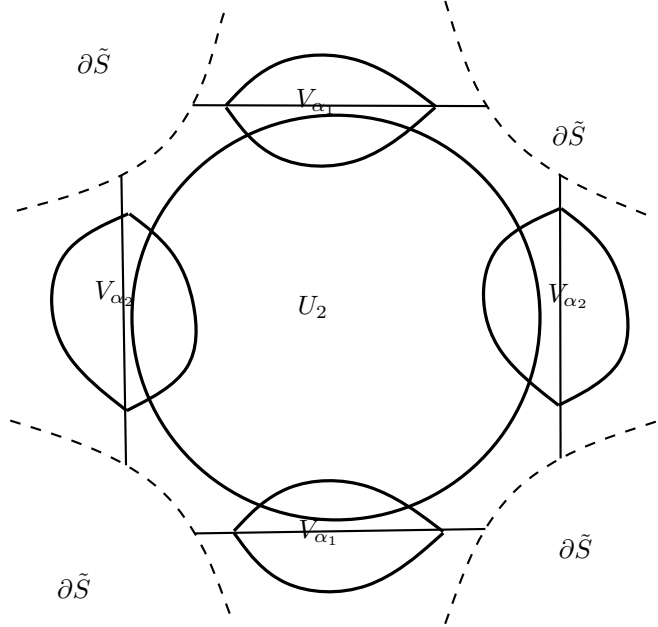}
\end{center}
\caption{Notations in the case of surfaces with boundary}
\end{figure}

The proof of the inequality in the case of the group $\mathrm{Homeo}_{0}(S, \partial S)$ requires the following lemmas:

\begin{lemma} \label{herbord}
Let $g$ be a homeomorphism in $\mathrm{Homeo}_{0}(S, \partial S)$ supported in the interior of $\bigcup  V_{\alpha} \cup U_{2}$. Suppose that $\mathrm{el}_{D_{0}}(\tilde{g}(D_{0})) \geq 2$. Then there exist homeomorphisms $g_{1}$, $g_{2}$ and $g_{3}$ in $\mathrm{Homeo}_{0}(S, \partial S)$ supported respectively in the interior of $\bigcup  V_{\alpha}$, $U_{2}$ and $\bigcup  V_{\alpha}$ such that the following property is satisfied:
$$ \mathrm{el}_{D_{0}}(\tilde{g}_{3} \circ \tilde{g}_{2} \circ \tilde{g}_{1} \circ \tilde{g}(D_{0})) \leq \mathrm{el}_{D_{0}}(\tilde{g}(D_{0}))-1.$$
\end{lemma}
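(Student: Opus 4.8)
The plan is to let the three homeomorphisms perform a \emph{peristaltic} move dragging $\tilde g(D_0)$ one tile closer to $D_0$. First I would set $E:=\mathrm{el}_{D_0}(\tilde g(D_0))\ge 2$ and record the relevant structure. Since $\partial S\ne\emptyset$ the group $\Pi_1(S)$ is free, so for the generating set $\mathcal G$ of Section~3 the graph $(\mathcal D,d_{\mathcal D})$ is a tree; hence every tile $D$ with $d_{\mathcal D}(D_0,D)=E$ has a unique neighbour $D^-$ with $d_{\mathcal D}(D_0,D^-)=E-1$, and their common edge $e_D=D\cap D^-$ is a $\Pi_1(S)$-translate of an edge of $\tilde A$, so $\Pi(e_D)\in A$. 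Because $\tilde g$ is the identity near $\partial\tilde S$, the connected set $\tilde g(D_0)$ contains a neighbourhood in $D_0$ of $\partial D_0\cap\partial\tilde S$ and so meets $D_0$; and because $g$ is supported in $\mathrm{int}(\bigcup V_\alpha\cup U_2)$, for every tile $D$ at distance $\ge1$ from $D_0$,
$$\tilde g(D_0)\cap\mathrm{int}(D)\ \subseteq\ \widetilde U_2^{(D)}\ \cup\ \bigcup_{e}\widetilde V_{e},$$
the union being over the edges $e$ of $D$, where $\widetilde U_2^{(D)}$ is the lift of $U_2$ contained in $\mathrm{int}(D)$ and $\widetilde V_e$ is the lift of the disc $V_\alpha$ straddling $e$. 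Finally, since $\tilde g(D_0)$ is connected and avoids every tile at distance $>E$, the part of $\tilde g(D_0)$ inside a tile $D$ at distance $E$ is joined to the rest only through the collar $\widetilde V_{e_D}$ of the parent edge, so it hangs from $e_D$ and lies in $\widetilde U_2^{(D)}\cup\widetilde V_{e_D}$; as $D$ is a disc, such a ``lollipop'' can in principle be isotoped out of $D$ across $e_D$.

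For the three moves I would take, in this order: $g_1$, supported in $\bigcup\mathrm{int}(V_\alpha)$, which (working equivariantly and in the charts $\varphi_\alpha$) sweeps the part of $\tilde g(D_0)$ lying in the collars $\widetilde V_e$ towards the adjacent lifts of $U_2$, so that afterwards the only part meeting $\mathrm{int}(D)$ away from $\widetilde U_2^{(D)}$ is a thin stem in $\widetilde V_{e_D}$; then $g_2$, supported in $U_2$, which inside each disc $\widetilde U_2^{(D)}$ pushes the set towards the side adjacent to the parent collar $\widetilde V_{e_D}$, concentrating it near $e_D$; then $g_3$, supported in $\bigcup\mathrm{int}(V_\alpha)$, which pushes the material now gathered near each parent edge $e_D$ across $e_D$ into $D^-$. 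Granting that these choices can be made, $\tilde g_3\tilde g_2\tilde g_1\tilde g(D_0)$ meets no tile at distance $E$, while no tile at distance $\le E-1$ gains a point at distance $\ge E$, so $\mathrm{el}_{D_0}$ drops by at least one.

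The hard part is that $g_1,g_2,g_3$ are homeomorphisms of $S$, so their lifts act in exactly the same way over every $\Pi_1(S)$-translate of $V_\alpha$ and of $U_2$: one cannot prescribe a push across one collar and leave a neighbouring one fixed. The delicate move is $g_3$: it must cross each parent edge $e_D$ towards $D_0$, yet, being downstairs a single homeomorphism of $V_\alpha$, it acts identically on all the other collars, and one has to check that it never drives a point of $\tilde g_2\tilde g_1\tilde g(D_0)$ from a tile at distance $<E$ into one at distance $\ge E$. The side of a collar $\widetilde V_e$ pointing towards $D_0$ is not the same for all $e$ — by the tree structure it depends on which of the two tiles adjacent to $e$ is nearer $D_0$ — so making the construction consistent requires a case analysis on the collars according to this ``inward side'', using crucially the hypothesis $E\ge2$ (which keeps the outer shell separated from $D_0$, so that the parent edges of the outermost tiles are genuine interior edges and the equivariant pushes on the remaining collars only move material inward or keep it at distance $<E$). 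It may moreover be necessary to let $g_1$ absorb the outermost collars whose inward side $g_3$ cannot serve, with $g_2$ shuttling material between the two families through the discs $\widetilde U_2^{(D)}$; organising this bookkeeping is the technical core, and is why the argument below proceeds case by case.
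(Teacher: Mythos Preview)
Your peristaltic picture is exactly right, and the paper's proof follows the same three-step outline $g_1,g_2,g_3$ with the same supports. You also correctly isolate the real difficulty: equivariance forces each $g_i$ to act the same way over every lift, so a ``directional'' $g_3$ pushing towards $D_0$ is ill-defined downstairs. But the resolution is not a case analysis on inward sides; it is to abandon directionality altogether and let the \emph{arc topology} decide where things go.

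Concretely, the paper takes $g_1$ supported in $\bigcup V_\alpha$ and \emph{pointwise fixing every edge} $\alpha\in A$; it pushes each component of $g(\Pi(\partial D_0))\cap \mathring V_\alpha$ that misses $\alpha$ into $U_2$. Then $g_2$, supported in $U_2$ (hence disjoint from all edges), pushes each component of $g_1 g(\Pi(\partial D_0))\cap\mathring U_2$ whose two ends lie in the \emph{same} $V_\alpha$ back into that $\mathring V_\alpha$. Finally $g_3$, supported in $\bigcup V_\alpha$, pushes each component of $g_2 g_1 g(\Pi(\partial D_0))\cap\mathring V_\alpha$ whose two ends lie in the \emph{same} component of $V_\alpha-\alpha$ entirely into that component, and \emph{pointwise fixes every other component}. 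None of these prescriptions refers to a direction towards $D_0$; they are symmetric conditions on arcs, hence well-defined on $S$ without any bookkeeping on inward sides.

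Two consequences follow immediately. First, since $\tilde g_1$ and $\tilde g_2$ fix $\Pi^{-1}(\Pi(\partial D_0))$ setwise, they cannot change the set of tiles met; and $\tilde g_3$ only moves arcs into the tile already containing their ends, so no new tile is ever hit (your Claim~1). Second, for an extremal tile $D_{ex}$, freeness of $\Pi_1(S)$ gives a unique inward neighbour, so every arc of $\tilde g(\partial D_0)\cap D_{ex}$ has both ends on the single parent edge $\tilde\alpha$; after $\tilde g_1,\tilde g_2$ these arcs are concentrated in $\mathring{\tilde V}_{\tilde\alpha}$ with both ends on the $D_{ex}^-$ side, and $\tilde g_3$ automatically evacuates them across $\tilde\alpha$ (Claim~2). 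The hypothesis $\mathrm{el}_{D_0}\ge 2$ is used only to ensure the ends of those arcs lie strictly in the interior of the $D_{ex}^-$ half of $\tilde V_{\tilde\alpha}$, not on $\tilde\alpha$ itself.

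A minor correction to your setup: it is not true that $\tilde g(D_0)\cap D$ for an extremal $D$ already lies in $\widetilde U_2^{(D)}\cup\widetilde V_{e_D}$; pieces can sit in non-parent collars $\widetilde V_e\cap D$ without crossing $e$. The role of $g_1$ is precisely to sweep those stragglers into $\widetilde U_2^{(D)}$ before $g_2$ funnels everything into the parent collar.
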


\begin{lemma} \label{inbord}
 Let $g$ be a homeomorphism in $\mathrm{Homeo}_{0}(S, \partial S)$ supported in the interior of $\bigcup  V_{\alpha} \cup U_{2}$. If $\mathrm{el}_{D_{0}}(\tilde{g}(D_{0}))=1$, then:
 $$\mathrm{Frag}_{\mathcal{U}}(g) \leq 6.$$
\end{lemma}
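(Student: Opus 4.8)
The plan is to correct $\tilde g$ in two stages --- first along $\partial D_{0}$, then in the interior of $D_{0}$ --- spending at each stage only a bounded number of homeomorphisms supported in $U_{1}$ or $U_{2}$, and to check that the total is at most $6$. I would first lift everything. Since $\tilde g$ is the time-one map of a lift of an isotopy relative to $\partial S$, it is the identity near $\partial \tilde S$, so it fixes the edges of $\partial D_{0}$ lying in $\partial \tilde S$ and in particular the endpoints of every edge $\beta \in \tilde A$. The hypothesis $\mathrm{el}_{D_{0}}(\tilde g(D_{0}))=1$ means precisely that $\tilde g(D_{0})$ meets only $D_{0}$ and the fundamental domains adjacent to it, i.e. $\tilde g(D_{0})$ is contained in the disc $\Omega := D_{0} \cup \bigcup_{s \in \mathcal G} s(D_{0})$. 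So the whole image $\tilde g(D_{0})$, hence also $\tilde g(\partial D_{0})$, is confined to one bounded thickened polygon; this confinement is the only place the hypothesis is used, and it is what keeps the number of correcting factors bounded.

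\emph{Stage one (the boundary).} Recall that $U_{1}$ is a single disc containing all the $V_{\alpha}$, so that $\Pi^{-1}(U_{1})$ covers a neighbourhood of every edge of $\partial D_{0}$ not contained in $\partial \tilde S$. I would construct a homeomorphism $g_{1} \in \mathrm{Homeo}_{0}(S,\partial S)$ supported in $U_{1}$ whose lift $\tilde g_{1}$ agrees with $\tilde g$ on $\partial D_{0}$: near each edge $\beta$, $\tilde g(\beta)$ is an arc of the disc $\Omega$ with the same (fixed) endpoints as $\beta$, and since $\tilde g$ commutes with the deck group, the behaviour of $\tilde g$ near the two edges of $\partial D_{0}$ lying above a given arc $\alpha \in A$ is conjugate under the deck transformation identifying those edges --- exactly the constraint that a homeomorphism supported in $U_{1}$ can realise. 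Set $g' = g_{1}^{-1} g$, so that $\tilde g'$ is the identity on $\partial D_{0}$. (If one $U_{1}$-supported correction does not suffice one uses two, which is still harmless for the final count.)

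\emph{Stage two (the interior).} Being orientation preserving, $\tilde g'$ now satisfies $\tilde g'(D_{0}) = D_{0}$; commuting with the deck group it preserves every fundamental domain and is the identity on the whole one-skeleton $\Pi^{-1}(A \cup \partial \tilde S)$, so $g'$ fixes $A$ pointwise and is still supported in $\bigcup_{\alpha} V_{\alpha} \cup U_{2}$. Cutting $S$ along the arcs of $A$ unfolds it to a disc $\bar D$ (the polygon $D_{0}$), in which $g'$ induces a homeomorphism $\bar g'$ supported in the interior; there the image of $U_{2}$ is a disc and the image of each $V_{\alpha}$ is a pair of disjoint half-discs, each meeting the image of $U_{2}$ along a single disc --- this is where the property that $U_{2} \cap V_{\alpha}$ has two components enters. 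Thus $\bar g'$ is a homeomorphism of a disc supported in a central disc with finitely many pairwise disjoint "fingers" attached along connected sets, and an elementary bounded-fragmentation argument (push the support off all the fingers into $U_{2}$, then finish inside $U_{2}$) writes $\bar g'$, and hence $g'$ after regluing, as a product of a bounded number of homeomorphisms supported in $U_{2}$ or in the fingers (hence in $U_{1}$); combining with $g_{1}$ one checks that $g$ is a product of at most $6$ homeomorphisms supported in $U_{1}$ or $U_{2}$.

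The step I expect to be the main obstacle is not any single geometric point but the bookkeeping of the constant: checking that Stage one really can be carried out within $U_{1}$ with the correct equivariance over all edges of $\partial D_{0}$ simultaneously, and that the disc-with-fingers fragmentation of Stage two costs only a bounded number of factors, adding up to at most $6$. Everything here is forced to be finite purely because $\mathrm{el}_{D_{0}}(\tilde g(D_{0}))=1$ confines $\tilde g(D_{0})$ to $\Omega$; without that one could not bound how many times $\tilde g$ drags $\partial D_{0}$ across the one-skeleton, and $\mathrm{Frag}_{\mathcal U}(g)$ would be unbounded.
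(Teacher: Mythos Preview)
Your two-stage plan (straighten $\partial D_{0}$, then fragment in the disc) is in the right spirit, but Stage one as written has a genuine gap. You claim there is $g_{1}$ supported in $U_{1}$ with $\tilde g_{1}|_{\partial D_{0}}=\tilde g|_{\partial D_{0}}$. Since $g_{1}$ is supported in $U_{1}$, the lift $\tilde g_{1}$ preserves every component of $\Pi^{-1}(U_{1})$. Now a point $x$ of an edge $\tilde\alpha$ that is moved by $\tilde g$ lies in the lift of $V_{\alpha}\subset U_{1}$, but nothing forbids $\tilde g(x)$ from lying in the lift of $U_{2}\setminus U_{1}$: the curve $\tilde g(\tilde\alpha)$ may wander through the interiors of $D_{0}$ and $D_{\tilde\alpha}$, well outside $\Pi^{-1}(U_{1})$. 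Then no map supported in $U_{1}$ can send $x$ to $\tilde g(x)$, and using two $U_{1}$-supported maps, as you suggest parenthetically, does not help, since any composite of such maps still preserves $\Pi^{-1}(U_{1})$. So the correction of $\partial D_{0}$ cannot be done with factors supported only in $U_{1}$.

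The paper avoids this by \emph{alternating} between homeomorphisms supported in $\bigcup_{\alpha}V_{\alpha}\subset U_{1}$ and in $U_{2}$: a first pair $(g_{1},g_{2})$ pushes each $\tilde g(\tilde\alpha)$ into the single disc $\tilde V_{\tilde\alpha}$, a second pair $(g_{3},g_{4})$ arranges that the running composite fixes $\partial V_{\alpha}$ pointwise (not $\alpha$ itself), and then the remaining homeomorphism splits tautologically as a $\bigcup_{\alpha}V_{\alpha}$-supported factor $g_{5}$ times a $U_{2}$-supported factor $g_{6}$, giving exactly six. A preliminary step you skip, and which is where both the hypothesis $\mathrm{el}_{D_{0}}=1$ and the free-group structure of $\pi_{1}(S)$ are actually used, is the verification that each $\tilde g(\tilde\alpha)$ lies in $D_{0}\cup D_{\tilde\alpha}$ only (not in the whole star $\Omega$); without this the first alternation would not confine the image to the correct $\tilde V_{\tilde\alpha}$, and the count would not close.
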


\begin{proof}[End of the proof of Proposition \ref{diamfrag2}]
Let $k=\mathrm{el}_{D_{0}}(\tilde{g}(D_{0}))$. By Lemma \ref{herbord}, after composing $\tilde{g}$ with $3(k-1)$ homeomorphisms, each supported in one of the discs of $\mathcal{U}$, we obtain a homeomorphism $f_{1}$ supported in $\bigcup \limits_{\alpha \in A} V_{\alpha} \cup U_{2}$ with:
$$ \mathrm{el}_{D_{0}}(\tilde{f}_{1}(D_{0})) =1.$$
Then, apply Lemma \ref{inbord} to the homeomorphism $f_{1}$:
$$\mathrm{Frag}_{\mathcal{U}}(f_{1}) \leq 6.$$
Therefore:
$$\mathrm{Frag}_{\mathcal{U}}(g) \leq 3 (\mathrm{el}_{D_{0}}(\tilde{g}(D_{0}))-1)+6.$$
However, as $D_{0} \cap \tilde{g}(D_{0}) \neq \emptyset$ (the homeomorphism $g$ pointwise fixes a neighbourhood of the boundary of $S$):
$$\mathrm{el}_{D_{0}}(\tilde{g}(D_{0})) \leq \mathrm{diam}_{\mathcal{D}}(\tilde{g}(D_{0})). $$
Hence:
$$\mathrm{Frag}_{\mathcal{U}}(g
) \leq 3 \mathrm{diam}_{\mathcal{D}}(\tilde{g}(D_{0}))+3.$$
\end{proof}

Notice that we indeed proved the following more precise proposition:

\begin{proposition} \label{fragbord}
Let $g$ be a homeomorphism in $\mathrm{Homeo}_{0}(S, \partial S)$ supported in the interior of $\bigcup \limits_{\alpha \in A} V_{\alpha} \cup U_{2}$. Then:
$$\mathrm{Frag}_{\mathcal{U}}(g) \leq 3 \mathrm{diam}_{\mathcal{D}}(\tilde{g}(D_{0}))+3.$$
\end{proposition}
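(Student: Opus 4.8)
The plan is to observe that Proposition \ref{fragbord} is not really a new statement: its proof is exactly the computation carried out in the ``End of the proof of Proposition \ref{diamfrag2}'' above, once one notices that this computation never used that $g$ is supported in $S'$, only that $g$ is supported in the interior of $\bigcup_{\alpha \in A} V_{\alpha} \cup U_{2}$. So what I would do is simply isolate that part of the argument and record it as a self-contained statement. Concretely, I would set $k=\mathrm{el}_{D_{0}}(\tilde{g}(D_{0}))$ and run the same induction on $k$.

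For the inductive step, as long as the current homeomorphism has $\mathrm{el}_{D_{0}}\geq 2$, Lemma \ref{herbord} produces three homeomorphisms $g_{1},g_{2},g_{3}$ supported respectively in the interiors of $\bigcup_{\alpha} V_{\alpha}$, $U_{2}$ and $\bigcup_{\alpha} V_{\alpha}$ — hence each supported in $U_{1}$ or $U_{2}$, i.e.\ in an element of $\mathcal{U}$ — such that composing by them decreases $\mathrm{el}_{D_{0}}$ by at least $1$; moreover the resulting homeomorphism is still supported in the interior of $\bigcup_{\alpha} V_{\alpha}\cup U_{2}$, so Lemma \ref{herbord} applies again. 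After $k-1$ such steps one reaches a homeomorphism $f_{1}$, supported in the interior of $\bigcup_{\alpha}V_{\alpha}\cup U_{2}$, with $\mathrm{el}_{D_{0}}(\tilde{f}_{1}(D_{0}))=1$, and then Lemma \ref{inbord} gives $\mathrm{Frag}_{\mathcal{U}}(f_{1})\leq 6$. Adding up the factors yields
$$\mathrm{Frag}_{\mathcal{U}}(g)\leq 3(k-1)+6=3k+3.$$
The only thing to be careful about here is the book-keeping that the hypothesis of Lemma \ref{herbord} is preserved along the induction, but this is immediate and is the same care already taken in the proof of Proposition \ref{diamfrag2}.

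It remains to replace $\mathrm{el}_{D_{0}}(\tilde{g}(D_{0}))$ by $\mathrm{diam}_{\mathcal{D}}(\tilde{g}(D_{0}))$. Since $g$ lies in $\mathrm{Homeo}_{0}(S,\partial S)$, the lift $\tilde{g}$ pointwise fixes a neighbourhood of $\Pi^{-1}(\partial S)=\partial\tilde{S}$; as the ``nice'' fundamental domain $D_{0}$ has edges on $\partial\tilde{S}$, it contains points fixed by $\tilde{g}$, so $D_{0}\cap\tilde{g}(D_{0})\neq\emptyset$. The inequality $\mathrm{el}_{D_{1}}(A)\leq\mathrm{diam}_{\mathcal{D}}(A)$ (valid whenever $D_{1}\cap A\neq\emptyset$, noted in Section 3) then gives $k\leq\mathrm{diam}_{\mathcal{D}}(\tilde{g}(D_{0}))$, hence $\mathrm{Frag}_{\mathcal{U}}(g)\leq 3\,\mathrm{diam}_{\mathcal{D}}(\tilde{g}(D_{0}))+3$, which is the claim. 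I do not expect any genuine obstacle in writing this proof: all the substance is contained in Lemmas \ref{herbord} and \ref{inbord}, whose proofs are deferred, and what remains is the elementary aggregation above.
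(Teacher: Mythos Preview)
Your proposal is correct and follows essentially the same approach as the paper: the paper explicitly remarks that the computation in the ``End of the proof of Proposition \ref{diamfrag2}'' already establishes Proposition \ref{fragbord}, since that argument only uses the support hypothesis $\mathrm{supp}(g)\subset \mathrm{int}\bigl(\bigcup_{\alpha} V_{\alpha}\cup U_{2}\bigr)$ together with Lemmas \ref{herbord} and \ref{inbord}. Your write-up reproduces exactly this chain of inequalities, with the same bookkeeping and the same use of $D_{0}\cap\tilde g(D_{0})\neq\emptyset$ to pass from $\mathrm{el}_{D_{0}}$ to $\mathrm{diam}_{\mathcal D}$.
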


\begin{proof}[Proof of Lemma \ref{herbord}]
Let us first give the properties of the homeomorphisms $g_{1}$, $g_{2}$ and $g_{3}$ which will satisfy the conclusion of the lemma. Let us give an idea of the action of these homeomorphisms "with the hands". If we look at the pieces of the disc $\tilde{g}(D_{0})$ furthest from $D_{0}$, the homeomorphism $g_{1}$ repulses them back to the open set $U_{2}$, the homeomorphism $g_{2}$ repulses them outside the open set $U_{2}$ and the homeomorphism $g_{3}$ makes them exit from the fundamental domain of $\mathcal{D}$ in which these pieces were contained (see figure \ref{distorsionhomeo8}). Let us give the precise construction of these homeomorphisms.

\begin{figure}[ht]
\begin{center}
\includegraphics{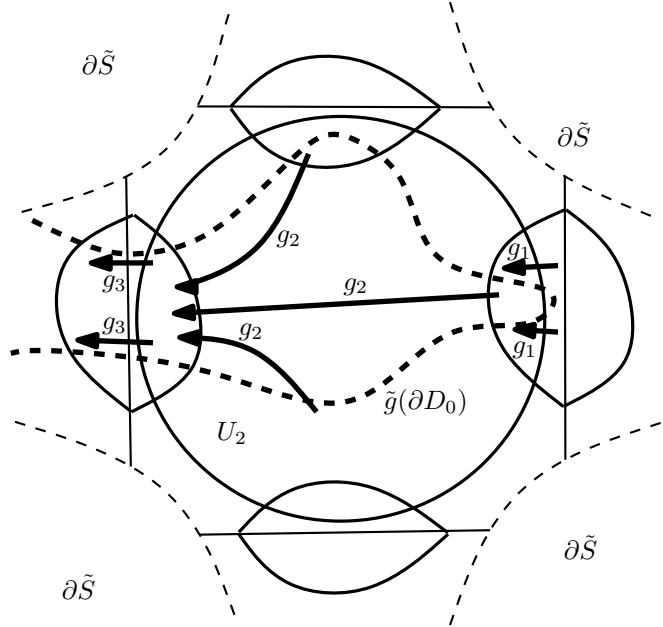}
\end{center}
\caption{Illustration of the proof of Lemma \ref{herbord}}
\label{distorsionhomeo8}
\end{figure}

Let $g_{1}$ be a homeomorphism supported in $\bigcup \limits_{\alpha \in A} V_{\alpha}$ such that:
\begin{enumerate}
\item The homeomorphism $g_{1}$ pointwise fixes $\Pi(\partial D_{0})$.
\item For any edge $\alpha$ and any connected component $C$ of $V_{\alpha} \cap g(\Pi(\partial D_{0}))$ which does not meet $\Pi(\partial D_{0})$:
$$ g_{1}(C) \subset U_{2}.$$
\end{enumerate}
One can build such a homeomorphism $g_{1}$ by taking the time $1$ of the flow of a well-chosen vector field which vanishes on $\Pi( \partial D_{0})$.

Let $g_{2}$ be a homeomorphism supported in $U_{2}$ which satisfies the following property: for any edge $\alpha$ in $A$ and for any connected component $C$ of $\mathring{U}_{2} \cap g_{1} \circ g(\Pi(\partial D_{0}))$ whose both ends (\emph{i.e.} the points of the closure of $C$ which do not belong to $C$) belong to $V_{\alpha}$, the set $g_{2}(C)$ is contained in $\mathring{V}_{\alpha}$. Let us explain how such a homeomorphism $g_{2}$ can be built. We will need the following elementary lemma which is a consequence of the Schönflies theorem:

\begin{lemma} \label{schonflies}
Let $c_{1} : [0,1] \rightarrow \mathbb{D}^{2}$ and $c_{2} : [0,1] \rightarrow \mathbb{D}^{2}$ be two injective curves which are equal in a neighbourhood of $0$ and in a neighbourhood of $1$ and such that:
\begin{enumerate}
\item $c_{1}(0)=c_{2}(0) \in \partial \mathbb{D}^{2}$ and $c_{1}(1)=c_{2}(1) \in \partial \mathbb{D}^{2}$.
\item $c_{1}((0,1)) \subset \mathbb{D}^{2}-\partial \mathbb{D}^{2}$ and $c_{2}((0,1)) \subset \mathbb{D}^{2}-\partial \mathbb{D}^{2}$.
\end{enumerate}
Then, there exists a homeomorphism $h$ in $\mathrm{Homeo}_{0}(\mathbb{D}^{2}, \partial \mathbb{D}^{2})$ such that:
$$ \forall t \in [0,1], h(c_{1}(t))=c_{2}(t).$$
\end{lemma}

\begin{corollary}
Let $(c_{i})_{1 \leq i \leq l}$ and $(c'_{i})_{1 \leq i \leq l}$ be finite sequences of injective curves $[0,1] \rightarrow \mathbb{D}^{2}$ of the closed disc $\mathbb{D}^{2}$ such that:
\begin{enumerate}
\item For any index $1 \leq i \leq l$, the maps $c_{i}$ and $c'_{i}$ are equal in a neighbourhood of $0$ and of $1$.
\item The curves $c_{i}$ are pairwise disjoint, as the curves $c'_{i}$.
\item For any index $i$, the points $c_{i}(0)$ and $c_{i}(1)$ belong to the boundary of the disc.
\item For any index $i$, the sets $c_{i}((0,1))$ and $c'_{i}((0,1))$ are contained in $\mathbb{D}^{2}- \partial \mathbb{D}^{2}$.
\end{enumerate}
Then there exists a homeomorphism $h$ in $\mathrm{Homeo}_{0}(\mathbb{D}^{2}, \partial \mathbb{D}^{2})$ such that, for any index $1 \leq i \leq l$:
$$\forall t \in [0,1], \ h(c_{i}(t))=c'_{i}(t).$$
\end{corollary}

\begin{proof}[Proof of the corollary]
It suffices to use Lemma \ref{schonflies} and an induction.
\end{proof}

Let us notice first that only a finite number of connected components of $\mathring{U}_{2}\cap g_{1} \circ g(\Pi(\partial D_{0}))$ is not contained in one of the open disc $\mathring{V}_{\alpha}$. We denote by $\mathcal{C}$ the set of such connected components with both ends in a same disc of the form $V_{\alpha}$, for an edge $\alpha$ in $A$. Let us fix now an edge $\alpha$ in $A$. Let $C$ be a connected component in $\mathcal{C}$ whose both ends belong to $V_{\alpha}$. We denote by $\delta_{C} : [0,1] \rightarrow D$ an injective path contained in $\mathring{V}_{\alpha} \cap U_{2}$ which is equal to the path $\overline{C}$ in a neighbourhood of $\delta(0)$ and of $\delta(1)$. The construction is made in such a way that the paths $\delta_{C}$ are pairwise disjoint. Then we apply the last corollary in the disc $U_{2}$ to the families of paths $(C)_{C \in \mathcal{C}}$ and $(\delta_{C})_{C \in \mathcal{C}}$ to build the homeomorphism $g_{2}$ that we wanted.

Finally, let $g_{3}$ be  a homeomorphism supported in $\bigcup \limits_{\alpha \in A} V_{\alpha}$ which satisfies, for any edge $\alpha$ in $A$, the following properties:
\begin{enumerate}
\item For any connected component $C$ of $\mathring{V}_{\alpha} \cap g_{2} \circ g_{1} \circ g ( \Pi( \partial D_{0}))$ whose both ends belong to the same connected component of $V_{\alpha}-\alpha$, $g_{3}(C) \cap \alpha = \emptyset$.
\item The homeomorphism $g_{3}$ pointwise fixes any other connected component of $\mathring{V}_{\alpha} \cap g_{2} \circ g_{1} ( \Pi( \partial D_{0}))$.
\end{enumerate}
The construction of the homeomorphism $g_{3}$ is analogous to the construction of the homeomorphism $g_{2}$. In what follows, we will not give details anymore on this kind of construction.

We claim that homeomorphisms $g_{1}$, $g_{2}$ and $g_{3}$ which satisfy the above properties satisfy also the conclusion of Lemma \ref{herbord}. This is a consequence of the two following claims.

\textbf{Claim 1.} The set of fundamental domains in $\mathcal{D}$ which meet $\tilde{g}_{3}\circ \tilde{g}_{2} \circ \tilde{g}_{1} \circ \tilde{g}(D_{0})$ is contained in the set of fundamental domains of $\mathcal{D}$ which meet $\tilde{g}(D_{0})$.

If $h$ is a homeomorphism in $\mathrm{Homeo}_{0}(S, \partial S)$, we will say that a fundamental domain $D$ in $\mathcal{D}$ is extremal for $\tilde{h}$ if it meets $\tilde{h}(D_{0})$ and satisfies:
$$d_{\mathcal{D}}(D,D_{0})=\mathrm{el}_{D_{0}}(\tilde{h}(D_{0})).$$

\textbf{Claim 2.} The fundamental domains $D$ in $\mathcal{D}$ which are extremal for $\tilde{g}$ do not meet $\tilde{g}_{3} \circ \tilde{g}_{2} \circ \tilde{g}_{1} \circ \tilde{g}(D_{0})$.

Let us assume for the moment that these two claims are true and let us prove Lemma \ref{herbord}.

Claim 1 implies that:
$$ \mathrm{el}_{D_{0}}(\tilde{g}_{3} \circ \tilde{g}_{2} \circ \tilde{g}_{1} \circ \tilde{g}(D_{0})) \leq \mathrm{el}_{D_{0}}(\tilde{g}(D_{0})).$$
Suppose that we have an equality in the above inequality. Then there exists a fundamental domain $D$ in $\mathcal{D}$ which is extremal for $\tilde{g}$ and which meets $\tilde{g}_{3} \circ \tilde{g}_{2} \circ \tilde{g}_{1} \circ \tilde{g}(D_{0})$, a contradiction with Claim 2. This proves the lemma.

Now, let us prove claim 1.\\
First, notice that, for a homeomorphism $h$ in $\mathrm{Homeo}_{0}(S, \partial S)$, the set of fundamental domains of $\mathcal{D}$ met by $\tilde{h}(D_{0})$ is equal to the set of fundamental domains of $\mathcal{D}$ met by $\tilde{h}(\partial D_{0})$ as the interior of a fundamental domain cannot contain a fundamental domain.\\
As the homeomorphisms $\tilde{g}_{1}$ and $\tilde{g}_{2}$ both pointwise fix $\bigcup \limits _{ D \in \mathcal {D}} \partial D $, the set of elements of $\mathcal{D}$ met by $\tilde{g}_{2} \circ \tilde{g}_{1} \circ \tilde{g}(\partial D_{0})$ is equal to the set of elements of $\mathcal{D}$ met by $\tilde{g}( \partial D_{0})$. Therefore, it suffices to prove the following inclusion:
$$ \left\{ D \in \mathcal{D}, \ \tilde{g}_{3} \circ \tilde{g}_{2} \circ \tilde{g}_{1} \circ \tilde{g}( \partial D_{0})\cap D \neq \emptyset \right\} \subset \left\{ D \in \mathcal{D}, \ \tilde{g}_{2} \circ \tilde{g}_{1} \circ \tilde{g}(\partial D_{0})\cap D \neq \emptyset \right\}.$$

Let $D$ be a fundamental domain which belongs to the left-hand set in the above inclusion. Let $\tilde{x}$ be a point in $\tilde{g}_{2} \circ \tilde{g}_{1} \circ \tilde{g}(\partial D_{0})$ which satisfies: $\tilde{g}_{3}(\tilde{x}) \in D$.

If the point $\tilde{x}$ belongs to the fundamental domain $D$, then the fundamental domain $D$ belongs to
$$\left\{ D' \in \mathcal{D}, \ \tilde{g}_{2} \circ \tilde{g}_{1} \circ \tilde{g}(\partial D_{0})\cap D' \neq \emptyset \right\}.$$
Hence, let us suppose that the point $\tilde{x}$ does not belong to the fundamental domain $D$. As the homeomorphism $g_{3}$ is supported in $\bigcup \limits_{\beta \in A} V_{\beta}$, there exists an edge $\alpha$ in $A$ such that the point $\Pi(\tilde{x})$ belongs to the disc $V_{\alpha}$. Let $\tilde{V}_{\alpha}$ be the lift of the disc $V_{\alpha}$ which contains $\tilde{x}$. By construction of the homeomorphism $\tilde{g}_{3}$, the point $\tilde{x}$ belongs to a connected component $\tilde{C}$ of $\tilde{g}_{2} \circ \tilde{g}_{1}( \partial D_{0}) \cap \mathring{\tilde{V}}_{\alpha}$ whose both ends belong to the interior of a same fundamental domain $D'$ in $\mathcal{D}$. Let us recall that the connected components which are not of this kind are fixed by the homeomorphism $g_{3}$. By definition of $\tilde{g}_{3}$, we have:
$$\tilde{g}_{3}(\tilde{x}) \in \tilde{g}_{3}(\tilde{C}) \subset \mathring{D}'$$
and, by hypothesis:
$$\tilde{g}_{3}(\tilde{x}) \in D.$$
Thus, $D'=D$ and, as the fundamental domain $D'$ meets $\tilde{C} \subset \tilde{g}_{2} \circ \tilde{g}_{1} \circ \tilde{g}(\partial D_{0})$, the fundamental domain $D$ belongs to the set
$$\left\{ D \in \mathcal{D}, \ \tilde{g}_{2} \circ \tilde{g}_{1} \circ \tilde{g}(\partial D_{0})\cap D \neq \emptyset \right\}.$$

We now come to the proof of claim 2. As in Section 3, let
$$\mathcal{G}= \left\{a_{i}, i \in \left\{1, \ldots, P \right\} \right\} \cup\left\{a_{i}^{-1}, i \in \left\{1, \ldots, P \right\} \right\}$$
be the generating set of the group $\Pi_{1}(S)$ which consists of the deck transformations which send the fundamental domain $D_{0}$ on a fundamental domain in $\mathcal{D}$ adjacent to $D_{0}$. As, in the case under discussion, the surface $S$ has a nonempty boundary, the group $\Pi_{1}(S)$ is the free group generated by $ \left\{a_{1}, a_{2}, \ldots, a_{p} \right\}$. Let $D_{ex}$ be a fundamental domain in $\mathcal{D}$ which is extremal for $\tilde{g}$. By definition:
$$ d_{\mathcal{D}}(D_{ex},D_{0})=\mathrm{el}_{D_{0}}(\tilde{g}(D_{0})).$$
Let us denote by $\gamma$ the deck transformation which sends $D_{0}$ to $D_{ex}$. The element $\gamma$ can be uniquely written as a reduced word on elements of $\mathcal{G}$:
$$ \gamma = s_{1} s_{2} \ldots s_{n}$$
where the $s_{i}$'s belong to the generating set $\mathcal{G}$ and $n=d_{\mathcal{D}}(D_{ex},D_{0})$. Every fundamental domain in $\mathcal{D}$ adjacent to $D_{ex}$ is a domain of the form $\gamma(s(D_{0}))$, where $s$ is an element in $\mathcal{G}$. If the element $s$ is different from $s_{n}^{-1}$, then:
$$d_{\mathcal{D}}(\gamma(s(D_{0})),D_{0})=l_{\mathcal{G}}(\gamma s)= n+1 >n= \mathrm{el}_{D_{0}}(\tilde{g}(\partial D_{0})).$$
Thus, the only face adjacent to $D_{ex}$ which meets $\tilde{g}(\partial D_{0})$ is $\gamma \circ s_{n}^{-1}(D_{0})$. We denote by $\tilde{\alpha}$ the edge which belongs to the fundamental domains $\gamma \circ s_{n}^{-1}(D_{0})$ and $D_{ex}$. The ends of any connected component of $\tilde{g}(\partial D_{0}) \cap D_{ex}$ belong to $\tilde{\alpha}$. These connected component do not meet the other edges of $\partial D_{ex}$.
Let $\tilde{V}_{\tilde{\alpha}}$ be the lift of $V_{\Pi(\tilde{\alpha})}$ which contains $\tilde{\alpha}$. We claim that:
$$\tilde{g}_{1} \circ \tilde{g}(\partial D_{0}) \cap D_{ex} \subset \tilde{V}_{\tilde{\alpha}} \cup \tilde{U}_{2},$$
where $\tilde{U}_{2}$ is the lift of $U_{2}$ which is contained in $D_{ex}$.

Let us prove this last claim. For a point $\tilde{x}$ in $D_{ex} \cap \tilde{g}(\partial D_{0}) \cap \Pi^{-1}(V_{\beta})-\tilde{V}_{\tilde{\alpha}}$, where $\beta$ is an edge in $A$, the connected component of $\tilde{g}(\partial D_{0}) \cap \Pi^{-1}(\mathring{V_{\beta}})$ which contains $\tilde{x}$ does not meet the set $\Pi^{-1}(\beta)$. Hence the point $\tilde{g}_{1}( \tilde{x})$ belongs to $U_{2}$, by construction of $g_{1}$. As, moreover, the homeomorphism $\tilde{g}_{1}$ preserves the following sets:
$$ \tilde{U}_{2} - (\bigcup_{\beta \in A} \Pi^{-1}(V_{\beta})) \mbox{ et } \tilde{V}_{\tilde{\alpha}},$$
the claim is proved.

Notice also that:
$$ \tilde{g}_{2} \circ \tilde{g}_{1} \circ \tilde{g}(\partial D_{0}) \cap D_{ex} \subset \mathring{\tilde{V}}_{\tilde{\alpha}}.$$
Indeed, the ends of any connected component of $\tilde{g}_{1} \circ \tilde{g}(\partial D_{0}) \cap \mathring{\tilde{U}}_{2}$ belong to $\tilde{V}_{\tilde{\alpha}}$. 

Let us prove that:
$$\tilde{g}_{3} \circ \tilde{g}_{2} \circ \tilde{g}_{1} \circ \tilde{g}( \partial D_{0}) \cap D_{ex}= \emptyset.$$
Let $C$ be a connected component of $\tilde{g}_{2} \circ \tilde{g}_{1} \circ \tilde{g}( \partial D_{0}) \cap \mathring{\tilde{V}}_{\tilde{\alpha}}$. As
$$ \tilde{g}_{2} \circ \tilde{g}_{1} \circ \tilde{g}( \partial D_{0}) \cap D_{ex} \subset \mathring{\tilde{V}}_{\tilde{\alpha}},$$
the ends of $C$ do not belong to $\mathring{D}_{ex} \cap \tilde{V}_{\tilde{\alpha}}$ but to $\gamma \circ s_{n}^{-1}(D_{0}) \cap \tilde{V}_{\tilde{\alpha}}$ which is the other connected component of $\tilde{V}_{\tilde{\alpha}}-\tilde{\alpha}$ (the ends of $C$ do not belong to $\alpha$ because $\mathrm{el}_{D_{0}}(\tilde{g}(D_{0}))=d_{\mathcal{D}}(D_{ex},D_{0}) \geq 2$). By construction of the homeomorphism $g_{3}$:
$$ \tilde{g}_{3}(C) \subset \gamma \circ s_{n}^{-1}( \mathring{D}_{0}).$$
Thus, the set $\tilde{g}_{3}(C)$ is disjoint from $D_{ex}$, which completes the proof of the second claim.
\end{proof}

\begin{proof}[Proof of Lemma \ref{inbord}]
For any edge $\tilde{\alpha}$ in $\tilde{A}$, we denote by $D_{\tilde{\alpha}}$ the fundamental domain in $\mathcal{D}$ which satisfies:
$$ D_{0} \cap D_{\tilde{\alpha}}= \tilde{\alpha}.$$
Let us fix an edge $\tilde{\alpha}$ of $\tilde{A}$. As $\mathrm{el}_{D_{0}}(\tilde{g}(D_{0}))=1$, the curve $\tilde{g}(\tilde{\alpha})$ does not meet fundamental domains in $\mathcal{D}$ adjacent to $D_{\tilde{\alpha}}$ and different from $D_{0}$: these fundamental domains are at distance $2$ from $D_{0}$. Let us prove that, if $\tilde{\beta}$ is an edge of $\tilde{A}$ different from $\tilde{\alpha}$, then 
$$ \tilde{g}(\tilde{\alpha}) \cap D_{\tilde{\beta}}= \emptyset .$$
Otherwise, we would have
$$ \tilde{g}(D_{\tilde{\alpha}}) \cap D_{\tilde{\beta}} \neq \emptyset,$$
for an edge $\tilde{\beta}$ different from $\tilde{\alpha}$. Let us denote by $s$ the deck transformation which sends $D_{0}$ to $D_{\tilde{\alpha}}$. Then:
$$ 2= d_{\mathcal{D}}(D_{\tilde{\alpha}}, D_{\tilde{\beta}})=d_{\mathcal{D}}(D_{0},s^{-1}(D_{\tilde{\beta}})).$$
Moreover:
$$\tilde{g}(s(D_{0})) \cap D_{\tilde{\beta}} \neq \emptyset.$$
Hence:
$$ \tilde{g}(D_{0}) \cap s^{-1}(D_{\tilde{\beta}}) \neq \emptyset.$$
It contradicts the hypothesis
$$\mathrm{el}_{D_{0}}(\tilde{g}(D_{0}))=1.$$
Thus, for any edge $\tilde{\alpha}$ in $\tilde{A}$:
$$\tilde{g}(\tilde{\alpha}) \subset \mathring{D}_{\tilde{\alpha}} \cup \mathring{D}_{0} \cup \tilde{\alpha}.$$
For an edge $\tilde{\alpha}$ in $\tilde{A}$, we denote by $\tilde{V}_{\tilde{\alpha}}$ the lift of $V_{\Pi(\tilde{\alpha})}$ which contains the edge $\tilde{\alpha}$.

We now build homeomorphisms $g_{1}$ and $g_{2}$ supported respectively in $\bigcup \limits_{\alpha \in A} V_{\alpha}$ and in $U_{2}$ such that:
$$ \forall \tilde{\alpha} \in \tilde{A}, \ \tilde{g}_{2} \circ \tilde{g}_{1} \circ \tilde{g}(\tilde{\alpha}) \subset \mathring{\tilde{V}}_{\tilde{\alpha}} \cup \tilde{\alpha}.$$

\begin{figure}[ht]
\begin{center}
\includegraphics{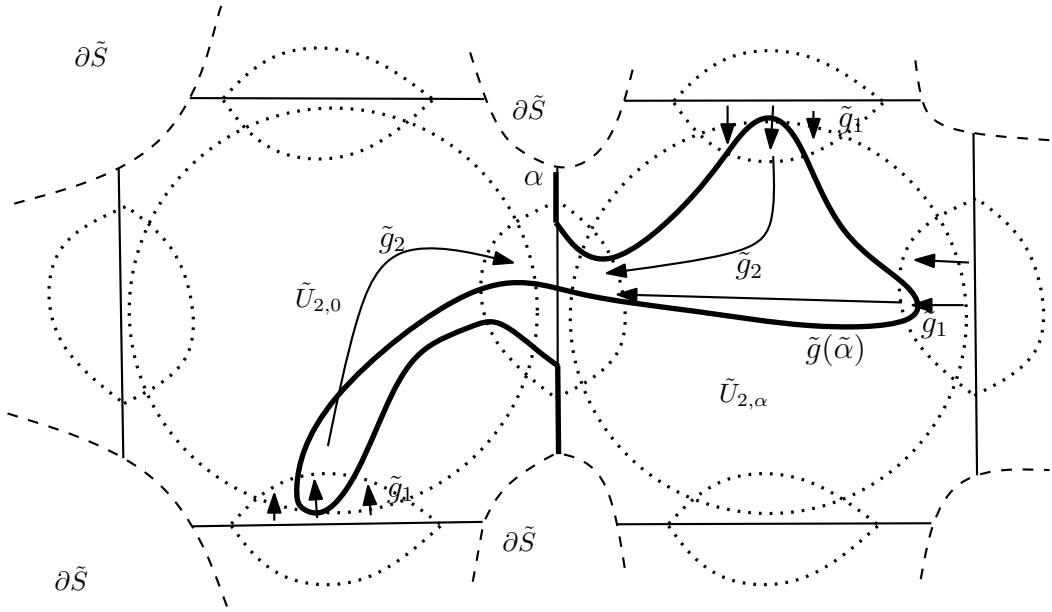}
\end{center}
\caption{Proof of Lemma \ref{inbord}: the homeomorphisms $g_{1}$ and $g_{2}$}
\end{figure}

As in the proof of Lemma \ref{herbord}, we build homeomorphisms $g_{1}$ and $g_{2}$ which satisfy the following properties:
\begin{enumerate}
\item The homeomorphism $g_{1}$ is supported in $\bigcup \limits_{\alpha \in A} V_{\alpha}$ and pointwise fixes $\partial D_{0}$.
\item For any edge $\alpha$ in $A$ and any connected component $C$ of $g ( \Pi (\partial D_{0})) \cap \mathring{V}_{\alpha}$ which does not meet $\alpha$, we have $g_{1}(C) \subset U_{2}$.
\item The homeomorphism $g_{2}$ is supported in $U_{2}$.
\item For any connected component $C$ of $g_{1} \circ g (\Pi (\partial D_{0})) \cap \mathring{U}_{2}$ whose ends belong to a same connected component of $V_{\alpha}-\alpha$ and for an edge $\alpha$ in $A$, $g_{2}(C) \subset \mathring{V}_{\alpha}$.
\end{enumerate}
Let us denote by $\tilde{U}_{2,0}$ the lift of the disc $U_{2}$ contained in $D_{0}$ and, for any edge $\tilde{\alpha}$ in $\tilde{A}$, $\tilde{U}_{2,\tilde{\alpha}}$ the lift of the disc $U_{2}$ contained in $D_{\tilde{\alpha}}$. By the same techniques as in the proof of Lemma \ref{herbord}, for any edge $\tilde{\alpha}$ in $\tilde{A}$:
$$\tilde{g}_{1} \circ \tilde{g} (\tilde{\alpha}) \subset \mathring{\tilde{U}}_{2,0} \cup \tilde{V}_{\tilde{\alpha}} \cup \mathring{\tilde{U}}_{2, \tilde{\alpha}}$$
and
$$ \tilde{g}_{2} \circ \tilde{g}_{1} \circ \tilde{g} (\tilde{\alpha}) \subset \mathring{\tilde{V}}_{\tilde{\alpha}}.$$

We will now build homeomorphisms $g_{3}$ and $g_{4}$ of $S$ supported respectively in $\bigcup \limits_{\alpha \in A} V_{\alpha}$ and $U_{2}$ such that, for any edge $\tilde{\alpha}$ in $\tilde{A}$, the homeomorphism $\tilde{g}_{4} \circ \tilde{g}_{3} \circ \tilde{g}_{2} \circ \tilde{g}_{1} \circ \tilde{g}$ pointwise fixes $\partial \tilde{V}_{\tilde{\alpha}}$.

Let $g_{3}$ be a homeomorphism supported in $\bigcup \limits_{\alpha \in A} V_{\alpha}$ which satisfies the following properties:
\begin{enumerate}
\item The homeomorphism $g_{3}$ pointwise fixes $g_{2} \circ g_{1} \circ g(\alpha)$.
\item For any connected component $C$ of $g_{2} \circ g_{1} \circ g(\partial V_{\alpha}) \cap \mathring{V}_{\alpha}$: $g_{3}(C) \subset \mathring{U}_{2}$.
\end{enumerate}
Then, the set $g_{3} \circ g_{2} \circ g_{1} \circ g(\partial V_{\alpha}) \Delta \partial V_{\alpha}$ is contained in $\mathring{U}_{2}$.

We impose that the homeomorphism $g_{4}$ is supported in $U_{2}$ and satisfies the following property: the homeomorphism $g_{4}$ is equal to $(g_{3} \circ g_{2} \circ g_{1} \circ g)^{-1}$ on the closed set $g_{3} \circ g_{2} \circ g_{1} \circ g(\partial V_{\alpha})$. The construction of $g_{3}$ has enabled the construction of $g_{4}$ with the above properties. Thus, as the homeomorphism $g_{4} \circ g_{3} \circ g_{2} \circ g_{1} \circ g$ pointwise fixes $\bigcup \limits_{\alpha \in A} \partial V_{\alpha}$, the map $g_{5}: S \rightarrow S$, which is equal to $g_{4} \circ g_{3} \circ g_{2} \circ g_{1} \circ g$ on $\bigcup \limits_{\alpha \in A} V_{\alpha}$ and to the identity outside this set, is a homeomorphism of $S$ supported in $\bigcup \limits_{\alpha \in A} V_{\alpha}$. Then, let $g_{6}= (g_{5} \circ g_{4} \circ g_{3} \circ g_{2} \circ g_{1} \circ g)^{-1}$. Then the homeomorphism $g_{6}$ is supported in $U_{2}$ and we have:
$$g=g_{1}^{-1} \circ g_{2}^{-1} \circ g_{3}^{-1} \circ g_{4}^{-1} \circ g_{5}^{-1} \circ g_{6}^{-1}.$$
This implies that $\mathrm{Frag}_{\mathcal{U}}(g) \leq 6$, which proves the lemma.
\end{proof}

\section{Case of the torus}

In this section, we prove Proposition \ref{diamfrag} in the case of the torus $\mathbb{T}^{2}=\mathbb{R}^{2}/\mathbb{Z}^{2}$.
We set $D_{0}=[0,1]^{2}$ and the covering $\Pi$ is given by the projection $\mathbb{R}^{2} \rightarrow \mathbb{R}^{2}/\mathbb{Z}^{2}$. We denote by $A_{0}$ (respectively $A_{1}$, $B_{0}$, $B_{1}$) the closed annulus $[-\frac{1}{4}, \frac{1}{2}] \times \mathbb{R} / \mathbb{Z} \subset \mathbb{T}^{2}$ (respectively $[\frac{1}{4}, 1] \times \mathbb{R} / \mathbb{Z}$, $\mathbb{R} / \mathbb{Z} \times [-\frac{1}{4}, \frac{1}{2}]$, $\mathbb{R} / \mathbb{Z} \times [\frac{1}{4}, 1]$). For any integer $i$, we denote by $\tilde{A}_{0}^{i}$ (respectively $\tilde{A}_{1}^{i}$, $\tilde{B}_{0}^{i}$, $\tilde{B}_{1}^{i}$) the band of the plane $[i-\frac{1}{4}, i+\frac{1}{2}] \times \mathbb{R}$ (respectively $[i+\frac{1}{4},i+ 1] \times \mathbb{R}$, $\mathbb{R} \times [i-\frac{1}{4}, i+\frac{1}{2}]$, $\mathbb{R} \times [i+\frac{1}{4},i+ 1]$). Finally, for $i \in \mathbb{Z}$ and $j \in \left\{0,1\right\}$, we denote by $\tilde{\alpha}_{j}^{i}$ (respectively $\tilde{\beta}_{j}^{i}$) the curve $\left\{i+\frac{j}{2} \right\} \times \mathbb{R}$ (respectively $\mathbb{R} \times \left\{i+\frac{j}{2} \right\}$). Let $\mathcal{U}$ be the cover of the torus $\mathbb{T}^{2}$ defined by:
$$\begin{array}{rcl}
\mathcal{U} & = & \left\{ I \times J, \ I,J \in \left\{ \left[-\frac{1}{4},\frac{1}{2} \right], \left[\frac{1}{4}, 1 \right] \right\} \right\} \\
 & = & \left\{ A_{j}\cap B_{j'}, \ j,j' \in \left\{0,1 \right\} \right\}.
\end{array}$$
For a compact subset $A$ of $\mathbb{R}^{2}$, we set:
$$ \mathrm{length}(A)= \mathrm{card} \left\{ (i,j) \in \mathbb{Z} \times \left\{ 0, 1 \right\} , \ \tilde{\alpha}_{j}^{i} \cap A \neq \emptyset \right\}$$
and:
$$ \mathrm{height}(A)= \mathrm{card} \left\{ (i,j) \in \mathbb{Z} \times \left\{ 0, 1 \right\} , \ \tilde{\beta}_{j}^{i} \cap A \neq \emptyset \right\}.$$
Let us notice that, for any compact subset $A$ of $\mathbb{R}^{2}$:
$$ \left\{
\begin{array}{l}
\mathrm{length}(A) \leq 2 \mathrm{diam}_{\mathcal{D}}(A) \\
\mathrm{height}(A) \leq 2 \mathrm{diam}_{\mathcal{D}}(A)
\end{array}
\right.
.
$$
Let us fix a homeomorphism $g$ in $\mathrm{Homeo}_{0}(\mathbb{T}^{2})$ and a lift $\tilde{g}$ of $g$. Let $i_{max, \alpha} \in \mathbb{Z}$ and $j_{max,\alpha} \in \left\{0,1 \right\}$ (respectively $i_{\max, \beta}$ and $j_{max,\beta}$) be the integers which satisfy:
$$i_{max, \alpha}+ \frac{1}{2}j_{max, \alpha}= \max \left\{ i + \frac{1}{2}j, \ \tilde{g}(D_{0}) \cap \tilde{\alpha}_{j}^{i} \neq \emptyset \right\}$$ 
(respectively:
$$i_{max, \beta}+ \frac{1}{2}j_{max, \beta}= \max \left\{ i + \frac{1}{2}j, \ \tilde{g}(D_{0}) \cap \tilde{\beta}_{j}^{i} \neq \emptyset \right\}).$$

Let $(i_{\alpha},j_{\alpha})$ (respectively $(i_{\beta},j_{\beta})$) be the pair such that the interior of the band $\tilde{A}^{i_{\alpha}}_{j_{\alpha}}$ (respectively $\tilde{B}^{i_{\beta}}_{i_{\alpha}}$) contains the curve $\tilde{\alpha}^{i_{max,\alpha}}_{j_{max,\alpha}}=\tilde{\alpha}_{max}$ (respectively $\tilde{\beta}^{i_{max,\beta}}_{j_{max,\beta}}=\tilde{\beta}_{max}$).
Suppose that $\mathrm{height}(\tilde{g}(D_{0}))>3$ or that $\mathrm{length}(\tilde{g}(D_{0}))>3$. 
Notice that the connected components of $\mathring{A}_{j_{\alpha}} \cap g(\Pi(\partial D_{0}))$ can be split into two classes:
\begin{enumerate}
\item On the one hand, the connected components which are homeomorphic to $\mathbb{R}$ which will be called \emph{regular connected component} of $\mathring{A}_{j_{\alpha}} \cap g(\Pi(\partial D_{0}))$.
\item On the other hand, there exists at most one connected component homeomorphic to the union of two transverse straight lines in $\mathbb{R}^{2}$. This is the connected component which contains the point $g(0,0)$. We will call it \emph{singular connected component} of $\mathring{A}_{j_{\alpha}} \cap g(\Pi(\partial D_{0}))$.
\end{enumerate}
We claim that one of the following cases occurs.

\begin{itemize}
\item[\textbf{First case}.] There exists a connected component $\tilde{C}$ of $\Pi^{-1}(\mathring{A}_{j_{\alpha}}) \cap \tilde{g}(\partial D_{0})$ such that:
\begin{enumerate}
\item The ends of $\tilde{C}$ belong to two different connected component of the boundary of $\Pi^{-1}(A_{j_{\alpha}})$.
\item $ \mathrm{height}( \tilde{C}) \leq 3.$
\end{enumerate}

\item[\textbf{Second case}.] There exists a connected component $\tilde{C}$ of $\Pi^{-1}(\mathring{B}_{j_{\beta}}) \cap \tilde{g}(\partial D_{0})$ such that:
\begin{enumerate}
\item The ends of $\tilde{C}$ belong to two different connected components of the boundary of $\Pi^{-1}(B_{j_{\beta}})$.
\item $ \mathrm{length}( \tilde{C}) \leq 3.$
\end{enumerate}
\end{itemize}

Let us prove this claim. Suppose first that the length of $\tilde{g}(D_{0})$ is greater than $3$. Then, there exists a connected component $\tilde{C}$ of $\Pi^{-1}(\mathring{A}_{j_{\alpha}}) \cap \tilde{g}(\partial D_{0})$ whose ends belong to different boundary components of $\Pi^{-1}(A_{j_{\alpha}})$. If the first case does not occur, the height of $\tilde{C}$ is greater than $3$. Then, there exists a connected component $\tilde{C}'$ of $\mathring{B}_{j_{\beta}} \cap \tilde{C}$ whose ends belong to two different connected components of the boundary of $B_{j_{\beta}}$. In this case, the length of the component $\tilde{C}'$ is at most $1$: the second case occurs. Finally, if the length of $\tilde{g}(D_{0})$ is smaller or equal to $3$ and the height of this compact is greater than $3$, then any connected component of $\Pi^{-1}(\mathring{B}_{j_{\beta}}) \cap \tilde{g}(\partial D_{0})$ satisfies the properties of the second case.

The next lemmas will allow us to complete the proof of Proposition \ref{diamfrag} in the case of the $2$-dimensional torus.

\begin{lemma} \label{hertore}
In the first case above, there exists a homeomorphism $h$ supported in $A_{j_{\alpha}}$ which satisfies the following properties:
\begin{enumerate}
\item If $p_{2}: \mathbb{R}^{2} \rightarrow \mathbb{R}$ denotes the projection on the second coordinate, we have:
$$ \sup_{x \in \mathbb{R}^{2}} \left| p_{2} \circ \tilde{h}(x)-p_{2}(x) \right| <3.$$
\item $\mathrm{height}(\tilde{h}\circ \tilde{g}(D_{0})) \leq \mathrm{height}(\tilde{g}(D_{0}))$.
\item $\mathrm{length}(\tilde{h}\circ \tilde{g}(D_{0})) \leq \mathrm{length}(\tilde{g}(D_{0}))-1$.
\end{enumerate}
We have of course a symmetric statement in the second case.
\end{lemma}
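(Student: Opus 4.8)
The idea is to use the arc $\tilde{C}$ to cut off the "tongue" of $\tilde{g}(D_0)$ that reaches the extreme line $\tilde{\alpha}_{max}$, and then to slide that tongue leftwards, by a homeomorphism $h$ supported in the annulus $A_{j_\alpha}$, so that its image no longer meets $\tilde{\alpha}_{max}$. Since $\mathrm{height}(\tilde{C})\le 3$, the tongue will be confined to a thin horizontal strip, and this confinement is exactly what will give properties (1) and (2), while "losing the contact with $\tilde{\alpha}_{max}$" gives property (3).

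\textbf{Geometric preliminaries.} First I would record the shape of $\tilde{g}(D_0)$ near $\tilde{\alpha}_{max}$. Because $\tilde{\alpha}_{max}$ is the rightmost line $\tilde{\alpha}^i_j$ meeting the compact connected set $\tilde{g}(D_0)$ and lies in the interior of $\tilde{A}^{i_\alpha}_{j_\alpha}$, one checks that $\tilde{g}(D_0)$ is contained in the open half-plane strictly to the left of the right-hand boundary line of $\tilde{A}^{i_\alpha}_{j_\alpha}$ (that line being at horizontal distance exactly $\tfrac12$ to the right of $\tilde{\alpha}_{max}$), while still touching $\tilde{\alpha}_{max}$. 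Consequently the arc $\tilde{C}$, whose ends lie on the two boundary lines of some band $\tilde{A}^k_{j_\alpha}$, must cross a band $\tilde{A}^k_{j_\alpha}$ with $k\le i_\alpha-1$; combined with $\mathrm{length}(\tilde{g}(D_0))>3$ (which is the situation in which the first case arises), this forces $\tilde{g}(D_0)$ to span all the bands $\tilde{A}^k_{j_\alpha},\dots,\tilde{A}^{i_\alpha}_{j_\alpha}$, so that $\tilde{C}$ genuinely separates a connected "tongue" $E$ of $\tilde{g}(D_0)$ running from $\tilde{C}$ rightwards to $\tilde{\alpha}_{max}$. Since $\mathrm{height}(\tilde{C})\le 3$, the arc $\tilde{C}$ lies in a horizontal strip $\mathbb{R}\times(c,c+3)$, and one verifies that this piece $E$ is trapped between $\tilde{C}$, the right-hand boundary lines of the intervening bands and the line carrying $\tilde{\alpha}_{max}$, hence that $E$ — in particular every point where $\tilde{g}(D_0)$ meets $\tilde{\alpha}_{max}$ — lies in that same strip of width $<3$.

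\textbf{Construction of $h$ and the three properties.} Working equivariantly over the bands $\tilde{A}^i_{j_\alpha}$ and using the Sch\"onflies corollary in the form already employed in Section~\ref{bord} (Lemma~\ref{schonflies} and its corollary), I would build a homeomorphism $h$ in $\mathrm{Homeo}_0(\mathbb{T}^2)$ supported in $A_{j_\alpha}$, whose lift $\tilde{h}$ is the identity outside the union of the $\mathbb{Z}$-translates of the strip $\mathbb{R}\times(c,c+3)$, moves points only within the band in which they sit (and only within horizontal strips bounded by lines $\tilde{\beta}^i_j$ that $\tilde{g}(D_0)$ already meets), and carries the right-hand part of each band leftwards just far enough that the image of $E$ avoids $\tilde{\alpha}_{max}$, while fixing a left-hand collar of each band matched precisely to the leftmost horizontal extent of $\tilde{g}(D_0)$. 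Property~(1) then holds because the whole isotopy stays inside a strip of vertical width $<3$, so $\sup_x|p_2\circ\tilde{h}(x)-p_2(x)|<3$; property~(2) holds because $\tilde{h}\circ\tilde{g}(D_0)$ meets exactly the same lines $\tilde{\beta}^i_j$ as $\tilde{g}(D_0)$; and property~(3) holds because the image no longer meets $\tilde{\alpha}_{max}$ while, thanks to the matched collars, it meets no line $\tilde{\alpha}^i_j$ lying to the left of the ones $\tilde{g}(D_0)$ already met, so $\mathrm{length}$ drops by at least $1$. The statement in the second case follows by interchanging the roles of the two coordinates.

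\textbf{Main obstacle.} The delicate point is the last one: because $h$ is a single homeomorphism of the annulus $A_{j_\alpha}$, the leftward push that removes the contact with $\tilde{\alpha}_{max}$ in the band $\tilde{A}^{i_\alpha}_{j_\alpha}$ acts in the very same way on the leftmost band, which $\tilde{g}(D_0)$ enters only partially, so the push and its fixed collars must be calibrated with care so as to strip off one line on the right without creating a new one on the left, and one must separately control the degenerate configurations of the leftmost intersection. This case analysis, together with the verification that $E$ really does lie in a horizontal strip of width $<3$ (which is exactly where the hypothesis $\mathrm{height}(\tilde{C})\le3$ enters), is the technical heart of the lemma; once it is in place, the homeomorphism $h$ and its three properties follow routinely from the Sch\"onflies-type constructions already developed.
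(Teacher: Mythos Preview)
Your approach has a genuine gap in the geometric preliminaries, and it concerns exactly the step you flag as ``the technical heart'': the claim that the tongue $E$, and in particular every point of $\tilde g(D_0)\cap\tilde\alpha_{max}$, lies in a single horizontal strip of width $<3$ is not true in general. The arc $\tilde C$ lives in one band $\tilde A^{k}_{j_\alpha}$ and separates only that band, not the plane; between that band and the rightmost band $\tilde A^{i_\alpha}_{j_\alpha}$ the set $\tilde g(D_0)$ is free to spread vertically (its height can be arbitrarily large in the first case), and other components of $\tilde g(\partial D_0)$ touching $\tilde\alpha_{max}$ need not lie anywhere near the $p_2$-range of $\tilde C$. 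So you cannot confine the \emph{support} of $\tilde h$ to a strip of width $<3$ and still remove every contact with $\tilde\alpha_{max}$.

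The paper obtains property~(1) by a completely different mechanism: it does not try to control the support of $h$, but instead builds $h$ so that it \emph{fixes} the projection $\Pi(\tilde C)$ pointwise (every crossing component is fixed; only the non-crossing components are pushed off the central circle $\Pi(\tilde\alpha_{max})$, each one within its own $p_2$-range). Because $\tilde h$ commutes with integer translations, all $\mathbb Z^2$-translates of $\tilde C$ are fixed as well; consecutive vertical translates of $\tilde C$ cut each band into regions of vertical extent at most $\sup p_2(\tilde C)-\inf p_2(\tilde C)+1<3$, and $\tilde h$ must preserve each such region, which immediately gives $\sup_x|p_2\circ\tilde h(x)-p_2(x)|<3$. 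This also dissolves the ``left-versus-right collar'' difficulty you worry about: since non-crossing components are pushed off the central circle in \emph{every} band, no new line $\tilde\alpha^i_j$ is created on the left, while in the rightmost band there are no crossing components and the contact with $\tilde\alpha_{max}$ is lost. The specification of $h$ on components downstairs (rather than a global leftward push) is what makes the same $h$ behave correctly in all bands simultaneously.
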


\begin{lemma} \label{intore}
There exists a constant $C'>0$ such that, for any homeomorphism $g$ in $\mathrm{Homeo}_{0}(\mathbb{T}^{2})$ which satisfies the following properties:
$$ \left\{
\begin{array}{l}
\mathrm{length}(\tilde{g}(D_{0})) \leq 3 \\
\mathrm{height}(\tilde{g}(D_{0})) \leq 3
\end{array}
\right.
,$$
we have:
$$\mathrm{Frag}_{\mathcal{U}}(g) \leq C'.$$
\end{lemma}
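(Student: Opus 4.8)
The plan is to regard Lemma~\ref{intore} as the base case of the induction on $\mathrm{diam}_{\mathcal D}(\tilde g(D_0))$ whose inductive step is Lemma~\ref{hertore}: the hypotheses $\mathrm{length}(\tilde g(D_0))\le 3$ and $\mathrm{height}(\tilde g(D_0))\le 3$ confine the image of the fundamental domain to a uniformly bounded window of the plane, and I want to trivialise such a $g$ by a number of $\mathcal U$-supported factors that does not depend on $g$. First I would normalise the lift. As $\tilde g(D_0)$ is connected, its two coordinate projections are intervals, so the vertical lines $\tilde\alpha^i_j$ meeting $\tilde g(D_0)$ have at most three consecutive half-integer abscissae, and likewise the horizontal lines have at most three consecutive half-integer ordinates. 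Replacing $\tilde g$ by $\gamma\circ\tilde g$ for a suitable $\gamma\in\Pi_1(\mathbb T^2)$ — which changes neither $g$ nor $\mathrm{length}$ nor $\mathrm{height}$ — I may assume $\tilde g(D_0)\subset R_0:=(-\tfrac12,2)^2$; in particular $\tilde g(\partial D_0)$ meets only the tiles $D_0+v$ with $v\in\{-1,0,1\}^2$, so $\mathrm{diam}_{\mathcal D}(\tilde g(D_0))\le m_0$ for a universal constant $m_0$.

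Next I would reduce to a homeomorphism supported in a square. Write $\Theta=\Pi(\partial D_0)$, the wedge of the two coordinate circles. The claim is that there is $h\in\mathrm{Homeo}_0(\mathbb T^2)$, a composition of at most $C_1=C_1(m_0)$ homeomorphisms each supported in one of the four squares of $\mathcal U$, such that $h\circ g$ fixes $\Theta$ pointwise. Granting this, $h\circ g$ induces a homeomorphism of $[0,1]^2$ that is the identity on $\partial[0,1]^2$, so Lemma~\ref{d�comp} writes it as a product of at most $N$ homeomorphisms $\varepsilon$-close to the identity, and Lemma~\ref{fragmentation}, applied to $[0,1]^2$ with the cover by the four rectangles $[0,1]^2\cap(A_j\cap B_{j'})$ — each contained in an element of $\mathcal U$ — writes each of those as a product of at most $N'$ rectangle-supported homeomorphisms. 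Hence $\mathrm{Frag}_{\mathcal U}(h\circ g)\le NN'$ and $\mathrm{Frag}_{\mathcal U}(g)\le\mathrm{Frag}_{\mathcal U}(h^{-1})+\mathrm{Frag}_{\mathcal U}(h\circ g)\le C_1+NN'=:C'$.

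The construction of $h$ is the heart of the matter and mirrors the arguments of Section~\ref{bord}. The set $\tilde g(\partial D_0)$ is a portion of the $\mathbb Z^2$-periodic set $\tilde g(\mathrm{grid})$ confined to $\bigcup_v (R_0+v)$; I would straighten the images of the grid lines back onto the grid in finitely many moves, each realised by a homeomorphism supported in one of the four squares of $\mathcal U$: using the corollary to Lemma~\ref{schonflies}, first push the arcs of $\tilde g(\partial D_0)$ across the bounded window towards the vertical grid lines, then likewise towards the horizontal grid lines, then absorb the remaining homeomorphism of the circle $\Theta$ by a few more square-supported homeomorphisms. Each move decreases a discrete quantity measuring the distance of the images from the grid, which is at most $m_0$ to begin with, so the number of moves — hence $C_1$ — is universal; a routine adjustment keeps every support inside the four squares rather than the annuli $A_j,B_{j'}$. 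The main obstacle is exactly this uniformity: $g$ may be arbitrarily wild inside its bounded window, so the bound on $\mathrm{Frag}_{\mathcal U}(g)$ cannot be obtained by controlling $g$ itself. One separates combinatorics from analysis — the straightening needs only the \emph{existence} of $\mathcal U$-supported homeomorphisms performing prescribed motions of finitely many pairwise disjoint arcs, which Lemma~\ref{schonflies} supplies, so each costs exactly one in $\mathrm{Frag}_{\mathcal U}$ no matter how wild it is, whereas all the analytic complexity is pushed into the final homeomorphism of a square, handled by Lemma~\ref{d�comp} with a universal number of factors.
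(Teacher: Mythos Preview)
Your overall architecture is sensible --- normalise the lift to a bounded window, straighten $g(\Theta)$ back to $\Theta$ by some $h$ of bounded $\mathrm{Frag}_{\mathcal U}$, then treat $h\circ g$ as a disc homeomorphism via Lemmas~\ref{d�comp} and~\ref{fragmentation}. The last step is fine and is actually a clean alternative to what the paper does at the very end.

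The gap is in the construction of $h$. You assert that $g(\Theta)$ can be carried back to $\Theta$ by at most $C_1(m_0)$ homeomorphisms \emph{each supported in one of the four squares of $\mathcal U$}, and that ``each move decreases a discrete quantity measuring the distance of the images from the grid, which is at most $m_0$.'' But you never say what that quantity is, and none of the obvious candidates works: the number of arcs of $g(\Theta)$ in a given square, or the number of intersections of $g(\alpha_0)$ with the half-grid, is \emph{not} bounded by the diameter $m_0$ --- $g$ can be arbitrarily wild inside the bounded window. The Sch\"onflies corollary lets one disc-supported homeomorphism rearrange many arcs at once, but the target configuration must be compatible across the overlaps of the four squares, and it is exactly this compatibility that costs an unbounded number of single-square moves in general. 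Your ``routine adjustment'' from annuli to squares is therefore not routine: it is precisely the content of Proposition~\ref{diamfrag2} for the annulus.

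The paper's proof fills this gap by never working directly with square-supported moves at this stage. It first observes that $\mathrm{length}(\tilde g(D_0))\le 3$ forces $\mathrm{length}(\tilde g(\tilde\alpha_0^0))\le 1$, so $g(\alpha_0)$ crosses at most three boundary components of the bands $\tilde A^i_j$; it then pushes $g(\alpha_0)$ into a single band $A_j$ using a bounded number of \emph{annulus-supported} homeomorphisms (supported in some $A_j$ or $B_{j'}$) each with vertical displacement $<3$, and continues until $g$ is a product of two annulus-supported homeomorphisms of bounded height. The key point is that every one of these annulus moves has $\mathrm{Frag}_{\mathcal U}\le\lambda$ by Proposition~\ref{diamfrag2} applied to the annulus (Section~\ref{bord}, already proved). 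That is the discrete quantity you are missing: the paper counts annulus moves (bounded by combinatorics of the half-grid, hence by $m_0$), and then pays a fixed price $\lambda$ per annulus move to convert to square moves. If you rewrite your construction of $h$ using annulus-supported pushes and invoke Proposition~\ref{diamfrag2} to bound each one, your argument goes through; as written, the bound $C_1$ is unjustified.
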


\begin{proof}[Proof of Proposition \ref{diamfrag} in the case of the torus $\mathbb{T}^{2}$]
Using Proposition \ref{fragbord} in the case of the annulus, we see that there exists a constant $C>0$ such that, for any homeomorphism $h$ supported in $A_{j_{\alpha}}$ (respectively in $B_{j_{\beta}}$) with 
$$ \sup_{x \in \mathbb{R}^{2}} \left| p_{2} \circ \tilde{h}(x)-p_{2}(x) \right| <3$$
(respectively
$$ \sup_{x \in \mathbb{R}^{2}} \left| p_{1} \circ \tilde{h}(x)-p_{1}(x) \right| <3),$$
we have:
$$\mathrm{Frag}_{\mathcal{U}}(h)\leq C.$$
Using Lemma \ref{hertore}, we see that, after composing the homeomorphism $g$ with at most 
$$C . (\max(\mathrm{height}(\tilde{g}(D_{0}))-3,0)+ \max(\mathrm{length}(\tilde{g}(D_{0}))-3,0)$$
homeomorphisms supported in one of the discs of $\mathcal{U}$, we obtain a homeomorphism $f_{1}$ which satisfies the hypothesis of Lemma \ref{intore}:
$$\mathrm{Frag}_{\mathcal{U}}(f_{1}) \leq C'.$$
Therefore:
$$\mathrm{Frag}_{\mathcal{U}}(g) \leq 4 C \mathrm{diam}_{\mathcal{D}}(\tilde{g}(D_{0}))+C'.$$
The proposition is proved in the case of the torus $\mathbb{T}^{2}$.
\end{proof}

Let us now turn to the proof of the two above lemmas.

\begin{proof}[Proof of Lemma \ref{hertore}]
Suppose that the first case occurs (the proof in the second case is identical). Let $h$ be a homeomorphism supported in $A_{j_{\alpha}}$ which satisfies the following properties:
\begin{enumerate}
\item For any regular connected component $C$ of $g(\Pi(\partial D_{0})) \cap \mathring{A}_{j_{\alpha}}$ whose both ends belong to the same connected component of $A_{j_{\alpha}}$:
$$ h(C) \cap \Pi(\tilde{\alpha}^{i_{max,\alpha}}_{j_{max,\alpha}}) = \emptyset$$
and, if we denote by $\tilde{C}$ the lift of $C$ which is contained in $\tilde{g}(\partial D_{0})$ and by $q_{min}$ and $q_{max}$ the ends of $\tilde{C}$ with $p_{2}(q_{min}) < p_{2}(q_{max})$, then:
$$p_{2}(\tilde{h}(\tilde{C})) = [p_{2}(q_{min}),p_{2}(q_{max})].$$
\item The homeomorphism $h$ fixes the projection of any connected component of $\tilde{g}(\partial D_{0}) \cap \Pi^{-1}(\mathring{A}_{j_{\alpha}})$ whose ends belong to different connected components of the boundary of $\Pi^{-1}(A_{j_{\alpha}})$.
\item If the point $g(0,0)$ belongs to $\mathring{A_{j_{\alpha}}}$, we add the following condition. Let $C_{0}$ be the singular connected component of $g(\Pi(\partial D_{0})) \cap \mathring{A}_{j_{\alpha}}$. If there exists a lift $\tilde{C}_{0}$ of the component $C_{0}$ which meets $\tilde{g}(\partial D_{0})$ and the curve $\tilde{\alpha}_{max}$, we impose the following condition. Let us denote by $C_{1}$, $C_{2}$, $C_{3}$ and $C_{4}$ the connected components of $C_{0}-\left\{g(0,0)\right\}$. Only three of these connected components admit a lift contained in $\tilde{g}(D_{0})$ which meets the interior of $\tilde{A}^{i_{\alpha}}_{j_{\alpha}}$: for the last connected component, the two lifts of this one contained in $\tilde{g}(D_{0})$ are necessarily contained in the interior of $\tilde{A}^{i_{\alpha}-1}_{j_{\alpha}}$. We may suppose that these three connected components are $C_{1}$, $C_{2}$ and $C_{3}$. Let $\tilde{C}_{1}$, $\tilde{C}_{2}$ and $\tilde{C}_{3}$ be the respective lifts of $C_{1}$, $C_{2}$ and $C_{3}$ so that these three lifts share an end $\tilde{q}$ in common. For an integer $i$ between $1$ and $3$, let $\tilde{q}_{i}$ be the end of $\tilde{C}_{i}$ different from the point $\tilde{q}$. We may suppose that:
$$p_{2}(\tilde{q}_{1}) < p_{2}(\tilde{q}_{2})< p_{2}(\tilde{q}_{3}).$$
Then, for any integer $i$ between $1$ and $3$, we add the following condition:
$$h(C_{i}) \cap \tilde{\alpha}_{max}= \emptyset.$$
Moreover:
$$p_{2}(\tilde{h}(\tilde{C}_{1})) = [p_{2}(\tilde{q}_{1}), p_{2}(\tilde{q}_{2})],$$
$$p_{2}(\tilde{h}(\tilde{C}_{2})) = \left\{ p_{2}(\tilde{q}_{2})\right\},$$
$$p_{2}(\tilde{h}(\tilde{C}_{3})) = [p_{2}(\tilde{q}_{2}), p_{2}(\tilde{q}_{3})].$$
\end{enumerate}
We claim that such a homeomorphism $h$ satisfies the wanted properties. First, the existence of a connected component $\tilde{C}$ of $\Pi^{-1}(\mathring{A}_{j_{\alpha}}) \cap \tilde{g}(\partial D_{0})$ whose ends belong to two different connected components of the boundary of $\Pi^{-1}(A_{j_{\alpha}})$ and whose height is less than or equal to $3$ (and therefore $ \sup p_{2}(\tilde{C}) - \inf p_{2}(\tilde{C}) \leq 2$)  and the fact that the homeomorphism $h$ pointwise fixes the projection of this connected component imply that:
$$ \sup_{x \in \mathbb{R}^{2}} \left| p_{2} \circ \tilde{h}(x)-p_{2}(x) \right| <3.$$
The condition on the ordinates of the images by $h$ of the connected component of $\mathring{A}_{j_{\alpha}} \cap g(\Pi(\partial D_{0}))$ imply that:
$$ \mathrm{height}(\tilde{h}\circ \tilde{g}(D_{0})) \leq \mathrm{height}(\tilde{g}(D_{0})).$$
Finally, by construction, the set $\tilde{h}\circ \tilde{g}(D_{0})$ does not meet the curve $\tilde{\alpha}^{i_{max,\alpha}}_{j_{max,\alpha}}$ and meets only curves of the form $\tilde{\alpha}_{j}^{i}$ already met by the set $\tilde{g}(D_{0})$. Thus:
$$\mathrm{length}(\tilde{h}\circ \tilde{g}(D_{0})) \leq \mathrm{length}(\tilde{g}(D_{0}))-1.$$
Lemma \ref{hertore} is proved.
\end{proof}

\begin{proof}[Proof of Lemma \ref{intore}]
During this proof, we will often use the following result, which is a direct consequence of Proposition \ref{diamfrag2} in the case of the annulus. There exists a constant $\lambda>0$ such that, for any homeomorphism $\eta$ in $\mathrm{Homeo}_{0}(\mathbb{T}^{2})$ supported in $\mathring{A}_{0}$ or in $\mathring{A}_{1}$ which satisfies:
$$\mathrm{height}(\tilde{\eta}(D_{0})) \leq 12,$$
we have:
$$\mathrm{Frag}_{\mathcal{U}}(\eta) \leq \lambda.$$
First, notice that the inequality $\mathrm{length}(\tilde{g}(D_{0})) \leq 3$ implies the inequality $\mathrm{length}(\tilde{g}(\tilde{\alpha}_{0}^{0})) \leq 1$. Indeed, suppose that $\mathrm{length}(\tilde{g}(\tilde{\alpha}_{0}^{0}))>1$. As one of the edges of the square $\partial D_{0}$ is contained in $\tilde{\alpha}_{0}^{0}$ and as the curve $\tilde{g}(\tilde{\alpha}_{0}^{1})$ meets two curves among the $\tilde{\alpha}_{j}^{i}$ that $\tilde{g}(\tilde{\alpha}_{0}^{0})$ does not meet, we have:
$$ \mathrm{length}(\tilde{g}(D_{0})) \geq \mathrm{length}(\tilde{g}(\tilde{\alpha}_{0}^{0}))+2>3.$$
Now, let $g$ be a homeomorphism which satisfies the hypothesis of Lemma \ref{intore}. We denote by $n(\tilde{g}(\tilde{\alpha}_{0}^{0}))$ the number of connected components of $\bigcup \limits_{i,j} \partial \tilde{A}^{i}_{j}$ met by the path $\tilde{g}(\tilde{\alpha}_{0}^{0})$. As the length of $\tilde{g}(\tilde{\alpha}_{0}^{0})$ is less than or equal to $1$, then $n(\tilde{g}(\tilde{\alpha}_{0}^{0})) \leq 3 $. We will now prove that, after composing $g$ by a homeomorphism whose fragmentation length with respect to $\mathcal{U}$ is less than or equal to $3\lambda$ if necessary, we may suppose that $n(\tilde{g}(\tilde{\alpha}_{0}^{0}))=0$.

Suppose that $n(\tilde{g}(\tilde{\alpha}_{0}^{0}))>0$. Choose a pair $(i_{0},j_{0}) \in \mathbb{Z} \times \left\{ 0, 1 \right\}$ such that:
the set $\tilde{g}(D_{0})$ meets $\tilde{A}^{i_{0}}_{j_{0}}$ but meets only one connected component of the boundary of $\tilde{A}^{i_{0}}_{j_{0}}$ that we denote by $c_{i_{0},j_{0}}$. Let $\tilde{A}^{i_{1}}_{j_{1}}$ be the unique band among the $\tilde{A}^{i}_{j}$ whose interior contains the curve $c_{i_{0},j_{0}}$. Then: $j_{1} \neq j_{0}$.

For instance, the band $\tilde{A}^{i_{0}}_{j_{0}}$ can be the rightmost band met by the path $\tilde{g}(\tilde{\alpha}_{0}^{0})$.

\begin{itemize}
\item[\textbf{First case}.] We suppose that the set $\tilde{g}(D_{0})$ meets the two connected components of the boundary of $\tilde{A}^{j_{1}}_{i_{1}}$. Let $h$ be a homeomorphism in $\mathrm{Homeo}_{0}(\mathbb{T}^{2})$ supported in the interior of $A_{j_{0}}$ which satisfies the following properties:
\begin{enumerate}
\item For any connected component $\tilde{C}$ of $\tilde{g}(\partial D_{0}) \cap \Pi^{-1}(A_{j_{0}})$ which is not contained in the interior of $A_{j_{1}}$, we have:
$$\left\{
\begin{array}{l}
h(\Pi(\tilde{C})) \subset \mathring{A}_{j_{1}} \\
p_{2}(h(\Pi(\tilde{C}))) \subset p_{2}(\Pi(\tilde{C}))
\end{array}
\right.
.
$$
\item The homeomorphism $h$ pointwise fixes the other connected components of $g(\Pi(\partial D_{0})) \cap A_{j_{0}}$.
\item $\sup_{x \in \mathbb{R}^{2}} \left|p_{2} \circ \tilde{h}(x)-p_{2}(x) \right| <2$.
\end{enumerate}
Notice that the penultimate condition is compatible with the other ones. Indeed, as the height of $\tilde{g}(D_{0})$ is less than or equal to $3$, then, for any connected component $\tilde{C}$ of $\tilde{g}(\partial D_{0}) \cap \Pi^{-1}(A_{j_{0}})$, we have: $\mathrm{height}(\tilde{C}) \leq 3$. Therefore, we can choose $h$ so that the support of $h$ is contained in a disjoint union of discs which have a height less than or equal to three. For such a homeomorphism $h$, the following properties are satisfied:
$$ \left\{
\begin{array}{l}
\mathrm{Frag}_{\mathcal{U}}(h) \leq \lambda \\
n(\tilde{h} \circ \tilde{g}(\tilde{\alpha}^{0}_{0})) < n(\tilde{g}(\tilde{\alpha}^{0}_{0})) \\
\mathrm{height}( \tilde{h} \circ \tilde{g}(D_{0})) \leq \mathrm{height}(\tilde{g}(D_{0}))
\end{array}
\right.
.
$$
The second one comes from the fact that the set $\tilde{h}\circ \tilde{g}(\tilde{\alpha}_{0}^{0})$ does not meet anymore one of the connected components of the boundary of $\tilde{A}_{j_{1}}^{i_{1}}$.\\
\item[\textbf{Second case}.]  Suppose that the set $\tilde{g}(D_{0})$ does not meet the boundary of $\tilde{A}^{j_{1}}_{i_{1}}$. Likewise, we build a homeomorphism in $\mathrm{Homeo}_{0}(\mathbb{T}^{2})$ supported in $\mathring{A}_{j_{1}}$ such that the curve $\tilde{h} \circ \tilde{g}(\tilde{\alpha}_{0}^{0})$ does not meet the band $\tilde{A}_{j_{0}}^{i_{0}}$ anymore and such that:
$$ \left\{
\begin{array}{l}
\mathrm{Frag}_{\mathcal{U}}(h) \leq \lambda \\
n(\tilde{h} \circ \tilde{g}(\tilde{\alpha}^{0}_{0})) < n(\tilde{g}(\tilde{\alpha}^{0}_{0})) \\
\mathrm{height}( \tilde{h} \circ \tilde{g}(D_{0})) \leq \mathrm{height}(\tilde{g}(D_{0}))
\end{array}
\right.
.
$$
\end{itemize}

Thus, it suffices to prove the following property. There exists a constant $C>0$ such that, if $g$ is a homeomorphism in $\mathrm{Homeo}_{0}(\mathbb{T}^{2})$ with $n(\tilde{g}(\tilde{\alpha}_{0}^{0}))=0$ and $\mathrm{height}(\tilde{g}(D_{0})) \leq 3$, then $\mathrm{Frag}_{\mathcal{U}}(g) \leq C$. Let us consider such a homeomorphism $g$.
\begin{itemize}
\item[\textbf{First case}.] $g(\alpha_{0}) \nsubseteq A_{0}$. Let $h$ be a homeomorphism supported in the annulus $A_{1}$ which preserves the horizontal foliation such that: $h(g(\alpha_{0})) \subset A_{0}$. The preservation of this foliation implies that $\mathrm{Frag}_{\mathcal{U}}(h) \leq \lambda$. We are led to the second case.
\item[\textbf{Second case}.] $g(\alpha_{0}) \subset A_{0}$. Let $h$ be a homeomorphism supported in the annulus $A_{0}$ which is equal to the homeomorphism $g$ in a neighbourhood of the curve $\alpha_{0}$. As the height of $\tilde{g}(D_{0})$ is less than or equal to $3$, we may suppose moreover that: $\mathrm{height}(\tilde{h}(D_{0})) \leq 3$, because we may suppose that $\sup_{x \in \mathbb{R}^{2}} \left\| \tilde{h}(\tilde{x})-\tilde{x} \right\|<2$. Thus: $\mathrm{Frag}_{\mathcal{U}}(h) \leq \lambda$. Moreover:
$$\mathrm{height}(\tilde{h}^{-1} \circ \tilde{g}(D_{0})) \leq 6.$$
\end{itemize}

We have pointwise fixed $\alpha$ which is one of the boundary components of $A_{1}$. By an analogous procedure, we can find a homeomorphism $h'$ such that $h'^{-1} \circ h^{-1} \circ g$ pointwise fixes a neighbourhood of the boundary of $A_{1}$ and such that:
$$ \left\{
\begin{array}{l}
\mathrm{Frag}_{\mathcal{U}}(h') \leq 2\lambda \\
\mathrm{height}(\tilde{h}'^{-1} \circ \tilde{h}^{-1} \circ \tilde{g}(D_{0}))\leq 12
\end{array}
\right.
.
$$
We denote by $h_{1}$ the homeomorphism supported in $A_{1}$ which is equal to $h'^{-1} \circ h^{-1} \circ g$ on $A_{1}$. The height of $\tilde{h}_{1}(D_{0})$ is less than or equal to $12$ and that is why: $\mathrm{Frag}_{\mathcal{U}}(h_{1}) \leq \lambda$. Moreover, the homeomorphism $h_{2}=h_{1}^{-1} \circ h'^{-1} \circ h^{-1} \circ g$ is supported in $A_{2}$. The image of $D_{0}$ under $\tilde{h}_{2}$ is less than or equal to $12$: $\mathrm{Frag}_{\mathcal{U}}(h_{2}) \leq \lambda$. Finally, $\mathrm{Frag}_{\mathcal{U}}(g) \leq 5 \lambda$ in this case.
\end{proof}

\section{Case of higher genus closed surfaces}

In this section, we prove Proposition \ref{diamfrag} for a higher genus closed surface $S$. Let us begin by describing the cover $\mathcal{U}$ that we use in what follows. Let $p$ be the point of $S$ which is the image under $\Pi$ of a vertex of the polygon $\partial D_{0}$. Let us denote by $\tilde{A}$ the set of edges of the polygon $\partial D_{0}$ and by $A$ the set of curves which are the images under $\Pi$ of an edge in $\tilde{A}$. Let:
$$B= \left\{ \gamma(\alpha), \ \left\{
\begin{array}{l}
\alpha \in \tilde{A} \\
\gamma \in \Pi_{1}(S) 
\end{array}
\right.
\right\}= \Pi^{-1}(\Pi(\tilde{A})).$$
We denote by $U_{0}$ a closed disc of $S$ whose interior contains the point $p$ and which satisfies the following property: if $\tilde{U}_{0}$ is a lift of $U_{0}$ and $\tilde{p}$ is a lift of the point $p$, then the disc $\tilde{U}_{0}$ meets only edges in $B$ for which one end is $\tilde{p}$ and the boundary $\partial \tilde{U}_{0}$ meets each of them in exactly one point.
For any edge $\alpha$ in $A$, we denote by $V_{\alpha}$ a closed disc which does not contain the point $p$ so that the following properties are satisfied:
\begin{enumerate}
\item For any edge $\alpha$ in $A$, the set $V_{\alpha} \cup U_{0}$ is a neighbourhood of the edge $\alpha$.
\item For any edge $\alpha$ in $A$, the set $V_{\alpha} \cap U_{0}$ is the disjoint union of two closed discs.
\item The discs $V_{\alpha}$ are pairwise disjoint.
\end{enumerate}
We denote by $U_{1}$ a closed disc which contains the union of the $V_{\alpha}$. Finally, we denote by $U_{2}$ a closed disc which does not meet any edge in $A$ and which satisfies the following properties:
\begin{enumerate}
\item For any edge $\alpha$ in $A$, the closed set $U_{2}\cap V_{\alpha}$ is homeomorphic to the disjoint union of two closed discs.
\item The union of the interior of the disc $U_{2}$ with the interior of the disc $U_{0}$ and with the interiors of the discs $V_{\alpha}$ is equal to $S$.
\item The closed set $(\bigcup_{\alpha} V_{\alpha} \cup U_{2})\cap U_{0}$ is homeomorphic to an annulus for which one component of the boundary is $\partial U_{0}$.
\end{enumerate}
Let $\mathcal{U}= \left\{U_{0}, U_{1}, U_{2} \right\}$. The following lemmas will allow us to complete the proof of Proposition \ref{diamfrag}.

\begin{figure}[ht]
\begin{center}
\includegraphics{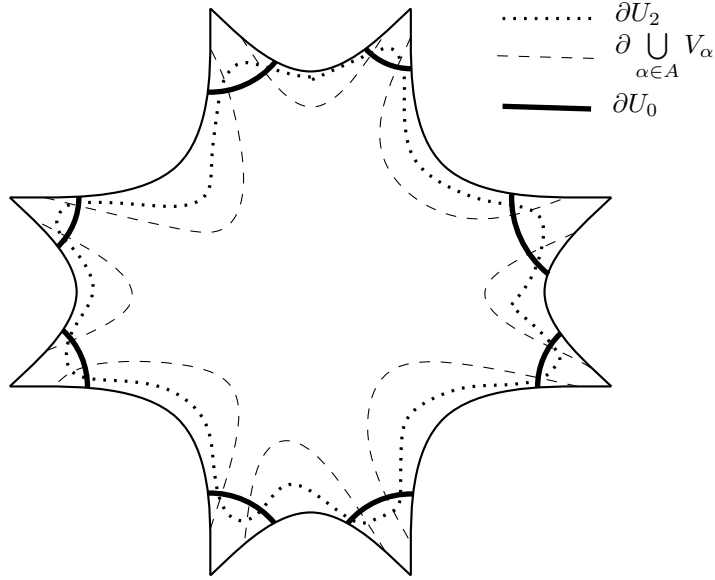}
\end{center}
\caption{Notations in the case of higher genus closed surfaces}
\end{figure}

\begin{lemma} \label{herhyp} Let $f$ be a homeomorphism in $\mathrm{Homeo}_{0}(S)$. Suppose that $\mathrm{el}_{D_{0}}(\tilde{f}(D_{0}))\geq 4g$. Then there exists a homeomorphism $h$ in $\mathrm{Homeo}_{0}(S)$ which satisfies the following properties:
\begin{enumerate}
\item $\mathrm{Frag}_{\mathcal{U}}(h) \leq 8g-2$.
\item $\mathrm{el}_{D_{0}}(\tilde{h} \circ \tilde{f}(D_{0})) \leq \mathrm{el}_{D_{0}}(\tilde{f}(D_{0}))-1$.
\end{enumerate}
\end{lemma}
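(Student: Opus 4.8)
The plan is to imitate the proof of Lemma~\ref{herbord}, replacing the tree-like geometry of the free group by the small-cancellation geometry of the surface group; the price for the relations is that one push across a single edge no longer suffices, and a uniformly bounded sequence of pushes is needed, which is what the constant $8g-2$ records. Set $n=\mathrm{el}_{D_0}(\tilde f(D_0))$, so $n\geq 4g$. Since $\tilde f(D_0)$ is again a fundamental domain, its interior contains no element of $\mathcal D$, so the domains meeting $\tilde f(D_0)$ are exactly those meeting the circle $\tilde f(\partial D_0)$, and
\[n=\max\{\,l_{\mathcal G}(\gamma)\ :\ \gamma(D_0)\cap\tilde f(\partial D_0)\neq\emptyset\,\}.\]
Call a domain $\gamma(D_0)$ \emph{extremal} when $l_{\mathcal G}(\gamma)=n$. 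The homeomorphism $h$ will be built as a composition of at most $8g-2$ homeomorphisms of $S$, each supported in $U_0$, in $U_1$, or in $U_2$, whose combined effect is to push $\tilde f(\partial D_0)$ out of every extremal domain while never making it meet a domain at distance $n$ or more from $D_0$; as at the end of the proof of Lemma~\ref{herbord}, this at once gives $\mathrm{el}_{D_0}(\tilde h\circ\tilde f(D_0))\leq n-1$. Like Lemma~\ref{herbord}, the lemma is meant to be iterated down to the range $\mathrm{el}_{D_0}(\tilde f(D_0))<4g$, which is to be handled separately by a constant-bound statement in the spirit of Lemma~\ref{inbord}, in order to deduce Proposition~\ref{diamfrag}.

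Since $g\geq 2$, the presentation of $\Pi_1(S)$ whose relator $R$ is the length-$4g$ word read around $\partial D_0$ satisfies $\mathrm{C}'(1/6)$, so Dehn's algorithm computes geodesics for $d_{\mathcal G}$: a freely reduced word is geodesic if and only if it contains no subword strictly more than half of a cyclic conjugate of $R^{\pm1}$. I would first use this to describe the local picture at an extremal domain $D_{ex}=\gamma(D_0)$ with geodesic word $\gamma=s_1\cdots s_n$. The edge of $\partial D_{ex}$ dual to the generator $s_n^{-1}$ always leads to $(s_1\cdots s_{n-1})(D_0)$, at distance $n-1$ from $D_0$, so there is always a well-defined inward edge; and for a generator $s$, the domain $\gamma s(D_0)$ can be strictly closer to $D_0$ only if $s_1\cdots s_ns$ fails to be geodesic, which by Dehn forces the suffix of length roughly $2g$ to embed into a cyclic conjugate of $R^{\pm1}$, and since that exceeds every piece length this pins $s$ down. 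Consequently the edges of $\partial D_{ex}$ leading to a domain at distance $\leq n$ from $D_0$ lie within a bounded number of consecutive edges, around a distinguished vertex $\tilde p=\tilde p(D_{ex})$, so the connected components of $\tilde f(\partial D_0)\cap D_{ex}$ stay near $\tilde p$. Applying the same reasoning to the length-$4g$ word read around $\tilde p$ shows that among the $4g$ tiles of $\mathcal D$ incident to $\tilde p$ the ones at distance $\geq n$ from $D_0$ form a proper sub-arc, and that a uniformly bounded sequence of pushes near $\tilde p$ (rotating the components around $\tilde p$ tile by tile, and if necessary pushing one across an interior edge of a tile) reaches a domain at distance $\leq n-1$.

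Next I would build $h$ following the three-step template of Lemma~\ref{herbord}. A first group of homeomorphisms, supported in the discs $V_\alpha$ and in $U_0$, pulls the connected components of $\tilde f(\partial D_0)$ lying in extremal domains into the vertex discs $\tilde U_0$ around the extremal vertices. A second group, supported alternately in $U_0$ and in the $V_\alpha$, moves these components inside $\tilde U_0$, one tile at a time, past the sub-arc of tiles at distance $\geq n$ at each extremal vertex; since that sub-arc contains at most $4g-1$ tiles this costs on the order of two homeomorphisms per tile crossed, which accounts for the bulk of the $8g-2$. A final homeomorphism supported in $U_2$ pushes the components out into the interior discs $\tilde U_2$ of the tiles now reached, which are at distance $\leq n-1$ from $D_0$. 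As in Lemma~\ref{herbord}, each individual move is prescribed by its effect on finitely many arcs and realized by the corollary of Lemma~\ref{schonflies}; and since $h$ is a homeomorphism of $S$, each move is automatically performed simultaneously at all lifts, which is what makes the number of moves independent of $f$.

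The main obstacle is exactly the one signalled in Section~3: a single push need not strictly decrease $\mathrm{el}_{D_0}$, because evacuating $\tilde f(\partial D_0)$ from one extremal domain may push it into an adjacent extremal domain. Making the bounded sequence work hinges on two things. First, the Dehn-algorithm analysis must be pushed far enough to guarantee that the tiles at distance $\geq n$ around any vertex form a proper arc of bounded length, so that a uniformly bounded number of tile-crossings lands in a domain at distance $n-1$. Second, and this is the technical heart, one must check, in the spirit of Claims~1 and~2 in the proof of Lemma~\ref{herbord}, that none of the intermediate simultaneous moves ever makes $\tilde f(\partial D_0)$ meet a domain at distance $\geq n$ from $D_0$ and that after the last move every extremal domain has been vacated; the delicate point is that the simultaneous action at all lifts is forced to be compatible with the surface relation $R$, so the choices defining the successive moves must be made coherently around every vertex at once. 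Carrying this out carefully is where the precise constant $8g-2$ gets pinned down.
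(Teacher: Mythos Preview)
Your plan has the right ingredients (Dehn's algorithm, pushing components through the discs of $\mathcal{U}$), but it misses the structural decomposition that makes the paper's argument work, and your proposed ``rotation around a vertex'' mechanism has a coherence problem you flag but do not solve.

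The paper does \emph{not} treat all extremal domains alike. It first separates them into \emph{non-exceptional} maximal faces (a unique neighbour at distance $n-1$) and \emph{exceptional} ones (two such neighbours, meeting at a vertex). The non-exceptional case is generalized into a hierarchy: a face is of type $(k,M)$ if every geodesic extension of length $\leq k$ out of it stays non-exceptional. The key combinatorial lemma, proved from Dehn's algorithm, is that along the corridor of $2g-1$ faces leading into an exceptional maximal face, every side-branch at step $i$ is a face of type $(i-1,M)$. This is what breaks the feared feedback loop: clearing type-$(j,M)$ faces cannot push you into a new exceptional face. The proof then proceeds in two phases. Phase~1 (the analogue of Lemma~\ref{herbord}) iteratively clears all type-$(j,M)$ faces for $j=0,\dots,2g-2$; each such face has a \emph{single} inward edge, so one $4$-move push (through $U_0$, then $\bigcup V_\alpha$, then $U_2$, then $\bigcup V_\alpha$) works coherently at all of them at once. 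Phase~2 then observes that what remains of $\tilde f(\partial D_0)$ near distance $M$ lies in the exceptional faces and their corridors, all sharing a vertex; four more moves pull these pieces into the single lift of $U_0$ at that vertex, and one final homeomorphism in $U_0$ pushes them across into faces at distance $\leq M-1$, using that around that vertex the ``almost-adjacent'' far sectors are at most two apart.

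Your ``rotate tile by tile inside $\tilde U_0$'' picture would require a homeomorphism of the single disc $U_0$ whose lift at every extremal vertex rotates in the correct direction and by the correct amount; since the angular position of the far sectors inside $U_0$ depends on which letters of $\mathcal G$ label the corridor, there is no reason these demands are compatible across all extremal vertices simultaneously, and you give no argument that they are. The paper sidesteps this entirely: the type stratification guarantees that at every stage one is pushing through a \emph{uniquely determined} edge, so the move is the same at every lift, and the final $U_0$-move exploits only the bounded ``almost-adjacency'' of the remaining sectors rather than any global rotation. Your count ``$4g-1$ tiles times two moves'' does not match the actual mechanism and does not yield $8g-2$; the paper's count is $1 + 4(2g-2)$ for Phase~1, plus $4$ for Phase~2, plus $1$ final move.
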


\noindent \textbf{Remark} We did not try to obtain an optimal upper bound of the fragmentation length of a homeomorphism with $\mathrm{el}_{D_{0}}(\tilde{h} \circ \tilde{f}(D_{0})) \leq \mathrm{el}_{D_{0}}(\tilde{f}(D_{0}))-1$.

\begin{lemma} \label{inhyp}
There exists a constant $C'>0$ such that, for any homeomorphism $f$ in $\mathrm{Homeo}_{0}(S)$ with $\mathrm{el}_{D_{0}}(\tilde{f}(D_{0})) \leq 4g$, we have:
$$ \mathrm{Frag}_{\mathcal{U}}(f)\leq C'.$$
\end{lemma}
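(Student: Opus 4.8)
The plan is to follow the pattern of the proofs of Lemmas~\ref{inbord} and~\ref{intore}: straighten the curve $\tilde{f}(\partial D_{0})$ back to a standard position near $\partial D_{0}$ by composing $f$ with a uniformly bounded number of homeomorphisms supported in the discs $U_{0},U_{1},U_{2}$, until the resulting homeomorphism is supported in a finite union of discs each contained in a member of $\mathcal{U}$, and then finish with the disc fragmentation estimate (the combination of the disc decomposition lemma and Lemma~\ref{fragmentation} used in the proof of Proposition~\ref{equiv}), which bounds the remaining fragmentation length by a constant depending only on $g$ and on the chosen cover.

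The first step is to record the combinatorial consequence of the hypothesis. Since $\mathrm{el}_{D_{0}}(\tilde{f}(D_{0}))\le 4g$, the set $\tilde{f}(D_{0})$, hence every curve $\tilde{f}(\beta)$ for $\beta$ an edge of $\partial D_{0}$, lies in the union $W$ of the fundamental domains of $\mathcal{D}$ at $d_{\mathcal{D}}$-distance at most $4g$ from $D_{0}$; this union is a topological disc and contains at most $N=N(g)$ fundamental domains. Consequently every complexity measure of $\tilde{f}(\partial D_{0})$ relative to the $1$-skeleton $B=\Pi^{-1}(\Pi(\tilde A))$ (number of components of $\tilde{f}(\partial D_{0})\cap B$ inside $W$, number of crossings of each $\partial\tilde V_{\tilde\alpha}$ and of $\partial\tilde U_{0}$, etc.) is bounded in terms of $g$ alone. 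The second step is the straightening itself: working one edge at a time (there are $4g$ edges in $\tilde A$), I would use homeomorphisms supported successively in $U_{1}$ — which contains all the thickened edges $V_{\alpha}$ — and in $U_{0}$ — the disc around the image $p$ of the vertex of $\partial D_{0}$ — to push each connected component of $\tilde f(\partial D_{0})\cap\mathring{\tilde V}_{\tilde\alpha}$ that does not cross $\tilde\alpha$ back across $\partial\tilde V_{\tilde\alpha}$, then to push the pieces trapped in $\mathring{\tilde U}_{0}$ out toward their home discs, exactly as the homeomorphisms $g_{1},g_{2},g_{3}$ are built in Lemma~\ref{herbord} and $g_{1},\dots,g_{4}$ in Lemma~\ref{inbord}; each such push is realized by an honest homeomorphism of a disc thanks to the corollary of Lemma~\ref{schonflies}. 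By the bound of the first step only a number of such homeomorphisms bounded in terms of $g$ is needed before $\tilde f(\partial D_{0})$ is isotopic rel endpoints, inside $\bigcup\tilde V_{\tilde\alpha}\cup\tilde U_{0}$, to $\partial D_{0}$; composing with a few further homeomorphisms supported in $U_{0}$ and $U_{1}$ that pointwise fix $\bigcup\partial\tilde V_{\tilde\alpha}$ and $\partial\tilde U_{0}$ (again by the Schönflies corollary), I may assume the modified homeomorphism is supported in the interior of $D_{0}$. Since $\mathring U_{0}\cup\bigcup\mathring V_{\alpha}\cup\mathring U_{2}=S$, the interior of $D_{0}$ is covered by interiors of discs each contained in a member of $\mathcal{U}$, so applying the disc fragmentation estimate to this last homeomorphism and adding up all factors yields the desired constant $C'$.

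The main obstacle is the behaviour of $\tilde f$ near the central vertex, and it is precisely what distinguishes this case from the surfaces-with-boundary case (Lemma~\ref{inbord}), where $\Pi_{1}(S)$ is free and there is always an outermost "free" edge to push through. Here the $4g$ edge-germs emanating from the lift $\tilde p$ of $p$, together with $\tilde f(\tilde p)$ and the image of the whole star of $\tilde p$, can wind around the neighbouring vertices of $W$ in a configuration constrained only by the relator of the surface group; this is the higher-genus analogue of the "singular connected component" handled in Lemma~\ref{hertore}. Controlling the number of moves needed to untangle it — and ensuring simultaneously that each move is supported in a single ball of $\mathcal{U}$, that the total count stays bounded in $g$, and that no move re-creates complexity already removed — is where the small-cancellation (Dehn's algorithm) structure of the surface group must be invoked; note that one cannot simply iterate Lemma~\ref{herhyp} down to $\mathrm{el}_{D_{0}}=1$, since below radius $4g$ geodesic words may exploit a relator and the outermost-finger argument of that lemma breaks down, which is exactly why the bounded-radius case is isolated as a separate lemma. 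I expect this combinatorial bookkeeping around the vertex to be the most delicate part of the argument.
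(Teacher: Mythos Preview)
Your proposal contains a genuine error in the first step. You claim that because $\tilde{f}(D_{0})$ lies in the bounded disc $W$, ``every complexity measure of $\tilde{f}(\partial D_{0})$ relative to the $1$-skeleton $B$ (number of components of $\tilde{f}(\partial D_{0})\cap B$ inside $W$, number of crossings of each $\partial\tilde V_{\tilde\alpha}$ \dots) is bounded in terms of $g$ alone.'' This is false: the number of transverse intersections of $\tilde{f}(\partial D_{0})$ with the edges in $B$ can be arbitrarily large even when $\mathrm{el}_{D_{0}}(\tilde{f}(D_{0}))\le 1$ --- just compose the identity with a small homeomorphism supported in one $V_{\alpha}$ making the image of $\alpha$ oscillate across $\alpha$ a million times. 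Your conclusion that ``only a number of such homeomorphisms bounded in terms of $g$ is needed'' rests on this false bound, so the second step does not go through as written. One could try to rescue it by observing, as in Lemmas~\ref{herbord} and~\ref{inbord}, that a single Sch\"onflies move pushes \emph{all} components of a given topological type simultaneously, so that what matters is the number of combinatorial types rather than crossings; but you do not make this argument, and carrying it out around the vertex is exactly the part you yourself flag as unresolved.

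The paper takes a completely different route that sidesteps both the crossing-count issue and the vertex combinatorics. Fix one edge $\alpha\in A$ (a non-separating simple closed curve on $S$) and a parallel copy $\alpha'$; let $S_{\alpha}$, $S_{\alpha'}$ be the complements of thin tubular neighbourhoods. These are surfaces with nonempty boundary, so Proposition~\ref{diamfrag2} (already proved in Section~\ref{bord}) applies and yields a constant $\lambda_{\eta}$ bounding $\mathrm{Frag}_{\mathcal U}$ for any homeomorphism supported in $S_{\alpha}$ or $S_{\alpha'}$ with $\mathrm{el}_{D_{0}}\le\eta$. The proof then straightens only the single curve $f(\alpha)$: alternately pushing it through $S_{\alpha}$ and $S_{\alpha'}$ --- each push costing at most $\lambda_{\eta}$, and at most $16g$ pushes needed since the hypothesis bounds how far $\tilde f(\tilde\alpha)$ can wander --- makes $f(\alpha)$ disjoint from $\alpha$; a few more such moves make the resulting homeomorphism fix a neighbourhood of $\partial S_{\alpha}$ pointwise; finally it splits as a product of one homeomorphism supported in $S_{\alpha}$ and one in $S_{\alpha'}$, each of which is handled by Proposition~\ref{diamfrag2}. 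The argument never touches the vertex $p$ and never invokes Dehn's algorithm --- the small-cancellation input is confined entirely to Lemma~\ref{herhyp}, while Lemma~\ref{inhyp} is a clean reduction to the boundary case.
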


\begin{proof}[End of the proof of Proposition \ref{diamfrag}. Case of a higher genus closed surface.]
By the Lefschetz fixed point theorem, the homeomorphism $f$ has a contractible fixed point. Hence:
$$\tilde{f}(D_{0})\cap D_{0} \neq \emptyset$$
and
$$\mathrm{el}_{D_{0}}(\tilde{f}(D_{0})) \leq diam_{\mathcal{D}}(\tilde{f}(D_{0})).$$
Therefore, the two above lemmas allow us to complete the proof of Proposition \ref{diamfrag}.
\end{proof}

To complete the proof of Lemma \ref{herhyp}, we will need some combinatorial lemmas concerning the group $\Pi_{1}(S)$ which we state in the following subsection.

\subsection{Some combinatorial lemmas}

Recall that two fundamental domains $D_{1}$ and $D_{2}$ in $\mathcal{D}$ are \emph{adjacent} if the intersection of $D_{1}$ with $D_{2}$ is an edge common to the polygons $\partial D_{1}$ and $\partial D_{2}$. Recall also that $\mathcal{G}$ is a generating set of $\Pi_{1}(S)$ consisting of deck transformations which send the fundamental domain $D_{0}$ to a fundamental domain adjacent to $D_{0}$.

We call \emph{geodesic word} a word $\gamma$ whose letters are elements of $\mathcal{G} \subset \Pi_{1}(S)$ such that the length of the word $\gamma$ is equal to $l_{\mathcal{G}}(\gamma)$ (by abuse of notation, we also denote by $\gamma$ the image of the word $\gamma$ in the group $\Pi_{1}(S)$). 

We now describe a more geometric way to see the words whose letters are elements of $\mathcal{G}$. We call \emph{path in $\mathcal{D}$ of origin $D_{0}$} any finite sequence $(D_{0},D_{1}, \ldots, D_{p})$ of fundamental domains in $\mathcal{D}$ such that two consecutive fundamental domains in this sequence are adjacent. Such a path in $\mathcal{D}$ is said to be \emph{geodesic} if, moreover, for any index $i$, $d_{\mathcal{D}}(D_{0},D_{i})=i$. Notice that there is a bijective map between words on the elements of $\mathcal{G}$ and the paths of origin $D_{0}$ in $\mathcal{D}$: to a word $l_{1} \ldots l_{p}$, one can associate the path $(D_{0}, l_{1}(D_{0}), l_{1}l_{2}(D_{0}), \ldots, l_{1}l_{2} \ldots l_{p}(D_{0}))$. This last application is a bijective map and sends the geodesic words to geodesic paths in $\mathcal{D}$.

For a homeomorphism $h$ in $\mathrm{Homeo}_{0}(S)$, we call \emph{maximal face} for $h$ any fundamental domain in $\mathcal{D}$ at distance $\mathrm{el}_{D_{0}}(\tilde{h}(D_{0}))$ from $D_{0}$. We want to prove that, after a composing of $h$ with a number independent from $h$ of homeomorphisms supported in one of the discs in $\mathcal{U}$, the image of $D_{0}$ does not meet maximal faces for $h$ anymore. There will be two different kinds of maximal faces for $h$. The first ones, which we call \emph{non-exceptional}, are not problematic: after a composing $h$ with four homeomorphisms, each of them being supported in one of the discs of $\mathcal{U}$, the image of the fundamental domain $D_{0}$ will not meet these faces anymore. These faces are the ones which satisfy the following property: in the set of faces adjacent to $D$, there is only one element which is at distance $d_{\mathcal{D}}(D,D_{0})-1$ from $D_{0}$. The faces in $\mathcal{D}$ which do not satisfy this property are called \emph{exceptional}. In their case, we will have to understand the relative arrangement of the nearby fundamental domains in $\mathcal{D}$. 

Let us describe more precisely the crucial property used in this proof. Let us denote by $D$ an exceptional face and by $\gamma$ a geodesic word such that $\gamma(D_{0})=D$. Let $(D_{0},D_{1}, \ldots, D_{M}=D)$ be the geodesic path in $\mathcal{D}$ corresponding to the geodesic word $\gamma$. We will see later(see Lemma \ref{adjacence}) that the $2g-1$ last faces in this sequence share a vertex in common. The crucial property is the following: \emph{if $1 \leq k \leq 2g-2$, for any geodesic path of the form $(D_{0}, \ldots, D_{M-k}, D'_{M-k+1}, \ldots, D'_{M})$, where the face $D'_{M-k+1}$ is different from the face $D_{M-k+1}$, then the faces $D'_{M-k+1}, \ldots, D'_{M}$ are not exceptional} (see Lemma \ref{faceexc}).

By replacing the face $D_{0}$ with any other fundamental domain $D_{1}$ in $\mathcal{D}$ and the generating set $\mathcal{G}$ by the generating set consisting of deck transformations which send $D_{1}$ to a face adjacent to $D_{1}$, we can define the notion of exceptional faces with respect to $D_{1}$. All the following statements dealing with exceptional faces (with respect to $D_{0}$) can be generalized to the case of an exceptional face with respect to any fundamental domain in $\mathcal{D}$. We implicitly use this remark during the proof of Lemma \ref{geodexc2}. 

Let:
$$\mathcal{G}=\left\{ a_{i}^{\epsilon}, \ 1 \leq i \leq g \mbox{ et } \epsilon \in \left\{-1,1 \right\} \right\} \cup \left\{ b_{i}^{\epsilon}, \ 1 \leq i \leq g \mbox{ et } \epsilon \in \left\{-1,1 \right\} \right\}$$
so that:
$$\Pi_{1}(S)= \left< (a_{i})_{1 \leq i \leq g}, \ (b_{i})_{1 \leq i \leq g} | [a_{1},b_{1}] \ldots [a_{g},b_{g}]=1 \right>.$$
Let us denote by $\Lambda$ the set of cyclic permutations of the words $[a_{1},b_{1}] \ldots [a_{g},b_{g}]$ and $[b_{g},a_{g}] \ldots [b_{1},a_{1}]$. In terms of paths in $\mathcal{D}$, these words correspond to a circle around one of the vertices of the polygon $\partial D_{0}$:

\begin{lemma} \label{adjacence} For any face $D$ in $\mathcal{D}$ and any word $\lambda_{1} \ldots \lambda_{4g}$ in $\Lambda$, the faces $\lambda_{1} \ldots \lambda_{i}(D)$, for $1 \leq i \leq 4g$, share a point in common.
\end{lemma}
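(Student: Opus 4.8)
The plan is to reduce at once to the case $D=D_0$ and then to recognize the faces $\lambda_{1}\ldots\lambda_{i}(D_0)$ as the cyclic chain of fundamental domains of $\mathcal{D}$ surrounding a vertex of the polygon $\partial D_0$.

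\textbf{Step 1: equivariance, and reduction to $D=D_0$.} Write $D=\gamma(D_0)$ with $\gamma\in\Pi_1(S)$. The finite sequence of faces attached to the word $\lambda_{1}\ldots\lambda_{4g}\in\Lambda$ and to the face $D$ is the $\gamma$-translate of the sequence attached to the same word and to $D_0$ (concretely, the path in $\mathcal{D}$ issued from $D$ and spelling $\lambda_1\ldots\lambda_{4g}$ is the $\gamma$-image of the path issued from $D_0$ and spelling the same word). Since $\gamma$ is an isometry of $\tilde{S}$ permuting $\mathcal{D}$, a point common to the faces $\lambda_{1}\ldots\lambda_{i}(D_0)$, $1\le i\le 4g$, is carried by $\gamma$ to a point common to the faces $\lambda_{1}\ldots\lambda_{i}(D)$. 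So it suffices to prove the statement when $D=D_0$.

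\textbf{Step 2: the tiling around a vertex, and link words.} I would recall that, with the chosen identifications, $S$ carries a CW decomposition with a single $2$-cell $\Pi(D_0)$, with $2g$ edges, and with a single vertex $p$ (all $4g$ vertices of $\partial D_0$ being glued to $p$). Because the link of $p$ in $S$ is a circle meeting the $4g$ corners of $D_0$, exactly $4g$ faces of $\mathcal{D}$ contain any given vertex $\tilde{p}$ of the tiling — one for each corner of $D_0$, the correspondence being bijective since $\Pi_1(S)$ acts freely on the vertices — and these faces form a cyclic chain, two consecutive ones sharing an edge issued from $\tilde{p}$. The fact that this number is \emph{exactly} $4g$ is what Gauss--Bonnet, equivalently $\chi(S)=2-2g$, provides. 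Now the word in $\mathcal{G}$ obtained by turning once around a vertex of $\partial D_0$, starting from $D_0$, is precisely an element of $\Lambda$: the $4g$ vertices of $\partial D_0$ account for the $4g$ cyclic permutations of $[a_1,b_1]\ldots[a_g,b_g]$, and reversing the sense of rotation replaces such a word by its inverse $[b_g,a_g]\ldots[b_1,a_1]$. Granting this, fix $w=\lambda_1\ldots\lambda_{4g}\in\Lambda$ and let $v$ be the vertex of $\partial D_0$ (rotated in the appropriate sense) whose link word started at $D_0$ is $w$; then $D_0,\lambda_1(D_0),\lambda_1\lambda_2(D_0),\ldots,\lambda_1\ldots\lambda_{4g}(D_0)=D_0$ is, by the very meaning of the link word, the cyclic chain of faces around $v$. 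Each of these faces contains $v$, so $v$ is the required common point; by Step 1 the same conclusion holds for an arbitrary $D$.

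\textbf{Main obstacle.} The one genuinely non-routine point is the identification used in Step 2 of $\Lambda$ with the set of vertex-link words of the tiling $\mathcal{D}$; carrying it out amounts to tracking, in the standard edge-pairing of the $4g$-gon, which corner of $D_0$ becomes adjacent to which after each gluing, i.e.\ computing the link word of the unique vertex of $S$. Once this classical computation is in hand, the remaining ingredients — the count ``$4g$ faces meet at each vertex'', the equivariance reduction to $D_0$, and the observation that all faces of a vertex-chain share that vertex — are immediate.
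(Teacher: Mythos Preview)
Your proposal is correct and follows essentially the same route as the paper: both reduce to $D=D_0$ by equivariance of the $\Pi_1(S)$-action on $\mathcal{D}$, and both identify $\Lambda$ with the set of link words obtained by turning around the vertices of $\partial D_0$, concluding that the faces $\lambda_1\ldots\lambda_i(D_0)$ are exactly the cyclic chain around the corresponding vertex. The only notable difference is in how the identification ``$\Lambda=$ set of link words'' is justified: you defer to the explicit edge-pairing computation, whereas the paper argues by cardinality --- it builds an injective map from the vertices of $\partial D_0$ to $\Lambda$ (reading off the link word counterclockwise), observes that each vertex also yields the inverse word, and concludes by counting that every element of $\Lambda$ arises this way.
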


\begin{proof}
We prove that, given a word $\lambda$ in $\Lambda$, the fundamental domains $\lambda_{1} \ldots \lambda_{i}(D_{0})$, for $1 \leq i \leq 4g$, share a point in common. This last property implies the lemma thanks to the transitivity of the action of the group $\Pi_{1}(S)$ on the set $\mathcal{D}$.

Let us denote by $X$ the set of $4g$-tuples $(\delta_{i})_{1 \leq i \leq 4g}$ of elements of $\mathcal{D}$ which satisfy the following properties:
\begin{enumerate}
\item $\delta_{4g}=D_{0}$.
\item There exists a vertex $\tilde{p}$ of $D_{0}$ such that the set of elements of $\mathcal{D}$ which contain the point $\tilde{p}$ is $\left\{ \delta_{i}, \ 1 \leq i \leq 4g \right\}$.
\item Any circle centered at $\tilde{p}$ of sufficiently small diameter and counterclockwise oriented meets successively the fundamental domains $\delta_{1}, \ldots, \ \delta_{4g}$ in this order. In particular, the faces $\delta_{i}$ and $\delta_{i+1}$ are adjacent.
\end{enumerate}
The set $X$ is naturally isomorphic to the set of vertices of the polygon $\partial D_{0}$. An element $a=(\delta_{i})_{1 \leq i \leq 4g}$ in $X$ is associated to a word $\varphi(a)=\lambda=\lambda_{1} \ldots \lambda_{4g}$ in $\Lambda$ defined in the following way: the letter $\lambda_{1}$ is the unique deck transformation in $\mathcal{G}$ which sends $D_{0}$ to $\delta_{1}$. The second letter $\lambda_{2}$ is the unique deck transformation in $\mathcal{G}$ such that $\lambda_{1} \lambda_{2}(D_{0})=\delta_{2}$. Likewise, if we suppose that we have built the letters $\lambda_{1}, \ldots, \ \lambda_{i}$ such that $\lambda_{1} \ldots \lambda_{i}(D_{0})= \delta_{i}$, the letter $\lambda_{i+1}$ is defined by the relation $\lambda_{1} \ldots \lambda_{i+1}(D_{0})= \delta_{i+1}$. Finally: $\delta_{1} \ldots \delta_{4g}(D_{0})=D_{0}$ so the word $\delta_{1} \ldots \delta_{4g}$ belongs to the set $\Lambda$.

Thus, we have built an injective map which, to any vertex $\tilde{p}$ of $D_{0}$, associates a word $\lambda$ in $\Lambda$ such that the fundamental domains $\lambda_{1} \ldots \lambda_{i}(D_{0})$, for $1 \leq i \leq 4g$, share the point $\tilde{p}$ in common. Notice that the word $\lambda^{-1}$ satisfies also this property. Moreover, as the cardinality of the set $\Lambda$ is $4g$ and as the cardinality of the set of vertices of the polygon $\partial D_{0}$ is $2g$, we obtain the following property: for a word $\lambda$ in $\Lambda$, the fundamental domains $\lambda_{1} \ldots \lambda_{i}(D_{0})$, for $1 \leq i \leq 4g$, share a point in common.
\end{proof}

The next lemma describes the shape of the geodesic words which send the face $D_{0}$ to an exceptional face. This lemma, as well as the following combinatorial lemmas, are proved at the end of this section.

\begin{lemma} \label{geodexc}
Let $D$ be an exceptional face different from $D_{0}$. For any geodesic word $\gamma$ with $\gamma(D_{0})=D$, one of the following properties holds:
\begin{enumerate}
\item The $2g$ last letters of the word $\gamma$ are a subword of a word of $\Lambda$.
\item The $4g-1$ last letters of $\gamma$ are the concatenation of two subwords $\lambda_{1}$ and $\lambda_{2}$ with respective length $2g$ and $2g-1$ of words of $\Lambda$ such that, if we denote by $a$ the last letter of $\lambda_{1}$ and by $b$ the first letter of $\lambda_{2}$, then the word $ab$ is not contained in any word in $\Lambda$.
\end{enumerate}
Moreover, there exists a geodesic word $\gamma$ such that $\gamma(D_{0})=D$ which satisfies the first property above. We denote by $l_{1} \ldots l_{2g}$ its $2g$ last letters, where $l_{1} \ldots l_{4g} \in \Lambda$. Moreover, the $2g$ first letters of any geodesic word for which this first property holds are $l_{1} \ldots l_{2g}$ or $l_{4g}^{-1} \ldots l_{2g+1}^{-1}$.
\end{lemma}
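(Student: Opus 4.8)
The proof is a small-cancellation computation, and I would organise it as follows.

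\emph{Tools.} For $g\geq 2$ the standard presentation of $\Pi_{1}(S)$ satisfies the condition $C'(1/6)$: a piece — a word appearing as a subword of two essentially distinct words of $\Lambda$ — has length $1$, while each $\lambda\in\Lambda$ has length $4g\geq 8$. Hence Dehn's algorithm applies: every nonempty cyclically reduced word that equals $1$ in $\Pi_{1}(S)$ contains a subword of length $\geq 2g+1$ which is a subword of some $\lambda\in\Lambda$ (Greendlinger's lemma); in particular a geodesic word contains no subword of length $\geq 2g+1$ lying inside a word of $\Lambda$, since otherwise the Dehn reduction move would shorten it. I shall also use that the image in $\mathcal{D}$ of a word of $\Lambda$ is a loop once around a vertex of the tessellation, so that by Lemma \ref{adjacence} the faces visited along any subword of a $\lambda\in\Lambda$ share a common vertex; a subword of length $2g$ then corresponds to a ``half-turn'' around a vertex, and, given two faces opposite around a vertex, the two half-turns joining them are the only two locally geodesic ways of doing so.

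\emph{Setting up a relation.} Put $M=d_{\mathcal{D}}(D,D_{0})$. A face adjacent to $D$ at distance $M-1$ from $D_{0}$ is exactly the penultimate face of some geodesic from $D_{0}$ to $D$, so ``$D$ exceptional'' means there are at least two geodesic words ending at $D$ with distinct last letters. Given the arbitrary geodesic $\gamma=l_{1}\cdots l_{M}$ of the statement, choose a geodesic $\gamma'=m_{1}\cdots m_{M}$ with $\gamma'(D_{0})=D$ and $m_{M}\neq l_{M}$, and let $j$ be largest with $l_{i}=m_{i}$ for $i\leq j$. Then
$$u=l_{j+1}\cdots l_{M}\,m_{M}^{-1}\cdots m_{j+1}^{-1}$$
is freely and cyclically reduced (no cancellation at the centre because $l_{M}\neq m_{M}$, none at the wrap-around because $l_{j+1}\neq m_{j+1}$) and equals $1$ in $\Pi_{1}(S)$, since $l_{1}\cdots l_{M}=m_{1}\cdots m_{M}$ and $l_{1}\cdots l_{j}=m_{1}\cdots m_{j}$. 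By Dehn's algorithm $u$ contains a subword of length $\geq 2g+1$ inside some $\lambda\in\Lambda$; as $\gamma$, $\gamma'$ and their inverses are geodesic and $\Lambda$ is closed under inversion and cyclic permutation, this subword lies in neither half of $u$, so it straddles a junction of the cyclic word $u$. After possibly replacing $\gamma'$ and re-taking the divergence point (part of the work is to justify this normalization at the junction $m_{j+1}^{-1}\,|\,l_{j+1}$ by a re-routing argument), we may assume it straddles $l_{M}\,|\,m_{M}^{-1}$, so that a suffix $l_{p}\cdots l_{M}$ of $\gamma$ together with a prefix of $m_{M}^{-1}\cdots m_{j+1}^{-1}$ spans a subword of $\lambda$ of length $\geq 2g+1$, each of the two pieces having length $\leq 2g$.

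\emph{The combinatorial core (the main obstacle).} If the suffix $l_{p}\cdots l_{M}$ has length $\geq 2g$, then its last $2g$ letters $l_{M-2g+1}\cdots l_{M}$ form a subword of $\lambda$, and we are in case (1). Otherwise $\gamma$ ends with a short relator-arc, and one must control the letters immediately before it. Take $k\leq p$ maximal such that $l_{k}\cdots l_{M}$ is a subword of some word of $\Lambda$; by Lemma \ref{adjacence} the faces $E_{k-1},\dots,E_{M}$ along this block share a vertex $v_{2}$. If $k\leq M-2g+1$ we are again in case (1); if not, $l_{k}\cdots l_{M}$ is a less-than-half turn around $v_{2}$, and one reapplies Dehn (to $u$, or to the relation furnished by a fresh sibling geodesic) to show that the block $l_{M-4g+2}\cdots l_{k-1}$ preceding it is, after padding to length exactly $2g$, a half-turn around a second vertex $v_{1}$, and that the two-letter junction where the two arcs meet extends to no word of $\Lambda$ — which is precisely the $C'(1/6)$ statement that pieces have length $1$. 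Since $v_{1}\neq v_{2}$ (else the two arcs would combine into a subword of a word of $\Lambda$ of length $>2g$, impossible for the geodesic $\gamma$), this is case (2). The real work is in enumerating the subcases (which junction of $u$ is straddled, the length of the straddling subword, the possible coincidences among the vertices involved) and checking that no configuration escapes (1) and (2); this is the step I expect to be genuinely delicate.

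\emph{Existence and uniqueness.} To produce a geodesic to $D$ of type (1), start from any geodesic; if it is of type (2), replace its terminal half-turn around $v_{1}$ by the opposite half-turn around $v_{1}$ — still geodesic, both half-turns having length $2g$, by Lemma \ref{adjacence} — and repeat; a descent on the combinatorial position of the branch vertex shows the process terminates at a type-(1) word $\gamma$, whose terminal $2g$ letters $l_{1}\cdots l_{2g}$ extend to a loop $l_{1}\cdots l_{4g}\in\Lambda$ around the branch vertex $v$, with $E_{M-2g}$ and $D=E_{M}$ opposite around $v$. For the uniqueness assertion, any terminal $2g$-letter block of a type-(1) geodesic to $D$ is a half-turn joining the two faces opposite around its branch vertex; these faces must be $E_{M-2g}$ and $D$ and the branch vertex must be $v$, for otherwise two distinct half-turns would overlap in more than one letter and hence share a piece of length $\geq 2$, contradicting $C'(1/6)$. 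Hence that block is $l_{1}\cdots l_{2g}$ or the reverse half-turn $l_{4g}^{-1}\cdots l_{2g+1}^{-1}$, which is the claim.
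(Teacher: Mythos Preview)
Your framework matches the paper's --- both reduce to Dehn's algorithm on a relation between two geodesics to $D$ --- but several steps are not under control. First, passing to the cyclically reduced word $u$ from the divergence point is a detour: it creates a second junction $m_{j+1}^{-1}\,|\,l_{j+1}$, and your ``re-routing'' to force the Greendlinger subword to the other junction is not carried out. The paper avoids this by applying Dehn's algorithm to the \emph{linear} word $\gamma_{1}\gamma_{2}^{-1}$; since both halves are geodesic, any simplifiable subword $\lambda'$ necessarily straddles the unique junction, giving directly $\lambda'=\lambda_{1}\lambda_{2}$ with $\lambda_{1}$ a suffix of $\gamma_{1}$ and $\lambda_{2}^{-1}$ a suffix of $\gamma_{2}$, each of length $\leq 2g$. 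Second, the ``combinatorial core'' you flag as delicate is the entire content of the lemma and is not a one-shot Greendlinger application. The paper \emph{iterates} Dehn's algorithm while maintaining an invariant shape $\tilde\gamma_{1}\eta_{1}\cdots\eta_{k}\tilde\gamma_{2}^{-1}$ (each $\eta_{i}$ short, with controlled junctions) through three explicit facts about how subwords of $\Lambda$ can abut; this is what rules out $|\lambda'|\leq 4g-3$ and, by a separate argument using the third fact, $|\lambda'|=4g-2$. Your ``reapply Dehn to show the preceding block is a half-turn'' does not supply this structure.

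The existence and uniqueness arguments also have gaps that are not merely omissions. For existence, your descent is ill-defined: in a type-(2) word the terminal block $\lambda_{2}$ has length $2g-1$, not $2g$, so there is no ``terminal half-turn around $v_{1}$'' to swap; and swapping $\lambda_{1}$ for the opposite half-turn changes its last letter, with nothing to ensure the new word is even reduced at the junction with $\lambda_{2}$. The paper gets existence for free from the case analysis: since $|\lambda'|\in\{4g-1,4g\}$, at least one of $\gamma_{1},\gamma_{2}$ has a length-$2g$ suffix in $\Lambda$. For uniqueness, your claim that two half-turns around distinct vertices of $D$ would ``overlap in more than one letter'' and thus produce a piece of length $\geq 2$ is unjustified: the two half-turns are different words, and sharing an endpoint face does not make either a subword of two distinct relators. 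The paper obtains uniqueness from the case $|\lambda'|=4g$: then $|\lambda_{1}|=|\lambda_{2}|=2g$ and $\lambda_{1}\lambda_{2}\in\Lambda$, which forces the two suffixes to be the two halves $l_{1}\cdots l_{2g}$ and $l_{4g}^{-1}\cdots l_{2g+1}^{-1}$ of a single relator.
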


In the case $g=2$, an example of a geodesic word associated to an exceptional face with the first property above is $[a_{1},b_{1}]=[b_{2},a_{2}]$ and an example of a geodesic word associated to an exceptional face with the second property above is 
$$\begin{array}{rcl}
a_{2}^{-1}b_{2}^{-1}a_{1}b_{1}^{2}a_{1}^{-1}b_{1}^{-1} & = & a_{2}^{-1}b_{2}^{-1}a_{1}b_{1}a_{1}^{-1}[a_{1},b_{1}]\\
 & = & b_{2}^{-1}a_{2}^{-1}b_{1} [b_{2},a_{2}].
\end{array}
$$
The first property holds for this last word.

Let us fix an exceptional face $D$. Let $l_{1} \ldots l_{4g}$ be a word in $\Lambda$ and $\gamma$ be a geodesic word whose $2g$ last letters are $l_{1} \ldots l_{2g}$ such that $\gamma(D_{0})=D$. Let $\gamma=\gamma' l_{1} \ldots l_{2g}$ and, for $0 \leq i \leq 2g$:
$$ \left\{
\begin{array}{l}
D_{i}^{1}=\gamma' l_{1}  \ldots l_{2g-i}(D_{0}) \\
D_{i}^{2}=\gamma' l_{4g}^{-1} \ldots l_{2g+i+1}^{-1}(D_{0})
\end{array}
\right.
.
$$
Then: $D_{0}^{1}=D_{0}^{2}=D$ et $D^{1}_{2g}=D^{2}_{2g}$. By Lemma \ref{adjacence}, all the fundamental domains that we just defined meet in one point: they are the elements of the set of fundamental domains in $\mathcal{D}$ which contain this point.

\begin{figure}[ht]
\begin{center}
\includegraphics{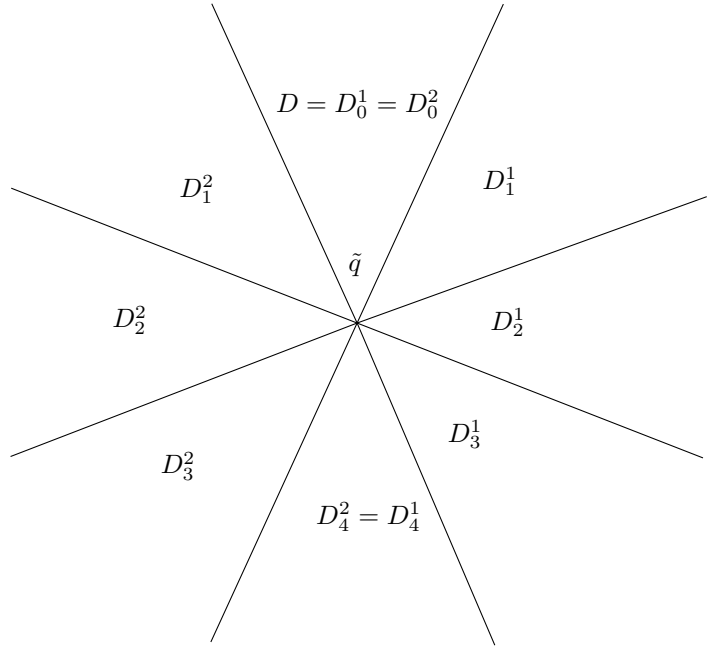}
\end{center}
\caption{The $D_{i}^{j}$'s for a genus $2$ surface}
\end{figure}

For a natural number $l \geq 1$, we call \emph{face of type $(0,l)$} any fundamental domain $D$ in $\mathcal{D}$ which is at distance $l$ from $D_{0}$ and which satisfies the following property: in the set of faces adjacent to $D$, only one element is at distance $l-1$ from $D_{0}$, \emph{i.e.} this face is not exceptional and is at distance $l$ from $D_{0}$. In this case, the other faces adjacent to $D$ are at distance $l+1$ from the fundamental domain $D_{0}$. This last fact is a consequence of the following remark: if we denote by $m$ a word on elements of $\mathcal{G}$ and by $l$ a letter in $\mathcal{G}$, the elements $ml$ and $l$ in the group $\Pi_{1}(S)$ do not have the same length $l_{\mathcal{G}}$ modulo $2$ as the relations which define this group have even length. By using the notion of geodesic word, another (equivalent) definition of faces of type $(0,l)$ can be given: a face of type $(0,l)$ is a fundamental domain $D$ in $\mathcal{D}$ such that all the geodesic words $\gamma$ with $\gamma(D_{0})=D$ have the same last letter and their length is $l$.

For any integer $k$ between $0$ and $l$, we define by induction the set of faces of types $(k,l)$. A \emph{face of type $(k,l)$} is a fundamental domain $D$ in $\mathcal{D}$ which is at distance $l-k$ from $D_{0}$ and which satisfies the following property: all the faces adjacent to $D$, except one, are faces of type $(k-1,l)$. Therefore, a face of type $(k,l)$ is also a face of type $(0,l-k)$ (or even $(k-i,l-i)$, for $0 \leq i \leq k$). An equivalent definition of faces of type $(k,l)$ is the following. Let us consider a geodesic word $\gamma'$ of length $l-k$ such that $\gamma'(D_{0})=D$. The face $D$ is a face of type $(k,l)$ if and only if, for any reduced word $m$ with length less than or equal to $k$ such that the word $\gamma'm$ is reduced, the face $\gamma'm(D_{0})$ is not exceptional. This definition can also be interpreted in terms of geodesic paths in $\mathcal{D}$. Let us denote by $(D_{0}, \ldots, D_{l-k})$ a geodesic path in $\mathcal{D}$. The fundamental domain $D_{l-k}$ is a face of type $(k,l)$ if and only if for any geodesic extension of the form $(D_{0}, \ldots, D_{l-k}, D_{l-k+1}, \ldots, D_{l})$ of this last path, the faces $D_{l-k}, \ldots, D_{l}$ are not exceptional. The crucial property described above can be translated in the following way: for any exceptional face $D$, for any integer $1 \leq j \leq 2g-2$, the faces adjacent to $D_{j}^{1}$ and different from $D_{j-1}^{1}$ and $D_{j+1}^{1}$ are faces of type $(j-1, d_{\mathcal{D}}(D,D_{0}))$. Notice that the face $D_{j}^{1}$ is not a face of type $(j,d_{\mathcal{D}}(D,D_{0}))$ as the face $D$, which is exceptional, is at distance $j$ from $D$.

The following lemma will play a crucial role in the proof of Lemma \ref{herhyp} and is deduced from Lemma \ref{geodexc}.

\begin{lemma} \label{faceexc}
For any indices $i$ between $1$ and $2g-2$ and $j \in \left\{ 1,2 \right\}$, the fundamental domains adjacent to $D_{i}^{j}$ which are different from $D_{i+1}^{j}$ and from $D_{i-1}^{j}$ are faces of type $(i-1, d_{\mathcal{D}}(D_{0},D))$.
\end{lemma}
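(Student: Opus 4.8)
The plan is to argue combinatorially inside $\Pi_1(S)$, combining Lemma~\ref{geodexc} with the standard small-cancellation facts for the surface relator $R=[a_1,b_1]\cdots[a_g,b_g]$: for $g\geq 2$ this presentation is $C'(1/6)$, so every subword of length $\geq 2$ of a cyclic conjugate of $R^{\pm 1}$ occurs in exactly one cyclic conjugate and at exactly one position, and Dehn's algorithm holds, i.e. a reduced word is geodesic as soon as it contains no run of more than $2g$ consecutive letters of a cyclic conjugate of $R$. Write $M=d_{\mathcal{D}}(D_0,D)$ and keep the notation $\gamma=\gamma'l_1\cdots l_{2g}$, $l_1\cdots l_{4g}\in\Lambda$, of Lemma~\ref{geodexc}. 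First I would record that $\gamma'l_1\cdots l_{2g-i}$ is a prefix of the geodesic word $\gamma$, hence geodesic of length $M-i$, and that $\gamma'l_{4g}^{-1}\cdots l_{2g+1}^{-1}=\gamma$ in $\Pi_1(S)$, so $\gamma'l_{4g}^{-1}\cdots l_{2g+i+1}^{-1}$ is also geodesic of length $M-i$; thus $D_i^1$ and $D_i^2$ lie at distance $M-i$ from $D_0$. I treat $j=1$, the case $j=2$ being identical after replacing $l_1\cdots l_{4g}$ by its inverse, which again lies in $\Lambda$.

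Next I would identify the neighbours of $D_i^1$ and check that, for $1\leq i\leq 2g-2$, this face is not exceptional. The $4g$ faces adjacent to $D_i^1$ are $\gamma'l_1\cdots l_{2g-i}\,t(D_0)$, $t\in\mathcal{G}$; the choices $t=l_{2g-i}^{-1}$ and $t=l_{2g-i+1}$ give $D_{i+1}^1$ and $D_{i-1}^1$. For any other letter $s$, the word $\rho:=\gamma'l_1\cdots l_{2g-i}\,s$ is reduced, and a run $S$ of more than $2g$ consecutive relator letters in it would be impossible: $S$ cannot lie inside the geodesic prefix $\gamma'l_1\cdots l_{2g-i}$, so it contains the final letter $s$; but if $S$ also contained the two letters $l_{2g-i-1}l_{2g-i}$ (legitimate since $i\leq 2g-2$), the $C'(1/6)$ rigidity would pin $S$ inside $l_1\cdots l_{4g}$ and force $s=l_{2g-i+1}$, which is excluded, so $S$ would have length at most $2<2g+1$. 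Hence $\rho$ is geodesic of length $M-i+1$, every neighbour of $D_i^1$ other than $D_{i+1}^1$ is strictly farther from $D_0$, and $D_i^1$ is not exceptional.

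It then remains to show that each side-neighbour $E=\gamma'l_1\cdots l_{2g-i}\,s(D_0)$, $s\in\mathcal{G}\setminus\{l_{2g-i+1},l_{2g-i}^{-1}\}$, is of type $(i-1,M)$. Its distance to $D_0$ is $M-(i-1)$ by the previous paragraph, so by the second description of type $(k,l)$ it suffices to show: for every reduced word $m$ with $|m|\leq i-1$ such that $\rho m$ is reduced, $\rho m(D_0)$ is not exceptional. The same run analysis shows $\rho m$ has no run of more than $2g$ consecutive relator letters (such a run cannot sit in the geodesic prefix $\gamma'l_1\cdots l_{2g-i}$, nor inside $m$, whose length is $\leq 2g-3$, and a run meeting $s$ cannot reach back past $l_{2g-i}$ without forcing $s=l_{2g-i+1}$, hence has length $\leq |m|+2\leq 2g-1$), so $\rho m$ is a geodesic word reaching $\rho m(D_0)$. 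If $\rho m(D_0)$ were exceptional, Lemma~\ref{geodexc} would force one of two shapes on $\rho m$: either its last $2g$ letters form a subword of a word of $\Lambda$, or its last $4g-1$ letters split as $\lambda_1\lambda_2$ with $|\lambda_1|=2g$, $|\lambda_2|=2g-1$ and the two-letter junction word not a subword of $\Lambda$. In the first shape the last $2g$ letters contain the block $l_{2g-i-1}l_{2g-i}s$, which by rigidity cannot be a subword of a word of $\Lambda$ unless $s=l_{2g-i+1}$: contradiction. In the second shape $\lambda_2$ contains $l_{2g-i-1}l_{2g-i}s$ and the same contradiction occurs, except in the single borderline sub-case $i=2g-2$, $|m|=2g-3$, where instead $\lambda_2=l_2\,s\,m$, $\lambda_1$ ends with $l_1$, and the junction word is $l_1l_2$, which is a subword of $l_1\cdots l_{4g}\in\Lambda$ — again a contradiction. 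Hence $\rho m(D_0)$ is not exceptional, $E$ is of type $(i-1,M)$, which is the assertion for $j=1$, and $j=2$ follows verbatim by symmetry.

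I expect the last paragraph to be where the real work lies: keeping track of exactly which letters of $\rho m$ fall into the windows of size $2g$ and $4g-1$ appearing in Lemma~\ref{geodexc}, making sure the blocks to which $C'(1/6)$-rigidity is applied always have length $\geq 2$, and isolating the borderline values of $i$, of $|m|$ and of $M$ (in particular the case $|\rho m|<2g$, where the conclusion is immediate since an exceptional face has a geodesic representative of length at least $2g$). The case analysis of the second paragraph and the parity considerations are routine by comparison.
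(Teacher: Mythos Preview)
Your argument is correct and lands in the same place as the paper's: both reduce to checking, via Lemma~\ref{geodexc} and the rigidity of two-letter relator subwords (the paper's Fact~1), that the words $\gamma' l_1\cdots l_{2g-i}\,s\,m$ are geodesic and that their endpoints are non-exceptional. The one methodological difference is how geodesicity is obtained. You invoke directly the criterion that a reduced word in $\Pi_1(S)$ is geodesic as soon as it contains no relator subword of length $>2g$; this is true for surface groups, but it is a stronger statement than Dehn's word-problem algorithm and not something the paper proves or cites. The paper sidesteps this by an induction on the length of $m$: once the face at $\gamma' l_1\cdots l_{i'} m$ is known to be non-exceptional, extending by any admissible letter automatically gives a geodesic word, since a non-exceptional face has a unique neighbour strictly closer to $D_0$ (and no neighbour at the same distance, by the parity remark preceding the definition of type $(0,l)$). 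The inductive route is thus more self-contained; yours is fine provided you justify or cite the geodesic criterion you use. Your case analysis excluding the two shapes of Lemma~\ref{geodexc}, including the borderline sub-case $(i,|m|)=(2g-2,\,2g-3)$ where the junction is $l_1l_2$, is in fact more explicit than the paper's, which simply asserts that the extended word ``is not of one of the forms given in Lemma~\ref{geodexc}'' and leaves the verification to the reader.
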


The next lemma is symmetric to Lemma \ref{geodexc}.

\begin{lemma} \label{geodexc2} Let $D_{1}$ be a fundamental domain in $\mathcal{D}$. Suppose that there exist two geodesic words with distinct first letters $a$ and $b$ such that:
$$ \gamma_{1}(D_{0})=\gamma_{2}(D_{0})=D_{1}.$$
In this case, the fundamental domain $D_{0}$ is an exceptional face with respect to $D_{1}$.
Then there exists a geodesic word $\gamma$ such that $\gamma(D_{0})=D_{1}$ whose $2g$ first letters $\lambda_{1}  \ldots \lambda_{2g}$ are a subword of a word $\lambda_{1} \ldots \lambda_{4g}$ in $\Lambda$. Moreover, the fundamental domains $D_{0}$, $a(D_{0})$ and $b(D_{0})$ share a point $\tilde{p}$ with the following property in common: the fundamental domains in $\mathcal{D}$ which contain the point $\tilde{p}$ are faces of the form $\lambda_{1} \ldots \lambda_{i}(D_{0})$ or $\lambda_{4g}^{-1} \ldots \lambda_{4g-i+1}^{-1}(D_{0})$, with $0 \leq i \leq 2g$.
\end{lemma}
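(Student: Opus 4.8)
The plan is to deduce the statement from Lemma~\ref{geodexc} by combining the symmetry $\gamma\mapsto\gamma^{-1}$ with the observation, recorded just before Lemma~\ref{geodexc}, that the notion of exceptional face is defined with respect to an arbitrary fundamental domain of $\mathcal{D}$ and is invariant under the action of $\Pi_1(S)$. Since this action on $\mathcal{D}$ is free, the two geodesic words $\gamma_1,\gamma_2$ represent a single element $g\in\Pi_1(S)$ with $D_1=g(D_0)$; put $E:=g^{-1}(D_0)$. Because $\mathcal{G}$ is symmetric, $\gamma\mapsto\gamma^{-1}$ is an involution carrying the geodesic words $\gamma$ with $\gamma(D_0)=D_1$ onto the geodesic words $\gamma'$ with $\gamma'(D_0)=E$; it exchanges the first letter of $\gamma$ with the inverse of the last letter of $\gamma^{-1}$, and the last letter of $\gamma$ with the inverse of the first letter of $\gamma^{-1}$. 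Hence $\gamma_1^{-1}$ and $\gamma_2^{-1}$ are geodesic words with $\gamma_i^{-1}(D_0)=E$ whose last letters $a^{-1}\neq b^{-1}$ are distinct, so by definition $E$ is an exceptional face with respect to $D_0$, and translating by $g$ (which sends $E\mapsto D_0$ and $D_0\mapsto D_1$) this says that $D_0$ is an exceptional face with respect to $D_1$ --- the first assertion.

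I would then apply Lemma~\ref{geodexc} to the exceptional face $E$: it yields a word $l_1\ldots l_{4g}\in\Lambda$ such that $l_1\ldots l_{2g}$ are the last $2g$ letters of a geodesic word $\gamma_0$, $\gamma_0(D_0)=E$, satisfying the first property; as $l_1\ldots l_{4g}$ is a relator, $l_1\ldots l_{2g}=l_{4g}^{-1}\ldots l_{2g+1}^{-1}$ in $\Pi_1(S)$, so the word obtained from $\gamma_0$ by replacing those last $2g$ letters with $l_{4g}^{-1}\ldots l_{2g+1}^{-1}$ is again a geodesic word reaching $E$. The faces $N_1:=g^{-1}l_{2g}^{-1}(D_0)$ and $N_2:=g^{-1}l_{2g+1}(D_0)$ --- the second-to-last faces of $\gamma_0$ and of this mirror word --- are thus adjacent to $E$ and at distance $d_{\mathcal{D}}(D_0,E)-1$ from $D_0$. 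The crux of the argument is the claim that $N_1$ and $N_2$ are the \emph{only} faces adjacent to $E$ at distance $d_{\mathcal{D}}(D_0,E)-1$ from $D_0$; for geodesic words to $E$ satisfying the first property of Lemma~\ref{geodexc} this is contained in that lemma's last clause, and for those satisfying the second property one has to reproduce, for $E$, the small-cancellation/Dehn-algorithm analysis of Section~7 (Lemma~\ref{faceexc} and the crucial property of the faces $D_i^j$). This is precisely the respect in which the present lemma is ``symmetric to Lemma~\ref{geodexc}'', and I expect it to be the main obstacle.

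Granting the claim, the rest is bookkeeping. The second-to-last faces of $\gamma_1^{-1}$ and $\gamma_2^{-1}$ are $g^{-1}a(D_0)$ and $g^{-1}b(D_0)$; both are adjacent to $E$ and at distance $d_{\mathcal{D}}(D_0,E)-1$ from $D_0$, so they lie in $\{N_1,N_2\}$, and since $a\neq b$ one gets $\{a(D_0),b(D_0)\}=\{l_{2g}^{-1}(D_0),l_{2g+1}(D_0)\}$; relabel so that $a=l_{2g}^{-1}$, $b=l_{2g+1}$. Now take $\gamma:=\gamma_0^{-1}$, a geodesic word with $\gamma(D_0)=D_1$ whose first $2g$ letters are $l_{2g}^{-1}l_{2g-1}^{-1}\ldots l_1^{-1}$, and set $\lambda_i:=l_{2g+1-i}^{-1}$ for $1\le i\le 4g$, with indices of the $l$'s read modulo $4g$. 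Then $\lambda_1\ldots\lambda_{2g}$ are exactly these first $2g$ letters of $\gamma$, while $\lambda_1\ldots\lambda_{4g}=(l_{2g+1}\ldots l_{4g}\,l_1\ldots l_{2g})^{-1}$ is the inverse of a cyclic permutation of $l_1\ldots l_{4g}$, hence lies in $\Lambda$ (closed under cyclic permutation by definition, and under inversion since $([a_1,b_1]\ldots[a_g,b_g])^{-1}=[b_g,a_g]\ldots[b_1,a_1]$); so $\gamma$ is the geodesic word required, with $\lambda_1=a$ and $\lambda_{4g}^{-1}=b$. Finally Lemma~\ref{adjacence}, applied to the relator $\lambda_1\ldots\lambda_{4g}$ and to $D_0$, gives that the faces $\lambda_1\ldots\lambda_i(D_0)$, $1\le i\le 4g$, share a vertex $\tilde p$ of $D_0$ and (by that lemma's proof) are exactly the fundamental domains of $\mathcal{D}$ containing $\tilde p$; using $\lambda_1\ldots\lambda_i=\lambda_{4g}^{-1}\ldots\lambda_{i+1}^{-1}$ this set equals $\{\lambda_1\ldots\lambda_i(D_0):0\le i\le 2g\}\cup\{\lambda_{4g}^{-1}\ldots\lambda_{4g-i+1}^{-1}(D_0):0\le i\le 2g\}$, which contains $D_0$, $a(D_0)=\lambda_1(D_0)$ and $b(D_0)=\lambda_{4g}^{-1}(D_0)$. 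This is the asserted conclusion.
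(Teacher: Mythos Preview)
Your approach is essentially the paper's own: both reduce to Lemma~\ref{geodexc} by passing to geodesic words for $g^{-1}$---you via inversion (treating $E=g^{-1}(D_0)$ as an exceptional face with respect to $D_0$), the paper via conjugation of the generating set by $\gamma_1$ (treating $D_0$ as exceptional with respect to $D_1$). These are the same computation with different bookkeeping, and your construction of $\gamma$ and of the $\lambda_i$ in the final paragraph is correct. The paper is far terser on the second assertion, writing only that it ``comes from the above argument and from Lemma~\ref{adjacence}.''

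Regarding the obstacle you flag: you do not need the general claim that an exceptional face has \emph{only} two close neighbours, nor any appeal to Lemma~\ref{faceexc}. Apply the first step of the Dehn analysis from the \emph{proof} of Lemma~\ref{geodexc} directly to the specific pair $(\gamma_1^{-1},\gamma_2^{-1})$: the maximal simplifiable subword $\lambda'=\lambda_1\lambda_2$ of $\gamma_1^{-1}\gamma_2$ has length $\geq 4g-1$, with $\lambda_1$ the last $|\lambda_1|$ letters of $\gamma_1^{-1}$ and $\lambda_2^{-1}$ the last $|\lambda_2|$ letters of $\gamma_2^{-1}$. Since $|\lambda_1|,|\lambda_2|\leq 2g$ (the $\gamma_i^{-1}$ are geodesic) and their sum is $\geq 4g-1$, one of them, say $\lambda_1$, has length exactly $2g$; choosing the relator $l_1\ldots l_{4g}$ containing $\lambda'$ so that $\lambda_1=l_1\ldots l_{2g}$, one reads off at the juncture that $a^{-1}=l_{2g}$ (last letter of $\lambda_1$) and $b=l_{2g+1}$ (first letter of $\lambda_2$). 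This gives $\{a,b\}=\{l_{2g}^{-1},l_{2g+1}\}$ immediately. The point is that the information about $a$ and $b$ is already encoded in the first Dehn step because $\gamma_1^{-1},\gamma_2^{-1}$ are the \emph{particular} geodesics in hand---you never need to analyse an arbitrary property-2 geodesic to $E$.
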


For a homeomorphism $h$ in $\mathrm{Homeo}_{0}(S)$, we denote by $l(h)$ the maximum of the $d_{\mathcal{D}}(D, D_{0})$, where $D$ is a fundamental domain in $\mathcal{D}$ which contains the image under the homeomorphism $\tilde{h}$ of a vertex of the polygon $\partial D_{0}$.

\begin{lemma} \label{sommets} Let $h$ be a homeomorphism in $\mathrm{Homeo}_{0}(S)$. Suppose that there exists a fundamental domain $D_{1}$ in $\mathcal{D}$ whose interior contains the image under $\tilde{h}$ of a vertex $\tilde{p}$ of the polygon $\partial D_{0}$. Then the following assertions are equivalent:
\begin{enumerate}
\item $d_{\mathcal{D}}(D_{1},D_{0})=l(h)$.
\item The fundamental domain $D_{0}$ is an exceptional face with respect to $D_{1}$.
\end{enumerate}
Then the face $D_{1}$ is unique among the faces which satisfy the properties above. In this case, there exists a word $\lambda_{1} \lambda_{2} \ldots \lambda_{4g}$ in $\Lambda$ and a geodesic word $\gamma$ such that $\gamma(D_{0})=D_{1}$ and the $2g$ first letters of $\gamma$ are $\lambda_{1} \lambda_{2} \ldots \lambda_{2g}$: $\gamma=\lambda_{1} \lambda_{2} \ldots \lambda_{2g} \gamma'$. Moreover, the vertices of the polygon $\partial D_{0}$ are the points of the form $\tilde{p}_{i}= \lambda_{i}^{-1} \lambda_{i-1}^{-1} \ldots \lambda_{1}^{-1}(\tilde{p})$ or $\tilde{p}'_{i}= \lambda_{4g-i+1} \lambda_{4g-i+2} \ldots \lambda_{4g}(\tilde{p})$. These points are pairwise distinct except in the two following cases: $\tilde{p}'_{0}=\tilde{p}_{0}=\tilde{p}$ and $\tilde{p}_{2g}=\tilde{p}'_{2g}$.
\end{lemma}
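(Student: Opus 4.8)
The plan is to reduce Lemma~\ref{sommets} to a purely combinatorial statement about the Cayley graph of $\Pi_{1}(S)$ for the generating set $\mathcal{G}$ (equivalently, about the graph whose vertices are the elements of $\mathcal{D}$, two being joined when they are adjacent), and then to run the small-cancellation machinery already isolated in Lemmas~\ref{adjacence}, \ref{geodexc}, \ref{faceexc} and~\ref{geodexc2}. Write $D_{1}=\gamma_{1}(D_{0})$ with $\gamma_{1}\in\Pi_{1}(S)$ and set $m=l_{\mathcal{G}}(\gamma_{1})=d_{\mathcal{D}}(D_{0},D_{1})$; let $N(\tilde{p})$ be the set of the $4g$ fundamental domains of $\mathcal{D}$ that contain the vertex $\tilde{p}$ (its structure is described by Lemma~\ref{adjacence}). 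The first step is to observe that $l(h)$ only sees stars of vertices: since $\tilde{h}$ is a lift of $h$, it commutes with every deck transformation, and since $\Pi$ identifies the vertices of $\partial D_{0}$ in a common $\Pi_{1}(S)$-orbit, the fundamental domain containing $\tilde{h}(\tilde{p}')$ equals $\eta\cdot D_{1}$ whenever $\eta\in\Pi_{1}(S)$ carries $\tilde{p}$ to the vertex $\tilde{p}'$. Using that $d_{\mathcal{D}}$ is $\Pi_{1}(S)$-invariant, this gives $l(h)=\max\{d_{\mathcal{D}}(D_{1},D):D\in N(\tilde{p})\}$, so assertion~(1) is equivalent to saying that $D_{0}$ realises the maximum of $d_{\mathcal{D}}(D_{1},\cdot)$ over the star $N(\tilde{p})$.

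For $(1)\Rightarrow(2)$ I would use a parity argument. Because every defining relator of $\Pi_{1}(S)$ has even length, the function $d_{\mathcal{D}}(\cdot,D_{1})$ changes by exactly $\pm 1$ along each edge of the Cayley graph, and for $s\in\mathcal{G}$ one has $d_{\mathcal{D}}(s(D_{0}),D_{1})=m-1$ precisely when $s$ is the first letter of some geodesic word for $\gamma_{1}$, and $=m+1$ otherwise. The star $N(\tilde{p})$ contains $D_{0}$ together with exactly the two neighbours of $D_{0}$ across the two edges of $\partial D_{0}$ meeting at $\tilde{p}$. Hence, if $D_{0}$ is not exceptional with respect to $D_{1}$, i.e. $\gamma_{1}$ has a single geodesic first letter, then at least one of those two neighbours sits at distance $m+1>m$ from $D_{1}$, so $D_{0}$ does not realise the maximum over $N(\tilde{p})$ and~(1) fails.

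For $(2)\Rightarrow(1)$, together with uniqueness and the explicit vertex list, I would feed ``$D_{0}$ exceptional with respect to $D_{1}$'' into Lemma~\ref{geodexc2}: this produces a relator $\lambda_{1}\cdots\lambda_{4g}\in\Lambda$, a geodesic word $\gamma=\lambda_{1}\cdots\lambda_{2g}\gamma'$ with $\gamma(D_{0})=D_{1}$, and a distinguished vertex $\tilde{q}$ of $\partial D_{0}$ (the common point of $D_{0}$, $\lambda_{1}(D_{0})$ and $\lambda_{4g}^{-1}(D_{0})$, the two geodesic directions out of $D_{0}$) whose star is exactly $\{\lambda_{1}\cdots\lambda_{i}(D_{0})\}_{0\le i\le 2g}\cup\{\lambda_{4g}^{-1}\cdots\lambda_{4g-i+1}^{-1}(D_{0})\}_{0\le i\le 2g}$. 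Assuming $\tilde{q}=\tilde{p}$, a direct computation using that suffixes of the geodesic word $\gamma$ are geodesic and the relation $\lambda_{1}\cdots\lambda_{4g}=1$ shows $d_{\mathcal{D}}(D_{1},\lambda_{1}\cdots\lambda_{i}(D_{0}))=m-i$ and $d_{\mathcal{D}}(D_{1},\lambda_{4g}^{-1}\cdots\lambda_{4g-i+1}^{-1}(D_{0}))\le m-i$ for $0\le i\le 2g$; hence $D_{0}$ is the unique face of $N(\tilde{p})$ at distance $m$ from $D_{1}$, which gives~(1) and the uniqueness of $D_{1}$. The vertices of $\partial D_{0}$ are then recovered as the images of $\tilde{q}=\tilde{p}$ under the initial segments of $\lambda_{1}\cdots\lambda_{2g}$ and of its inverse, i.e. the points $\tilde{p}_{i}=\lambda_{i}^{-1}\cdots\lambda_{1}^{-1}(\tilde{p})$ and $\tilde{p}'_{i}=\lambda_{4g-i+1}\cdots\lambda_{4g}(\tilde{p})$, and the identities $\tilde{p}_{0}=\tilde{p}'_{0}=\tilde{p}$, $\tilde{p}_{2g}=\tilde{p}'_{2g}$ express the two ways the star of $\tilde{q}$ closes up.

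The main obstacle is the step $\tilde{q}=\tilde{p}$, and more generally showing that among the $4g$ vertices of $\partial D_{0}$ exactly one, say $\tilde{p}_{*}$, has $D(\tilde{p}_{*})$ realising $l(h)$, and that $D_{0}$ is exceptional with respect to $D(\tilde{p}')$ \emph{only} for $\tilde{p}'=\tilde{p}_{*}$. This is where the topological input (the fact that $\tilde{h}(\partial D_{0})$ is a Jordan curve whose vertex-images occur in the cyclic order of $\partial D_{0}$) must be combined with the small-cancellation control of geodesics near exceptional faces coming from Lemmas~\ref{geodexc}, \ref{faceexc} and~\ref{geodexc2}: one has to rule out that any non-extremal face of a vertex star is again exceptional-related, and that no second face of $N(\tilde{q})$ ties $D_{0}$. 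By contrast, the bipartiteness/parity observation makes the non-exceptional direction essentially immediate, so all of the genuine work is concentrated in the combinatorial analysis of a single vertex star.
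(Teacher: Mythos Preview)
Your reformulation $l(h)=\max_{D\in N(\tilde p)}d_{\mathcal D}(D,D_1)$ via the equivariance of $\tilde h$ is correct and clean, and your argument for $(1)\Rightarrow(2)$ is essentially the paper's: both neighbours of $D_0$ in $N(\tilde p)$ must be at distance $m-1$ from $D_1$, hence $D_0$ is exceptional with respect to $D_1$ \emph{and} the distinguished vertex $\tilde q$ produced by Lemma~\ref{geodexc2} is already equal to $\tilde p$. This last point is what you miss when you turn to $(2)\Rightarrow(1)$.

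The gap you identify (``$\tilde q=\tilde p$'') is genuine, but your proposed fix is wrong. No topological input is needed: the fact that $\tilde h(\partial D_0)$ is a Jordan curve plays no role, and the paper's proof uses nothing about $\tilde h$ beyond its commutation with deck transformations. The paper closes the gap by an elimination argument that stays entirely inside the Cayley combinatorics. Under hypothesis (1), once one knows $\tilde q=\tilde p$, the other vertices of $\partial D_0$ are the points $\tilde p_i=\lambda_i^{-1}\cdots\lambda_1^{-1}(\tilde p)$ and $\tilde p'_i=\lambda_{4g-i+1}\cdots\lambda_{4g}(\tilde p)$, and equivariance gives
\[
\tilde h(\tilde p_i)\in \lambda_{i+1}\cdots\lambda_{2g}\gamma'(\mathring D_0),\qquad
\tilde h(\tilde p'_i)\in \lambda_{4g-i}^{-1}\cdots\lambda_{2g+1}^{-1}\gamma'(\mathring D_0).
\]
The paper then invokes Lemma~\ref{geodexc} (in its ``reversed'' form, i.e.\ Lemma~\ref{geodexc2}) to check that for each of these target faces $D'$ the geodesic from $D_0$ does \emph{not} have the prescribed $\Lambda$-prefix structure, so $D_0$ is not exceptional with respect to any such $D'$. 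Now the converse is immediate: choose any vertex $\tilde p^*$ realising $l(h)$; by the work just done (applied to $\tilde p^*$), $D_0$ is exceptional with respect to the face containing $\tilde h(\tilde p^*)$ and is not exceptional with respect to the face containing $\tilde h(\tilde p')$ for every other vertex $\tilde p'$. Hence if (2) holds for your given $\tilde p$, necessarily $\tilde p=\tilde p^*$, which gives (1), the uniqueness of $D_1$, and \emph{a posteriori} $\tilde q=\tilde p$.

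So the missing ingredient is not a Jordan-curve argument but the combinatorial verification, via Lemma~\ref{geodexc}, that the faces $\lambda_{i+1}\cdots\lambda_{2g}\gamma'(D_0)$ (for $1\le i\le 2g$) fail the exceptionality criterion for $D_0$. Once you add that, your distance computations already give everything else.
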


Let us come now to the proof of Lemma \ref{herhyp}. 

\subsection{Proof of Lemma \ref{herhyp}}
\begin{proof}[Proof of Lemma \ref{herhyp}] Let $f$ be a homeomorphism in $\mathrm{Homeo}_{0}(S)$ such that $\mathrm{el}_{D_{0}}(\tilde{f}(D_{0})) \geq 4g$. The proof is decomposed into two parts. First, we build a homeomorphism $\eta_{1}$ so that the set $\tilde{\eta}_{1} \circ \tilde{f} (D_{0})$ does not meet faces of type $(i, \mathrm{el}_{D_{0}}(\tilde{f}(D_{0})))$ for $0 \leq i \leq 2g-2$ anymore. Then, we build a homeomorphism $\eta_{2}$ so that the set $\tilde{\eta}_{2} \circ \tilde{\eta}_{1} \circ \tilde{f}(D_{0})$ does not meet either exceptional maximal faces for $f$. In these constructions, we will make sure that the quantities $\mathrm{Frag}_{\mathcal{U}}(\eta_{i})$ are bounded by a constant independent from the chosen homeomorphism $f$. Let us give more details now.

\begin{lemma} \label{nettoyage}
Let $h$ be a homeomorphism in $\mathrm{Homeo}_{0}(S)$. Suppose that $\mathrm{el}_{D_{0}}(\tilde{h}(D_{0})) \geq 4g$. Then there exists a homeomorphism $\eta$ in $\mathrm{Homeo}_{0}(S)$ such that:
\begin{enumerate}
\item $\mathrm{Frag}_{\mathcal{U}}(\eta) \leq 4 (2g-2)+1$.
\item $\mathrm{el}_{D_{0}}(\tilde{\eta} \circ \tilde{h}(D_{0})) \leq \mathrm{el}_{D_{0}}(\tilde{h}(D_{0}))$.
\item One of the following properties holds:
\begin{enumerate}
\item $\mathrm{el}_{D_{0}}(\tilde{\eta} \circ \tilde{h}(D_{0})) \leq \mathrm{el}_{D_{0}}(\tilde{h}(D_{0}))-1$.
\item The set $\tilde{\eta} \circ \tilde{h}(D_{0})$ does not meet faces of type $(i, \mathrm{el}_{D_{0}}(\tilde{h}(D_{0})))$ for $0 \leq i \leq 2g-2$.
\end{enumerate}
\end{enumerate}
\end{lemma}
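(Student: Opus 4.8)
The plan is to build $\eta$ as a composition of at most $4(2g-2)+1$ homeomorphisms, each supported in one of the three discs $U_0,U_1,U_2$, by cleaning the image $\tilde h(D_0)$ off the non-exceptional maximal faces one ``layer'' at a time. Write $l:=\mathrm{el}_{D_0}(\tilde h(D_0))\ge 4g$; the layers are the faces of type $(0,l)$, then $(1,l)$, $\dots$, $(2g-2,l)$, and the hypothesis $l\ge 4g$ guarantees all these layers sit at positive distance from $D_0$ and that the combinatorial descriptions given above apply around a maximal exceptional face. If at any stage the éloignement drops, conclusion~(a) holds and we stop; the goal of the procedure is to reach conclusion~(b) otherwise.

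First I would treat the outermost layer. A face $D$ of type $(0,l)$ is at distance $l$ from $D_0$, so, $l$ being the éloignement, $\tilde h(\partial D_0)$ can reach $\mathring D$ only through the unique neighbour $D'$ of $D$ at distance $l-1$; consequently each connected component of $\tilde h(\partial D_0)$ inside the lifted disc sitting across the edge $D\cap D'$, other than the singular one through the image of a vertex of $\partial D_0$ (where $g(\Pi(\partial D_0))$ has its degree-$4g$ branch point), has its two ends on the $D'$-side of that disc. Using the corollary following Lemma~\ref{schonflies} inside each disc $V_\alpha$ one builds a single homeomorphism supported in $U_1=\bigcup_\alpha V_\alpha$ that pushes every such arc so it no longer crosses the corresponding edge, the construction being phrased, as for $g_3$ in the proof of Lemma~\ref{herbord}, in terms of the side of the edge on which an arc's ends lie, which is the deck-invariant formulation required since the homeomorphism lives on $S$. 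A further homeomorphism supported in $U_0$, and if necessary one supported in $U_2$, disposes of the singular component. Exactly as in Claims~1 and~2 of the proof of Lemma~\ref{herbord}, one checks that the set of faces met by the image does not grow and that the image now avoids every face of type $(0,l)$; in particular $\mathrm{el}_{D_0}$ does not increase.

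Then I would iterate. Suppose the current image avoids faces of type $(0,l),\dots,(i-1,l)$ and still has éloignement $l$. A face $D$ of type $(i,l)$ lies at distance $l-i$, and since all its neighbours except one are of type $(i-1,l)$, hence already avoided, the image reaches $\mathring D$ only through its unique remaining neighbour $D'$ at distance $l-i-1$; the relevant arcs of the image again have their ends near the edge $D\cap D'$. Pushing them back across that edge is realised by at most four homeomorphisms supported in $U_0,U_1,U_2$ (a variant of the $(g_1,g_2,g_3)$ mechanism of Lemma~\ref{herbord}, which repulses stray pieces toward $U_2$, then out of $U_2$, then across the edge, plus one homeomorphism in $U_0$ for the branch point), with the verification, this is where the combinatorial Lemmas~\ref{adjacence}, \ref{geodexc} and~\ref{faceexc} enter, that $\mathrm{el}_{D_0}$ does not increase, that the push does not drive the image back into the already-cleaned layers (which sit strictly farther from $D_0$), and that the neighbours of $D$ other than its two geodesic neighbours really are of type $(i-1,l)$, so the induction closes. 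After the layer $i=2g-2$ is cleaned we are in conclusion~(b), and the factors used number at most $1+4(2g-2)=4(2g-2)+1$.

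The hard part will be the singular connected component of $g(\Pi(\partial D_0))$, the one through the degree-$4g$ branch point $g(p)$: unlike a regular arc it cannot simply be pushed across a single edge, which is precisely why the disc $U_0$ around $p$ was placed in $\mathcal{U}$. Handling it forces the kind of delicate case analysis that already appears in the third item of the proof of Lemma~\ref{hertore} for the torus, and is most delicate when $l(h)=l$, where Lemma~\ref{sommets} describes how the $2g$ vertices of $\partial D_0$ are arranged around the far face. A secondary difficulty is the combinatorial bookkeeping needed to certify that cleaning layer $i$ neither re-creates a face of type $(0,l),\dots,(i-1,l)$ met by the image nor leaves a type-$(i+1,l)$ face reachable through more than its single geodesic neighbour, for which Lemmas~\ref{geodexc}--\ref{faceexc} are exactly tailored.
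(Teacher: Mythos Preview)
Your overall strategy---clean $\tilde h(D_0)$ off the type-$(i,l)$ faces layer by layer, from $i=0$ up to $i=2g-2$, using a bounded number of homeomorphisms per layer---is exactly the paper's approach, and your identification of the singular component as the delicate part is correct. The paper also begins with a small perturbation $\eta_0$, supported in a single disc of $\mathcal U$, whose sole purpose is to push $h(p)$ off $\Pi(\partial D_0)$; you omit this, but it is what makes the subsequent case analysis of the singular component well-posed, and it accounts for the ``$+1$'' in the bound.

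There is, however, a genuine gap in your treatment of the first layer. You assert that each regular component of $\tilde h(\partial D_0)$ inside the lifted disc $\tilde V_{\tilde\alpha}$ straddling the edge $\tilde\alpha=D\cap D'$ has both ends on the $D'$-side of $\partial\tilde V_{\tilde\alpha}$, so that a single homeomorphism supported in $U_1$ (the analogue of $g_3$ from Lemma~\ref{herbord}) pushes them across. This is false. A component of $\tilde h(\partial D_0)\cap D$ does have its endpoints on the edge $\tilde\alpha$, but between those endpoints it may wander through the whole interior of $D$, leaving $\tilde V_{\tilde\alpha}$ into the lifts of $U_2$ and $U_0$ before returning; its restriction to $\tilde V_{\tilde\alpha}$ then breaks into several sub-arcs, some with both ends on the $D$-side of $\partial\tilde V_{\tilde\alpha}$, others crossing. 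A homeomorphism supported in $U_1$ cannot move the portions lying outside $\tilde V_{\tilde\alpha}$, so after your single push the intersection $\tilde h(\partial D_0)\cap D$ need not be empty. This is precisely why the paper---and your own description of the inductive step---uses the full four-move mechanism $f_1,f_2,f_3,f_4$ at \emph{every} layer, including the first: $f_1$ (in $U_0$), $f_2$ (in $\cup_\alpha V_\alpha$) and $f_3$ (in $U_2$) successively gather the stray pieces inside $D$ into $\tilde V_{\tilde\alpha}$, and only then does $f_4$ push across the edge. In Lemma~\ref{herbord}, which you cite, $g_3$ is likewise the \emph{last} of three steps, not a standalone move. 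Your count $1+4(2g-2)$ therefore does not match your own description: the first layer already requires the four-step routine, just like the others.
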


\begin{proof} 
Notice that there are two kinds of connected components of $h(\Pi(\partial D_{0}))-\Pi(\partial D_{0})$: the connected components homeomorphic to $\mathbb{R}$ which will be called \emph{regular} and (if the image under $h$ of the vertex of $\Pi(\partial D_{0})$ does not belong to $\Pi(\partial D_{0})$, what is assumed in the lemmas below) a connected component called \emph{singular} homeomorphic to the union of pairwise transverse $2g$ straight lines of the plane which meet in one point. This last connected component is the one which contains the vertex of $\Pi(\partial D_{0})$. This last kind of component will raise technical issues and will require lemmas throughout the proof. The reader may skip the lemmas which deal with this singular component on a first reading. The following lemma is one of those.

\begin{lemma} \label{sommets2}
Let $h$ be a homeomorphism in $\mathrm{Homeo}_{0}(S)$. Take an integer $j$ in $[0,2g-2]$. Suppose that the following properties hold:
\begin{enumerate}
\item $\mathrm{el}_{D_{0}}(\tilde{h}(D_{0})) \geq 4g$.
\item The point $h(p)$ does not belong to the set $\Pi(\partial D_{0})$.
\item The set $\tilde{h}(D_{0})$ does not meet faces of type $(i, \mathrm{el}_{D_{0}}(\tilde{h}(D_{0})))$ for $0 \leq i <j$.
\item The image under $\tilde{h}$ of a vertex $\tilde{p}$ of the polygon $\partial D_{0}$ belongs to a face $D_{1}$ of type $(j, \mathrm{el}_{D_{0}}(\tilde{h}(D_{0})))$.
\end{enumerate}
In this case, the image under the homeomorphism $\tilde{h}$ of any vertex of the polygon $\partial D_{0}$ different from $\tilde{p}$ does not belong to a face of type $(j, \mathrm{el}_{D_{0}}(\tilde{h}(D_{0})))$. Moreover, the face $D_{0}$ is exceptional with respect to $D_{1}$.
\end{lemma}

\begin{proof}
Suppose first that $j=0$. Lemma \ref{sommets} implies that the images under the homeomorphism $\tilde{h}$ of the other vertices of the polygon $\partial D_{0}$ belong to fundamental domains in $\mathcal{D}$ strictly closer to $D_{0}$ than $D_{1}$. Suppose now that $j \geq 1$. We prove by contradiction that the face $D_{1}$ is exceptional with respect to $D_{0}$. Denote by $s(D_{0})$, where $s$ is a deck transformation in $\mathcal{G}$, a face adjacent to $D_{0}$ which contains the point $\tilde{p}$. Suppose by contradiction that $d_{\mathcal{D}}(s(D_{0}),D_{1})=d_{\mathcal{D}}(D_{0},D_{1})+1$. Then:
$$ \left\{
\begin{array}{l}
d_{\mathcal{D}}(D_{0},s^{-1}(D_{1}))=d_{\mathcal{D}}(D_{0},D_{1})+1 \\
\tilde{h}(s^{-1}(\tilde{p})) \in s^{-1}(D_{1})
\end{array}
\right.
.
$$
Let us prove that the fundamental domain $s^{-1}(D_{1})$ is a face of type $(j-1, \mathrm{el}_{D_{0}}(\tilde{h}(D_{0})))$. Let $\gamma$ be a geodesic word such that $\gamma (D_{0})=D_{1}$. As $\mathrm{el}_{D_{0}}(\tilde{h}(D_{0}))\geq 4g$, the length of the word $\gamma$ is greater than or equal to $2g$. Moreover, as $d_{\mathcal{D}}(s(D_{0}),D_{1})=d_{\mathcal{D}}(D_{0},D_{1})+1$, the word $s^{-1} \gamma$ is geodesic. If we concatenate  $i \in [0,j]$ letters $a_{1},a_{2}, \ldots, a_{i}$ on the right with $\gamma$ so that the word $\gamma a_{1} a_{2} \ldots a_{i}$ is reduced, then the $2g$ last letters of the obtained word are not a subword of a word in $\Lambda$, as the fundamental domain $D_{1}$ is a face of type $(j,\mathrm{el}_{D_{0}}(\tilde{h}(D_{0})))$. Therefore, if we concatenate $i \in [0,j-1]$ letters $a_{1},a_{2}, \ldots, a_{i}$ on the right with the geodesic word $s^{-1} \gamma$ so that the obtained word is reduced, the $2g-1$ last letters of the obtained word are not a subword of a word in $\Lambda$. By Lemma \ref{geodexc}, the faces $s^{-1} \gamma a_{1} a_{2} \ldots a_{i}(D_{0})$ are not exceptional so the face $s^{-1}(D_{1})$ is a face of type $(j-1, \mathrm{el}_{D_{0}}(\tilde{h}(D_{0})))$. This contradicts the hypothesis of the lemma.

Thus, the face $D_{0}$ is exceptional with respect to $D_{1}$ and, using Lemma \ref{sommets}, we see that the images under the homeomorphism $\tilde{h}$ of the vertices of $\partial D_{0}$ distinct from $\tilde{p}$ belong to fundamental domains in $\mathcal{D}$ strictly closer to $D_{0}$ than $D_{1}$, which proves the lemma.
\end{proof}

Let $M= \mathrm{el}_{D_{0}}(\tilde{h}(D_{0}))$. Consider a little perturbation of the identity $\eta_{0}$ supported in the interior of one of the discs in $\mathcal{U}$ so that:
$$ \left\{
\begin{array}{l}
\mathrm{el}_{D_{0}}(\tilde{\eta}_{0} \circ \tilde{h}(D_{0})) \leq M \\
\eta_{0} \circ h(p) \notin \Pi(\partial D_{0})
\end{array}
\right.
.
$$
Notice that, if $\mathrm{el}_{D_{0}}(\tilde{\eta}_{0} \circ \tilde{h}(D_{0})) \leq M-1$, then the lemma is proved with $\eta=\eta_{0}$. Suppose now that, for an integer $j \in [0,2g-2]$, we have built a homeomorphism $\eta_{j}$ in $\mathrm{Homeo}_{0}(S)$ such that:
\begin{enumerate}
\item $\mathrm{Frag}_{\mathcal{U}}(\eta_{j}) \leq 4(j-1)+1$.
\item $\mathrm{el}_{D_{0}}(\tilde{\eta}_{j} \circ \tilde{h}(D_{0}))=M$.
\item The set $\tilde{\eta}_{j}(\tilde{h}(D_{0}))$ does not meet the faces of type $(i,M)$ for $0 \leq i <j$.
\item The point $\eta_{j} \circ h (p)$ does not belong to $\Pi(\partial D_{0})$.
\end{enumerate}
We will build a homeomorphism $\eta_{j+1}$ so that the set $\tilde{\eta}_{j+1} \circ \tilde{h}(D_{0})$ does not meet the faces of type $(j,M)$ either. This homeomorphism will be built by composing the homeomorphism $\eta_{j}$ with four homeomorphisms $f_{1}$, $f_{2}$, $f_{3}$ and $f_{4}$ each supported in the interior of one of the discs in $\mathcal{U}$. The homeomorphisms $f_{i}$ for $1 \leq i \leq 3$ will satisfy the following property $P$:
$$ \left\{ D \in \mathcal{D}, D \cap \tilde{f}_{i} \ldots \tilde{f}_{1} \circ \tilde{\eta}_{j} \circ \tilde{h}(D_{0}) \neq \emptyset \right\} = \left\{ D \in \mathcal{D}, D \cap \tilde{\eta}_{j} \circ \tilde{h}(D_{0}) \neq \emptyset \right\}.$$
If the image under $\tilde{\eta}_{j} \circ \tilde{h}$ of a vertex $\tilde{p}$ of the polygon $\partial D_{0}$ belongs to a face $D$ of type $(j,M)$, \emph{i.e.} the homeomorphism $\eta_{j} \circ h$ satisfies the hypothesis of the previous lemma, we denote by $\tilde{C}_{1}$ the connected component of $\tilde{\eta}_{j} \circ \tilde{h}(\partial D_{0}) \cap \mathring{D}$ which contains the point $\tilde{\eta}_{j} \circ \tilde{h}(\tilde{p})$. This is the unique connected component of $\tilde{\eta}_{j} \circ \tilde{h}(\partial D_{0})- \Pi^{-1}(\Pi(\partial D_{0}))$ which contains the image under the homeomorphism $\tilde{\eta}_{j} \circ \tilde{h}$ of a vertex of the polygon $\partial D_{0}$ which is contained in a face of type $(j,M)$, by the previous lemma. Notice that $\Pi(\tilde{C}_{1})$ is contained in the singular component of $\eta_{j} \circ h(\Pi(D_{0})) - \Pi(\partial D_{0})$.

\begin{figure}[ht]
\begin{center}
\includegraphics{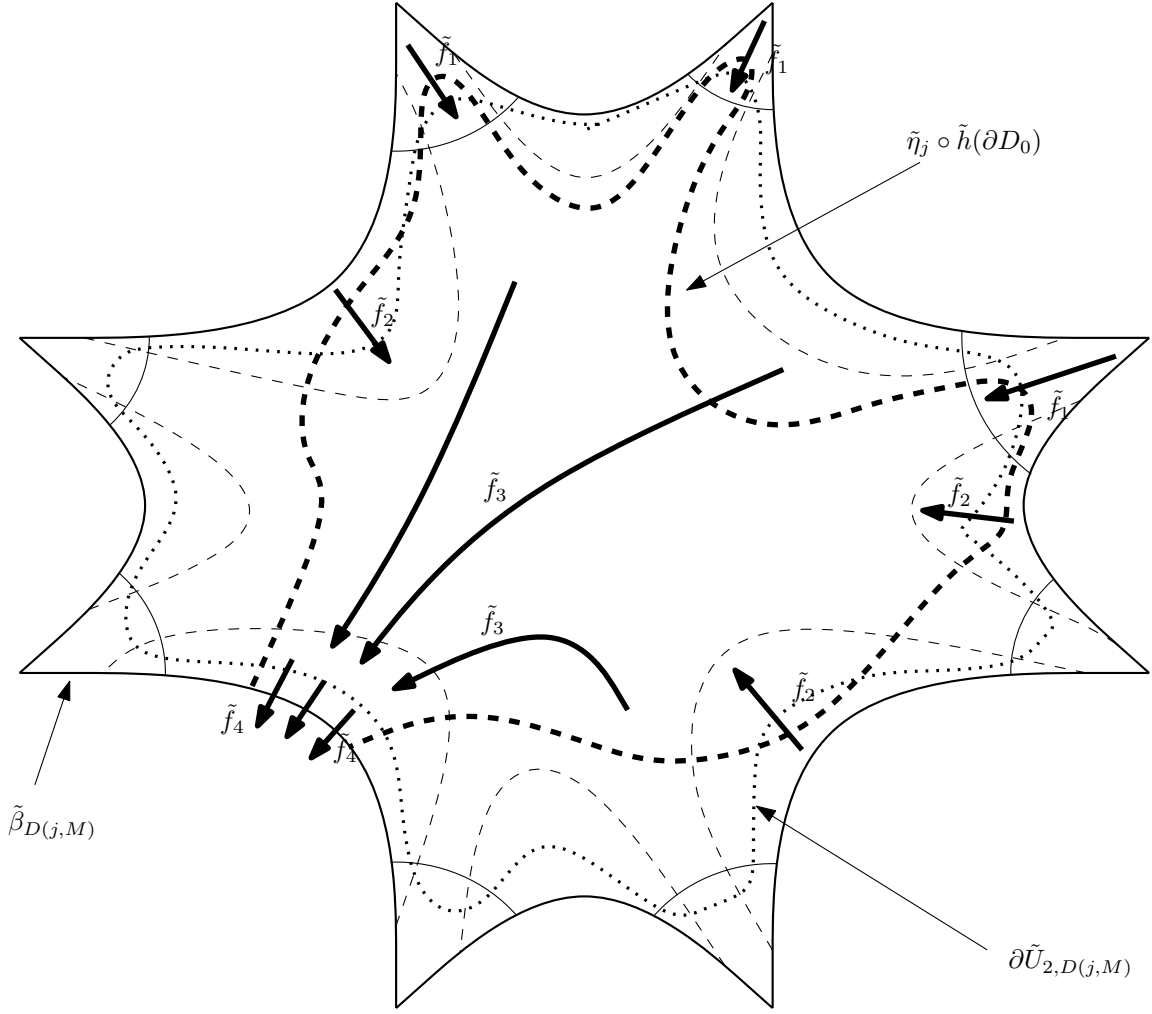}
\end{center}
\caption{Idea of the proof of Lemma \ref{nettoyage}: the face $D(j,M)$}
\end{figure}

Let $f_{1}$ be a homeomorphism supported in the interior of the disc $U_{0}$ with the following properties:
\begin{enumerate}
\item The homeomorphism $f_{1}$ globally preserves each edge in $A$.
\item For any connected component $C$ of $\mathring{U}_{0} \cap \eta_{j} \circ h(\Pi(\partial D_{0}))$ which does not contain the point $p$, we have
$$f_{1}(C) \subset \bigcup_{\alpha \in A} \mathring{V}_{\alpha} \cup \mathring{U}_{2}.$$
\item Case of the singular component: if the hypothesis of the previous lemma hold for the homeomorphism $\eta_{j} \circ h$, we require moreover that the image of $\Pi(\tilde{C}_{1})$ under $f_{1}$ is contained in the open set
$$\bigcup_{\alpha \in A} \mathring{V}_{\alpha} \cup \mathring{U}_{2}.$$
Notice that this condition is not implied by the other ones when $\Pi(\tilde{C}_{1})$ is contained in a connected component of $\mathring{U}_{0} \cap \eta_{j} \circ h(\Pi(\partial D_{0}))$ which contains the point $p$.
\end{enumerate}
Notice that, as the set $\tilde{C}_{1}$ is contained in a face of type $(j,M)$, the set $\overline{\Pi(\tilde{C}_{1})}$ does not contain the point $p$ (otherwise the closed set $\overline{\tilde{C}}_{1}$ would meet a face of type $(j-1,M)$, which is excluded by hypothesis on the homeomorphism $\eta_{j}$).
To build such a homeomorphism $f_{1}$, it suffices to take the time $1$ of the flow of a vector field for which the point $p$ is a repulsive fixed point, which is tangent to the edges of $A$ and is supported in the open disc $\mathring{U}_{0}$. As the homeomorphism $f_{1}$ globally preserves $\Pi^{-1}(\Pi(\partial D_{0}))$, it satisfies property $P$. Denote by $D(j,M)$ a face of type $(j,M)$. Recall that, by definition, if $j \geq 1$, all the faces adjacent to $D(j,M)$, except one, are of type $(j-1,M)$. Let $\tilde{\beta}_{D(j,M)}$ be the edge common to both the face $D(j,M)$ and the unique face adjacent to $D(j,M)$ which is at distance $d_{\mathcal{D}}(D(j,M),D_{0})-1$ from the fundamental domain $D_{0}$. Then, by hypothesis, any connected component of $\tilde{\eta}_{j} \circ \tilde{h} (\partial D_{0}) \cap D(j,M)$ has ends contained in the interior $\tilde{\beta}_{D(j,M)}- \partial \tilde{\beta}_{D(j,M)}$ of the edge $\tilde{\beta}_{D(j,M)}$. Let us denote by $\tilde{U}_{2, D(j,M)}$ the lift of the disc $U_{2}$ contained in the fundamental domain $D(j,M)$. Then the construction of the homeomorphism $f_{1}$ implies:
$$ \tilde{f}_{1} \circ \tilde{\eta}_{j} \circ \tilde{h}(\partial D_{0}) \cap D(j,M) \subset \mathring{\tilde{U}}_{2,D(j,M)} \cup \Pi^{-1}(\bigcup_{\alpha \in A} V_{\alpha}).$$

Let $f_{2}$ be a homeomorphism $\mathrm{Homeo}_{0}(S)$ which is supported in the union of the discs $V_{\alpha}$, where $\alpha$ varies over $A$, which satisfies the following properties:
\begin{enumerate}
\item The homeomorphism $f_{2}$ pointwise fixes all the edges in $A$.
\item For any edge $\alpha$ in $A$ and any connected component $C$ of $f_{1} \circ \eta_{j} \circ h(\Pi(\partial D_{0})) \cap V_{\alpha}$ which does not meet the edge $\alpha$, we have $f_{2}(C) \subset \mathring{U}_{2}$.
\item Case of the singular component: if the homeomorphism $\eta_{j} \circ h$ satisfies the hypothesis of the previous lemma, we require moreover that $f_{2} \circ f_{1}(\Pi(\tilde{C}_{1})) \subset \mathring{U}_{2}$.
\end{enumerate}
Let $\tilde{V}_{\tilde{\beta}_{D(j,M)}}$ be the lift of the disc $V_{\Pi(\tilde{\beta}_{D(j,M)})}$ which meets the edge $\tilde{\beta}_{D(j,M)}$. As the homeomorphism $\tilde{f}_{2}$ pointwise fixes $\Pi^{-1}(\Pi(\partial D_{0}))$, it satisfies property $P$. Moreover, by construction of the homeomorphism $f_{2}$, we have, for any face $D(j,M)$ of type $(j,M)$:
$$\tilde{f}_{2} \circ \tilde{f}_{1} \circ \tilde{\eta}_{j} \circ \tilde{h}(\partial D_{0}) \cap D(j,M) \subset \mathring{\tilde{V}}_{\tilde{\beta}_{D(j,M)}} \cup \mathring{\tilde{U}}_{2,D(j,M)}.$$
With the same method, we build a homeomorphism $f_{3}$ supported in the interior of $U_{2}$ such that, for any face $D(j,M)$ of type $(j,M)$, we have:
$$\tilde{f}_{3} \circ \tilde{f}_{2} \circ \tilde{f}_{1} \circ \tilde{\eta}_{j} \circ \tilde{h}(\partial D_{0}) \cap D(j,M) \subset \mathring{\tilde{V}}_{\tilde{\beta}_{D(j,M)}}.$$
As this homeomorphism pointwise fixes $\Pi^{-1}(\Pi(\partial D_{0}))$, it also satisfies property $P$.
Finally, let $f_{4}$ be a homeomorphism in $\mathrm{Homeo}_{0}(S)$ supported in the disjoint union of the open discs $\mathring{V}_{\alpha}$, where $\alpha$ varies over the set $A$, which satisfies the following properties for any edge $\alpha$ in $A$:
\begin{enumerate}
\item For any connected component $C$ of $f_{3} \circ f_{2} \circ f_{1} \circ \eta_{j} \circ h(\Pi(\partial D_{0})) \cap \mathring{V}_{\alpha}$ whose ends belong to the same connected component of $V_{\alpha}-\alpha$, we have $f_{4}(C) \cap \alpha = \emptyset$.
\item The homeomorphism $f_{4}$ pointwise fixes any other regular connected component of $f_{3} \circ f_{2} \circ f_{1} \circ \eta_{j} \circ h(\Pi(\partial D_{0})) \cap \mathring{V}_{\alpha}$.
\item Case of the singular component: if the homeomorphism $\eta_{j} \circ h$ satisfies the hypothesis of the previous lemma, if $\tilde{C}'_{1}$ denotes the connected component of $\tilde{f}_{3} \circ \tilde{f}_{2} \circ \tilde{f}_{1} \circ \tilde{\eta}_{j} \circ \tilde{h}(\partial D_{0}) \cap \Pi^{-1}( \cup_{\alpha} \mathring{V}_{\alpha})$ which contains the image under the homeomorphism $\tilde{f}_{3} \circ \tilde{f}_{2} \circ \tilde{f}_{1} \circ \tilde{\eta}_{j} \circ \tilde{h}$ of a vertex of the polygon $\partial D_{0}$ and which meets a face of type $(j,M)$, then:
$$ f_{4}(\Pi(\tilde{C}'_{1})) \cap \alpha = \emptyset.$$
\item In the case where the homeomorphism $\eta_{j} \circ h$ does not satisfy the hypothesis of the previous lemma, then the homeomorphism $f_{4}$ pointwise fixes the potential connected component of $f_{3} \circ f_{2} \circ f_{1} \circ \eta_{j} \circ h(\Pi(\partial D_{0})) \cap \mathring{V}_{\alpha}$ which is not homeomorphic to $\mathbb{R}$ and has ends in the two connected components of $V_{\alpha}-\alpha$.
\end{enumerate}
We now prove that the homeomorphism $\eta_{j+1}=f_{4} \circ f_{3} \circ f_{2} \circ f_{1} \circ \eta_{j}$ satisfies the required property, namely that $\mathrm{el}_{D_{0}}(\tilde{\eta}_{j+1} \circ \tilde{h}(D_{0})) \leq \mathrm{el}_{D_{0}}(\tilde{\eta}_{j} \circ \tilde{h}(D_{0}))$ and that the set $\tilde{\eta}_{j+1} \circ \tilde{h}(D_{0})$ does not meet the faces of type $(i,M)$ for $0 \leq i \leq j$. We will distinguish several pieces of the curve $\tilde{f}_{3} \circ \tilde{f}_{2} \circ \tilde{f}_{1} \circ \tilde{\eta}_{j} \circ \tilde{h}(\partial D_{0})$: the piece $\tilde{k}_{1}=\tilde{f}_{3} \circ \tilde{f}_{2} \circ \tilde{f}_{1} \circ \tilde{\eta}_{j} \circ \tilde{h}(\partial D_{0})-\Pi^{-1}(\cup_{\alpha} V_{\alpha})$ and the piece $\tilde{k}_{2}=\tilde{f}_{3} \circ \tilde{f}_{2} \circ \tilde{f}_{1} \circ \tilde{\eta}_{j} \circ \tilde{h}(\partial D_{0}) \cap \Pi^{-1}(\cup_{\alpha} V_{\alpha})$. In each of these cases, we prove that the image under $f_{4}$ of the chosen piece does not meet new faces (\emph{i.e.} which were not met by the curve $\tilde{f}_{3} \circ \tilde{f}_{2} \circ \tilde{f}_{1} \circ \tilde{\eta}_{j} \circ \tilde{h}(\partial D_{0})$)  and does not meet faces of type $(j,M)$.
\paragraph{First case} If $\tilde{C}$ is the closure of a connected component of $\tilde{k}_{1}$, then $f_{4}(\tilde{C})=\tilde{C}$ is contained in a face which belongs to the set:
$$\left\{ D \in \mathcal{D}, D \cap \tilde{f}_{3} \circ \tilde{f}_{2} \circ \tilde{f}_{1} \circ \tilde{\eta}_{j} \circ \tilde{h}(D_{0}) \neq \emptyset \right\} =\left\{ D \in \mathcal{D}, D \cap \tilde{\eta}_{j} \circ \tilde{h}(D_{0}) \neq \emptyset \right\}$$
and is not contained in a face of type $(j,M)$ because, for any face $D(j,M)$ of type $(j,M)$:
$$\tilde{f}_{3} \circ \tilde{f}_{2} \circ \tilde{f}_{1} \circ \tilde{\eta}_{j} \circ \tilde{h}(\partial D_{0}) \cap D(j,M) \subset \mathring{\tilde{V}}_{\tilde{\beta}_{D(j,M)}}.$$
\paragraph{Second case} If $\tilde{C}$ is a connected component of $\tilde{k}_{2}$ whose ends do not belong to the same connected component of $\Pi^{-1}(\cup_{\alpha} V_{\alpha}-\alpha)$ and, in the case where the homeomorphism $\eta_{j} \circ h$ satisfies the hypothesis of the previous lemma, which does not contain the image under the homeomorphism $\tilde{f}_{3} \circ \tilde{f}_{2} \circ \tilde{f}_{1} \circ \tilde{\eta}_{j} \circ \tilde{h}$ of a vertex of the polygon $\partial D_{0}$ then $\tilde{f}_{4}(\tilde{C})=\tilde{C}$ in the faces of the set:
$$\left\{ D \in \mathcal{D}, D \cap \tilde{\eta}_{j} \circ \tilde{h}(D_{0}) \neq \emptyset \right\}$$
and does not meet faces of type $(j,M)$. 
\paragraph{Third case} If $\tilde{C}$ is a connected component of $\tilde{k}_{2}$ whose ends all belong to the same connected component of $\Pi^{-1}(\cup_{\alpha} V_{\alpha}-\alpha)$, then the subset $\tilde{f}_{4}(\tilde{C})$ is contained in the interior of the fundamental domain in $\mathcal{D}$ which contains the ends of $\tilde{C}$ and which, therefore, is not a face of type $(j,M)$. Indeed, for any face $D(j,M)$ of type $(j,M)$:
$$\tilde{f}_{3} \circ \tilde{f}_{2} \circ \tilde{f}_{1} \circ \tilde{\eta}_{j} \circ \tilde{h}(\partial D_{0}) \cap D(j,M) \subset \mathring{\tilde{V}}_{\tilde{\beta}_{D(j,M)}}.$$
Moreover, such a face belongs to the set:
$$\left\{ D \in \mathcal{D}, D \cap \tilde{\eta}_{j} \circ \tilde{h}(D_{0}) \neq \emptyset \right\}.$$
\paragraph{Fourth case} Let us finally address the case where the homeomorphism $\eta_{j} \circ h$ satisfies the hypothesis of the previous lemma and where $\tilde{C}$ is a connected component of $\tilde{k}_{2}$ which contains the image under the homeomorphism $\tilde{f}_{3} \circ \tilde{f}_{2} \circ \tilde{f}_{1} \circ \tilde{\eta}_{j} \circ \tilde{h}$ of a vertex of the polygon $\partial D_{0}$. Let $\tilde{p}$ be the vertex of the polygon whose image under the homeomorphism $\tilde{f}_{3} \circ \tilde{f}_{2} \circ \tilde{f}_{1} \circ \tilde{\eta}_{j} \circ \tilde{h}$ belongs to a face $D_{1}$ of type $(j,M)$. By Lemmas \ref{sommets2} and \ref{sommets}, there exists a geodesic word of the form $\lambda_{1} \lambda_{2} \ldots \lambda_{2g} \gamma$, where the word $\lambda_{1} \lambda_{2} \ldots \lambda_{4g}$ belongs to $\Lambda$, which sends the face $D_{0}$ to the face $D_{1}$. Let us denote by $\gamma'$ the word $\gamma$ without the last letter. By construction of the homeomorphism $f_{4}$, by Lemma \ref{sommets}, the set $\tilde{f}_{4}(\tilde{C})$ is contained in the union of the following fundamental domains:
$$\begin{array}{l}
\lambda_{1} \ldots \lambda_{2g} \gamma'(D_{0}) \\
\lambda_{i+1} \ldots \lambda_{2g} \gamma(D_{0}) \mbox{ if } 1 \leq i \leq 2g \\
\lambda_{i+1} \ldots \lambda_{2g} \gamma'(D_{0}) \mbox{ if } 1 \leq i \leq 2g \\
\lambda_{4g-i}^{-1} \ldots \lambda_{2g}^{-1} \gamma(D_{0}) \mbox{ if } 1 \leq i \leq 2g \\
\lambda_{4g-i}^{-1} \ldots \lambda_{2g}^{-1} \gamma'(D_{0}) \mbox{ if } 1 \leq i \leq 2g.
\end{array}
$$
These fundamental domains are each at distance less than or equal to $M-j-1$ from $D_{0}$ and are not faces of type $(i,M)$ if $0 \leq i \leq j$.
Lemma \ref{nettoyage} is proved because, either $\mathrm{el}_{D_{0}}(\tilde{\eta}_{j+1} \circ \tilde{h}(D_{0})) < \mathrm{el}_{D_{0}}(\tilde{h}(D_{0}))$ and $\eta=\eta_{j+1}$ is appropriate, or one can continue the process until the other property is eventually satisfied.
\end{proof}

For a homeomorphism $h$ in $\mathrm{Homeo}_{0}(S)$, we denote by $\mathcal{F}_{h}$ the union of the set of exceptional faces which are maximal for the homeomorphism $h$ with the set of fundamental domains in $\mathcal{D}$ at distance less than or equal to $\mathrm{el}_{D_{0}}(\tilde{h}(D_{0}))-1$ and greater than or equal to $\mathrm{el}_{D_{0}}(\tilde{h}(D_{0}))-(2g-2)$ from $D_{0}$ and which share a vertex in common with an exceptional face which is maximal for $h$. By Lemma \ref{faceexc}, the faces $D$ which belong to this last kind satisfy the following property: if $\tilde{p}$ denotes the vertex of the boundary of $D$ which belongs to an exceptional maximal face, any face adjacent to $D$ which does not contain the point $\tilde{p}$ is a face of type $(i, \mathrm{el}_{D_{0}}(\tilde{h}(D_{0})))$, for an integer $i$ between $0$ and $2g-3$.

\begin{lemma} \label{nettoyage2}
Let $h$ be a homeomorphism in $\mathrm{Homeo}_{0}(S)$ with the following properties:
\begin{enumerate}
\item $h(p) \notin \Pi(\partial D_{0})$.
\item $\mathrm{el}_{D_{0}}(\tilde{h}(D_{0})) \geq 4g$.
\item The set $\tilde{h}(D_{0})$ does not meet the faces of type $(i,\mathrm{el}_{D_{0}}(\tilde{h}(D_{0})))$ for any index $0 \leq i \leq 2g-2$.
\end{enumerate}
Then, there exists a homeomorphism $\eta$ in $\mathrm{Homeo}_{0}(S)$ with the following properties:
\begin{enumerate} 
\item For any fundamental domain $D$ in $\mathcal{F}_{h}$, the connected components of $\tilde{\eta} \circ \tilde{h}(\partial D_{0}) \cap D$ are contained in $\Pi^{-1}(\mathring{U}_{0})$.
\item $\eta \circ h(p) \notin \Pi(\partial D_{0})$.
\item $\mathrm{el}_{D_{0}}(\tilde{\eta} \circ \tilde{h}(D_{0})) \leq \mathrm{el}_{D_{0}}(\tilde{h}(D_{0}))$.
\item $\mathrm{Frag}_{\mathcal{U}}(\eta) \leq 4$.
\item The set $\tilde{\eta} \circ \tilde{h}(D_{0})$ does not meet faces of type $(i,\mathrm{el}_{D_{0}}(\tilde{h}(D_{0})))$ for $0 \leq i \leq 2g-2$.
\end{enumerate}
\end{lemma}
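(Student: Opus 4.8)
The proof has the same architecture as that of Lemma \ref{nettoyage}: the homeomorphism $\eta$ will be a composition of at most four homeomorphisms, each supported in the interior of one of the discs $U_0$, $U_1$, $U_2$, and at every stage we shall keep track of the set of fundamental domains met by the image of $D_0$ and of the fact that no face of type $(i,M)$ — where $M=\mathrm{el}_{D_0}(\tilde h(D_0))$ and $0\le i\le 2g-2$ — is met. The combinatorial picture is the following. Each exceptional face $D$ which is maximal for $h$ has a distinguished vertex $\tilde p(D)$, the one it shares with the faces of $\mathcal F_h$ adjacent to it (Lemma \ref{adjacence}); the faces of $\mathcal F_h$ split into clusters, each consisting of such a $D$ together with the faces of $\mathcal F_h$ around $\tilde p(D)$. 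Since two adjacent faces are always at distances from $D_0$ differing by exactly one, the neighbours of $D$ are at distance $M-1$ or $M+1$; those at distance $M+1$ are not met by $\tilde h(D_0)$, and by the combinatorial description of geodesic words (Lemmas \ref{geodexc}, \ref{geodexc2}) those at distance $M-1$ are the two faces of $\mathcal F_h$ flanking $D$ at $\tilde p(D)$. Likewise, for a face $D'\in\mathcal F_h$ of a cluster different from $D$, Lemma \ref{faceexc} shows that every neighbour of $D'$ other than the two faces flanking it at the distinguished vertex is of type $(j,M)$ for some $0\le j\le 2g-3$, hence is not met by $\tilde h(D_0)$. Consequently, for every $D'\in\mathcal F_h$ the connected components of $\tilde h(\partial D_0)\cap D'$ have both endpoints on the two edges of $D'$ issuing from the distinguished vertex. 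Finally, the lift $\tilde U_0$ of $U_0$ at $\tilde p(D)$ overlaps each of these edges in a disc near $\tilde p(D)$ — one of the two discs of $V_\alpha\cap U_0$ — and, by property (3) of $U_2$, the set $(\bigcup_\alpha V_\alpha\cup U_2)\cap U_0$ is an annular collar of $\partial U_0$; it is through these overlaps that the relevant arcs will be absorbed into $\Pi^{-1}(\mathring U_0)$.

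The construction then proceeds in three movements, all realized through the corollary to Lemma \ref{schonflies}. A homeomorphism $f_1$ supported in $\bigcup_\alpha\mathring V_\alpha$ slides the endpoints of the components of $\tilde h(\partial D_0)\cap D'$, $D'\in\mathcal F_h$, along the edges issuing from the distinguished vertices until they lie in the overlaps $V_\alpha\cap U_0$, carrying the adjacent pieces of the curve with them (and, as in the proof of Lemma \ref{herbord}, pushing into $\mathring U_2$ any sub-arc of $\tilde h(\partial D_0)\cap V_\alpha$ having both ends on the same side of the edge). A homeomorphism $f_2$ supported in $\mathring U_2$ next pushes the pieces of the resulting curve lying in the $U_2$-lifts of the clusters toward the distinguished vertices, into the collar $U_2\cap U_0$. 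Finally a homeomorphism $f_3$ supported in $\mathring U_0$ pulls into $\mathring U_0$ everything that now lies, near a distinguished vertex, in the collar $(\bigcup_\alpha V_\alpha\cup U_2)\cap U_0$, so that for every $D'\in\mathcal F_h$ the components of $\tilde\eta\circ\tilde h(\partial D_0)\cap D'$ are contained in $\Pi^{-1}(\mathring U_0)$; one last perturbation restores $\eta\circ h(p)\notin\Pi(\partial D_0)$ if necessary. Because $f_1$ only slides points of $\Pi^{-1}(\Pi(\partial D_0))$ along the edges, while $f_2$ and $f_3$ may be chosen to fix $\Pi^{-1}(\Pi(\partial D_0))$ pointwise, each of $f_1$, $f_2$, $f_3$ leaves unchanged the set of fundamental domains met by the image of $D_0$; since all the motion stays inside the clusters, which by Lemma \ref{faceexc} contain no face of type $(i,M)$ with $0\le i\le 2g-2$, such faces are still avoided and $\mathrm{el}_{D_0}$ does not increase. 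By construction $\mathrm{Frag}_{\mathcal U}(\eta)\le 4$.

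The genuine difficulty, exactly as in the proof of Lemma \ref{nettoyage}, is the singular connected component of $h(\Pi(\partial D_0))-\Pi(\partial D_0)$ — the wedge of $2g$ arcs through $h(p)$ — and especially the piece of it containing the image of a vertex of $\partial D_0$, which may land inside a face of a cluster: this piece is not free to be moved like a regular arc, its centre being pinned near $h(p)$, and one must push it into $\Pi^{-1}(\mathring U_0)$ without letting the curve sweep across a face of type $(i,M)$ or meet a new fundamental domain. Lemmas \ref{sommets} and \ref{sommets2} pin down the face containing this vertex-image and the list of fundamental domains around the corresponding vertex; one then imposes on $f_1$, $f_2$, $f_3$ the additional ``case of the singular component'' conditions, exactly as in the fourth case of the proof of Lemma \ref{nettoyage}, so that $\tilde f_3\circ\tilde f_2\circ\tilde f_1$ sends this piece into $\Pi^{-1}(\mathring U_0)$ while still obeying property $P$. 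The same combinatorial lemmas ensure that the choices of distinguished ends along the edges are globally coherent, so that the single homeomorphism $f_1$ of $S$ can be prescribed consistently on $\bigcup_\alpha V_\alpha$. Assembling these ingredients yields a homeomorphism $\eta$ with all the asserted properties.
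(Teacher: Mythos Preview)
Your proposal correctly identifies the combinatorial picture --- the clusters of faces in $\mathcal F_h$ around the distinguished vertices, the role of Lemma~\ref{faceexc}, the singular component --- and has roughly the right architecture. But your sequence of moves ($V_\alpha$, then $U_2$, then $U_0$) is reversed relative to the paper's, and this order does not work.

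Here is the difficulty. A connected component $\tilde C$ of $\tilde h(\partial D_0)\cap D'$, $D'\in\mathcal F_h$, has its endpoints on the two edges issuing from the distinguished vertex, as you say; but nothing prevents $\tilde C$ from wandering through the interior of $D'$ and in particular from passing through the \emph{core} $\Pi^{-1}\bigl(U_0\setminus(\bigcup_\alpha V_\alpha\cup U_2)\bigr)$ near a vertex of $D'$ \emph{other} than the distinguished one. Your $f_1$ (supported in $\bigcup_\alpha V_\alpha$) and your $f_2$ (supported in $U_2$) cannot move this piece of $\tilde C$ at all. Your $f_3$ is supported in $\mathring U_0$ and fixes $\Pi(\partial D_0)$ pointwise, so it preserves each sector of each lift of $U_0$; hence it keeps that piece inside the wrong lift. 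The image $\tilde\eta(\tilde C)$ is then a connected arc with a portion trapped near the wrong vertex and its endpoints near the distinguished one, so it lies in no single lift of $\mathring U_0$, and conclusion~(1) fails.

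The paper avoids this by running the $U_0$-step \emph{first}: its $f_1$ is the composition of a radial repulsion supported in $U_0$ --- pushing every piece of the curve out of the core at \emph{every} vertex into $\bigcup_\alpha V_\alpha\cup U_2$ --- with a map supported in $\bigcup_\alpha V_\alpha$, both globally preserving $\Pi(\partial D_0)$. Only then do $f_2$ (in $U_2$) and $f_3,f_4$ (in $\bigcup_\alpha V_\alpha$) funnel the arcs into the distinguished $\tilde U_{0,D}$. Note that the paper's terminal maps are supported in $\bigcup_\alpha V_\alpha$, not in $U_0$: a map supported in $\mathring U_0$ can only rearrange points already in $U_0$, whereas a $V_\alpha$-supported map can carry arcs from $V_\alpha\setminus U_0$ into the overlap $V_\alpha\cap U_0$. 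The paper also singles out (via its $f_4$) the faces of $\mathcal F_h$ at distance $2g-2$ from the exceptional face, where one of the two relevant neighbours, $D_{2g-1}^j$, lies outside $\mathcal F_h$ and may well be met by $\tilde h(\partial D_0)$; your outline does not distinguish this case.
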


\begin{proof}
During this proof, we need the following lemma which allows us to deal with the singular components:

\begin{lemma} \label{sommets3}
Let $h$ be a homeomorphism of $S$ which satisfies the hypothesis of Lemma \ref{nettoyage2}. Suppose that there exists a vertex $\tilde{p}$ of the polygon $\partial D_{0}$ such that the point $\tilde{h}(\tilde{p})$ belongs to a fundamental domain $D_{1}$ in $\mathcal{F}_{h}$ at distance $i$ from an exceptional face $D_{max}$ which is maximal for $h$, with $0 \leq i \leq 2g-2$. Then there exist two subwords $\lambda_{1}  \ldots \lambda_{2g}$ and $\lambda'_{1} \ldots \lambda'_{2g-1}$ of words $\lambda_{1} \ldots \lambda_{4g}$ and $\lambda'_{1} \ldots \lambda'_{4g}$ in $\Lambda$ and a geodesic word of the form $\lambda_{1} \ldots \lambda_{2g} \gamma \lambda'_{1} \ldots \lambda'_{2g-1}$ such that:
\begin{enumerate}
\item $\lambda_{1} \ldots \lambda_{2g} \gamma \lambda'_{1} \ldots \lambda'_{2g-1-i}(D_{0})=D_{1}$.
\item $\lambda_{1} \ldots \lambda_{2g} \gamma \lambda'_{1} \ldots \lambda'_{2g-1}(D_{0})=D_{max}$. 
\item The vertices of the polygon $\partial D_{0}$ are the points of the form $\lambda_{i}^{-1}  \ldots \lambda_{1}^{-1}(\tilde{p})$ or $\lambda_{4g-i+1} \ldots \lambda_{4g}(\tilde{p})$.
\end{enumerate}
\end{lemma}

\noindent \textbf{Remark} The lemma implies in particular that the point $\tilde{p}$ is the unique vertex of the polygon $\partial D_{0}$ whose image under $\tilde{h}$ belongs to a fundamental domain in $\mathcal{F}_{h}$.

\begin{proof}
Let us denote by $\tilde{p}'$ the vertex of the polygon $\partial D_{0}$ such that the point $\tilde{h}(\tilde{p}')$ belongs to a fundamental domain $D'_{1}$ in $\mathcal{D}$ at distance $l(h)$ from $D_{0}$. Then, by Lemma \ref{sommets}, $D'_{1}= \lambda_{1} \ldots \lambda_{2g} \gamma'(D_{0})$, where $\lambda_{1} \ldots \lambda_{2g}$ is a subword of length $2g$ of a word $\lambda_{1} \ldots \lambda_{4g}$ in $\Lambda$ and $\lambda_{1} \ldots \lambda_{2g} \gamma'$ is a geodesic word. Moreover, by the same lemma, after replacing $\lambda_{1} \ldots \lambda_{2g}$ with $\lambda_{4g}^{-1} \ldots \lambda_{2g+1}^{-1}$, we may suppose that $\tilde{p}=\lambda_{j}^{-1} \ldots \lambda_{1}^{-1}(\tilde{p}')$, where $0 \leq j \leq 2g$. Therefore, the face $D_{1}$ is a face of the form $D_{1}=\lambda_{j+1} \ldots \lambda_{2g} \gamma'(D_{0})$. As the face $D_{1}$ belongs to $\mathcal{F}_{h}$, by Lemma \ref{geodexc}, we have $\gamma'=\gamma \lambda'_{1} \ldots \lambda'_{2g-i-1}$, where $\lambda'_{1} \ldots \lambda'_{2g-1}$ is a subword of length $2g-1$ of a word in $\Lambda$ and:
$$D_{max}=\lambda_{j+1} \ldots \lambda_{2g} \gamma \lambda'_{1} \ldots \lambda'_{2g-1}(D_{0}).$$
The lemma will be proved if $j=0$. Suppose by contradiction that $j \geq 1$. As $d_{\mathcal{D}}(D'_{1},D_{0}) \leq d_{\mathcal{D}}(D_{max},D_{0})$, then $j \leq i$. Moreover, by Lemma \ref{geodexc}, the faces of the form $\lambda_{1} \ldots \lambda_{2g} \gamma' \lambda'_{1} \ldots \lambda'_{2g-i-1}a_{1} \ldots a_{k}(D_{0})$, where $0 \leq k \leq i-j$, the letters $a_{i}$ are elements of $\mathcal{G}$ and the word $\lambda_{1} \ldots \lambda_{2g} \gamma' \lambda'_{1}  \ldots \lambda'_{2g-i-1} a_{1} \ldots a_{k}$ is reduced, are not exceptional, so that the face $D'_{1}$ is a face of type $(i-j, \mathrm{el}_{D_{0}}(\tilde{h}(D_{0})))$. This contradicts the fact that the set $\tilde{h}(\partial D_{0})$ does not meet faces of this type.
\end{proof}

\begin{figure}[ht]
\begin{center}
\includegraphics{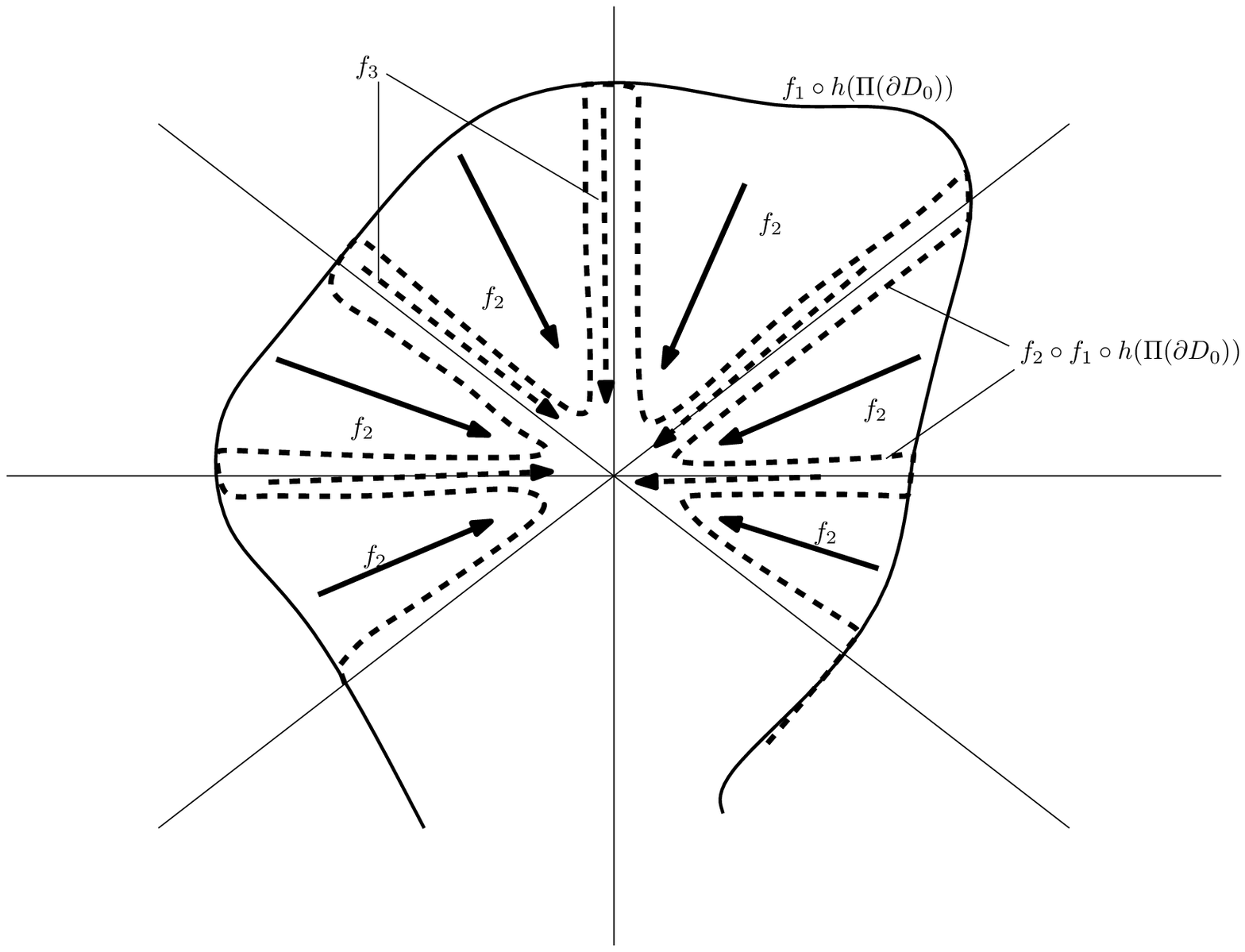}
\end{center}
\caption{Illustration of the proof of Lemma \ref{nettoyage2}}
\end{figure}

By methods similar to those used to prove Lemma \ref{nettoyage}, we build a homeomorphism $f_{1}$ which is the composition of a homeomorphism supported in $U_{0}$ with a homeomorphism supported in the union of the $V_{\alpha}$'s, which globally preserves $\Pi^{-1}(\Pi(\partial D_{0}))$ and has the following property. Let $D$ be a fundamental domain in $\mathcal{F}_{h}$. Then the face $D$ has exactly two adjacent faces which belong to $\mathcal{F}_{h}$ and we denote by $\tilde{\alpha}_{D}$ and $\tilde{\beta}_{D}$ the edges common to both the boundaries of one of these faces and of $D$.  We denote by $\tilde{U}_{2,D}$ the lift of the disc $U_{2}$ contained in $D$, $\tilde{V}_{\tilde{\alpha}_{D}}$ the lift of $V_{\Pi(\tilde{\alpha}_{D})}$ which meets $\tilde{\alpha}_{D}$, $\tilde{V}_{\tilde{\beta}_{D}}$ the lift of $V_{\Pi(\tilde{\beta}_{D})}$ which meets $\tilde{\beta}_{D}$ and $\tilde{U}_{0,D}$ the lift of $U_{0}$ which contains the point $\tilde{\alpha}_{D} \cap \tilde{\beta}_{D}$. Then, for any connected component $\tilde{C}$ of $\tilde{h}(\partial D_{0}) \cap D$, we have:
$$ \tilde{f}_{1}(\tilde{C}) \subset \tilde{U}_{0,D} \cup \tilde{V}_{\tilde{\alpha}_{D}} \cup \tilde{V}_{\tilde{\beta}_{D}} \cup \tilde{U}_{2,0}.$$
Moreover, if no end of $\tilde{C}$ meets $E$, where $E$ is one of the sets $\tilde{U}_{0,D}$, $\tilde{V}_{\tilde{\alpha}_{D}}$ or $\tilde{V}_{\tilde{\beta}_{D}}$ and if $\tilde{C}$ does not have one end in $\tilde{V}_{\tilde{\alpha}_{D}}$ and the other in $\tilde{V}_{\tilde{\beta}_{D}}$ then $\tilde{f}_{1}(\tilde{C})$ does not meet $E$.

If the homeomorphism $h$ does not satisfy the hypothesis of the previous lemma, we denote by $\mathcal{C}$ the set of connected components of $f_{1} \circ h(\Pi(\partial D_{0}))-\Pi(\partial D_{0})$ whose ends belong all either to the same edge in $A$, or to two consecutive edges in $A$ (\emph{i.e.} edges which admit lifts which share a point in common and are contained in a same face in $\mathcal{D}$). If the homeomorphism $h$ satisfies the hypothesis of the previous lemma, we define the set $\mathcal{C}$ as the union of the above set with the set $\left\{ \Pi(\tilde{C}_{1}) \right\}$, where $\tilde{C}_{1}$ is the unique connected component of $\tilde{f}_{1} \circ \tilde{h}(\partial D_{0})- \Pi^{-1}(\Pi(\partial D_{0}))$ which contains the image under the homeomorphism $\tilde{f}_{1} \circ \tilde{h}$ of a vertex of $\partial D_{0}$ and which is contained in a face in $\mathcal{F}_{h}$.

We build a homeomorphism $f_{2}$ which is supported in $U_{2}$ with the following property: given two consecutive edges $\alpha$ and $\beta$, for any element $C$ in $\mathcal{C}$ whose ends belong to $\alpha \cup \beta$: $f_{2}(C) \subset V_{\alpha} \cup V_{\beta} \cup U_{0}$. Moreover, if the ends of $C$ do not meet a set $E$ among $V_{\alpha}$, $V_{\beta}$ or $U_{0}$, then $f_{2}(C)$ is disjoint from $E$. The construction implies that, for any fundamental domain $D$ in $\mathcal{F}_{h}$ and any connected component $\tilde{C}$ of $\tilde{f}_{1} \circ \tilde{h}(\partial D_{0}) \cap D$:
$$ \tilde{f}_{2}(\tilde{C}) \subset \tilde{U}_{0,D} \cup \tilde{V}_{\tilde{\alpha}_{D}} \cup \tilde{V}_{\tilde{\beta}_{D}}.$$
Moreover, if the set $\tilde{C}$ does not meet a disc $E$ among $\tilde{U}_{0,D}$, $\tilde{V}_{\tilde{\alpha}_{D}}$ or $\tilde{V}_{\tilde{\beta}_{D}}$, then $\tilde{f}_{2}(\tilde{C})$ does not meet this disc either. As the homeomorphism $f_{2}$ is supported in $U_{2}$:
$$ \left\{ D \in \mathcal{D}, \tilde{f}_{2} \circ \tilde{f}_{1} \circ \tilde{h}(D_{0}) \cap D \neq \emptyset \right\} = \left\{ D \in \mathcal{D}, \tilde{h}(D_{0}) \cap D \neq \emptyset \right\}.$$
Let $f_{3}$ be a homeomorphism supported in the union of the $V_{\alpha}$'s with the following properties:
\begin{enumerate}
\item For any edge $\alpha$ in $A$ and any connected component $C$ of $f_{2} \circ f_{1} \circ h(\Pi(\partial D_{0}))\cap V_{\alpha}$ whose ends belong to a same connected component of $U_{0} \cap V_{\alpha}$, then $f_{3}(C) \subset U_{0}$.
\item For any connected component $C$ of $f_{2} \circ f_{1} \circ h(\Pi(\partial D_{0}))\cap V_{\alpha}$ which does not meet the edge $\alpha$: $f_{3}(C) \cap \alpha = \emptyset$.
\item If $\tilde{C}_{1}$ is a connected component of $\tilde{f}_{2} \circ \tilde{f}_{1} \circ \tilde{h}(\partial D_{0})- \Pi^{-1}(\Pi(\partial D_{0}))$ which contains the image under the homeomorphism $\tilde{f}_{2} \circ \tilde{f}_{1} \circ \tilde{h}$ of a vertex of the polygon $\partial D_{0}$ and which is contained in a face in $\mathcal{F}_{h}$, then $f_{3}( \Pi(\tilde{C}_{1})) \subset U_{0}$.
\end{enumerate}
Let $D$ be a face in $\mathcal{F}_{h}$ at distance $i < 2g-2$ from an exceptional face which is maximal for $h$. We prove that, for any fundamental domain $D'$ in $\mathcal{D}$ and any connected component $\tilde{C}$ of $D' \cap \tilde{f}_{2} \circ \tilde{f}_{1} \circ \tilde{h}(\partial D_{0})$:
$$ \tilde{f}_{3}(\tilde{C}) \cap D \subset \tilde{U}_{0,D}.$$
If the face $D'$ is not adjacent to $D$, as $\tilde{f}_{3}(\tilde{C})$ is contained in the set of faces adjacent to $D'$, then: $\tilde{f}_{3}(\tilde{C}) \cap D = \emptyset$. By Lemma \ref{faceexc}, a face adjacent to $D$ is:
\begin{enumerate}
\item Either a face of type $(i-1, \mathrm{el}_{D_{0}}(\tilde{h}(D_{0})))$.
\item Or at distance $\mathrm{el}_{D_{0}}(\tilde{h}(D_{0}))+1$ from the face $D_{0}$.
\item Or in $\mathcal{F}_{h}$.
\end{enumerate}
In the first two cases, the faces do not meet $\tilde{f}_{2} \circ \tilde{f}_{1} \circ \tilde{h}(\partial D_{0})$. Therefore, it suffices to address the two following cases:
\begin{enumerate}
\item The face $D'$ belongs to $\mathcal{F}_{h}$ and is adjacent to $D$.
\item $D'=D$.
\end{enumerate}
In the first case, let $\tilde{\alpha}=D \cap D'$ and $\tilde{V}_{\tilde{\alpha}}$ be the lift of $V_{\Pi(\tilde{\alpha})}$ which meets $\tilde{\alpha}$. Notice that any point of $\tilde{C}$ which does not meet $\tilde{V}_{\tilde{\alpha}}$ has an image disjoint from $D$. Moreover, by construction of $f_{3}$, any connected component of $\tilde{C} \cap \tilde{V}_{\tilde{\alpha}}$ which does not meet $\tilde{\alpha}$ has an image under $\tilde{f}_{3}$ which does not meet the fundamental domain $D$. Let us denote by $\tilde{C}_{1}$ a connected component of $\tilde{C} \cap \tilde{V}_{\tilde{\alpha}}$ which meets $\tilde{\alpha}$ and denote by $\tilde{C}'_{1}$ the connected component of $\tilde{V}_{\tilde{\alpha}}$ which contains $\tilde{C}_{1}$. The connected component $\tilde{C}'_{1}$ has necessarily its both ends contained in $\tilde{U}_{0,D}$ by the properties satisfied by $\tilde{f}_{2} \circ \tilde{f}_{1} \circ \tilde{h}$. Therefore, the set $\tilde{f}_{3}(\tilde{C}_{1})$ is contained in the set $\tilde{f}_{3}(\tilde{C}'_{1})$ which is itself contained in $\tilde{U}_{0,D}$. This proves the above result in the first case.
In the second case, the same kind of arguments implies that $\tilde{f}_{3}(\tilde{C})\cap D \subset \tilde{U}_{0,D}$.

Finally, let $f_{4}$ be a homeomorphism in $\mathrm{Homeo}_{0}(S)$ supported in the union of the $V_{\alpha}$'s with the following properties:
\begin{enumerate}
\item The homeomorphism $f_{4}$ globally preserves $\Pi(\partial D_{0})$.
\item For any connected component $\tilde{C}$ of $\tilde{f}_{3} \circ \tilde{f}_{2} \circ \tilde{f}_{1} \circ \tilde{h}(\partial D_{0})-\Pi^{-1}(\Pi(\partial D_{0}))$ contained in a face in $\mathcal{F}_{h}$ at distance $2g-2$ from an exceptional maximal face: $\tilde{f}_{4}(\tilde{C}) \subset \Pi^{-1}(U_{0})$.
\item $f_{4}(U_{0}) \subset U_{0}$.
\end{enumerate}
Then the homeomorphism $\eta=f_{4} \circ f_{3} \circ f_{2} \circ f_{1}$ satisfies the following property, for any face $D$ in $\mathcal{F}_{h}$:
$$ \tilde{f}_{4} \circ \tilde{f}_{3} \circ \tilde{f}_{2} \circ \tilde{f}_{1}(\partial D_{0})\cap D \subset \Pi^{-1}(U_{0}).$$
Moreover:
$$ \left\{ D \in \mathcal{D}, D \cap \tilde{f}_{4} \circ \tilde{f}_{3} \circ \tilde{f}_{2} \circ \tilde{f}_{1} \circ \tilde{h}(\partial D_{0}) \neq \emptyset \right\} = \left\{ D \in \mathcal{D}, D \cap \tilde{f}_{3} \circ \tilde{f}_{2} \circ \tilde{f}_{1} \circ \tilde{h}(\partial D_{0}) \neq \emptyset \right\}.$$
Therefore, in order to prove the lemma, it suffices to prove that any fundamental domain in $\mathcal{D}$ met by $\tilde{f}_{3} \circ \tilde{f}_{2} \circ \tilde{f}_{1} \circ \tilde{h}(\partial D_{0})$ is at distance at most $\mathrm{el}_{D_{0}}(\tilde{h}(D_{0}))$ from $D_{0}$ and is not a face of type $(i,\mathrm{el}_{D_{0}}(\tilde{h}(D_{0})))$ for $0 \leq i \leq 2g-2$. Let $D$ be a fundamental domain in $\mathcal{D}$. If $\tilde{C}$ is a connected component of $\tilde{f}_{2} \circ \tilde{f}_{1} \circ \tilde{h}(\partial D_{0}) \cap D$ which does not contain the image under the homeomorphism $\tilde{f}_{2} \circ \tilde{f}_{1} \circ \tilde{h}$ of a vertex of the polygon $\partial D_{0}$, then the set $\tilde{f}_{3}(\tilde{C})$ meets only fundamental domains in $\mathcal{D}$ intersected by $\tilde{f}_{2} \circ \tilde{f}_{1} \circ \tilde{h}(\partial D_{0})$. If $\tilde{C}$ is a connected component of $\tilde{f}_{2} \circ \tilde{f}_{1} \circ \tilde{h}(\partial D_{0}) \cap D$ which contains the image under the homeomorphism $\tilde{f}_{2} \circ \tilde{f}_{1} \circ \tilde{h}$ of a vertex of the polygon $\partial D_{0}$, then either the homeomorphism $\tilde{h}$ does not satisfy the hypothesis of Lemma \ref{sommets3} and the last claim remains true, or it satisfies the hypothesis of this lemma and it suffices to apply it to complete the proof.
\end{proof}

We now complete the proof of Lemma \ref{herhyp}. Let $M=\mathrm{el}_{D_{0}}(\tilde{f}(D_{0}))$. By Lemmas \ref{nettoyage} and \ref{nettoyage2}, after possibly composing the homeomorphism $f$ with $8g-3$ homeomorphisms which are each supported in the interior of one of the discs of $\mathcal{U}$, we may suppose that the homeomorphism $f$ satisfies the following properties:
\begin{enumerate}
\item $f(p) \notin \Pi(\partial D_{0})$;
\item The set $\tilde{f}(D_{0})$ does not meet faces of type $(i,M)$, for any index $i \in [0,2g-2]$.
\item For any fundamental domain $D$ in $\mathcal{F}_{f}$ (defined just before Lemma \ref{nettoyage2}), the set $\tilde{f}(\partial D_{0}) \cap D$ is contained in $\tilde{U}_{0,D}$, where $\tilde{U}_{0,D}$ is the lift of $U_{0}$ which meets $D$, meets an exceptional maximal face and meets only fundamental domains in $\mathcal{D}$ at distance less than $M$ from $D_{0}$.
\end{enumerate}
Two distinct connected components $\xi_{1}$ and $\xi_{2}$ of $U_{0}-\Pi(\partial D_{0})$ are said to be \emph{adjacent} if $\overline{\xi}_{1} \cap \overline{\xi}_{2}$ is an interval which is not reduced to a point. Two connected components $\xi_{1}$ and $\xi_{2}$ of $U_{0}-\Pi(\partial D_{0})$ are said to be \emph{almost adjacent} if there exists a connected component $\xi$ of $U_{0}-\Pi(\partial D_{0})$ different from $\xi_{1}$ and from $\xi_{2}$ which is adjacent to $\xi_{1}$ and to $\xi_{2}$. Then such a connected component $\xi$ is unique: we call it the \emph{adjacency face} of $\xi_{1}$ and $\xi_{2}$.

In the case where any connected component of $\tilde{f}(\partial D_{0}) \cap \Pi^{-1}(U_{0})$ which contains the image under the homeomorphism $\tilde{f}$ of a vertex of the polygon $\partial D_{0}$ avoids the exceptional faces which are maximal for $f$, we denote by $\mathcal{C}$ the set of connected components of $f(\Pi(\partial D_{0})) \cap \mathring{U}_{0}$ whose ends all belong:
\begin{enumerate}
\item Either to the same connected component of $U_{0}-\Pi(\partial D_{0})$.
\item Or to the interior of an interval of the form $\partial U_{0} \cap \overline{\xi_{1} \cup \xi_{2}}$, where $\xi_{1}$ and $\xi_{2}$ are adjacent connected components of $U_{0}-\Pi(\partial D_{0})$.
\item Or to the interior of an interval of the form $\partial U_{0} \cap \overline{\xi_{1} \cup \xi \cup \xi_{2}}$, where $\xi_{1}$ and $\xi_{2}$ are connected components of $U_{0}-\Pi(\partial D_{0})$ which are almost adjacent with adjacency face $\xi$. 
\end{enumerate}
In the case where there exists a connected component $\tilde{C}_{1}$ of $\tilde{f}(\partial D_{0}) \cap \Pi^{-1}(U_{0})$ which contains the image under the homeomorphism $\tilde{f}$ of a vertex $\tilde{p}$ of the polygon $\partial D_{0}$ which meets an exceptional face which is maximal for $f$ (such a connected component is unique by Lemma \ref{sommets3}), the set $\mathcal{C}$ is the union of the previous set with the point $\Pi(\tilde{C}_{1})$.

\begin{figure}[ht]
\begin{center}
\includegraphics{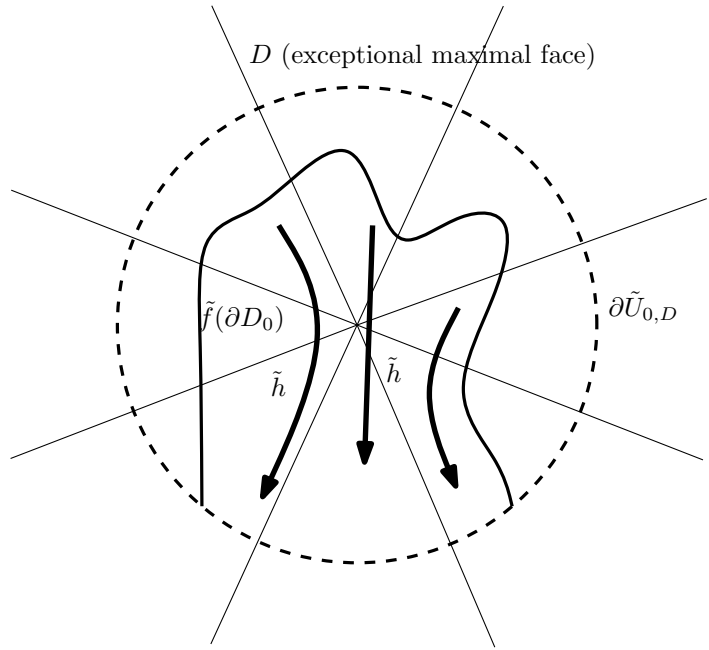}
\end{center}
\caption{End of the proof of Lemma \ref{herhyp}}
\end{figure}

Let $h$ be a homeomorphism supported in $\mathring{U}_{0}$ with the following properties:
\begin{enumerate}
\item For any connected component $C$ in $\mathcal{C}$ whose ends belong to a same face or to two adjacent faces, $h(C)$ is contained in the interior of the union of the closures of connected components of $U_{0}-\Pi(\partial D_{0})$ that meet the ends of $C$.
\item For any connected component $C$ in $\mathcal{C}$ whose ends belong to two almost adjacent connected components $\xi_{1}$ and $\xi_{2}$ of $U_{0}-\Pi(\partial D_{0})$ and to their adjacency face $\xi$, then $h(C) \subset \mathring{k}$, with $k=\overline{\xi_{1}} \cup \overline{\xi_{2}} \cup \overline{\xi}$.
\item The homeomorphism $h$ pointwise fixes any connected component of $f(\Pi(\partial D_{0})) \cap U_{0}$ which does not contain an element of $\mathcal{C}$.
\end{enumerate}
We claim that $\mathrm{el}_{D_{0}}(\tilde{h} \circ \tilde{f}(D_{0})) \leq \mathrm{el}_{D_{0}}(\tilde{f}(D_{0}))-1=M-1$,which completes the proof of Lemma \ref{herhyp}.

The faces at distance $M$ from $D_{0}$ can be split into two types: the exceptional maximal ones, and those of type $(0,M)$. We prove that the set $\tilde{h} \circ \tilde{f}(D_{0})$ meets neither the first ones nor the second ones.

First, for a point $\tilde{y}$ in $\tilde{f}(\partial D_{0})$ which does not belong to $\Pi^{-1}(\mathring{U}_{0})$, we have $\tilde{h}(\tilde{y})=\tilde{y}$ and the point $\tilde{y}$ belongs neither to an exceptional maximal face nor to a face of type $(0,M)$ by the properties satisfied by $f$. Thus, the point $\tilde{h}(\tilde{y})$ does not meet a fundamental domain in $\mathcal{D}$ at distance $M$ from $D_{0}$.

Let $\tilde{C}$ be a connected component of $\tilde{f}(\partial D_{0})\cap \Pi^{-1}(U_{0})$ which does not contain the image under $\tilde{f}$ of a vertex of $\partial D_{0}$. 

Let $D$ be an exceptional maximal face for $f$. Let us prove that $D \cap \tilde{h}(\tilde{C})= \emptyset$. If the lift $\tilde{U}_{0}$ of the disc $U_{0}$ which contains $\tilde{C}$ does not meet $D$, then this property holds. Suppose now that the lift of the disc $U_{0}$ which contains $\tilde{C}$ meets $D$. We now use notation from Lemma \ref{faceexc}. By this lemma, the faces $D_{i}^{j}$, for $ 1 \leq i \leq 2g-2$ and $j \in \left\{1,2\right\}$, belong to $\mathcal{F}_{f}$. By the properties satisfied by the homeomorphism $f$, the connected component $\tilde{C}$ has necessarily its ends contained in $D^{1}_{2g-1}$, $D^{2}_{2g-1}$ or $D^{1}_{2g}=D^{2}_{2g}$. But the connected components $\Pi(\mathring{D}^{1}_{2g-1})$ and $\Pi(\mathring{D}^{2}_{2g-1})$ of $U_{0}-\Pi(\partial D_{0})$ are almost adjacent with adjacency face $\Pi(\mathring{D}^{1}_{2g})$. This implies the following inclusion: $\tilde{h}(\tilde{C}) \subset D^{1}_{2g-1} \cup D^{2}_{2g-1} \cup D^{1}_{2g}$. In particular: $\tilde{h}(\tilde{C}) \cap D= \emptyset$.

Let $D$ be a fundamental domain in $\mathcal{D}$ of type $(0,M)$. Let us prove that $\tilde{h}(\tilde{C}) \cap D= \emptyset$. By the properties satisfied by $\tilde{f}$, the set $\tilde{C}$ does not meet $D$. The set $\tilde{h}(\tilde{C})$ meets the face $D$ only in the following case: the two ends of $\tilde{C}$ belong to two distincts fundamental domains which are adjacent to $D$. However, these two fundamental domains would be at distance $M-1$ from $D_{0}$ (they cannot be at distance $M+1$ from $D_{0}$ by definition of $M$), which would contradict the fact that a fundamental domain $D$ is a face of type $(0,M)$.

It remains to deal with the case of a connected component $\tilde{C}$ of $\tilde{f}(\partial D_{0}) \cap \Pi^{-1}(U_{0})$ which contains the image under $\tilde{f}$ of a vertex of the polygon $\partial D_{0}$. The case where no connected component of this kind meets an exceptional maximal face is easy. Otherwise, we have to use Lemma \ref{sommets3} to obtain an explicit expression of the fundamental domains met by the image under $\tilde{h}$ of such connected components. We notice that those faces are not maximal for $\tilde{f}$.

This completes the proof of Lemma \ref{herhyp}.
\end{proof}

\subsection{Proof of Lemma \ref{inhyp}}

\begin{proof}[Proof of Lemma \ref{inhyp}]
The proof of this lemma is analogous to the proof of Lemma \ref{intore}. Let $\beta$ and $\gamma$ be simple closed curves of $S$ which are homotopic and which are not homotopic to a point. We denote by $l(\gamma, \beta)$ the number of connected components of $\Pi^{-1}(\beta)$ that a connected component of $\Pi^{-1}(\gamma)$ meets. Let us denote by $\alpha$ an edge in $A$ and by $\alpha'$ a simple closed curve isotopic to $\alpha$ and disjoint from $\alpha$. Let $S_{\alpha'}$ be the complement of an open tubular neighbourhood of $\alpha'$ and let $S_{\alpha}$ be the complement of an open tubular neighbourhood of $\alpha$ so that $\mathring{S}_{\alpha'} \cup \mathring{S}_{\alpha}=S$. Let $f$ be a homeomorphism in $\mathrm{Homeo}_{0}(S)$ with $\mathrm{el}_{D_{0}}(\tilde{f}(D_{0})) \leq 4g$. Throughout the proof, $\eta$ denotes a positive constant which will be fixed later. We will use the following result, which is a consequence of Lemma \ref{diamfrag2} applied to neighbourhoods of $S_{\alpha}$ and of $S_{\alpha'}$: there exists $\lambda_{\eta}>0$ such that, for any homeomorphism $h$ in $\mathrm{Homeo}_{0}(S_{\alpha})$ or in $\mathrm{Homeo}_{0}(S_{\alpha'})$ with $\mathrm{el}_{D_{0}}(\tilde{h}(D_{0})) \leq \eta$, we have $\mathrm{Frag}_{\mathcal{U}}(h) \leq \lambda_{\eta}$.

Let us give the idea of the proof. By composing the homeomorphism $f$ with at most $16g$ homeomorphisms with fragmentation length (with respect to $\mathcal{U}$) less than or equal to $\lambda_{\eta}$, we obtain a homeomorphism $f_{1}$ which sends the curve $\alpha$ to a curve disjoint from $\alpha$ and contained in $\mathring{S}_{\alpha'}$. Then, after composing $f_{1}$ with a homeomorphism supported in $S_{\alpha'}$ which is equal to $f_{1}^{-1}$ on a neighbourhood of $f_{1}(\alpha)$ and with fragmentation length bounded by $\lambda_{\eta}$, we obtain a homeomorphism $f_{2}$ which is equal to the identity on a neighbourhood of $\alpha$ and isotopic to the identity relative to $\alpha$. By composing $f_{2}$ with at most three homeomorphisms supported in $S_{\alpha}$ or in $S_{\alpha'}$ and and with fragmentation length bounded by $\lambda_{\eta}$, we obtain a homeomorphism $f_{3}$ which pointwise fixes a neighbourhood of the boundary of $S_{\alpha}$ and isotopic to the identity relative to this boundary. Then the homeomorphism $f_{3}$ can be written as a product of a homeomorphism in $\mathrm{Homeo}_{0}(S_{\alpha})$ and of a homeomorphism in $\mathrm{Homeo}_{0}(S_{\alpha'})$ with disjoint supports. The previous statement applied to these two homeomorphisms implies that the fragmentation length of $f_{3}$ is less than or equal to $2\lambda_{\eta}$. Of course, the constant $\eta$ will have to be large enough so that this proof works.

Let us give now some details. Let $\alpha_{1}$ and $\alpha_{2}$  (respectively $\alpha'_{1}$ and $\alpha'_{2}$) be the two connected components of the boundary of $S_{\alpha}$ (respectively of $S_{\alpha'}$). For any two disjoint subsets $A$ and $B$ of $\tilde{S}$, we denote by $\delta(A,B)$ the number of connected components of $\Pi^{-1}(\alpha_{1} \cup \alpha_{2} \cup \alpha'_{1} \cup \alpha'_{2})$ disjoint from $A$ and from $B$ which separate $A$ and $B$. Let $M(f)$ be the maximum of $\delta(\tilde{S}',\tilde{\alpha})$, where $\tilde{S}'$ varies over all connected components of $\Pi^{-1}(S_{\alpha})$ or of $\Pi^{-1}(S_{\alpha'})$ which meet $\tilde{f}(\tilde{\alpha})$. As, by hypothesis, we have $\mathrm{el}_{D_{0}}(\tilde{f}(D_{0}))\leq 4g$, then $M(f) \leq 16g$. Notice that, if $\tilde{S}'$ is a connected component of $\Pi^{-1}(S_{\alpha})$ or of $\Pi^{-1}(S_{\alpha'})$ such that $\delta(\tilde{S}',\tilde{\alpha})=M(f)$, then any connected component of $\tilde{f}(\tilde{\alpha})\cap \tilde{S}'$ has its ends in the same connected component of $\partial \tilde{S}'$. Let $S'=\Pi(\tilde{S}')$ and $S"$ be the surface $S_{\alpha}$ if $S'=S_{\alpha'}$, or the surface $S_{\alpha'}$ if $S'=S_{\alpha}$. Denote by $h_{1}$ a homeomorphism supported in $S'$ with the following properties:
\begin{enumerate}
\item $\mathrm{el}_{D_{0}}(\tilde{h}_{1}(D_{0})) \leq 4g$.
\item For any connected component $C$ of $f(\alpha) \cap S'$ whose ends are in the same connected component of $\partial S'$ and homotopic to a path on the boundary of $S'$: $h_{1}(C) \subset S"$.
\end{enumerate}
These two properties are compatible because $\mathrm{el}_{D_{0}}(\tilde{f}(D_{0})) \leq 4g$. Notice that we have $\mathrm{el}_{D_{0}}(\tilde{h}_{1} \circ \tilde{f}(D_{0})) \leq 8g$ and $ \mathrm{Frag}_{\mathcal{U}}(h_{1}) \leq \lambda_{\eta}$ if $\eta \geq 4g$. Moreover, for any connected component $\tilde{S}'$ of $\Pi^{-1}(S')$ with $d(\tilde{\alpha}, \tilde{S}')=M(f)$ and for any connected component $\tilde{C}$ of $\tilde{f}(\tilde{\alpha}) \cap \tilde{S}'$: $\tilde{h}_{1}(\tilde{C}) \subset \Pi^{-1}(S")$. Now, let $h_{2}$ be a homeomorphism supported in $S"$ with the following properties:
\begin{enumerate}
\item $\mathrm{el}_{D_{0}}(\tilde{h}_{2}(D_{0})) \leq 8g$.
\item For any connected component $C$ of $h_{1} \circ f(\alpha) \cap S"$ whose ends are in the same connected component of $\partial S"$ and homotopic to a path on the boundary of $S"$: $h_{2}(C) \subset S'$.
\end{enumerate}
These two properties are compatible because $\mathrm{el}_{D_{0}}(\tilde{h}_{1} \circ \tilde{f}(\partial D_{0})) \leq 8g$. Notice that we have $\mathrm{el}_{D_{0}}(\tilde{h}_{2} \circ \tilde{h}_{1} \circ \tilde{f}(\partial D_{0})) \leq 16g$ and $\mathrm{Frag}_{\mathcal{U}}(h_{2}) \leq \lambda_{\eta}$ if $\eta \geq 16g$. Moreover, we have $M(h_{2} \circ h_{1} \circ f) \leq M(f)-2$. We repeat this process at most $8g$ times so that, after composing the homeomorphism $f$ with at most $16g$ homeomorphisms with fragmentation length less than or equal to $\lambda_{\eta}$ (by taking $\eta \geq 2^{8g}.4g$), we obtain a homeomorphism $f_{1}$ which sends the curve $\alpha$ to a curve disjoint from $\alpha$ and which satisfies the following inequality:
$$ \mathrm{el}_{D_{0}}(\tilde{f}_{1}(D_{0})) \leq 2^{8g+1}.4g.$$
After composing the homeomorphism $f_{1}$ with four homeomorphisms with fragmentation length less than or equal to $\lambda_{\eta}$ (if we take $\eta \geq 2^{8g+4}.4g$), we obtain a homeomorphism $f_{3}$ which pointwise fixes a neighbourhood of $\partial S_{\alpha}$ and which is isotopic to the identity relative to this neighbourhood with:
$$ \mathrm{el}_{D_{0}}(\tilde{f}_{3}(D_{0})) \leq 2^{8g+5}.4g.$$
As written at the beginning of this proof, it suffices to take $\eta \geq 2^{8g+5}.4g$ to complete the proof of Lemma \ref{inhyp}.
\end{proof}

\subsection{Proof of the combinatorial lemmas}

\begin{proof}[Proof of Lemma \ref{geodexc}]
Let us describe the Dehn algorithm that we will use. Let $m$ be a reduced word on elements of $\mathcal{G}$. At each step of the algorithm, we look for a subword $f$ of $m$ with length greater than $2g$ which is contained in a word $f.\lambda'$ of $\Lambda$ (such a word $f$ will be said to be \emph{simplifiable}) and whose length is maximal among such words (it is said to be \emph{maximal} in $m$). The word $\lambda'$ will be called the \emph{complementary word} of $f$. Then we replace in $m$ the subword $f$ by the word $\lambda'^{-1}$ whose length is strictly smaller (the words in $\Lambda$ have length $4g$) and we make if necessary the free group reductions to obtain a new reduced word. By a theorem by Dehn (see \cite{LS}), a reduced word represents the trivial element in $\Pi_{1}(S)$ if and only if, after implementing a finite number of steps of this algorithm, we obtain the empty word.

Let us give some general facts on the group $\Pi_{1}(S)$ which are immediate and are used below.

\textbf{Fact 1} For any two letters $a$ and $b$ in $\mathcal{G}$, there exists at most one word in $\Lambda$ whose two first letters are given by $ab$. The other words in $\Lambda$ which contain the word $ab$ are a cyclic permutation of this one.

\textbf{Fact 2} For any letter $a$ in $\mathcal{G}$, there exists exactly two words in $\Lambda$ whose last letter (respectively first letter) is $a$. If $b$ et $c$ denote the penultimate letters (respectively the second letters) of these words, then the word $b^{-1}c$ is not a subword of a word in $\Lambda$.

\textbf{Fact 3} For any two letters $a$ and $b$ in $\mathcal{G}$ such that the word $ab$ is contained in a word of $\Lambda$, let us denote by $m_{1}$ the word of $\Lambda$ with first letter $b$ but whose last letter $l_{1}$ is different from $a$ and by $m_{2}$ the word in $\Lambda$ whose last letter is $a$ but whose first letter $l_{2}$ is not $b$. Then $l_{2}^{-1}l_{1}^{-1}$ is not contained in a word in $\Lambda$.

We will use Fact 2 in the following situation: if, at a given step of Dehn algorithm, we have a reduced word of the form $macm'$, where $acm'$ is a subword of a word in $\Lambda$, $ma$ is a simplifiable word and $mac$ is not simplifiable, then, after replacing $ma$ by the inverse of its complementary word, we obtain a word of the form $m"cm'$, where $m"c$ is not contained in any word in $\Lambda$. As for Fact 3, we will use it in the following situation: suppose that, at a given step of Dehn algorithm, we have a word of the form $mabm'$, where $ab$ is a subword of a word in $\Lambda$ and $ma$ as well as $bm'$ are simplifiable. Suppose moreover that the words $mab$ and $abm'$ are not simplifiable (these are not subwords of words in $\Lambda$). Then after replacement of the words $ma$ and $bm'$ by the inverse of their complementary words, we obtain a word of the form $nl_{2}^{-1}l_{1}^{-1}n'$ where the words $nl_{2}^{-1}l_{1}^{-1}$ and $l_{2}^{-1}l_{1}^{-1}n'$ are not contained in any subword of words in $\Lambda$.

Let us come back to the proof of the lemma. As $D$ is an exceptional face, there exist two geodesic words $\gamma_{1}$ and $\gamma_{2}$ with distinct last letters such that $\gamma_{1}(D_{0})=D$ and $\gamma_{2}(D_{0})=D$. We now prove that one of them satisfies necessarily the first property given by the lemma and both of them satisfy one of the properties stated in the lemma. Moreover, if both of them satisfy the first property of the lemma, there exists a word $l_{1} \ldots l_{4g}$ in $\Lambda$ such that the $2g$ last letters of $\gamma_{1}$ are $l_{1} \ldots l_{2g}$ and the $2g$ last letters of $\gamma_{2}$ are $l_{4g}^{-1}  \ldots l_{2g+1}^{-1}$. These two results imply all the claims of the lemma.

Then take two geodesic words $\gamma_{1}$ and $\gamma_{2}$ with distinct last letters such that $\gamma_{1}(D_{0})=D$ and $\gamma_{2}(D_{0})=D$. The word $\gamma_{1} \gamma_{2}^{-1}$ is reduced but represents the trivial element in the group $\Pi_{1}(S)$. We apply now the algorithm just described to this word to prove the lemma. As the words $\gamma_{1}$ and $\gamma_{2}$ are geodesic, they do not contain simplifiable words. Let $\lambda'$ be a simplifiable word which is maximal for $\gamma_{1} \gamma_{2}^{-1}$. Let $\lambda_{3}$ be the complementary word of $\lambda'$. Then we have a decomposition of the word $\lambda'$, $\lambda'=\lambda_{1}\lambda_{2}$, with:
$$ \left\{
\begin{array}{l}
\gamma_{1}= \hat{ \gamma}_{1} \lambda_{1} \\
\gamma_{2}= \hat{\gamma}_{2} \lambda_{2}^{-1}
\end{array}
\right.
.
$$
By the previous remark, the words $\lambda_{1}$ and $\lambda_{2}$ are nonempty. The words $\hat{\gamma}_{1}$ and $\hat{\gamma}_{2}$ are geodesic. Moreover, as the words $\gamma_{1}$ and $\gamma_{2}$ are both geodesic, the words $\lambda_{1}$ and $\lambda_{2}$ are not simplifiable. Thus, if the length of $\lambda'$ is $4g$, the words $\lambda_{1}$ and $\lambda_{2}$ have both length $2g$. We now prove the following fact.

\paragraph{Fact} Such a word $\lambda'$ has necessarily length greater than $4g-2$.

Suppose first that the length of $\lambda'$ is less than or equal to $4g-3$ (\emph{i.e.} the length of $\lambda_{3}$ is greater than $2$). After the first step of the algorithm, we obtain the word $\hat{\gamma}_{1} \lambda_{3}^{-1} \hat{\gamma}_{2}^{-1}$ which is reduced as $\lambda'$ is maximal. Moreover, the concatenation of the word $\lambda_{3}^{-1}$ with the first letter of the word $\hat{\gamma}_{2}^{-1}$ is not contained in any word in $\Lambda$, and similarly for the concatenation of the last letter of the word $\hat{\gamma}_{1}$ with the word $\lambda_{3}^{-1}$. Suppose by induction that, at a given step of the algorithm, we obtain a reduced word of the following form:
$$ \tilde{\gamma}_{1}\eta_{1}\eta_{2} \ldots \eta_{k} \tilde{\gamma}_{2}^{-1},$$
where $k\geq 1$, the words $\tilde{\gamma}_{1}$ and $\tilde{\gamma}_{2}$ are geodesic and the words $\eta_{i}$ are each contained in a word of $\Lambda$, have length smaller than $2g$ and satisfy the following properties:
\begin{enumerate}
\item The words $\eta_{1}$ and $\eta_{k}$ have length greater than $1$ and, if they are both of length $2$, then $k>1$.
\item For any index $i$ between $1$ and $k-1$, the concatenation of the last letter of $\eta_{i}$ with the first letter of $\eta_{i+1}$ is not contained in any word in $\Lambda$.
\item The concatenation of the word $\eta_{k}$ with the first letter of the word $\tilde{\gamma}_{2}^{-1}$ is not contained in any word in $\Lambda$ and similarly for the concatenation of the last letter of the word $\tilde{\gamma}_{1}$ with the word $\eta_{1}$.
\end{enumerate}
Let us apply a new step of the algorithm. A simplifiable subword $\lambda'$ of the above word is necessarily contained in one of the words $\tilde{\gamma}_{1}\eta_{1}$ or $\eta_{k} \tilde{\gamma}_{2}^{-1}$ by the second property above and by using the fact that each of the $\eta_{i}$'s has length smaller than $2g$. We may suppose, without loss of generality, that such a subword is contained in $\tilde{\gamma}_{1}\eta_{1}$. By combining Fact 1 with the third property above, we obtain that the last letter $a$ of the word $\lambda'=\lambda'_{1}a$ is also the first letter of the word $\eta_{1}=a \eta'_{1}$. As the word $\tilde{\gamma}_{1}$ is geodesic, it does not contain any simplifiable subword, so the word $\lambda'_{1}$, that it contains, has length $2g$. After applying the algorithm, we obtain the word:
$$\tilde{\gamma}'_{1} \tilde{\lambda}^{-1} \eta'_{1} \eta_{2} \ldots \eta_{k} \tilde{\gamma}_{2}^{-1},$$
where $\tilde{\gamma}_{1}=\tilde{\gamma}'_{1}\lambda'_{1}$ and $\tilde{\lambda}$ is the complementary word of $\lambda'$. The obtained words $\tilde{\gamma}'_{1}$ and $\tilde{\gamma}_{2}$ are geodesic. The word $\tilde{\lambda}$, of length $2g-1$, has length smaller than $2g$ and greater than $1$. Moreover, if $k=1$, the length of $\eta_{1}$ is greater than $2$ so the length of $\eta'_{1}$ is greater than $1$. Fact 2 implies that the concatenation of the last letter of $\tilde{\lambda}^{-1}$ with the first letter of $\eta'_{1}$ is not contained in any word in $\Lambda$. Finally, the third property is satisfied for this decomposition: denoting by $l$ the last letter of $\tilde{\gamma}'_{1}$, if the word $l \tilde{\lambda}^{-1}$ were a subword of a word in $\Lambda$, then, by Fact 1, the first letter of the word $\lambda'$ would be $l^{-1}$, which would contradict the fact that the word $\tilde{\gamma}_{1}$ is reduced. At each step of the algorithm, the sum of the lengths of the geodesic words at the beginning and at the end of this decomposition strictly decreases. Therefore, after applying a finite number of steps of the algorithm, we obtain a word of the following form:
$$ \tilde{\gamma}_{1}\eta_{1}\eta_{2} \ldots \eta_{k} \tilde{\gamma}_{2}^{-1},$$
where $k \geq 1$, which satisfies the three properties that we just described as well as the following property: the length of $\tilde{\gamma}_{1}$ as well as the length of $\tilde{\gamma}_{2}$ are less than $2g$. In this case, we can see that the considered word does not contain subwords for a word in $\Lambda$ with length greater than $2g$, a contradiction.

Let us come back to the first step of the algorithm. Then the considered word $\lambda'$ has length $4g-2$ or $4g-1$, if its length is not $4g$. Suppose now that the length of $\lambda'$ is $4g-2$. We want to find a contradiction.

After the first step of the algorithm, we obtain a reduced word of the form $\hat{\gamma}_{1} \lambda_{3} \hat{\gamma}_{2}^{-1}$, where the length of $\lambda_{3}=ab$ is $2$. As before, the concatenation of the last letter of $\hat{\gamma}_{1}$ with the word $\lambda_{3}$ as well as the concatenation of the word $\lambda_{3}$ with the first letter of $\hat{\gamma}_{2}^{-1}$ is not contained in any word of $\Lambda$. Without loss of generality, we may suppose that, during the second step of the algorithm, we choose a subword of a word in $\Lambda$ of the form $b\tilde{\lambda}_{2}$, where the word $\tilde{\lambda}_{2}$ is the concatenation of the $2g$ first letters of the word $\hat{\gamma}_{2}^{-1}$. Let us use notation from Fact 3. After applying a step of the algorithm, we obtain a word of the form $\hat{\gamma}_{1} a \eta_{1} \tilde{\gamma}_{2}^{-1}$, where the length of $\eta_{1}$ is $2g-1$ and the first letter of $\eta_{1}$ is $l_{1}^{-1}$. While the subwords chosen during the algorithm do not meet $\hat{\gamma}_{1}$, we obtain words of the form $\hat{\gamma}_{1} a \eta_{1}\eta_{2} \ldots \eta_{k} \tilde{\gamma}_{2}^{-1}$, where the properties 1) and 2) are satisfied as well as property 3) for $\tilde{\gamma}_{2}$ alone and where the first letter of $\eta_{1}$ is $l_{1}^{-1}$. After the first step for which we replace a subword which meets $\hat{\gamma}_{1}$, we obtain a word of the form:
$$\tilde{\gamma}_{1} \eta_{0} \eta_{1} \ldots \eta_{k} \tilde{\gamma}_{2}^{-1},$$
where the last letter of the word $\eta_{0}$ is $l_{2}^{-1}$ and the first letter of $\eta_{1}$ is $l_{1}^{-1}$. Fact 3 implies the situation is the same as before, a contradiction.

Finally, in the case where the length of $\lambda'$ is $4g-1$, one of the two geodesic words $\gamma_{1}$ or $\gamma_{2}$ satisfies necessarily the first property of the lemma. Similarly, after implementing the algorithm, we see that the second geodesic word satisfies the second property of the lemma.
\end{proof}

\begin{proof}[Proof of Lemma \ref{faceexc}]
The cases $j=1$ and $j=2$ are symmetric to each other: suppose that $j=1$. Take an index $2 \leq i' \leq 2g-1$ (think that i'=2g-i). By induction on the length of $m$, we prove that, for any reduced word $m$ of length less than or equal to $2g-i'$ with a first letter distinct from $l_{i'+1}$ and from $l_{i'}^{-1}$:
\begin{enumerate}
\item The word $\gamma'l_{1} l_{2} \ldots l_{i'}m$ is geodesic.
\item The fundamental domain $\gamma'l_{1} l_{2} \ldots l_{i'}m(D_{0})$ is not exceptional.
\end{enumerate}
Suppose that the property holds for a word $m$ as above of length less than $2g-i'$.  Let $l$ be a letter in $\mathcal{G}$ different from the inverse of the last letter of $m$ (or different from $l_{i'+1}$ and from $l_{i'}^{-1}$ if the word $m$ is empty). As the fundamental domain $\gamma'l_{1} l_{2} \ldots l_{i'}m(D_{0})$ is not an exceptional face, then:
$$d_{\mathcal{D}}(\gamma'l_{1} l_{2} \ldots l_{i'}ml(D_{0}), D_{0})=d_{\mathcal{D}}(\gamma'l_{1} l_{2} \ldots l_{i'}m(D_{0}), D_{0})+1$$
and the word $\gamma'l_{1} l_{2} \ldots l_{i'}ml$ is geodesic. Moreover, as the length of $ml$ is less than or equal to $2g-i'$, the word $\gamma'l_{1} l_{2} \ldots l_{i'}ml$ is not of one of the forms given in Lemma \ref{geodexc}. Therefore, the face $\gamma'l_{1} l_{2} \ldots l_{i'}ml(D_{0})$ is not exceptional. This completes the proof of Lemma \ref{faceexc}.
\end{proof}

\begin{proof}[Proof of Lemma \ref{geodexc2}] The generating set of the group $\Pi_{1}(S)$ given by the deck transformations which send the fundamental domain $D_{1}$ on a fundamental domain in $\mathcal{D}$ adjacent to $D_{1}$ is $\gamma_{1} \mathcal{G} \gamma_{1}^{-1}$. By Lemma \ref{geodexc}, there exists a geodesic word on elements of $\gamma_{1} \mathcal{G} \gamma_{1}^{-1}$ whose $2g$ last letters determine a word
$$(\gamma_{1} \lambda_{2g}^{-1} \gamma_{1}^{-1})(\gamma_{1} \lambda_{2g-1}^{-1} \gamma_{1}^{-1}) \ldots (\gamma_{1} \lambda_{1}^{-1} \gamma_{1}^{-1}),$$
where $\lambda_{1} \lambda_{2} \ldots \lambda_{4g} \in \Lambda$, which sends the face $D_{1}$ to the face $D_{0}$. Thus, in the group $\Pi_{1}(S)$:
$$ \gamma_{1}^{-1}= \gamma_{1} \eta^{-1} \lambda_{2g}^{-1} \lambda_{2g-1}^{-1} \ldots \lambda_{1}^{-1} \gamma_{1}^{-1},$$
where $\eta^{-1} \lambda_{2g}^{-1} \lambda_{2g-1}^{-1} \ldots \lambda_{1}^{-1}$ is a geodesic word on elements of $\mathcal{G}$. Let $\gamma$ be the word $\lambda_{1} \lambda_{2} \ldots \lambda_{2g} \eta$. Then, in the group $\Pi_{1}(S)$: $\gamma=\gamma_{1}.$
Thus, the geodesic word $\gamma$ satisfies the required properties. The second point of the lemma comes from the above argument and from Lemma \ref{adjacence}.
\end{proof}

\begin{proof}[Proof of Lemma \ref{sommets}]
Let us denote by $s(D_{0})$ and $s'(D_{0})$, where $s$ and $s'$ are deck transformations in $\mathcal{G}$, the faces which are adjacent to the face $D_{0}$ and which contain the point $\tilde{p}$. Suppose that $d_{\mathcal{D}}(D_{0},D_{1})=l(h)$. If the relation $d_{\mathcal{D}}(s(D_{0}),D_{1})=d_{\mathcal{D}}(D_{0},D_{1})+1$ held, then we would have $d_{\mathcal{D}}(D_{0},s^{-1}(D_{1}))>l(h)$ and the vertex $s^{-1}(\tilde{p})$ of $\partial D_{0}$ would satisfy:
$$ \tilde{h}(s^{-1}(\tilde{p}))=s^{-1}(\tilde{h}(\tilde{p})) \in s^{-1}(D_{1})$$
which is not possible by definition of $l(h)$. Thus, necessarily:
$$d_{\mathcal{D}}(s(D_{0}),D_{1})=d_{\mathcal{D}}(s'(D_{0}),D_{1})=d_{\mathcal{D}}(D_{0},D_{1})-1.$$
The face $D_{0}$ is exceptional with respect to $D_{1}$. By Lemma \ref{geodexc}, there exists a word $\lambda_{1} \lambda_{2} \ldots \lambda_{4g}$ in $\Lambda$ such that:
$$\left\{
\begin{array}{l}
\gamma= \lambda_{1} \lambda_{2} \ldots \lambda_{2g} \gamma'= \lambda_{4g}^{-1} \ldots \lambda_{2g+1}^{-1} \gamma' \\
\gamma(D_{0})=D_{1}
\end{array}
\right.
.$$
Moreover, by the same lemma, the point $\tilde{p}$ is common to the faces $\lambda_{1} \lambda_{2} \ldots \lambda_{i}(D_{0})$ and $\lambda_{4g}^{-1} \lambda_{4g-1} \ldots \lambda_{4g-i+1}^{-1}(D_{0})$ for an integer $i$ between $0$ and $2g$. Let $i$ be an integer between $0$ and $2g$. The point $\tilde{p}$ is a vertex of the polygon $\lambda_{1} \lambda_{2} \ldots \lambda_{i}(D_{0})$ so the point $\lambda_{i}^{-1}\lambda_{i-1}^{-1} \ldots \lambda_{1}^{-1}(\tilde{p})$ belongs to the polygon $\partial D_{0}$. Therefore, we have $4g$ pairwise distinct points which are vertices of the polygon $\partial D_{0}$: we have obtained in this way all the vertices of the polygon $\partial D_{0}$. Moreover, if $i \geq 1$:
$$ \left\{
\begin{array}{l}
\tilde{h}(\lambda_{i}^{-1} \lambda_{i-1}^{-1} \ldots \lambda_{1}^{-1}(\tilde{p})) \in \lambda_{i+1} \lambda_{i+2} \ldots \lambda_{2g} \gamma'(D_{0}) \\
\tilde{h}(\lambda_{4g-i+1} \lambda_{4g-i+2} \ldots  \lambda_{4g}(\tilde{p})) \in \lambda_{4g-i}^{-1} \lambda_{4g-i-1}^{-1} \ldots \lambda_{2g+1}^{-1} \gamma'(D_{0})
\end{array}
\right.
$$
so the image under the homeomorphism $\tilde{h}$ of the vertices of the polygon $\partial D_{0}$ which are different from $\tilde{p}$ belong to the interior of fundamental domains $D$ in $\mathcal{D}$ with the following property: the face $D_{0}$ is not exceptional with respect to $D$, by Lemma \ref{geodexc}. This implies the converse and the uniqueness of the face $D_{1}$.
\end{proof}

\section{Distortion elements with a fast orbit growth}

In this section, we prove Theorem \ref{exemple}.

First notice that it suffices to prove Theorem \ref{exemple} for sequences $(v_{n})_{n \geq 1}$ with the following additional properties:
\begin{enumerate}
\item The sequence $(v_{n})_{n \geq 1}$ is strictly increasing.
\item The sequence $(v_{n+1}-v_{n})_{n \geq 1}$ is decreasing.
\end{enumerate}
Let us prove this. Suppose we have proved Theorem \ref{exemple} for strictly increasing sequences. If $(v_{n})_{n \geq 1}$ is any sequence, it suffices to apply the theorem to the sequence $( \sup_{k \leq n}v_{k}+1-\frac{1}{2^{n}})_{n \geq 1}$ to deduce the general theorem. Suppose now that the theorem is proved only for sequences which satisfy the two properties above. Let us prove that it is true for any strictly increasing sequence. Let $(v_{n})_{n \geq 1}$ be a strictly increasing sequence such that the sequence $(\frac{v_{n}}{n})_{n}$ converges to $0$. Let $A$ be the convex hull in $\mathbb{R}^{2}$ of the set 
$$ \left\{ (n, t), \   n \geq 1 \mbox{ et } t \leq v_{n} \right\}$$
and let $w_{n}= \sup \left\{ t \in \mathbb{R}, \ (n,t) \in A \right\}$. The sequence $(w_{n})_{n \geq 1}$ satisfies the two properties above and $\lim_{n \rightarrow + \infty} \frac{w_{n}}{n}=0$. Then it suffices to apply the theorem to this sequence to prove it for the sequence $(v_{n})_{n \geq 1}$.

In what follows, we suppose that $(v_{n})_{n \geq 1}$ is a sequence which satisfies the hypothesis of Theorem \ref{exemple} as well as the two above properties.

Let $\mathbb{A} = \mathbb{R}/ \mathbb{Z} \times [-1,1]$ and let $\alpha$ be the curve $\left\{ 0 \right\} \times [-1,1] \subset \mathbb{A}$.
The homeomorphism $f$ in $\mathrm{Homeo}_{0}(\mathbb{A}, \partial \mathbb{A})$ which we are going to build will satisfy the following property:
$$ \exists x \in \mathring{\mathbb{A}}, \ v_{n}+ \frac{1}{2^{n}} \geq p_{2}(\tilde{f}^{n}(x))-p_{2}(x) \geq v_{n}$$
where $p_{2}:\mathbb{R} \times (-1,1) \rightarrow \mathbb{R}$ denotes the projection. As $f$ is compactly supported, this guarantee that the property
$$ \forall n \geq 0, \ \delta(\tilde{f}^{n}([0,1] \times [0,1])) \geq v_{n}$$
holds.
Now, let us consider the following embedding of $\mathbb{R}$ in $\mathring{\mathbb{A}}$:
$$ \begin{array}{rrcl}
L: & \mathbb{R} & \rightarrow & \mathring{\mathbb{A}}= \mathbb{R} / \mathbb{Z} \times (-1,1)  \\
 & x & \mapsto & (x \ \mbox{mod 1},g(x))
\end{array}
$$
where $g$ is a continuous strictly increasing function whose limit is $\frac{1}{2}$ as $x$ tends to $+\infty$ and whose limit is $-\frac{1}{2}$ as $x$ tends to $-\infty$. We identify a tubular neighbourhood $T$ of $L(\mathbb{R})$ with the band $\mathbb{R} \times [-1,1]$, where the real line $\mathbb{R}$ is identified with the curve $L(\mathbb{R})$ via the map $L$ so that, for any integer $j$, the path $\left\{j \right\} \times [-1,1]$ is contained in $\alpha$. Let $h$ be a homeomorphism of the line $L$, identified with $\mathbb{R}$, with the following properties:
\begin{enumerate}
\item The map $x \mapsto h(x)-x$ is decreasing on the interval $[0,+\infty)$ and $\lim_{x \rightarrow + \infty}h(x)-x = 0$.
\item The homeomorphism $h$ is equal to the identity on $(-\infty,-1]$.
\item For any natural numbers $i$ and $n$: $h^{n}(i) \notin \mathbb{Z}$.
\item For any natural number $n$: $h^{n}(0)=v_{n}+ \frac{\epsilon_{n}}{2^{n}}$, where $\epsilon_{n}$ is equal to $1$ if $v_{n}$ is an integer and vanishes otherwise.
\end{enumerate}

The "$\epsilon_{n}$" in the fourth property makes this property compatible with the third one. Let $f$ be the homeomorphism defined on $T$ by:
$$ \begin{array}{rrcl}
f: & \mathbb{R} \times [-1,1] & \rightarrow & \mathbb{R} \times [-1,1] \\
 & (x,t) & \mapsto & ((1- \left| t \right|)h(x) + \left| t \right| x,t)
\end{array}.$$

This extends continuously to a homeomorphism in $\mathrm{Homeo}_{0}(\mathbb{A}, \partial \mathbb{A})$ that we denote by $f$ by abuse. This extension is possible thanks to the fifth property satisfied by $h$ which makes sure that the homeomorphism $f$ is close to the identity when we are close to the circle $\mathbb{R} / \mathbb{Z} \times \left\{ \frac{1}{2} \right\}$. The third property satisfied by $h$ makes sure that, for any nonnegative integers $i$, $j$ and $n$, the curve $f^{n}(\left\{i \right\} \times (-1,1))$ is transverse to the curve $\left\{ j \right\} \times (-1,1)$. 
For any curve $\beta$ in the annulus $\mathbb{A}$, let $l(\beta,\alpha)$ be the number of connected components of $\Pi^{-1}(\alpha)$ met by a lift of $\beta$.
In order to prove that the homeomorphism $f$ is a distortion element, the crucial proposition is the following:

\begin{proposition} \label{fragexemple}
Let $l$ be a positive integer and let $\lambda_{l}=l(f^{l}(\alpha), \alpha)$. There exist two homeomorphisms $g_{1}$ and $g_{2}$ in $\mathrm{Homeo}_{0}(\mathbb{A}, \partial \mathbb{A})$ supported respectively in the complement of $\alpha$ and in a tubular neighbourhood of $\alpha$ such that:
$$ l((g_{2} \circ g_{1})^{\lambda_{l}-1}(f^{l}(\alpha)), \alpha)=1.$$
\end{proposition}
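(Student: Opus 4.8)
The plan is to lift everything to the universal cover $\tilde{\mathbb{A}}=\mathbb{R}\times[-1,1]$, where $\Pi^{-1}(\alpha)$ is the disjoint union of the vertical segments $v_n=\{n\}\times[-1,1]$, $\tilde\alpha=v_0$, and to first describe the arc $\gamma_0:=\tilde f^{\,l}(v_0)$, which runs from $(0,-1)$ to $(0,1)$ because $f$ fixes $\partial\mathbb{A}$. Since $f$ is supported in the disc $T$ and acts there by the shear $(x,t)\mapsto((1-|t|)h(x)+|t|x,t)$, its lift $\tilde f$ is supported in $\Pi^{-1}(T)$, which is a disjoint union of bands $\tilde T_m$; $\tilde f$ preserves each $\tilde T_m$ and, in the natural coordinates on it, acts by the same shear formula, so $\tilde f^{\,l}$ acts on $\tilde T_m$ by the $l$-fold iterate of that shear. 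In these coordinates $v_0$ meets $\tilde T_{-j}$ in the vertical arc $\{j\}\times[-1,1]$, and a direct computation shows that $\tilde f^{\,l}$ carries this arc to an arc joining $(j,-1)$ to $(j,1)$ whose first coordinate sweeps once out to $h^{l}(j)$ and back, i.e. over $[j,h^{l}(j)]$. Translating to $\tilde{\mathbb{A}}$-coordinates, $\gamma_0$ is an arc made of segments staying near $v_0$ together with finitely many ``rightward loops'', the loop carried by $\tilde T_{-j}$ reaching first coordinate $h^{l}(j)-j$; loops with $j\le-1$ are trivial because $h=\mathrm{id}$ on $(-\infty,-1]$. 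A short induction using that $x\mapsto h(x)-x$ is nonincreasing on $[0,+\infty)$ and that $h^{l}(0)\ge0$ gives $h^{l}(j)-j\le h^{l}(0)$ for every integer $j\ge0$; hence the set of components of $\Pi^{-1}(\alpha)$ met by $\gamma_0$ is exactly $\{v_0,\dots,v_{M}\}$ with $M=\lfloor h^{l}(0)\rfloor=\lambda_l-1$ (non-integrality of $h^{l}(j)$ from property (3) makes this count unambiguous for a sufficiently thin $T$).

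The scheme is then to remove one crossing per application of $g_2\circ g_1$. Fix scales $0<\delta_1<\delta_2<\tfrac12$ with, in addition, $\delta_1<M+1-h^{l}(0)$. I would take $g_1\in\mathrm{Homeo}_0(\mathbb{A},\partial\mathbb{A})$ supported in $\mathbb{A}\setminus\alpha$ whose lift, on each ``page'' $(n,n+1)\times[-1,1]$ of $\tilde{\mathbb{A}}\setminus\Pi^{-1}(\alpha)$ (the same push on every page), is a leftward push --- the time-one map of a field $-X\,\partial_x$, $X\ge0$, vanishing near the boundary of the page --- strong enough to drag everything in $(n+\delta_1,n+1-\delta_1)\times[-1,1]$ into a thin slab against the edge $\{x=n+\delta_1\}$; and $g_2\in\mathrm{Homeo}_0(\mathbb{A},\partial\mathbb{A})$ supported in the $\delta_2$-tubular neighbourhood of $\alpha$ whose lift, near each $v_n$, pushes leftward across $v_n$ so that $[n,n+\delta_2/2]\times[-1,1]$ is carried into $[n-\delta_2,n-\delta_1]\times[-1,1]$. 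One step then acts as follows on a curve $\gamma$ that meets $v_0,\dots,v_N$ and whose portion with $x\ge N$ lies in the page $(N,N+1)\times[-1,1]$: first $g_1$ drags this ``finger'' leftward until it sits just to the right of $v_N$, changing no crossing since a push inside a page crosses no component of $\Pi^{-1}(\alpha)$; then $g_2$ pushes the finger across $v_N$ into $(N-\delta_2,N-\delta_1)\times[-1,1]$, so that the image no longer reaches $x\ge N$. Thus $g_2\circ g_1$ deletes precisely the crossing with $v_N$ and returns the image to the same normalised form with $N$ replaced by $N-1$.

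With these $g_1,g_2$ I would prove by induction on $k$, $0\le k\le M$, that $(g_2\circ g_1)^{k}(\gamma_0)$ is an arc from $(0,-1)$ to $(0,1)$ meeting exactly $v_0,\dots,v_{M-k}$, with its portion in $\{x\ge M-k\}$ confined to the page $(M-k,M-k+1)\times[-1,1]$ near its right edge. The base case $k=0$ is the description of $\gamma_0$ above, the point being that the outermost fingers of $\gamma_0$, which reach $x=h^{l}(j)-j<M+1-\delta_1$, lie in $g_1$'s support for the page $(M,M+1)$; the inductive step is the mechanism just described, where $\delta_1<\delta_2$ is exactly what guarantees that after $g_2$ the finger lands inside $g_1$'s support for the next page and that after $g_1$ it lies inside $g_2$'s support near the next line. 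Taking $k=M=\lambda_l-1$ and projecting back to $\mathbb{A}$ gives $l\bigl((g_2\circ g_1)^{\lambda_l-1}(f^{\,l}(\alpha)),\alpha\bigr)=1$.

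The main obstacle is precisely that one pair $(g_1,g_2)$ has to execute all $\lambda_l-1$ peeling steps, so the induction must show the curve stays in the exact normalised shape to which the next $g_2\circ g_1$ applies verbatim --- this is why the supports and the strengths of the two pushes must be calibrated with the two scales $\delta_1<\delta_2$. Two further points to check while carrying this out: a single line $v_n$ may be crossed simultaneously by the fingers of several loops (the $\tilde T_{-j}$ with $\lfloor h^{l}(j)-j\rfloor\ge n$), but $g_1$ acts on a whole page and $g_2$ on a whole neighbourhood of $v_n$, so all such fingers are handled at once; and neither push ever creates a new intersection with $\alpha$ --- for $g_1$ because, being supported off $\alpha$, it cannot move any arc across a component of $\Pi^{-1}(\alpha)$, and for $g_2$ because its lift has the explicit ``push one finger across the line'' form.
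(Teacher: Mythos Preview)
Your induction breaks because the pair $(g_1,g_2)$ you describe is a \emph{uniform} leftward conveyor: since $g_1,g_2$ are homeomorphisms of $\mathbb{A}$, their lifts commute with the deck translation, so whatever they do near $v_N$ and on the page $(N,N+1)$ they do verbatim near $v_0$ and on the page $(-1,0)$. Now trace any point of $\gamma_0$ that lies on $v_0$ (the ``spine'' pieces of $v_0$ outside the tubes, which are fixed by $\tilde f^{\,l}$). After $g_1$ it is fixed; after $g_2$ it sits in $[-\delta_2,-\delta_1]\times[-1,1]$. This is exactly the position, modulo an integer, that the retracted finger occupies after crossing $v_N$. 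By periodicity, the next $g_2\circ g_1$ treats the two identically: if the finger is carried across $v_{N-1}$ (which you need), the spine is carried across $v_{-1}$ (which you cannot afford). Thus $(g_2\circ g_1)^k(\gamma_0)$ acquires crossings with $v_{-1},v_{-2},\dots$, and your claimed normal form ``meets exactly $v_0,\dots,v_{M-k}$'' is false for $k\ge 2$. The same obstruction appears for the shorter loops: the loop in $\tilde T_{-j}$ has $\tilde\theta$-extent $h^{l}(j)-j<M$, so under a uniform push it is emptied past $v_0$ long before the longest loop is finished, again producing crossings to the left.

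The paper's proof is set up precisely to defeat this. It works not in $\tilde{\mathbb{A}}$ but in the $T$-coordinates $\mathbb{R}\times[-1,1]$ of the tubular neighbourhood of $L$: there the successive ``pages'' $[i,i+1]$ sit at different heights in $\mathbb{A}$, so a single homeomorphism of $\mathbb{A}$ may legitimately act differently on each of them. The paper's $g_1,g_2$ are then tailored to $f^{l}(\alpha)$: they \emph{globally preserve} each transverse component of $f^{l}(\alpha)\cap[i-\epsilon,i+\epsilon]\times(-1,1)$ and of $f^{l}(\alpha)\cap[i+\tfrac{\epsilon}{4},i+1-\tfrac{\epsilon}{4}]\times(-1,1)$ (so nothing drifts), and $g_1$ is assigned page-dependent waiting times $n_i$ so that every loop $g(\{j\}\times[-1,1])$ crosses exactly one line per step and all of them finish together at step $\lambda_l-1$. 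This selectivity and the non-uniform speeds are the missing ingredients; a translation-invariant push cannot supply them.
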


First, let us see why this property implies Theorem \ref{exemple}.

\begin{proof}[Proof of Theorem \ref{exemple}]
Let $\mathcal{U}$ be the open cover of $\mathbb{A}$ built at the beginning of Section \ref{bord}. By Lemma \ref{inbord}:
$$ \mathrm{Frag}_{\mathcal{U}}(g_{1}) \leq 6$$
and:
$$ \mathrm{Frag}_{\mathcal{U}}(g_{2}) \leq 6.$$

\noindent \textbf{Remark} Looking closely at the proof of Lemma \ref{inbord}, we can see that the upper bound can be replaced with $3$.

By Lemma \ref{inbord}:
$$\mathrm{Frag}_{\mathcal{U}}((g_{2} \circ g_{1})^{\lambda_{l}-1} \circ f^{l}) \leq 6.$$
Recall that $a_{l}=a_{\mathcal{U}}(f^{l})$ is the minimum of the $mlog(k)$ where there exists a family $(h_{i})_{1 \leq i \leq m}$ of homeomorphisms which are each supported in one of the open sets of $\mathcal{U}$ such that $f^{l}=h_{1} \circ h_{2} \circ \ldots \circ h_{m}$ and the cardinality of the set $\left\{h_{p}, \ 1 \leq p \leq m \right\}$ is $k$.
So, for any positive integer $l$:
$$a_{l} \leq (12 \lambda_{l} -6) log(18).$$
But:
$$ \frac{\lambda_{l}}{l}= \frac{l(f^{l}(\alpha), \alpha)}{l} \leq \frac{v_{l} + \frac{1}{2^{l}}}{l},$$
where the left-hand side of the inequality converges to $0$. Therefore, the sequence $(\frac{a_{l}}{l})_{l >0}$ converges to $0$. By Proposition \ref{fragdist}, the homeomorphism $f$ is a distortion element in $\mathrm{Homeo}_{0}(\mathbb{A}, \partial \mathbb{A})$. Notice that, here, the use of Proposition \ref{fragdist} is crucial as the hypothesis 
$$\lim_{n \rightarrow + \infty} \frac{\mathrm{Frag}_{\mathcal{U}}(f^{n}).log(\mathrm{Frag}_{\mathcal{U}}(f^{n}))}{n}= \lim_{n \rightarrow + \infty} \frac{\lambda_{n}}{n} =0$$
of Theorem \ref{fragdistth} does not necessarily hold.
\end{proof}

\begin{proof}[Proof of Proposition \ref{fragexemple}]
Let $g=f^{l}$ and $\lambda=\lambda_{l}=l(f^{l}(\alpha),\alpha)$. In what follows, everything will take place in the tubular neighbourhood $T$ of the line $L$ which is identified to $\mathbb{R} \times [-1,1]$. Therefore, we can "forget" the annulus $\mathbb{A}$. Let us give briefly the idea of the proof. As the curve $g(\left\{0 \right\} \times (-1,1))$ has length $\lambda$ with respect to  $\alpha$, we have no choice: in the product $(g_{2} \circ g_{1})^{\lambda-1}$, each factor must push this curve to the left and it must go across a curve of the form $\left\{ i \right\} \times (-1,1)$ at each step (under the action of each factor $g_{2} \circ g_{1}$). The curves $g(\left\{i \right\} \times (-1,1))$ are less dilated and must come back to their initial places in $\lambda$ steps. Then we must "make them wait" so that they do not come back too fast: if they come back before the time $\lambda$, they go too far to the left, which we want to avoid. On Figure \ref{distorsionhomeo15}, we represented the action of $g_{2} \circ g_{1}$ on $g(\alpha)$ on an example.

\begin{figure}[ht]
\begin{center}
\includegraphics[scale=0.75]{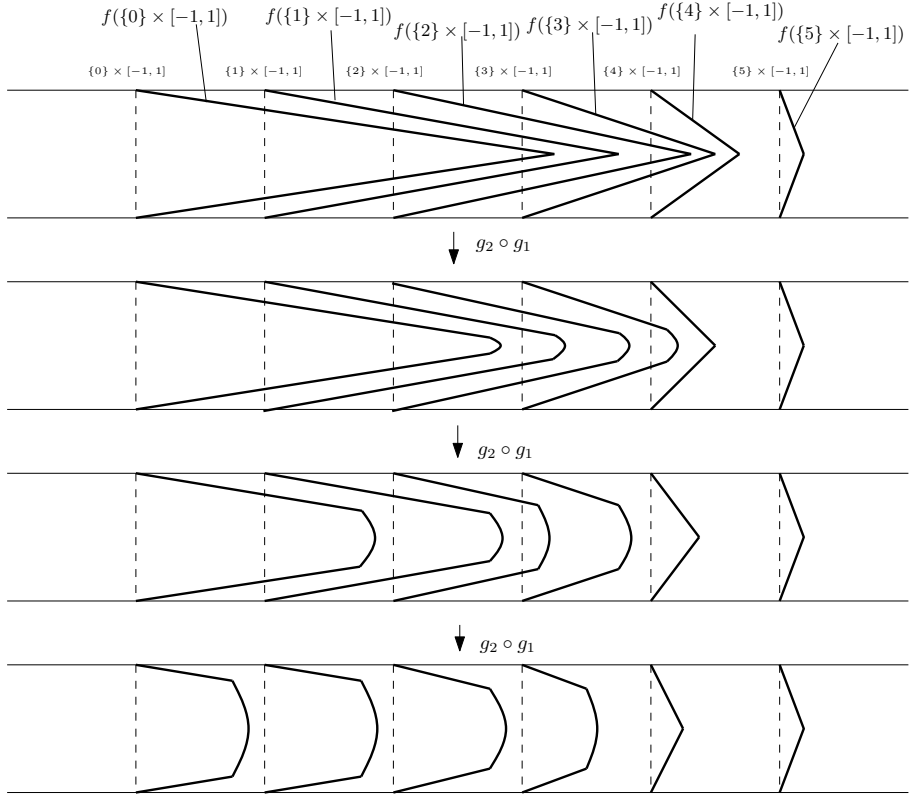}
\end{center}
\caption{The action of $g_{2} \circ g_{1}$}
\label{distorsionhomeo15}
\end{figure}

Let $N$ be the minimal nonnegative integer such that 
$$g(N,0) \in [N, N+1) \times \left\{ 0 \right\} \subset \mathbb{R} \times[-1,1] \subset \mathbb{A}.$$
In the case of Figure \ref{distorsionhomeo15}, this integer is equal to $4$. Let us take a real number $\epsilon$ in $(0, \frac{1}{2})$ such that, for any integer $i$ in $[0,N]$, any connected component of $g(\alpha) \cap ([i-\epsilon, i + \epsilon] \times [-1,1] -g(\left\{i \right\} \times (-1,1))$ joins both boundary components of $[i-\epsilon, i+\epsilon] \times (-1,1)$. The transversality property satisfied by $f$ enables us to find such a real number $\epsilon$. Let $\eta >0$ such that, for any integer $i$ in $[0,N]$, any connected component of
$$g(\alpha) \cap [i+ \frac{\epsilon}{4}, i+1- \frac{\epsilon}{4}] \times [-1,1]$$
is contained in:
$$[i+ \frac{\epsilon}{4} , i+1- \frac{\epsilon}{4}] \times (-1+ \eta,1- \eta).$$
Let us start with the construction of the homeomorphism $g_{2}$. Let $g_{2}$ be a homeomorphism with the following properties:
\begin{enumerate}
\item The homeomorphism $g_{2}$ is supported in $\bigcup \limits _{0 \leq i \leq N} (i-\epsilon,i+\epsilon) \times (-1,1)$.
\item If $P_{i}$ denotes the connected component of $[i-\epsilon, i+\epsilon] \times [-1,1]- g(\left\{ i \right\} \times [-1,1])$ which contains $\left\{i-\epsilon \right\} \times [-1,1]$ and $K_{i}$ denotes a topological closed disc contained in $P_{i}$ which contains the connected components of 
$$(g(\alpha) \cap [i-\epsilon,i+ \frac{\epsilon}{2}] \times (-1,1))-g(\left\{i \right\} \times [-1,1]),$$
we have:
$$ \forall i, \ g_{2}(K_{i}) \subset [i-\epsilon, i- \frac{\epsilon}{2}] \times (-1+\eta, 1-\eta).$$
\item The homeomorphism $g_{2}$ globally preserves each connected component of $g(\alpha) \cap [i-\epsilon, i + \epsilon] \times (-1,1)$. 
\end{enumerate}

Before defining $g_{1}$, we first need to build a sequence of integers $(n_{i})_{0 \leq i \leq N}$. For an integer $i$ between $0$ and $N$, let:
$$A_{i}= \left\{ j \in [0,N], \ \left\{
\begin{array}{l}
g(\left\{ j \right\} \times [-1,1]) \cap \left\{ i \right\} \times [-1,1] \neq \emptyset \\
g(\left\{ j \right\} \times [-1,1]) \cap \left\{ i+1 \right\} \times [-1,1] = \emptyset
\end{array}
\right.
\right\}.$$
Let $i_{0}= \max \left\{i, \left\{i \right\} \times [-1,1) \cap g(\left\{0\right\} \times (-1,1)) \neq \emptyset \right\}$. The sets $A_{0}, A_{1}, \ldots ,A_{i_{0}-1}$ are all empty but we are going to see that, for any integer $N \geq m \geq i_{0}$, the set $A_{m}$ is nonempty. In the case of Figure \ref{distorsionhomeo15}, the sets $A_{0}$, $A_{1}$ and $A_{2}$ are empty , $A_{3}= \left\{0,1 \right\}$ and $A_{4}=\left\{2,3,4 \right\}$. More generally, the family $(A_{i_{0}},A_{i_{0}+1}, \ldots, A_{N})$ is a partition of $\left\{0,1, \ldots, N \right\}$ which admits an ordering in the sense that, if $i_{0} \leq m \leq m' \leq N$, then any integer in $A_{m}$ is smaller than any integer in $A_{m'}$. 
Let us prove that if, for an integer $i$ between $0$ and $N-1$, the set $A_{i}$ is nonempty, then the set $A_{i+1}$ is nonempty. Notice that, for an integer $j$ in the interval $[0,N]$:
$$l(g(\left\{j \right\} \times (-1,1)), \alpha)= \lfloor h^{l}(j) \rfloor -j+1 $$
by construction of $f$. As the map $x \mapsto h^{l}(x)-x$ is decreasing by construction of $h$, then the map
$$ j \mapsto l(g(\left\{j \right\} \times (-1,1)), \alpha)$$
is decreasing on $[0,N]$. In particular, $i_{0}=\lambda-1$. Let $j=\max(A_{i})$. As 
$$  l(g(\left\{j+1 \right\} \times (-1,1)), \alpha) \leq l(g(\left\{j \right\} \times (-1,1)), \alpha),$$
then the curve $g(\left\{j+1 \right\} \times (-1,1))$ does not meet the curve $\left\{i+2 \right\} \times [-1,1]$ so the integer $j+1$ belongs to $A_{i+1}$ which is nonempty. For any integer $i$ between $i_{0}$ and $N$, let
$$A_{i}= \left\{ j(i), j(i)+1, \ldots, j(i+1)-1 \right\}.$$
We define by induction a finite sequence of integers $(n_{i})_{0 \leq i \leq N}$:
\begin{enumerate}
\item If $i<i_{0}$, we set $n_{i}=1$.
\item Otherwise, assuming that the $n_{k}$'s, for $k<i$, have been defined, we set
$$n_{i}= \lambda - \sum_{k=j(i+1)-1}^{i-1}n_{k}.$$
\end{enumerate}
The integer $n_{i}$ will represent the number of iterations of $g_{2} \circ g_{1}$ necessary for a curve close to $\left\{i+1 \right\} \times (-1,1)$ to cross the curve $\left\{i \right\} \times (-1,1)$. For $0 \leq j \leq N$, let $i(j)$ be the unique integer such that $j \in A_{i(j)}$. After a number of iterations of $g_{2} \circ g_{1}$ which is less than or equal to $n_{i(j)}$, the curve $g(\left\{j \right\} \times (-1,1))$ will cross $\left\{i(j) \right\} \times (-1,1)$. Then, after $n_{i(j)-1}$ iterations, it will cross the curve $\left\{i(j)-1 \right\} \times (-1,1)$ and so on... For instance, in the case of Figure \ref{distorsionhomeo15}: $n_{0}=n_{1}=n_{2}=1$, $n_{3}=2$ and $n_{4}=4$.
Let us prove by induction that, for any integer $i \geq i_{0}$ :
$$\sum_{k=j(i)}^{i-1}n_{k} < \lambda.$$
This will prove also that the integers $n_{i}$ are positive. If $i=i_{0}$, then, for $j <i_{0}$, the set $A_{j}$ is empty and we have:
$$ l(g(\left\{0 \right\} \times [-1,1]), \alpha)=i_{0}+1 \leq \lambda$$
by definition of $\lambda$. Thus:
$$ \lambda - \sum_{k=0}^{i_{0}-1}n_{k}= \lambda - i_{0} >0$$
and the property holds for $i=i_{0}$. Suppose that the property holds for $k$ between $i_{0}$ and $i$ given between $0$ and $N-1$. Then:
$$ \sum_{k=j(i+1)}^{i}n_{k}= \lambda - \sum_{k=j(i+1)-1}^{i-1}n_{k} + \sum_{k=j(i+1)}^{i-1}n_{k} = \lambda-n_{j(i+1)-1} < \lambda$$
because $n_{j(i+1)-1}>0$ by induction hypothesis. The property is proved. 

For an integer $j$ between $0$ and $N$, notice that, by construction, the connected components of 
$$ g(\left\{j \right\} \times [-1,1]) \cap \bigcup_{0 \leq i \leq N}[i+\frac{\epsilon}{4}, i+1-\frac{\epsilon}{4}] \times (-1,1)$$
join each two distinct connected components of the boundary of 
$$\bigcup_{0 \leq i \leq N}[i+\frac{\epsilon}{4}, i+1-\frac{\epsilon}{4}] \times (-1,1)$$
except one (which corresponds to the maximal integer $i$) which we will denote by $C_{j}$. Let $i(j)$ be the unique integer such that the integer $j$ belongs to $A_{i(j)}$. Then:
$$C_{j} \subset [i(j)+\frac{\epsilon}{4}, i(j)+1-\frac{\epsilon}{4}] \times (-1,1).$$
Now, we can build an appropriate homeomorphism $g_{1}$. Let $g_{1}$ be a homeomorphism which is supported in 
$$ \bigcup_{0 \leq i \leq N}(i+\frac{\epsilon}{4}, i+1-\frac{\epsilon}{4}) \times [-1,1] \subset \mathbb{R} \times [-1,1] \subset \mathbb{A}$$
and which satisfies the following properties for any integer $i$ between $0$ and $N$:
\begin{enumerate}
\item The homeomorphism $g_{1}$ globally preserves each of the connected components of $g(\alpha) \cap [i+\frac{\epsilon}{4}, i+1-\frac{\epsilon}{4}] \times [-1,1]$ which join the boundary components of $[i+\frac{\epsilon}{4}, i+1-\frac{\epsilon}{4}] \times (-1,1)$.
\item For any integer $j$ in $A_{i}$ and any integer $r< \lambda-\sum \limits _{k=j}^{i-1} n_{k}$:
$$g_{1}^{r}(C_{j}) \cap(i-\epsilon, i+\epsilon) \times [-1,1]=C_{j} \cap (i-\epsilon,i+\epsilon) \times[-1,1].$$
\item For any integer $j$ in $A_{i}$, the following inclusion holds:
$$g_{1}^{\lambda- \sum \limits _{k=j}^{i-1}n_{k}}(C_{j}) \subset K_{i}$$
(notice that these properties are compatible as $\lambda- \sum \limits _{k=j}^{i-1}n_{k}$ increases with $j$ and, moreover, $\lambda- \sum \limits _{k=j}^{i-1}n_{k} \leq n_{i}$ by definition of $n_{i}$).
\item The following inclusion holds:
$$g_{1}^{n_{i}}([i+\frac{\epsilon}{4},i+1-\frac{\epsilon}{2}] \times(-1+\eta,1-\eta)) \subset [i+ \frac{\epsilon}{4}, i+\frac{\epsilon}{2}) \times (-1+\eta,1-\eta) \cap K_{i}.$$
\item For any connected component $C$ of $g(\alpha) \cap [i+\frac{\epsilon}{4},i+1-\epsilon]\times (-1,1)$ which joins the two boundary components of $[i+\frac{\epsilon}{4},i+1-\epsilon] \times (-1,1)$, we have:
$$ \forall r<n_{i}, \ g_{1}^{r}(C) \cap(i-\epsilon,i+\epsilon)\times[-1,1]= C \cap( i-\epsilon,i+\epsilon)\times[-1,1].$$
\item For any integer $r<n_{i}$, the set $g_{1}^{r}([i+1-\epsilon,i+1- \frac{\epsilon}{4}] \times [-1,1])$ does not meet the square$[i,i+\epsilon] \times [-1,1]$.
\end{enumerate}
The second and the third above properties give the speeds with which we push back the components $C_{j}$: the third property means that the piece $C_{j}$ is pushed back in a $K_{i}$ after time $\lambda- \sum \limits _{k=j+1}^{i-1}n_{k}$ and the second condition implies that it cannot be pushed back before this time. The properties 4, 5 et 6 give the exact time necessary to cross $[ i,i+1] \times (-1,1)$.

Now, we prove that, for homeomorphisms $g_{1}$ and $g_{2}$ with the properties given above, we have:
$$l((g_{2} \circ g_{1})^{\lambda-1}(g(\alpha)),\alpha)=1.$$
Let $j$ be an integer between $0$ and $N$ and let $i=i(j)$. We denote by $\alpha_{j}$ the curve $ \left\{j \right\} \times [-1,1]$. Let us prove that, for any $j' \in [j-1,i-1]$ and any $\lambda- \sum_{ k=j}^{ j'}n_{k}>r \geq \lambda - \sum_{ k=j}^{ j'+1}n_{k}$, we have:
$$l((g_{2} \circ g_{1})^{r}\circ g(\alpha_{j}), \alpha)=l(g(\alpha_{j}), \alpha)-(i-j'-1).$$

By the two first properties satisfied by $g_{1}$ and the third property satisfied by $g_{2}$, we have, for any positive integer $r$ which is less than $\lambda - \sum \limits _{k=j}^{i-1}n_{k}$:
$$ \left\{
\begin{array}{l}
(g_{2} \circ g_{1})^{r}(g(\alpha_{j}) \cap [0,i+\epsilon] \times [-1,1])=g(\alpha_{j}) \cap [0,i+\epsilon] \times [-1,1] \\
 (g_{2} \circ g_{1})^{r}(g(\alpha_{j}))=g_{1}^{r}(g(\alpha_{j}))
\end{array}
\right.
.
$$
This implies the above property for $j'=i-1$.
Therefore:
$$ g_{1} \circ (g_{2} \circ g_{1})^{\lambda - \sum \limits _{k=j}^{i-1}n_{k}-1} \circ g(\alpha_{j})=g_{1}^{\lambda - \sum \limits _{k=j}^{i-1}n_{k}}(g(\alpha_{j})).$$
The third property satisfied by the homeomorphism $g_{1}$ implies that the intersection of the above set with $[i-\epsilon, +\infty) \times [-1,1]$ is contained in $K_{i}$. Therefore, the second property satisfied by the homeomorphism $g_{2}$ implies that:
$$ (g_{2} \circ g_{1})^{\lambda - \sum \limits _{k=j}^{i-1}n_{k}} \circ g(\alpha_{j}) \subset [j,i-\frac{\epsilon}{2}] \times [-1+\eta,1-\eta].$$
All the extremal part of the curve has been put back in $[i-\epsilon,i- \frac{\epsilon}{2}] \times(-1,1)$. The remainder has not moved.
Indeed:
$$(g_{2} \circ g_{1})^{\lambda - \sum \limits _{k=j}^{i-1}n_{k}}(g(\alpha_{j}) \cap [j,i-\epsilon] \times [-1,1])=g(\alpha_{j}) \cap [j,i-\epsilon] \times [-1,1]$$
and:
$$ \begin{array}{rcl}
l((g_{2} \circ g_{1})^{\lambda - \sum \limits _{k=j}^{i-1}n_{k}} \circ g(\alpha_{j}), \alpha) & = & i-j \\
 & = & l(g(\alpha_{j}), \alpha)-1.
\end{array}
$$
It suffices now to repeat this argument.
Suppose that, for an integer $j'$ between $j+1$ and $i-1$:
$$ \left\{
\begin{array}{l}
(g_{2} \circ g_{1})^{\lambda - \sum \limits _{k=j}^{j'}n_{k}} \circ g(\alpha_{j}) \subset [j,j'+1-\frac{\epsilon}{2}] \times (-1+ \eta,1-\eta) \\
(g_{2} \circ g_{1})^{\lambda - \sum \limits _{k=j}^{j'}n_{k}}(g(\alpha_{j}) \cap [j,j'+1-\epsilon] \times [-1,1])=g(\alpha_{j}) \cap [j,j'+1-\epsilon] \times [-1,1]
\end{array}
\right.
.
$$
We saw that this property holds for $j'=i-1$. Supposing that this property holds for an integer $j'$, we prove now that it holds for the integer $j'-1$ and also that, under this hypothesis, for $\lambda- \sum_{ k=j}^{ j'-1}n_{k}> r > \lambda - \sum_{ k=j}^{ j'}n_{k}$:
$$l((g_{2} \circ g_{1})^{r}\circ g(\alpha_{j}), \alpha)=l(g(\alpha_{j}), \alpha)-(i-j');$$
By the fifth and the sixth properties satisfied by the homeomorphism $g_{1}$ and the third property satisfied by the homeomorphism $g_{2}$, for any integer $0 \leq r<n_{j'}$:
$$(g_{2} \circ g_{1})^{r} \circ (g_{2} \circ g_{1})^{\lambda - \sum \limits _{k=j}^{j'}n_{k}}(g(\alpha_{j}) \cap [0,j'+\epsilon] \times [-1,1])=g(\alpha_{j}) \cap [0,j'+\epsilon] \times [-1,1]$$
and
$$(g_{2} \circ g_{1})^{r} \circ (g_{2} \circ g_{1})^{\lambda - \sum \limits _{k=j}^{j'}n_{k}} \circ g(\alpha_{j}) = g_{1}^{r}(g_{2} \circ g_{1})^{\lambda - \sum \limits _{k=j}^{j'}n_{k}}(g(\alpha_{j})).$$
Therefore:
$$g_{1} \circ (g_{2} \circ g_{1})^{n_{j'}-1} \circ (g_{2} \circ g_{1})^{\lambda - \sum \limits _{k=j}^{j'}n_{k}}(g(\alpha_{j}))=g_{1}^{n_{j'}}(g_{2} \circ g_{1})^{\lambda - \sum \limits _{k=j}^{j'}n_{k}}(g(\alpha_{j}))$$
so, by the fourth property satisfied by the homeomorphism $g_{1}$, the intersection of this set with $[j'+\epsilon,+\infty) \times [-1, 1]$ is contained in the set $K_{j'}$. By the second property satisfied by the homeomorphism $g_{2}$:
$$ (g_{2} \circ g_{1})^{n_{j}} \circ(g_{2} \circ g_{1})^{\lambda - \sum \limits _{k=j}^{j'}n_{k}} \circ g(\alpha_{j}) \subset [j,j'-\frac{\epsilon}{2}] \times (-1+ \eta,1-\eta)$$
and, moreover:
$$(g_{2} \circ g_{1})^{n_{j'}}\circ(g_{2} \circ g_{1})^{\lambda - \sum \limits _{k=j}^{i-1}n_{k}}(g(\alpha_{j}) \cap [j,j'-\epsilon] \times [-1,1])=g(\alpha_{j}) \cap [j,j'-\epsilon] \times [-1,1].$$
This completes the induction. One can prove, as before, that, for any $\lambda >r> \lambda - n_{j}$:
$$(g_{2} \circ g_{1})^{r} \circ g(\alpha_{j}) = g_{1}^{r-\lambda+n_{j}}(g_{2} \circ g_{1})^{\lambda - n_{j}}(g(\alpha_{j})).$$
which implies that:
$$l((g_{2} \circ g_{1})^{\lambda-1} \circ g(\alpha_{j}),\alpha)=1,$$
what we wanted to prove.
\end{proof}

\section{Generalization of the results}

In this section, we will briefly generalize the results in two directions. First, we could look at other growth speeds of words than the linear speed. Moreover, we can also consider finite families of elements instead of looking at one element and define a notion of distortion for this situation. The results are analogous to those we stated before. In what follows, let $(w_{n})_{n \geq 0}$ be a sequence of positive real numbers which tends to $+\infty$. Let us start with a definition:

\begin{definition}
Let $G$ be a group and $g$ be an element of $G$. The element $g$ is said to be $(w_{n})_{n \geq 0}$-distorted in $G$ if and only if there exists a finite set $\mathcal{G}$ in $G$ such that:
\begin{enumerate}
\item The element $g$ belongs to the group generated by $\mathcal{G}$. 
\item The inferior limit of the sequence $(\frac{l_{\mathcal{G}}(g^{n})}{w_{n}})$ is $0$.
\end{enumerate}
\end{definition}

This notion of distortion is interesting only if $\lim_{n \rightarrow + \infty}\frac{w_{n}}{n}=+ \infty$: otherwise, any element of $G$ is $(w_{n})_{n \geq 0}$-distorted. Moreover, this notion depends only on the equivalence class of $(w_{n})_{n \geq 0}$ for the following equivalence relation:
$$ (\omega_{n}) \equiv (\xi_{n}) \Leftrightarrow \exists C>0, \ \forall n \geq 0, \ \frac{1}{C} \xi_{n} \leq \omega_{n} \leq C \xi_{n}.$$

Then, one can prove the following theorems:

\begin{proposition}
Let $D$ be a fundamental domain of $\tilde{S}$ for the action of $\Pi_{1}(S)$.\\
If a homeomorphism $f$ in $\mathrm{Homeo}_{0}(S)$ (respectively in $\mathrm{Homeo}_{0}(S, \partial S)$) is $(w_{n})_{n \geq 0}$-distorted in $\mathrm{Homeo}_{0}(S)$ (respectively in $\mathrm{Homeo}_{0}(S, \partial S)$), then:
$$ \liminf_{n \rightarrow + \infty} \frac{\delta(\tilde{f}^{n}(D))}{w_{n}}=0.$$
\end{proposition}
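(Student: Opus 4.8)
The plan is to imitate the proof of Proposition~\ref{distorsionimpliquecroissance} almost verbatim, replacing the sequence $(n)$ by the sequence $(w_{n})$ and the limit by a $\liminf$. Assume $f$ is $(w_{n})_{n\geq 0}$-distorted and fix, as in the definition, a finite set $\mathcal{G}=\left\{g_{1},\dots,g_{p}\right\}$ in $\mathrm{Homeo}_{0}(S)$ (respectively in $\mathrm{Homeo}_{0}(S,\partial S)$) such that $f$ belongs to the group generated by $\mathcal{G}$ and $\liminf_{n}\frac{l_{\mathcal{G}}(f^{n})}{w_{n}}=0$. Write $l_{n}=l_{\mathcal{G}}(f^{n})$, so that $f^{n}=g_{i_{1}}\circ\cdots\circ g_{i_{l_{n}}}$, and lift this relation to the universal cover: there is an isometry $I$ of $\tilde{S}$ with $I\circ\tilde{f}^{n}=\tilde{g}_{i_{1}}\circ\cdots\circ\tilde{g}_{i_{l_{n}}}$. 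Set $\mu=\max_{1\leq i\leq p,\ x\in\tilde{S}}d(x,\tilde{g}_{i}(x))$; this is finite for the same reason as in the original proof, namely that each $\tilde{g}_{i}$ commutes with every deck transformation and the metric on $\tilde{S}$ is deck-invariant, so the supremum over $\tilde{S}$ is attained on a fundamental domain.

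Exactly as in the proof of Proposition~\ref{distorsionimpliquecroissance}, for any two points $x,y$ of $D$ one obtains
$$d(\tilde{f}^{n}(x),\tilde{f}^{n}(y))=d(I\circ\tilde{f}^{n}(x),I\circ\tilde{f}^{n}(y))\leq l_{n}\mu+\delta(D)+l_{n}\mu,$$
and hence $\delta(\tilde{f}^{n}(D))\leq 2\mu\, l_{n}+\delta(D)$ for every $n$.

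It then remains to divide by $w_{n}$ and pass to a good subsequence. Since $\liminf_{n}\frac{l_{n}}{w_{n}}=0$, choose an increasing sequence $(n_{k})_{k}$ with $\frac{l_{n_{k}}}{w_{n_{k}}}\to 0$. Because $w_{n}\to+\infty$, we also have $\frac{\delta(D)}{w_{n_{k}}}\to 0$, so that
$$0\leq\frac{\delta(\tilde{f}^{n_{k}}(D))}{w_{n_{k}}}\leq 2\mu\,\frac{l_{n_{k}}}{w_{n_{k}}}+\frac{\delta(D)}{w_{n_{k}}}\xrightarrow[k\to\infty]{}0,$$
which gives $\liminf_{n\rightarrow+\infty}\frac{\delta(\tilde{f}^{n}(D))}{w_{n}}=0$, as desired. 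The statement being asserted for the given fundamental domain $D$, nothing more is needed; independence of the choice of $D$, if one wishes to record it, follows from the quasi-isometry of the maps $g\mapsto\delta(\tilde{g}(D'))$ established in Proposition~\ref{equiv}. There is essentially no hard step here: the only points deserving a word of justification are the finiteness of $\mu$ (deck-invariance of $d$) and the fact that $w_{n}\to+\infty$ is used precisely to absorb the constant term $\delta(D)$ along the subsequence realizing the $\liminf$.
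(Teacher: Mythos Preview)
Your argument is correct and is exactly the intended one: the paper does not spell out a proof for this generalized proposition but clearly means the reader to adapt the proof of Proposition~\ref{distorsionimpliquecroissance} verbatim, which is precisely what you do. The only new ingredients beyond that proof are the passage to a subsequence realizing the $\liminf$ and the use of $w_{n}\to+\infty$ to absorb the additive constant $\delta(D)$, both of which you handle correctly.
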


\begin{theorem}
Let $f$ be a homeomorphism in $\mathrm{Homeo}_{0}(S)$ (respectively in $\mathrm{Homeo}_{0}(S, \partial S)$). If:
$$\liminf_{n \rightarrow + \infty} \frac{\delta(\tilde{f}^{n}(D))log(\delta(\tilde{f}^{n}(D)))}{w_{n}}=0,$$
then $f$ is $(w_{n})_{n \geq 0}$-distorted in $\mathrm{Homeo}_{0}(S)$ (respectively in $\mathrm{Homeo}_{0}(S, \partial S)$).
\end{theorem}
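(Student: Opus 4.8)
The plan is to re-run, with $(w_n)_n$ in place of $(n)_n$ and $\liminf$ in place of $\lim$, the two-step derivation of Theorem~\ref{croissanceimpliquedistorsion} from Theorems~\ref{quasiisom} and~\ref{fragdistth}. Fix once and for all a finite cover $\mathcal{U}$ of $S$ by closed discs and half-discs and a fundamental domain $D$; I treat the case of $\mathrm{Homeo}_0(S)$, the group $\mathrm{Homeo}_0(S,\partial S)$ being handled in the same way upon replacing Theorem~\ref{quasiisom} by Theorem~\ref{quasiisom2} and conjugating supports into the balls of $\mathcal{U}$ exactly as in the proof of the ``if'' part of Proposition~\ref{fragdist2}. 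Since $(w_n)$-distortion is only a meaningful notion when $\lim_n w_n/n=+\infty$ (see the remark following the definition above), we may assume this throughout.

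\textbf{Step 1: transfer to the fragmentation length.} Put $\delta_n=\delta(\tilde f^n(D))$ and $\phi_n=\mathrm{Frag}_{\mathcal{U}}(f^n)$. Theorem~\ref{quasiisom} gives constants $C\ge1$, $C'\ge0$ with $\phi_n\le C\delta_n+C'$ for every $n$. Pick a subsequence along which $\delta_n\log\delta_n/w_n\to0$. If $\delta_n$ stays bounded along a further subsequence, then so does $\phi_n$, hence so does $a_{\mathcal{U}}(f^n)$, and $a_{\mathcal{U}}(f^n)/w_n\to0$ there because $w_n\to+\infty$; otherwise $\delta_n\to+\infty$ and then $\phi_n\log\phi_n\le(C\delta_n+C')\log(C\delta_n+C')\le C''\,\delta_n\log\delta_n$ for $n$ large, so $\phi_n\log\phi_n/w_n\to0$. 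Since $a_{\mathcal{U}}(f^n)\le\phi_n\log\phi_n$ whenever $\phi_n\ge1$, in every case
$$\liminf_{n\to+\infty}\frac{a_{\mathcal{U}}(f^n)}{w_n}=0.$$

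\textbf{Step 2: the weighted analogue of the nontrivial implication of Proposition~\ref{fragdist}.} It remains to show: if $\liminf_n a_{\mathcal{U}}(f^n)/w_n=0$ then $f$ is $(w_n)$-distorted in $\mathrm{Homeo}_0(S)$. I would reproduce the construction of ``End of the proof of Propositions~\ref{fragdist} and~\ref{fragdist2}'', choosing for each $n$ a decomposition realizing $a_{\mathcal{U}}(f^n)=l_n\log k_n$ and normalising the ambient family so that $k_n\ge2$ whenever $\phi_n\ge2$. Two modifications are needed. First, in the inductive construction of the strictly increasing map $\sigma$, one now only needs at stage $m$ a single index $\sigma(m+1)>\sigma(m)$ with $l_{\sigma(m+1)}\bigl(14\log(\sum_{i\le m+1}k_{\sigma(i)})+14\bigr)/w_{\sigma(m+1)}\le 1/m$; such an index exists because, along the subsequence realizing the $\liminf$ with $k_n\ge2$, one has $l_n\log k_n/w_n\to0$ and hence also $l_n/w_n\to0$ (as $\log k_n\ge\log2$), which lets one absorb the bounded additive terms coming from the already-fixed partial sum $\sum_{i\le m}k_{\sigma(i)}$. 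Second, Lemma~\ref{Avila} is applied exactly as before, producing a finite set $\mathcal{G}'$ with $l_{\mathcal{G}'}(f^{\sigma(n)})\le l_{\sigma(n)}\bigl(C\log(\sum_{i\le n}k_{\sigma(i)})+C'\bigr)$, so that $l_{\mathcal{G}'}(f^{\sigma(n)})/w_{\sigma(n)}\to0$; as $(w_n)$-distortion only asks for the $\liminf$ of $l_{\mathcal{G}'}(f^n)/w_n$ to vanish, this concludes. In the degenerate situation where the relevant subsequence has $\phi_n\le1$ cofinitely (so $a_{\mathcal{U}}(f^n)=0$), one instead notes that $f^{n_0}$ is supported in a single ball $U_{i_0}$ (or is the identity) for some large $n_0$, and applies Lemma~\ref{Avila} to the sequence $(\varphi_{i_0}^{-1}\circ f^{n_0 m}\circ\varphi_{i_0})_{m\ge1}$ of homeomorphisms of $\mathbb{R}^2$ (or $H^2$) supported in $B(0,1)$: this yields a finite set $\mathcal{G}'\subset\mathrm{Homeo}_0(U_{i_0},\partial U_{i_0})$ with $l_{\mathcal{G}'}(f^{n_0 m})\le14\log m+14$, whence $l_{\mathcal{G}'}(f^{n_0 m})/w_{n_0 m}\to0$ using $w_n/n\to+\infty$.

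\textbf{Main obstacle.} Nothing here is conceptually new: Step~1 is a routine manipulation of the quasi-isometry of Theorem~\ref{quasiisom} together with monotonicity of $x\mapsto x\log x$, and Step~2 re-uses the free-group and commutator machinery of Lemma~\ref{Avila} untouched. The one point requiring genuine care is the inductive extraction of $\sigma$ in Step~2: one must check that a single well-chosen index per stage suffices and that the partial-sum logarithms $\log(\sum_{i\le m+1}k_{\sigma(i)})$ do not spoil the estimate, which reduces --- after the normalisation $k_n\ge2$ --- to the elementary fact that $l_n/w_n\to0$ along the relevant subsequence. The companion Proposition stated at the beginning of this section is proved exactly as Proposition~\ref{distorsionimpliquecroissance}, carrying a $\liminf$ through the same chain of inequalities.
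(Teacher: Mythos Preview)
The paper does not spell out a proof of this generalized statement: Section~9 merely asserts that ``one can prove the following theorems'' by adapting the earlier arguments. Your proposal does precisely this adaptation, re-running the derivation of Theorem~\ref{croissanceimpliquedistorsion} via Theorems~\ref{quasiisom}/\ref{quasiisom2} and the machinery of Proposition~\ref{fragdist} with $w_n$ in place of $n$ and $\liminf$ in place of $\lim$, so it is in complete agreement with the paper's intended approach.

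One small remark on your Step~2: the normalisation $k_n\ge2$ is automatic whenever $a_{\mathcal U}(f^n)>0$, since a decomposition with $k=1$ forces $f^n$ to be supported in a single ball of $\mathcal U$ and hence $\mathrm{Frag}_{\mathcal U}(f^n)\le1$; conversely when $a_{\mathcal U}(f^n)=0$ you may simply take $l_n=1$, $k_n=1$ (the single homeomorphism being $f^n$ itself), so $l_n/w_n\to0$ trivially. Thus the case split you make between $\phi_n\ge2$ and $\phi_n\le1$ already covers everything, and the inductive extraction of $\sigma$ goes through exactly as you describe.
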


\begin{theorem}
Let $(v_{n})_{n \geq 0}$ be a sequence of positive real numbers such that: $\liminf_{n \rightarrow + \infty} \frac{v_{n}}{w_{n}}=0$.
Then there exists a homeomorphism $f$ in $\mathrm{Homeo}_{0}(\mathbb{R}/\mathbb{Z} \times [0,1] , \mathbb{R}/\mathbb{Z} \times \left\{ 0,1 \right\} )$ such that:
\begin{enumerate}
\item $ \forall n \geq 0, \ \delta(\tilde{f}^{n}([0,1] \times [0,1])) \geq v_{n}$.
\item The homeomorphism $f$ is $(w_{n})_{n \geq 0}$-distorted in $\mathrm{Homeo}_{0}(\mathbb{R}/\mathbb{Z} \times [0,1] , \mathbb{R}/\mathbb{Z} \times \left\{ 0,1 \right\} )$.
\end{enumerate}
\end{theorem}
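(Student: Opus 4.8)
The plan is to follow the proof of Theorem~\ref{exemple}, replacing every limit by an inferior limit and fixing once and for all a subsequence $n_{1}<n_{2}<\cdots$ along which $v_{n_{k}}/w_{n_{k}}\to 0$. The first ingredient is the $(w_{n})$-analogue of the ``if'' part of Proposition~\ref{fragdist}: if $f\in\mathrm{Homeo}_{0}(M)$ satisfies $\liminf_{n}a_{\mathcal{U}}(f^{n})/w_{n}=0$, then $f$ is $(w_{n})$-distorted in $\mathrm{Homeo}_{0}(M)$, and likewise in $\mathrm{Homeo}_{0}(M,\partial M)$. Its proof is word for word that of Section~4 via Lemma~\ref{Avila}: one builds the increasing injection $\sigma$ and the bijection $\psi$ exactly as there, but now one forces the values of $\sigma$ to lie in a subsequence realizing the $\liminf$, so that the final estimate reads $l_{\mathcal{G}'}(f^{\sigma(n)})/w_{\sigma(n)}\le 1/n$; along such a subsequence $\log k_{m}/w_{m}\to 0$ forces $l_{m}/w_{m}\to 0$ as well, so the bookkeeping is unaffected. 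It therefore suffices to build $f\in\mathrm{Homeo}_{0}(\mathbb{R}/\mathbb{Z}\times[0,1],\mathbb{R}/\mathbb{Z}\times\{0,1\})$ with $\delta(\tilde f^{n}([0,1]\times[0,1]))\ge v_{n}$ for all $n$ and $\liminf_{n}a_{\mathcal{U}}(f^{n})/w_{n}=0$.

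For the construction I would reuse the mechanism of Section~8: an embedding $L:\mathbb{R}\hookrightarrow\mathring{\mathbb{A}}$, a tubular neighbourhood $T\cong\mathbb{R}\times[-1,1]$ of $L$ meeting $\Pi^{-1}(\alpha)$ at the integers, and the skew-product $f(x,t)=((1-|t|)h(x)+|t|x,\,t)$ attached to a homeomorphism $h$ of $\mathbb{R}$, extended over $\mathbb{A}$. As in Section~8, $\delta(\tilde f^{n}([0,1]\times[0,1]))$ is bounded below by the number of lifts of $\alpha$ separated by $\tilde f^{n}$ of the marked point, hence essentially by $h^{n}(0)$, and $\lambda_{n}:=l(f^{n}(\alpha),\alpha)$ is comparable to $h^{n}(0)$. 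When the reduction of Section~8 applies --- e.g.\ when $(v_{n})$ admits a non-decreasing, concave majorant $(v'_{n})$ with $\liminf v'_{n}/w_{n}=0$ --- one takes $h$ monotone with $h^{n}(0)=v'_{n}+\epsilon_{n}2^{-n}$ and the construction is literally that of Section~8. To control $a_{\mathcal{U}}(f^{n_{k}})$ I would invoke Proposition~\ref{fragexemple}: it provides $g_{1},g_{2}\in\mathrm{Homeo}_{0}(\mathbb{A},\partial\mathbb{A})$, supported off $\alpha$ and in a tubular neighbourhood of $\alpha$ respectively (so, by Lemma~\ref{inbord}, with $\mathrm{Frag}_{\mathcal{U}}(g_{i})\le 6$), such that $(g_{2}\circ g_{1})^{\lambda_{n_{k}}-1}\circ f^{n_{k}}$ sends $\alpha$ to a curve with $l(\cdot,\alpha)=1$, hence (again Lemma~\ref{inbord}) has $\mathrm{Frag}_{\mathcal{U}}\le 6$. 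Writing $f^{n_{k}}=(g_{2}\circ g_{1})^{-(\lambda_{n_{k}}-1)}\circ\bigl((g_{2}\circ g_{1})^{\lambda_{n_{k}}-1}\circ f^{n_{k}}\bigr)$ exhibits $f^{n_{k}}$ as a composition of at most $12\lambda_{n_{k}}-6$ homeomorphisms, each supported in one disc of $\mathcal{U}$ and drawn from a fixed family of at most $18$ of them, so $a_{\mathcal{U}}(f^{n_{k}})\le(12\lambda_{n_{k}}-6)\log 18$. The point, exactly as in the proof of Theorem~\ref{exemple}, is that this bound is \emph{linear} in $\lambda_{n_{k}}$ with no extra logarithm --- which is why Proposition~\ref{fragdist}, not Theorem~\ref{fragdistth}, must be used here. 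Since $\lambda_{n_{k}}=O(v_{n_{k}})$ and $v_{n_{k}}/w_{n_{k}}\to 0$, this yields $\liminf_{n}a_{\mathcal{U}}(f^{n})/w_{n}=0$, and the $(w_{n})$-analogue of Proposition~\ref{fragdist} concludes.

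The main obstacle is that, in contrast with Theorem~\ref{exemple}, the condition $\liminf v_{n}/w_{n}=0$ does not in general allow one to reduce to a ``nice'' (non-decreasing, slowly growing) sequence: one may be forced to let $\delta(\tilde f^{n}([0,1]\times[0,1]))$ overshoot $v_{n}$ for most $n$ while keeping it comparable to $v_{n_{k}}$ at the times $n_{k}$. Thus $f$ must be built so that its iterates \emph{spread} across at least $v_{n}$ fundamental domains at every time $n$ yet \emph{rewind} --- bringing $l(f^{n_{k}}(\alpha),\alpha)$ back down to $O(v_{n_{k}})$ --- at the times $n_{k}$. This forces the orbit of the marked point along $L$ to oscillate rather than grow monotonically; it is still realizable by a suitable (non-periodic, non-monotone) homeomorphism $h$ of $\mathbb{R}$, provided one first chooses $L$ spiralling into a boundary circle so that a bounded displacement along $L$ crosses unboundedly many lifts of $\alpha$ --- which is what keeps $f$ continuously extendable over $\mathbb{A}$. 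The genuine work is then to re-prove the combinatorial core of Proposition~\ref{fragexemple} in this oscillating setting, \emph{without} the monotonicity of $j\mapsto l(f^{\ell}(\{j\}\times(-1,1)),\alpha)$ on which its present proof rests: one must still show that a uniformly bounded pair $g_{1},g_{2}$ un-spreads $f^{n_{k}}(\alpha)$ in $\lambda_{n_{k}}-1$ steps.
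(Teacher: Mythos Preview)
The paper gives no proof of this statement: Section~9 simply lists it among generalizations that ``one can prove'', with no argument. Your first two paragraphs are precisely what such a proof should be: the $(w_n)$-analogue of Proposition~\ref{fragdist} goes through verbatim along a chosen subsequence, and whenever $(v_n)$ admits an increasing concave majorant $(v'_n)$ with $\liminf v'_n/w_n=0$, the Section~8 construction applied to $(v'_n)$ produces the desired $f$, with $a_{\mathcal U}(f^{n_k})\le(12\lambda_{n_k}-6)\log 18$ exactly as in Theorem~\ref{exemple}. You are also right that the reduction to such a majorant is the only genuine issue.

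However, the obstacle you isolate is not a technicality to be engineered around---it is fatal, and your ``oscillating'' fix cannot succeed. For \emph{every} $f\in\mathrm{Homeo}_0(\mathbb A,\partial\mathbb A)$ one has $|\delta_{n+1}-\delta_n|\le 2M$ with $M=\sup_x d(x,\tilde f(x))<\infty$, and the same bound holds for $\lambda_n$; these quantities cannot ``rewind'' between consecutive times. This already falsifies the theorem as stated. Take $w_n=\sqrt n$ and put $v_n=\sqrt n$ when $n$ is not a power of~$2$, and $v_{2^k}=1$. Then $\liminf v_n/w_n=0$, yet any $f$ satisfying condition~(1) has $\delta_{2^k}\ge\delta_{2^k-1}-2M\ge\sqrt{2^k-1}-2M$, whence $\liminf\delta_n/w_n\ge 1$; by the first generalized proposition of Section~9, such an $f$ is \emph{not} $(w_n)$-distorted. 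So no $f$ can satisfy both conclusions. (Section~9 is visibly rough---its remark on when the notion is ``interesting'' is in fact reversed.) A correct formulation needs an extra hypothesis, for instance that $(v_n)$ already admits an increasing concave majorant $(v'_n)$ with $\liminf v'_n/w_n=0$; under that hypothesis your first two paragraphs constitute a complete proof.
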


For any positive integer $k$, we denote by $\mathbb{F}_{k}$ the free group on $k$ generators. Let $a_{1},a_{2}, \ldots, a_{k}$ be the standard generators of this group and $A$ be the set of these generators.

\begin{definition}
Let $G$ be a group generated by a finite set $\mathcal{G}$. A $k$-tuple $(f_{1},f_{2}, \ldots, f_{k})$ is said to be distorted if the map $\mathbb{F}_{k} \rightarrow G$, which sends the generator $a_{k}$ to $f_{k}$, is not a quasi-isometry for the distances $d_{A}$ and $d_{\mathcal{G}}$. More generally, for any group $G$, a $k$-tuple $(f_{1},f_{2}, \ldots, f_{k})$ is said to be distorted if there exists a subgroup of $G$ which is finitely generated, which contains the elements $f_{i}$ and in which this $k$-tuple is distorted.
\end{definition}

One can prove the following theorem for a compact surface $S$:

\begin{theorem}
Let $D$ be a fundamental domain of $\tilde{S}$ for the action of $\Pi_{1}(S)$.
Let $(f_{1},f_{2}, \ldots, f_{k})$ be a $k$-tuple of homeomorphisms of $S$. Suppose that there exists a sequence of words $(m_{n})_{n \geq 0}$ on the $f_{i}$'s whose sequence of lengths $(l(m_{n}))_{n}$ tend to $+\infty$ such that:
$$\lim_{n \rightarrow +\infty} \frac{\delta(m_{n}(D))log(\delta(m_{n}(D)))}{l(m_{n})}=0.$$
Then the $k$-tuple $(f_{1},f_{2}, \ldots, f_{k})$ is distorted.
\end{theorem}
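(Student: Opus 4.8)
The plan is to re-run the proof of Theorem \ref{croissanceimpliquedistorsion} with the powers $f^{n}$ replaced by the words $m_{n}$ and the exponent $n$ replaced by the length $l(m_{n})$: almost nothing in the underlying machinery used that one was iterating a single homeomorphism, only an input of the form $a_{\mathcal{U}}(\cdot)/n\to 0$. I interpret $l(m_{n})$ as the length of $m_{n}$ in the word metric of $\mathbb{F}_{k}$, so that we may take each $m_{n}$ reduced, $l(m_{n})=l_{A}(m_{n})$, and $l_{A}(m_{n})\to+\infty$. Let $\widetilde{m}_{n}$ denote the homeomorphism of $\widetilde{S}$ obtained by composing the chosen lifts $\widetilde{f}_{i}^{\pm 1}$ of the letters of $m_{n}$; its diameter $\delta(\widetilde{m}_{n}(D))$ is insensitive to the deck-transformation ambiguity, exactly as in the proof of Proposition \ref{distorsionimpliquecroissance}, and equals the quantity $\delta(m_{n}(D))$ appearing in the statement. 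Using the upper bound of Theorem \ref{quasiisom} (resp. Theorem \ref{quasiisom2}) together with the monotonicity of $x\mapsto x\log x$, the hypothesis becomes
$$\lim_{n\to+\infty}\frac{\mathrm{Frag}_{\mathcal{U}}(m_{n})\,\log\bigl(\mathrm{Frag}_{\mathcal{U}}(m_{n})\bigr)}{l(m_{n})}=0 ,$$
and hence, with the quantity $a_{\mathcal{U}}$ of Section 4 (in the boundary case one first conjugates the $f_{i}$'s so that their supports lie in $S'$, as in the proof of Proposition \ref{fragdist2}),
$$\lim_{n\to+\infty}\frac{a_{\mathcal{U}}(m_{n})}{l(m_{n})}=0 .$$

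Next I would copy the argument of the end of the proof of Propositions \ref{fragdist} and \ref{fragdist2} verbatim, feeding it the sequence $(m_{n})$ in place of $(f^{n})$ and $l(m_{n})$ in place of $n$. Thus for each $n$ one picks integers $l_{n},k_{n}$ with $a_{\mathcal{U}}(m_{n})=l_{n}\log k_{n}$ and a family of $k_{n}$ homeomorphisms, each supported in one of the elements of $\mathcal{U}$, of which $m_{n}$ is a composition of $l_{n}$ terms; one builds the increasing injection $\sigma$ and the enumeration $\psi$ as there; one applies Lemma \ref{Avila} to the sequence of homeomorphisms obtained by transporting these factors into $\mathbb{R}^{d}$ or $H^{d}$ through the charts, and one collects the resulting finite set into a finite set $\mathcal{G}'$ of homeomorphisms supported in the balls of $\mathcal{U}$. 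Word for word as in that proof, this yields a constant with $l_{\mathcal{G}'}(m_{\sigma(n)})\le l_{\sigma(n)}\bigl(C\log(\textstyle\sum_{i\le n}k_{\sigma(i)})+C'\bigr)$, and therefore
$$\lim_{n\to+\infty}\frac{l_{\mathcal{G}'}(m_{\sigma(n)})}{l(m_{\sigma(n)})}=0 .$$

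Finally I would phrase the conclusion in the language of the definition of a distorted $k$-tuple. Set $\mathcal{G}=\{f_{1},\dots,f_{k}\}\cup\mathcal{G}'$, let $G'=\langle\mathcal{G}\rangle$ be the corresponding finitely generated subgroup of $\mathrm{Homeo}_{0}(S)$ (resp. $\mathrm{Homeo}_{0}(S,\partial S)$), which contains all the $f_{i}$'s, and let $\chi:\mathbb{F}_{k}\to G'$ be the morphism with $\chi(a_{i})=f_{i}$. For every reduced $w\in\mathbb{F}_{k}$ the element $\chi(w)$ is literally a product of $l_{A}(w)$ generators in $\{f_{1}^{\pm 1},\dots,f_{k}^{\pm 1}\}\subset\mathcal{G}$, so $l_{\mathcal{G}}(\chi(w))\le l_{A}(w)$ and the upper estimate in the definition of a quasi-isometry holds. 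On the other hand $l_{\mathcal{G}}(\chi(m_{\sigma(n)}))\le l_{\mathcal{G}'}(m_{\sigma(n)})=o\bigl(l(m_{\sigma(n)})\bigr)=o\bigl(l_{A}(m_{\sigma(n)})\bigr)$ while $l_{A}(m_{\sigma(n)})\to+\infty$, so no lower estimate of the form $l_{\mathcal{G}}(\chi(w))\ge\frac1C l_{A}(w)-C'$ can hold. Hence $\chi$ is not a quasi-isometry and the $k$-tuple $(f_{1},\dots,f_{k})$ is distorted.

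The main point is that there is essentially no new mathematical obstacle: the proofs of Theorems \ref{quasiisom}, \ref{quasiisom2} and of Propositions \ref{fragdist}, \ref{fragdist2} never used that the sequence being decomposed was $(f^{n})_{n}$, only the sublinearity-type hypothesis $a_{\mathcal{U}}(\cdot)/n\to 0$, so the entire scheme transfers with $(m_{n},l(m_{n}))$ replacing $(f^{n},n)$. The two places where a little care is needed are (i) reading $l(m_{n})$ as the word-metric length on $\mathbb{F}_{k}$, which is what lets one translate a small fragmentation length into the failure of the lower quasi-isometry bound, and (ii) repackaging the conclusion as non-quasi-isometry of $\chi:\mathbb{F}_{k}\to G'$ rather than as distortion of a single power; both are routine.
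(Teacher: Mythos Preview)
Your proposal is correct and is exactly the adaptation the paper has in mind: Section 9 states this theorem without proof, merely indicating that the results ``briefly generalize'' by replacing $f^{n}$ with the words $m_{n}$ and $n$ with $l(m_{n})$, which is precisely the scheme you carry out via Theorems \ref{quasiisom}/\ref{quasiisom2} and the Section 4 machinery. Your final repackaging of the conclusion as failure of the lower quasi-isometry bound for $\chi:\mathbb{F}_{k}\to\langle\mathcal{G}\rangle$ is the right way to match the definition of a distorted $k$-tuple.
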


\section*{Acknowledgement}

I would like to thank Frédéric Le Roux who asked me if a homeomorphism of the torus whose rotation set reduces to a single point is a distortion element. I thank him also for his careful reading of this article.

\end{document}